\renewcommand{\mathcal}{}\let\mathcal=\CMcal
\numberwithin{equation}{section}
\newtheorem{thm}{Theorem}[section]
\newtheorem{prop}[thm]{Proposition}
\newtheorem{defi}[thm]{Definition}
\newtheorem{lem}[thm]{Lemma}
\newtheorem{cor}[thm]{Corollary}
\newtheorem{eg}[thm]{{Example}}
\newtheorem{rema}[thm]{Remark}
\newcommand{\ad}{{\mbox{\upshape{ad}}}}
\newcommand{\Ad}{{\mbox{\upshape{Ad}}}}
\newcommand{\Aut}{\mathrm{Aut}}
\newcommand{\barB}{\overline{\phantom{a,}}^B}
\newcommand{\bc}{{\mathbf{c}}}
\newcommand{\bs}{{\mathbf{s}}}
\newcommand{\bfrak}{{\mathfrak b}}
\newcommand{\C}{{\mathbb C}}
\newcommand{\N}{{\mathbb N}}
\newcommand{\cA}{{\mathcal A}}
\newcommand{\cB}{{\mathcal B}}
\newcommand{\cC}{{\mathcal C}}
\newcommand{\ccf}{{\mathsf f}}
\newcommand{\cK}{{\mathcal K}}
\newcommand{\cM}{{\mathcal M}}
\newcommand{\cO}{{\mathcal O}}
\newcommand{\calR}{{\mathcal R}}
\newcommand{\cS}{{\mathcal S}}
\newcommand{\cV}{\mathcal{V}ect}
\newcommand{\cZ}{{\mathcal Z}}
\newcommand{\dkm}{{{\mathrm{dkm}}}}
\newcommand{\End}{\mbox{End}}
\newcommand{\field}{{\mathbb K}}
\newcommand{\fieldtwo}{k}
\newcommand{\flip}{{\mathrm{flip}}}
\newcommand{\For}{\mathcal{F}or}
\newcommand{\free}{{}'{\mathbf f}}
\newcommand{\Ftwo}{{\For^{(2)}}}
\newcommand{\Ftwoop}{{\For^{(2)\mathrm{op}}}}
\newcommand{\gfrak}{{\mathfrak g}}
\newcommand{\glfrak}{{\mathfrak{gl}}}
\newcommand{\hfrak}{{\mathfrak h}}
\newcommand{\Hom}{{\mathrm{Hom}}}
\newcommand{\hght}{\mathrm{ht}}
\newcommand{\id}{{\mathrm{id}}}
\newcommand{\kfrak}{{\mathfrak k}}
\newcommand{\kow}{{\varDelta}}
\newcommand{\lact}{{\triangleright}}
\newcommand{\alphatil}{\tilde{\alpha}}
\newcommand{\lambdatil}{\tilde{\lambda}}
\newcommand{\Oint}{{{\mathcal O}_{int}}}
\newcommand{\op}{\mathrm{op}}
\newcommand{\ot}{\otimes}
\newcommand{\Qvext}{Q^\vee_{ext}}
\newcommand{\rh}{\hat{R}}
\newcommand{\ri}{r_i}
\newcommand{\ir}{{}_ir}
\newcommand{\RtX}{R^{(\tau,X)}}
\newcommand{\Q}{\mathbb Q}
\newcommand{\rank}{\mathrm{rank}}
\newcommand{\scrU}{\mathscr{U}}
\newcommand{\slfrak}{{\mathfrak{sl}}}
\newcommand{\sU}{{\mathscr{U}}}
\newcommand{\tw}{{\mathrm{tw}}}
\newcommand{\twn}{{tw}}
\newcommand{\uc}{\mathbf{c}}
\newcommand{\uqg}{{U_q(\mathfrak{g})}}
\newcommand{\uqgp}{{U_q(\mathfrak{g'})}}
\newcommand{\uqislzi}{U_{q_i}(\mathfrak{sl}_2)_i}
\newcommand{\vep}{\varepsilon}
\newcommand{\uX}{\underline{X}}
\newcommand{\Z}{{\mathbb Z}}
\newcommand{\coid}{{B_{\mathbf{c},\mathbf{s}}}}
\newcommand{\Xfrak}{\mathfrak{X}}
\newcommand{\g}{\mathfrak{g}}
\newcommand{\oci}{{\overline{c_i}}}
\title{Universal K-matrix for quantum symmetric pairs}
\thanks{Research supported by EPSRC grant EP/K025384/1}
\author{Martina Balagovi\'c}
\author{Stefan Kolb}
\address{Martina Balagovi\'c and Stefan Kolb, School of Mathematics and Statistics, Newcastle University, Newcastle upon Tyne NE1 7RU, UK}
\email{martina.balagovic@newcastle.ac.uk}
\email{stefan.kolb@newcastle.ac.uk}
\subjclass[2010]{17B37; 81R50}
\keywords{Quantum groups, coideal subalgebras, universal K-matrix, quantum symmetric pairs, reflection equation}
\begin{document}

\begin{abstract}
Let $\gfrak$ be a symmetrizable Kac-Moody algebra and let $\uqg$ denote the corresponding quantized enveloping algebra. In the present paper we show that quantum symmetric pair coideal subalgebras $\coid$ of $\uqg$ have a universal K-matrix if $\gfrak$ is of finite type. By a universal K-matrix for $\coid$ we mean an element in a completion of $\uqg$ which commutes with $\coid$ and provides solutions of the reflection equation in all integrable $\uqg$-modules in category $\cO$. The construction of the universal K-matrix for $\coid$ bears significant resemblance to the construction of the universal R-matrix for $\uqg$. Most steps in the construction of the universal K-matrix are performed in the general Kac-Moody setting. 

In the late nineties T.~tom Dieck and R.~H{\"a}ring-Oldenburg developed a program of representations of categories of ribbons in a cylinder. Our results show that quantum symmetric pairs provide a large class of examples for this program. 
\end{abstract}

\maketitle

\section{Introduction}
\subsection{Background}
  Let $\gfrak$ be a symmetrizable Kac-Moody algebra and $\theta:\gfrak\rightarrow \gfrak$ an involutive Lie algebra automorphism. Let $\kfrak=\{x\in \gfrak\,|\,\theta(x)=x\}$ denote the fixed Lie subalgebra. We call the pair of Lie algebras $(\gfrak,\kfrak)$ a symmetric pair. Assume that $\theta$ is of the second kind, which means that the standard Borel subalgebra $\bfrak^+$ of $\gfrak$ satisfies $\dim(\theta(\bfrak^+)\cap\bfrak^+)<\infty$.  In this case the universal enveloping algebra $U(\kfrak)$ has a quantum group analog $\coid=\coid(\theta)$ which is a right coideal subalgebra of the Drinfeld-Jimbo quantized enveloping algebra $\uqg$, see \cite{a-Letzter99a}, \cite{MSRI-Letzter}, \cite{a-Kolb14}. We call $(\uqg,\coid)$ a quantum symmetric pair.
  
The theory of quantum symmetric pairs was first developed by M.~Noumi, T.~Sugitani, and M.~Dijkhuizen for all classical Lie algebras in \cite{a-Noumi96}, \cite{a-NS95}, \cite{a-NDS97}, \cite{a-Dijk96}. The aim of this program was to perform harmonic analysis on quantum group analogs of compact symmetric spaces. This allowed an interpretation of Macdonald polynomials as quantum zonal spherical functions. Independently, G.~Letzter developed a comprehensive theory of quantum symmetric pairs for all semisimple $\gfrak$ in \cite{a-Letzter99a}, \cite{MSRI-Letzter}. Her approach uses the Drinfeld-Jimbo presentation of quantized enveloping algebras and hence avoids casework. Letzter's theory also aimed at applications in harmonic analysis for quantum group analogs of symmetric spaces \cite{a-Letzter04}, \cite{a-Letzter-memoirs}. The algebraic theory of quantum symmetric pairs was extended to the setting of Kac-Moody algebras in \cite{a-Kolb14}. 

Over the past two years it has emerged that quantum symmetric pairs play an important role in a much wider representation theoretic context. In a pioneering paper H.~Bao and W.~Wang proposed a program of canonical bases for quantum symmetric pairs \cite{a-BaoWang13p}. They performed their program for the symmetric pairs $(\slfrak_{2N},\mathfrak{s}(\glfrak_N\times \glfrak_N))$ and $(\slfrak_{2N+1},\mathfrak{s}(\glfrak_N\times \glfrak_{N+1}))$ and applied it to establish Kazhdan-Lusztig theory for the category $\cO$ of the ortho-symplectic Lie superalgebra $\mathfrak{osp}(2n{+}1\,|\,2m)$. Bao and Wang developed the theory for these two examples in astonishing similarity to Lusztig's exposition of quantized enveloping algebras in \cite{b-Lusztig94}. In a closely related program M.~Ehrig and C.~Stroppel showed that quantum symmetric pairs for $(\glfrak_{2N}, \glfrak_N\times\glfrak_N)$ and $(\glfrak_{2N+1}, \glfrak_N\times \glfrak_{N+1})$ appear via categorification using parabolic category $\cO$ of type $D$ \cite{a-EhrigStroppel13p}. The recent developments as well as the previously known results suggest that quantum symmetric pairs allow as deep a theory as quantized enveloping algebras themselves. It is reasonable to expect that most results about quantized enveloping algebras have analogs for quantum symmetric pairs. 

One of the fundamental properties of the quantized enveloping algebra $\uqg$ is the existence of a universal R-matrix which gives rise to solutions of the quantum Yang-Baxter equation for suitable representations of $\uqg$. The universal R-matrix is at the heart of the origins of quantum groups in the theory of quantum integrable systems \cite{inp-Drinfeld1}, \cite{a-Jimbo1} and of the applications of quantum groups to invariants of knots, braids, and ribbons  \cite{a-RT90}. Let $\kow:\uqg\rightarrow \uqg\ot \uqg$ denote the coproduct of $\uqg$ and let $\kow^{\mathrm{op}}$ denote the opposite coproduct obtained by flipping tensor factors. The universal R-matrix $R^U$ of $\uqg$ is an element in a completion $\sU^{(2)}_0$ of $\uqg\ot \uqg$, see Section \ref{sec:U-coproduct}. It has the following two defining properties:
\begin{enumerate}
  \item In $\sU^{(2)}_0$ the element $R^U$ satisfies the relation $\kow(u) R^U=R^U\kow^{\mathrm{op}}(u)$ for all $u\in \uqg$.
  \item The relations
    \begin{align*}
      (\kow\ot \id)(R^U)= R^U_{23} R^U_{13},\qquad(\id \ot \kow)(R^U)= R^U_{12} R^U_{13}
    \end{align*}
    hold. Here we use the usual leg notation for threefold tensor products. 
\end{enumerate}
The universal R-matrix gives rise to a family $\rh=(R_{M,N})$ of commutativity isomorphisms $\rh_{M,N}:M\ot N\rightarrow N\ot M$ for all category $\cO$ representations $M,N$ of $\uqg$. 
In our conventions one has $\rh_{M,N}=R^U\circ \flip_{M,N}$ where $\flip_{M,N}$ denotes the flip of tensor factors. The family $\rh$ can be considered as an element in an extension $\sU^{(2)}$ of the completion $\sU^{(2)}_0$ of $\uqg\ot \uqg$, see Section \ref{sec:Rmatrix} for details. In $\sU^{(2)}$ property (1) of $R^U$ can be rewritten as follows:
\begin{enumerate}
  \item[(1')] In $\sU^{(2)}$ the element $\rh$ commutes with $\kow(u)$ for all $u\in \uqg$.
\end{enumerate} 
By definition the family of commutativity isomorphisms $\rh=(\rh_{M,N})$ is natural in $M$ and $N$. The above relations mean that $\rh$ turns category $\cO$ for $\uqg$ into a braided tensor category. 

The analog of the quantum Yang-Baxter equation for quantum symmetric pairs is known as the boundary quantum Yang-Baxter equation or (quantum) reflection equation. It first appeared in I.~Cherednik's investigation of factorized scattering on the half line \cite{a-Cher84} and in E.~Sklyanin's investigation of quantum integrable models with non-periodic boundary conditions \cite{a-Skl88}, \cite{a-KuSkl92}. In \cite[6.1]{a-KuSkl92} an element providing solutions of the reflection equation in all representations was called a `universal K-matrix'. Explicit examples of universal K-matrices for $U_q(\slfrak_2)$ appeared in \cite[(3.31)]{a-CremGer92} and \cite[(2.20)]{a-KSS93}. 

A categorical framework for solutions of the reflection equation was proposed by T.~tom Dieck and R.~H{\"a}ring-Oldenburg under the name braided tensor categories with a cylinder twist \cite{a-tD98}, \cite{a-tDHO98}, \cite{a-HaOld01}. Their program provides an extension of the graphical calculus for braids and ribbons in $\C\times [0,1]$ as in \cite{a-RT90} to the setting of braids and ribbons in the cylinder $\C^\ast\times[0,1]$, see \cite{a-HaOld01}. It hence corresponds to an extension of the theory from the classical braid group of type $A_{N-1}$ to the braid group of type $B_N$. Tom Dieck and H{\"a}ring-Oldenburg called the analog of the universal R-matrix in this setting a universal cylinder twist. They determined a family of universal cylinder twists for $U_q(\slfrak_2)$ by direct calculation \cite[Theorem 8.4]{a-tDHO98}. This family essentially coincides with the universal K-matrix in \cite[(2.20)]{a-KSS93} where it was called a universal solution of the reflection equation. 
\subsection{Universal K-matrix for coideal subalgebras}
Special solutions of the reflection equation were essential ingredients in the initial construction of quantum symmetric pairs by Noumi, Sugitani, and Dijkhuizen \cite{a-Noumi96}, \cite{a-NS95}, \cite{a-NDS97}, \cite{a-Dijk96}. For this reason it is natural to expect that quantum symmetric pairs give rise to universal K-matrices. The fact that quantum symmetric pairs $\coid$ are coideal subalgebras of $\uqg$ moreover suggests to base the concept of a universal K-matrix on a coideal subalgebra of a braided (or quasitriangular) Hopf algebra. 

Recall that a subalgebra $B$ of $\uqg$ is called a right coideal subalgebra if
\begin{align*}
  \kow(B) \subset B\ot \uqg.
\end{align*}
In the present paper we introduce the notion of a universal K-matrix for a right coideal subalgebra $B$ of $\uqg$. A universal K-matrix for $B$ is an element $\cK$ in a suitable completion $\sU$ of $\uqg$ with the following properties:
\begin{enumerate}
  \item In $\sU$ the universal K-matrix $\cK$ commutes with all $b\in B$.
  \item The relation
    \begin{align}\label{eq:kowcK}
      \kow(\cK)= (\cK\ot 1)\cdot \rh \cdot (\cK\ot 1) \cdot \rh 
    \end{align}
    holds in the completion $\sU^{(2)}$ of $\uqg\ot \uqg$.
\end{enumerate}
See Definition \ref{def:U-cylinder-braided} for details. By definition of the completion $\sU$, a universal K-matrix is a family $\cK=(K_M)$ of linear maps $K_M:M\rightarrow M$ for all integrable $\uqg$-modules in category $\cO$. Moreover, this family is natural in $M$. The defining properties (1) and (2) of $\cK$ are direct analogs of the defining properties (1') and (2) of the universal R-matrix $R^U$. The fact that $\rh$ commutes with $\kow(\cK)$ immediately implies that $\cK$ satisfies the reflection equation
\begin{align*}
\rh \cdot (\cK\ot 1)\cdot \rh \cdot (\cK\ot 1) =(\cK\ot 1)\cdot \rh \cdot (\cK\ot 1) \cdot \rh 
\end{align*}
in $\sU^{(2)}$. By \eqref{eq:kowcK} and the naturality of $\cK$ a universal K-matrix for $B$ gives rise to the structure of a universal cylinder twist on the braided tensor category of integrable $\uqg$-modules in category $\cO$. Universal K-matrices, if they can be found, hence provide examples for the theory proposed by tom Dieck and H{\"a}ring-Oldenburg. The new ingredient in our definition is the coideal subalgebra $B$. We will see in this paper that $B$ plays a focal role in finding a universal K-matrix.

The notion of a universal K-matrix can be defined for any coideal subalgebra of a braided bialgebra $H$ with universal R-matrix $R^H\in H\ot H$. This works in complete analogy to the above definition for $B$ and $\uqg$, and it avoids completions, see Section \ref{sec:cylinder-braided} for details. Following the terminology of \cite{a-tD98}, \cite{a-tDHO98} we call a coideal subalgebra $B$ of $H$ cylinder-braided if it has a universal K-matrix.

A different notion of a universal K-matrix for a braided Hopf algebra $H$ was previously introduced by J.~Donin, P.~Kulish, and A.~Mudrov in \cite{a-DKM03}. Let $R^H_{21}\in H\ot H$ denote the element obtained from $R^H$ by flipping the tensor factors. Under some technical assumptions the universal K-matrix in \cite{a-DKM03} is just the element $R^H R^H_{21}\in H\ot H$. Coideal subalgebras only feature indirectly in this setting. We explain this in Section \ref{sec:characters}.

In a dual setting of coquasitriangular Hopf algebras the relations between the constructions in \cite{a-DKM03}, the notion of a universal cylinder twist \cite{a-tD98}, \cite{a-tDHO98}, and the theory of quantum symmetric pairs was already discussed by J.~Stokman and the second named author in \cite{a-KolbStok09}. In that paper universal K-matrices were found for quantum symmetric pairs corresponding to the symmetric pairs $(\slfrak_{2N},\mathfrak{s}(\glfrak_N\times \glfrak_N))$ and $(\slfrak_{2N+1},\mathfrak{s}(\glfrak_N\times \glfrak_{N+1}))$. However, a general construction was still outstanding.
\subsection{Main results}
The main result of the present paper is the construction of a universal K-matrix for every quantum symmetric pair coideal subalgebra $\coid$ of $\uqg$ for $\gfrak$ of finite type. This provides an analog of the universal R-matrix for quantum symmetric pairs. Moreover, it shows that important parts of Lusztig's book \cite[Chapters 4 \& 32]{b-Lusztig94} translate to the setting of quantum symmetric pairs. 

The construction in the present paper is significantly inspired by the examples $(\slfrak_{2N},\mathfrak{s}(\glfrak_N\times \glfrak_N))$ and $(\slfrak_{2N+1},\mathfrak{s}(\glfrak_N\times \glfrak_{N+1}))$ considered by Bao and Wang in \cite{a-BaoWang13p}. The papers \cite{a-BaoWang13p} and \cite{a-EhrigStroppel13p} both observed the existence of a bar involution for quantum symmetric pair coideal subalgebras $\coid$ in this special case. Bao and Wang then constructed an intertwiner $\Upsilon\in \sU$ between the new bar involution and Lusztig's bar involution. The element $\Upsilon$ is hence an analog of the quasi R-matrix in Lusztig's approach to quantum groups, see \cite[Theorem 4.1.2]{b-Lusztig94}. Similar to the construction of the commutativity isomorphisms in \cite[Chapter 32]{b-Lusztig94} Bao and Wang construct a $\coid$-module homomorphism $\EuScript{T}_M:M\rightarrow M$ for any finite-dimensional representation $M$ of $U_q(\slfrak_N)$. If $M$ is the vector representation they show that $\EuScript{T}_M$ satisfies the reflection equation and they establish Schur-Jimbo duality between the coideal subalgebra and a Hecke algebra of type $B_N$ acting on $V^{\ot N}$.

In the present paper we consider quantum symmetric pairs in full generality and formulate results in the Kac-Moody setting whenever possible. The existence of the bar involution 
\begin{align*}
  \barB:\coid\rightarrow \coid, \qquad x\mapsto \overline{x}^B
\end{align*}
for the quantum symmetric pair coideal subalgebra $\coid$ was already established in \cite{a-BalaKolb14p}. Following \cite[Section 2]{a-BaoWang13p} closely we now prove the existence of an intertwiner between the two bar involutions. More precisely, we show in Theorem \ref{thm:Xfrak} that there exists a nonzero element $\Xfrak\in \sU$ which satisfies the relation
\begin{align}\label{eq:X-cond}
   \overline{x}^B\, \Xfrak = \Xfrak\, \overline{x} \qquad \mbox{for all $x\in\coid$.}
\end{align}
We call the element $\Xfrak$ the quasi K-matrix for $\coid$. It corresponds to the intertwiner $\Upsilon$ in the setting of \cite{a-BaoWang13p}.  

Recall from \cite[Theorem 2.7]{a-Kolb14} that the involutive automorphism $\theta:\gfrak\rightarrow \gfrak$ is determined by a pair $(X,\tau)$ up to conjugation. Here $X$ is a subset of the set of nodes of the Dynkin diagram of $\gfrak$ and $\tau$ is a diagram automorphism. The Lie subalgebra $\gfrak_X\subset \gfrak$ corresponding to $X$ is required to be of finite type. Hence there exists a longest element $w_X$ in the parabolic subgroup $W_X$ of the Weyl group $W$. The Lusztig automorphism $T_{w_X}$ may be considered as an element in the completion $\sU$ of $\uqg$, see Section \ref{sec:completion}. We define
\begin{align}\label{eq:K'-def}
  \cK' = \Xfrak\, \xi \, T_{w_X}^{-1} \in \sU
\end{align}
where $\xi \in \sU$ denotes a suitably chosen element which acts on weight spaces by a scalar. The element $\cK'$ defines a linear isomorphism 
\begin{align}\label{eq:KM'}
  \cK'_M:M\rightarrow M
\end{align}
for every integrable $\uqg$-module $M$ in category $\cO$. In Theorem \ref{thm:Bc-hom} we show that $\cK'_M$ is a $\coid$-module homomorphism if one twists the $\coid$-module structure on both sides of \eqref{eq:KM'} appropriately. The element $\cK'$ exists in the general Kac-Moody case.

For $\gfrak$ of finite type there exists a longest element $w_0\in W$ and a corresponding family of Lusztig automorphisms $T_{w_0}=(T_{w_0,M})\in \sU$. In this case we define
\begin{align}\label{eq:K-def-intro}
  \cK =\Xfrak\, \xi\, T_{w_X}^{-1}\, T_{w_0}^{-1}\in \sU.
\end{align} 
For the symmetric pairs $(\slfrak_{2N},\mathfrak{s}(\glfrak_N\times \glfrak_N))$ and $(\slfrak_{2N+1},\mathfrak{s}(\glfrak_N\times \glfrak_{N+1}))$ the construction of $\cK$ coincides with the construction of the $\coid$-module homomorphisms $\EuScript{T}_M$ in \cite{a-BaoWang13p} up to conventions.
The longest element $w_0$ induces a diagram automorphism $\tau_0$ of $\gfrak$ and of $\uqg$. Any $\uqg$-module $M$ can be twisted by an algebra automorphism $\varphi:\uqg \rightarrow \uqg$ if we define $u\lact m=\varphi(u)m$ for all $u\in \uqg$, $m\in M$. We denote the resulting twisted module by $M^\varphi$. We show in Corollary \ref{cor:K} that the element $\cK$ defines a $\coid$-module isomorphism
\begin{align}\label{eq:KMMtt0}
  \cK_M: M\rightarrow M^{\tau \tau_0}
\end{align}
for all finite-dimensional $\uqg$-modules $M$. Alternatively, this can be written as
\begin{align*}
  \cK b =\tau_0(\tau(b)) \cK \qquad \mbox{for all $b\in \coid$.}
\end{align*}

The construction of the bar involution for $\coid$, the intertwiner $\Xfrak$, and the $\coid$-module homomorphism $\cK$ are three expected key steps in the wider program of canonical bases for quantum symmetric pairs proposed in \cite{a-BaoWang13p}. The existence of the bar involution was explicitly stated without proof and reference to the parameters in \cite[0.5]{a-BaoWang13p} and worked out in detail in \cite{a-BalaKolb14p}. Weiqiang Wang has informed us that he and Huanchen Bao have constructed $\Xfrak$ and $\cK'_M$ independently in the case $X=\emptyset$, see \cite{a-BaoWang15in-prep}.

In the final Section \ref{sec:coproductK} we address the crucial problem to determine the coproduct $\kow(\cK)$ in $\sU^{(2)}$. The main step to this end is to determine the coproduct of the quasi K-matrix $\Xfrak$ in Theorem \ref{thm:kowX}. Even for the symmetric pairs $(\slfrak_{2N},\mathfrak{s}(\glfrak_N\times \glfrak_N))$ and $(\slfrak_{2N+1},\mathfrak{s}(\glfrak_N\times \glfrak_{N+1}))$, this calculation goes beyond what is contained in \cite{a-BaoWang13p}. It turns out that if $\tau\tau_0=\id$ then the coproduct $\kow(\cK)$ is given by formula \eqref{eq:kowcK}. Hence, in this case $\cK$ is a universal K-matrix as defined above for the coideal subalgebra $\coid$. If $\tau\tau_0\neq \id$ then we obtain a slight generalization of the properties (1) and (2) of a universal K-matrix. Motivated by this observation we introduce the notion of a $\varphi$-universal K-matrix for $B$ if $\varphi$ is an automorphism of a braided bialgebra $H$ and $B$ is a right coideal subalgebra, see Section \ref{sec:cylinder-braided}. With this terminology it hence turns out in Theorem \ref{thm:deltaK} that in general $\cK$ is a $\tau\tau_0$-universal K-matrix for $\coid$. The fact that $\tau\tau_0$ may or may not be the identity provides another conceptual explanation for the occurrence of two distinct reflection equations in the Noumi-Sugitani-Dijkhuizen approach to quantum symmetric pairs. 
\subsection{Organization}
Sections \ref{sec:prelim}--\ref{sec:QSP} are of preparatory nature. In Section \ref{sec:prelim} we fix notation for Kac-Moody algebras and quantized enveloping algebras, mostly following \cite{b-Kac1}, \cite{b-Lusztig94}, and \cite{b-Jantzen96}. In Section \ref{sec:completion} we discuss the completion $\sU$ of $\uqg$ and the completion $\sU^{(2)}_0$ of $\uqg\ot \uqg$. In particular, we consider Lusztig's braid group action and the commutativity isomorphisms $\rh$ in this setting. 

Section \ref{sec:CylTw+RE} is a review of the notion of a braided tensor category with a cylinder twist as introduced by tom Dieck and H{\"a}ring-Oldenburg. We extend their original definition by a twist in Section \ref{sec:TwCylTw} to include all the examples obtained from quantum symmetric pairs later in the paper. The categorical definitions lead us in Section \ref{sec:cylinder-braided} to introduce the notion of a cylinder-braided coideal subalgebra of a braided bialgebra. By definition this is a coideal subalgebra which has a universal K-matrix. We carefully formulate the analog definition for coideal subalgebras of $\uqg$ to take into account the need for completions. Finally, in Section \ref{sec:characters} we recall the different definition of a universal K-matrix from \cite{a-DKM03} and indicate how it relates to cylinder braided coideal subalgebras as defined here. 

Section \ref{sec:QSP} is a brief summary of the construction  and properties of the quantum symmetric pair coideal subalgebras $\coid$ in the conventions of \cite{a-Kolb14}. In Section \ref{sec:bar-involution} we recall the existence of the bar involution for $\coid$ following \cite{a-BalaKolb14p}. The quantum symmetric pair coideal subalgebra $\coid$ depends on a choice of parameters, and the existence of the bar involution imposes additional restrictions. In Section \ref{sec:Assumptions} we summarize our setting, including all restrictions on the parameters $\bc, \bs$.

The main new results of the paper are contained in Sections \ref{sec:quasiK}--\ref{sec:coproductK}. In Section \ref{sec:quasiK} we prove the existence of the quasi K-matrix $\Xfrak$. The defining condition \eqref{eq:X-cond} gives rise to an overdetermined recursive formula for the weight components of $\Xfrak$. The main difficulty is to prove the existence of elements satisfying the recursion. To this end, we translate the inductive step into a more easily verifiable condition in Section \ref{sec:SysEq}. This condition is expressed solely in terms of the constituents of the generators of $\coid$, and it is verified in Section \ref{sec:constructXmu}. This allows us to prove the existence of $\Xfrak$ in Section \ref{sec:constructingX}. A similar argument is contained in \cite[2.4]{a-BaoWang13p} for the special examples $(\slfrak_{2N},\mathfrak{s}(\glfrak_N\times \glfrak_N))$ and $(\slfrak_{2N+1},\mathfrak{s}(\glfrak_N\times \glfrak_{N+1}))$. However, the explicit formulation of the conditions in Proposition \ref{prop:Xi} seems to be new.

In Section \ref{sec:K-construction} we consider the element $\cK'\in \sU$ defined by \eqref{eq:K'-def}. In Section \ref{sec:pseudoT} we define a twist of $\uqg$ which reduces to the Lusztig action $T_{w_0}$ if $\gfrak$ is of finite type. We also record an additional Assumption ($\tau_0$) on the parameters. In Section \ref{sec:generalK'} this assumption is used in the proof that $\cK'_M:M\rightarrow M$ is a $\coid$-module isomorphism of twisted $\coid$-modules. In the finite case this immediately implies that the element $\cK$ defined by \eqref{eq:K-def-intro} gives rise to an $\coid$-module isomorphism \eqref{eq:KMMtt0}. Up to a twist this verifies the first condition in the definition of a universal K-matrix for $\coid$.

The map $\xi$ involved in the definition of $\cK'$ is discussed in more detail in Section \ref{sec:xi-construction}. So far, the element $\xi$ was only required to satisfy a recursion which guarantees that $\cK'_M$ is a $\coid$-module homomorphism. In Section \ref{sec:choosing-xi} we choose $\xi$ explicitly and show that our choice satisfies the required recursion. In Section \ref{sec:xi-coproduct} we then determine the coproduct of this specific $\xi$ considered as an element in the completion $\sU$. Moreover, in Section \ref{sec:xi-adjoint} we discuss the action of $\xi$ on $\uqg$ by conjugation. This simplifies later calculations.

In Section \ref{sec:coproductK} we restrict to the finite case. We first perform some preliminary calculations with the quasi R-matrices of $\uqg$ and $U_q(\gfrak_X)$. This allows us in Section \ref{sec:deltaX} to determine the coproduct of the quasi K-matrix $\Xfrak$, see Theorem \ref{thm:kowX}. Combining the results from Sections \ref{sec:xi-construction} and \ref{sec:coproductK} we calculate the coproduct $\kow(\cK)$ and prove a $\tau\tau_0$-twisted version of Formula \eqref{eq:kowcK} in Section \ref{sec:deltaK}. This shows that $\cK$ is a $\tau\tau_0$-universal K-matrix in the sense of Definition \ref{def:U-cylinder-braided}.

\medskip

\noindent{\bf Acknowledgements:} The authors are grateful to Weiqiang Wang for comments and advice on referencing.

\section{Preliminaries on quantum groups}\label{sec:prelim}
In this section we fix notation and recall some standard results about quantum groups. We mostly follow the conventions in \cite{b-Lusztig94} and \cite{b-Jantzen96}.
\subsection{The root datum}\label{sec:RootDatum}
Let $I$ be a finite set and let $A=(a_{ij})_{i,j\in I}$ be a symmetrizable generalized Cartan matrix. By definition there exists a diagonal matrix $D=\mathrm{diag}(\epsilon_i\,|\,i\in I)$ with coprime entries $\epsilon_i\in \N$ such that the matrix $DA$ is symmetric. Let $(\mathfrak{h},\Pi,\Pi^\vee)$ be a minimal realization of $A$ as in \cite[1.1]{b-Kac1}. Here $\Pi=\{\alpha_i\,|\,i\in I\}$ and $\Pi^\vee=\{h_i\,|\,i\in I\}$ denote the set of simple roots and the set of simple coroots, respectively. We write $\mathfrak{g}=\mathfrak{g}(A)$ to denote the Kac-Moody Lie algebra corresponding to the realization $(\mathfrak{h},\Pi,\Pi^\vee)$ of $A$ as defined in \cite[1.3]{b-Kac1}.

Let $Q=\Z\Pi$  be the root lattice and define $Q^+=\mathbb{N}_0\Pi$. For $\lambda,\mu\in \hfrak^\ast$ we write $\lambda>\mu$ if $\lambda-\mu\in Q^+\setminus\{0\}$. For $\mu=\sum_i m_i \alpha_i\in Q^+$ let $\mathrm{ht}(\mu)=\sum_i m_i$ denote the height of $\mu$. For any $i\in I$ the simple reflection $\sigma_i\in \mathrm{GL}(\hfrak^\ast)$ is defined by
\begin{align*}
  \sigma_i(\alpha)= \alpha- \alpha(h_i)\alpha_i.
\end{align*}
The Weyl group $W$ is the subgroup of $\mathrm{GL}(\hfrak^\ast)$ generated by the simple reflections $\sigma_i$ for all $i\in I$. For simplicity set $r_A=|I|-\rank(A)$. Extend $\Pi^\vee$ to a basis $\Pi^\vee_{ext}=\Pi^\vee\cup\{d_s\,|\,s=1,\dots,r_A\}$ of $\hfrak$ and set $\Qvext=\Z\Pi^\vee_{ext}$. Assume additionally that $\alpha_i(d_s)\in \Z$ for all $i\in I$, $s=1,\dots,r_A$. By \cite[2.1]{b-Kac1} there exists a nondegenerate, symmetric, bilinear form $(\cdot,\cdot)$ on $\hfrak$ such that
\begin{align*}
  (h_i,h)=\alpha_i(h)/\epsilon_i \quad \forall h\in \hfrak, i\in I, \qquad (d_m,d_n)=0 \quad \forall n,m\in \{1,\dots,r_A\}.
\end{align*}
Hence, under the resulting identification of $\hfrak$ and $\hfrak^\ast$ we have $h_i=\alpha_i/\epsilon_i$. The induced bilinear form on $\hfrak^\ast$ is also denoted by the bracket $(\cdot,\cdot)$. It satisfies $(\alpha_i,\alpha_j)=\epsilon_i a_{ij}$ for all $i,j\in I$. Define the weight lattice by 
\begin{align*}
  P=\{\lambda\in \hfrak^\ast\,|\, \lambda(\Qvext)\subseteq \Z\}.
\end{align*}
\begin{rema}
  The abelian groups $Y=\Qvext$ and $X=P$ together with the embeddings $I\rightarrow Y$, $i\mapsto h_i$ and $I\rightarrow X$, $i\mapsto \alpha_i$ form an $X$-regular and $Y$-regular root datum in the sense of \cite[2.2]{b-Lusztig94}.
\end{rema}
Define $\beta_i\in \hfrak^\ast$ by $\beta_i(h)=(d_i,h)$, set $\Pi_{ext}=\Pi\cup\{\beta_i\,|\,i=1,\dots,r_A\}$, and let $Q_{ext}=\Z \Pi_{ext}$. Then
\begin{align*}
  P^\vee=\{h\in \hfrak\,|\, Q_{ext}(h)\subseteq \Z\}
\end{align*}
is the coweight lattice. Let $\varpi_i^\vee$ for $i\in I$ denote the basis vector of $P^\vee$ dual to $\alpha_i$. Let $B$ denote the $r_A\times |I|$-matrix with entries $\alpha_j(d_i)$. Define 
an $(r_A+|I|)\times (r_A+|I|)$ matrix by 
\begin{align*}
  A_{ext}=\left(\begin{array}{cc} A & D^{-1} B^t \\ B & 0\end{array} \right).
\end{align*}
By construction one has $\det(A_{ext})\neq 0$.
The pairing $(\cdot,\cdot)$ induces $\Q$-valued pairings on $P\times P$ and $P\times P^\vee$. The above conventions lead to the following result.
\begin{lem}\label{lem:detA}
  The pairing $(\cdot,\cdot)$ takes values in $\frac{1}{\det(A_{ext})}\Z$ on $P\times P$ and on $P\times P^\vee$. 
\end{lem}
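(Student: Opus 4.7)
The plan is to control both pairings through the inverse matrix $A_{ext}^{-1}$, all of whose entries lie in $\frac{1}{\det(A_{ext})}\Z$ by Cramer's rule, since $A_{ext}$ has integer entries (using the assumption $\alpha_i(d_s)\in\Z$). By definition $P$ is the $\Z$-dual of $\Qvext$, so it admits a $\Z$-basis $\{\omega_i,\zeta_s\}$ dual to $\Pi^\vee_{ext}=\{h_i,d_s\}$; analogously, $P^\vee$ admits a $\Z$-basis $\{\varpi_i^\vee,d_s'\}$ dual to $\Pi_{ext}=\{\alpha_i,\beta_s\}$.

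First I would express $\{\varpi_i^\vee,d_s'\}$ in the basis $\Pi^\vee_{ext}$ of $\hfrak$. Unfolding the defining conditions of the dual basis, and using $\beta_t(h)=(d_t,h)$ together with $\alpha_k(h_j)=a_{jk}$, $\alpha_k(d_t)=B_{tk}$, $(d_t,h_j)=\alpha_j(d_t)/\epsilon_j=B_{tj}/\epsilon_j$ and $(d_t,d_s)=0$, one checks that the transition matrix $C$ satisfies $C\, A_{ext}=\mathrm{Id}$, so $C=A_{ext}^{-1}$. Since the pairing on $P\times P^\vee$ is simply the evaluation pairing $\hfrak^\ast\times\hfrak\to\Q$, its values $\omega_i(\varpi_j^\vee)$, $\omega_i(d_t')$, $\zeta_s(\varpi_j^\vee)$ and $\zeta_s(d_t')$ are exactly the entries of $A_{ext}^{-1}$, and the claim for $P\times P^\vee$ follows by $\Z$-bilinearity.

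For the pairing on $P\times P$ I would invoke the standard fact that if $G$ is the Gram matrix of $(\cdot,\cdot)$ on $\hfrak$ in the basis $\Pi^\vee_{ext}$, then the Gram matrix of the induced form on $\hfrak^\ast$ in the dual basis $\{\omega_i,\zeta_s\}$ is $G^{-1}$. A short computation from $(h_i,h_j)=a_{ji}/\epsilon_i$, $(h_i,d_s)=B_{si}/\epsilon_i$ and $(d_s,d_t)=0$, using the symmetrizability identity $\epsilon_i a_{ij}=\epsilon_j a_{ji}$, yields
\begin{equation*}
  G\cdot\begin{pmatrix}D & 0\\ 0 & \mathrm{Id}\end{pmatrix} = A_{ext},\qquad\text{hence}\qquad G^{-1}=\begin{pmatrix}D & 0\\ 0 & \mathrm{Id}\end{pmatrix}A_{ext}^{-1}.
\end{equation*}
Since $D$ has integer entries, $G^{-1}$ has all entries in $\frac{1}{\det(A_{ext})}\Z$, proving the claim for $P\times P$. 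The main care required is bookkeeping of transpose conventions and the symmetrizability identity at the step identifying $G\cdot\mathrm{diag}(D,\mathrm{Id})$ with $A_{ext}$; no substantive obstacle is expected.
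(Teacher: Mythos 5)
The paper states this lemma without proof, so I am judging your argument on its own terms. Your two reduction steps are correct and are the natural route: unfolding the duality conditions does give $C\,A_{ext}=\mathrm{Id}$, so the matrix of evaluation pairings between the basis of $P$ dual to $\Pi^\vee_{ext}$ and the basis of $P^\vee$ dual to $\Pi_{ext}$ is exactly $A_{ext}^{-1}$; and the Gram matrix $G$ of $(\cdot,\cdot)$ on $\hfrak$ in the basis $\Pi^\vee_{ext}$ does satisfy $G\cdot\mathrm{diag}(D,\mathrm{Id})=A_{ext}$, whence $G^{-1}=\mathrm{diag}(D,\mathrm{Id})A_{ext}^{-1}$ is the Gram matrix on $P$. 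The gap is your very first claim, that $A_{ext}$ has integer entries. The hypothesis $\alpha_i(d_s)\in\Z$ makes the block $B$ integral, but the upper right block of $A_{ext}$ is $D^{-1}B^t$, whose entries are $\alpha_i(d_s)/\epsilon_i=(h_i,d_s)$, and nothing stated forces $\epsilon_i$ to divide $\alpha_i(d_s)$. Hence Cramer's rule does not directly yield that $\det(A_{ext})A_{ext}^{-1}$ is an integer matrix; since the whole content of the lemma is the control of denominators, this is precisely the step that cannot be waved through. (In finite type $r_A=0$, $A_{ext}=A$ is integral and your argument is complete.)

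Moreover the gap is not merely cosmetic: without some integrality condition on $D^{-1}B^t$ the conclusion itself can fail, so no argument can avoid this input. Take $A=\left(\begin{smallmatrix}2&-4\\-1&2\end{smallmatrix}\right)$ (type $A_2^{(2)}$), so $D=\mathrm{diag}(1,4)$ and $r_A=1$, and choose $d=d_1$ with $\alpha_1(d)=0$, $\alpha_2(d)=1$; this satisfies $\alpha_i(d)\in\Z$ and $\{h_1,h_2,d\}$ is a basis of $\hfrak$. Then
\begin{equation*}
  A_{ext}=\begin{pmatrix}2&-4&0\\-1&2&\tfrac14\\0&1&0\end{pmatrix},\qquad \det(A_{ext})=-\tfrac12,
\end{equation*}
while the element $\omega_1\in P$ dual to $h_1$ satisfies $(\omega_1,\omega_1)=\tfrac12\notin 2\Z=\tfrac{1}{\det(A_{ext})}\Z$. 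So to make your proof work you must add (or extract from the intended reading of the conventions) the divisibility $\epsilon_i\mid\alpha_i(d_s)$ for all $i,s$, equivalently $(h_i,d_s)\in\Z$, equivalently that $A_{ext}$ is an integer matrix — a genuine condition on the choice of the elements $d_s$ beyond $\alpha_i(d_s)\in\Z$. With that hypothesis stated explicitly, the remainder of your argument (Cramer's rule, the identification of the $P\times P^\vee$ pairing matrix with $A_{ext}^{-1}$, and $G^{-1}=\mathrm{diag}(D,\mathrm{Id})A_{ext}^{-1}$ for $P\times P$) is correct and complete.
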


  \subsection{Quantized enveloping algebras}\label{sec:QGps}
With the above notations we are ready to introduce the quantized enveloping algebra $\uqg$. Let $d\in \N$ be the smallest positive integer such that $\frac{d}{\det(A_{ext})}\in \Z$. Let $q^{1/d}$ be an indeterminate and let $\mathbb{K}$ a field of characteristic zero. We will work with the field $\mathbb{K}(q^{1/d})$ of rational functions in $q^{1/d}$ with coefficients in $\mathbb{K}$. 
\begin{rema}\label{rem:3reasons}
  The choice of ground field is dictated by two reasons. Firstly, by Lemma \ref{lem:detA} it makes sense to consider $q^{(\lambda,\mu)}$  as an element of $\field(q^{1/d})$ for any weights $\lambda, \mu\in P$. This will allow us to define the commutativity isomorphism $\rh$, see Example \ref{eg:kappa-def} and formula \eqref{eq:Rh-def}. Secondly, in the construction of the function $\xi$ in Section \ref{sec:xi-construction} we will require factors of the form $q^{\lambda(\varpi^\vee_i)}$ for $\lambda\in P$ and $i\in I$, see formula \eqref{eq:xi-def}. Again, Lemma \ref{lem:detA} shows that such factors lie in $\field(q^{1/d})$.
\end{rema}
Following \cite[3.1.1]{b-Lusztig94} the quantized enveloping algebra $\uqg$ is the associative $\mathbb{K}(q^{1/d})$-algebra generated by elements $E_i$, $F_i$, $K_h$ for all $i\in I$ and $h\in \Qvext$ satisfying the following defining relations:
\begin{enumerate}
  \item $K_0=1$ and $K_h K_{h'}=K_{h+h'}$ for all $h, h'\in \Qvext$.
  \item $K_h E_i= q^{\alpha_i(h)} E_i K_h$ for all $i\in I$, $h\in \Qvext$.
  \item $K_h F_i= q^{-\alpha_i(h)} F_i K_h$ for all $i\in I$, $h\in \Qvext$.
  \item $E_i F_i - F_i E_i=\delta_{ij} \frac{K_i-K_i^{-1}}{q_i-q_i^{-1}}$ for all $i\in I$ where $q_i=q^{\epsilon_i}$ and $K_i=K_{\epsilon_i h_i}$.
  \item \label{q-Serre} the quantum Serre relations given in \cite[3.1.1.(e)]{b-Lusztig94}. 
\end{enumerate}
We will use the notation $q_i=q^{\epsilon_i}$ and $K_i=K_{\epsilon_i h_i}$ all through this text. Moreover, for $\mu=\sum_{i\in I}n_i\alpha_i\in Q$ we will use the notation
\begin{align}\label{eq:Kmu}
  K_\mu=\prod_{i\in I} K_i^{n_i}.
\end{align}
We make the quantum-Serre relations (\ref{q-Serre}) more explicit. Let $\left[\begin{matrix}1-a_{ij}\\n\end{matrix}\right]_{q_i}$ denote the $q_i$-binomial coefficient defined in \cite[1.3.3]{b-Lusztig94}. For any $i,j\in I$ define a non-commutative polynomial $S_{ij}$ in two variables by
\begin{align}\label{eq:Fij-def}
  S_{ij}(x,y)=\sum_{n=0}^{1-a_{ij}}(-1)^n
  \left[\begin{matrix}1-a_{ij}\\n\end{matrix}\right]_{q_i}
  x^{1-a_{ij}-n}yx^n.
\end{align}
By \cite[33.1.5]{b-Lusztig94} the quantum Serre relations can be written in the form
\begin{align}\label{eq:q-Serre}
  S_{ij}(E_i,E_j)=S_{ij}(F_i,F_j)=0\qquad \mbox{for all $i,j\in I$}.
\end{align}
The algebra $\uqg$ is a Hopf algebra with coproduct $\kow$, counit $\vep$, and antipode $S$ given by
\begin{align}
  \kow(E_i)&=E_i \ot 1 + K_i\ot E_i,& \vep(E_i)&=0, & S(E_i)&=-K_i^{-1} E_i,\label{eq:E-copr}\\
  \kow(F_i)&=F_i\ot K_i^{-1} + 1 \ot F_i,&\vep(F_i)&=0, & S(F_i)&=-F_i K_i,\label{eq:F-copr}\\
  \kow(K_h)&=K_h \ot K_h,& \vep(K_h)&=1, & S(K_h)&=K_{-h}\label{eq:K-copr}
\end{align}
for all $i\in I$, $h\in \Qvext$. We denote by $\uqgp$ the Hopf subalgebra of $\uqg$ generated by the elements $E_i, F_i$, and $K_i^{\pm 1}$ for all $i\in I$.  Moreover, for any $i\in I$ let $U_{q_i}(\mathfrak{sl}_2)_i$ be the subalgebra of $\uqg$ generated by $E_i, F_i, K_i$ and $K_i^{-1}$. The Hopf algebra $U_{q_i}(\mathfrak{sl}_2)_i$ is isomorphic to $U_{q_i}(\mathfrak{sl}_2)$ up to the choice of the ground field.

As usual we write $U^+$, $U^-$, and $U^0$ to denote the $\field(q^{1/d})$-subalgebras of $\uqg$ generated by $\{E_i\,|\,i\in I\}$, $\{F_i\,|\,i\in I\}$, and $\{K_h\,|\,h\in \Qvext\}$, respectively. We also use the notation $U^{\ge}=U^+U^0$ and $U^\le=U^-U^0$ for the positive and negative Borel part of $\uqg$. For any $U^0$-module $M$ and any $\lambda\in P$ let
\begin{align*}
  M_{\lambda}=\{m\in M | K_h\lact m=q^{\lambda(h)}m \,\, \textrm{ for all } h\in \Qvext \}
\end{align*}  
denote the corresponding weight space. We can apply this notation in particular to $U^+$, $U^-$, and $\uqg$ which are $U^0$-modules with respect to the left adjoint action. We obtain algebra gradings
\begin{align}\label{eq:U-grading}
  U^+=\bigoplus_{\mu \in Q^+}U^+_{\mu}, \quad U^-=\bigoplus_{\mu \in Q^+}U^-_{-\mu}, \quad \uqg=\bigoplus_{\mu\in Q} \uqg_\mu. 
\end{align}
\subsection{The bilinear pairing $\left<\cdot,\cdot \right>$}
Let $k$ be any field, let $A$ and $B$ be $k$-algebras, and let $\langle\cdot,\cdot \rangle:A\times B \rightarrow k$ be a bilinear pairing. Then $\langle\cdot,\cdot\rangle$ can be extended to $A^{\otimes n} \times B^{\otimes n}$ by setting
\begin{align*}
  \langle\otimes_i a_i,\otimes_i b_i \rangle=\prod_{i=1}^n\langle a_i ,b_i\rangle.
\end{align*}
In the following we will use this convention for $k=\field(q^{1/d})$, $A=U^\le$, $B=U^\ge$,  and $n=2$ and $3$ without further remark. 

There exists a unique $\mathbb{K}(q^{1/d})$-bilinear pairing 
\begin{align}\label{eq:form}
  \left<\cdot,\cdot \right>:U^{\le}\times U^{\ge}\to \mathbb{K}(q^{1/d})
\end{align}
such that for all $x,x' \in U^{\ge}$, $y,y' \in U^{\le}$, $g,h\in \Qvext$, and $i,j\in I$ the following relations hold
\begin{align} 
\left<y, xx' \right>&=\left<\Delta(y), x'\otimes x \right>, &  \left<yy', x \right>&=\left<y\otimes y', \Delta(x)\right>,  \label{eq:formdef} \\
\left<K_g,K_h \right>&=q^{-(g,h)}, &  \left<F_i, E_j\right>&=\delta_{ij} \frac{-1}{q_i-q_i^{-1}},  \\
\left<K_h,E_i\right>&=0,  &  \left<F_i, K_h\right>&=0.  \label{eq:formdef2}
\end{align}
Here we follow the conventions of \cite[6.12]{b-Jantzen96} in the finite case. In the Kac-Moody case the existence of the pairing $\langle\cdot,\cdot\rangle$ follows from the results in \cite[Chapter 1]{b-Lusztig94}. Relations \eqref{eq:formdef}--\eqref{eq:formdef2} imply that for all $x\in U^+, y\in U^-$, and $g,h\in \Qvext$ one has
\begin{align} 
\left<yK_g,xK_h \right>&=q^{-(g,h)}\left<y ,x\right>. \label{eq:formKout} 
\end{align}
The pairing $\langle\cdot,\cdot\rangle$ respects weights in the following sense. For $\mu,\nu \in Q^+$ with $\mu\neq \nu$ the restriction of the pairing to $U^-_{-\nu}\times U^+_\mu$ vanishes identically. On the other hand, the restriction of the paring to $U^-_{-\mu}\times U^+_\mu$ is nondegenerate for all $\mu\in Q^+$. The nondegeneracy of this restriction implies the following lemma, which we will need in the proof of Theorem \ref{thm:kowX}.
\begin{lem}\label{lem:yzXX'}
  Let $X,X' \in \prod_{\mu\in Q^+} U^+ K_\mu\ot U^+_\mu$. If 
  \begin{align*}
    \left<y\ot z, X\right> = \left<y\ot z,X'\right> \qquad \mbox{for all $y,z\in U^-$}
  \end{align*} 
  then $X=X'$.
\end{lem}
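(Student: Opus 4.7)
The plan is to reduce to weight components and then invoke the nondegeneracy of the pairing on each finite-dimensional pair of weight spaces. Setting $Y = X - X'$, we need to show that if $\langle y \otimes z, Y\rangle = 0$ for all $y, z \in U^-$, then $Y = 0$.

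First I would decompose $Y$ more finely. Since $U^+ = \bigoplus_{\nu \in Q^+} U^+_\nu$, any element of $U^+ K_\mu \otimes U^+_\mu$ splits as a sum over $\nu$ of pieces in $U^+_\nu K_\mu \otimes U^+_\mu$. Writing $Y = \sum_{\mu,\nu \in Q^+} Y_{\mu,\nu}$ with $Y_{\mu,\nu} \in U^+_\nu K_\mu \otimes U^+_\mu$, it suffices to show $Y_{\mu,\nu} = 0$ for all $\mu,\nu$. By linearity, we may also restrict attention to homogeneous $y \in U^-_{-\alpha}$ and $z \in U^-_{-\beta}$.

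Next I would isolate which components of $Y$ can contribute to $\langle y \otimes z, Y\rangle$ for such homogeneous $y,z$. By the extension convention, $\langle y \otimes z, a K_\mu \otimes b\rangle = \langle y, a K_\mu\rangle\, \langle z, b\rangle$ whenever $a K_\mu \otimes b \in U^+_\nu K_\mu \otimes U^+_\mu$. Using \eqref{eq:formKout} with $g = 0$ gives $\langle y, a K_\mu\rangle = \langle y, a\rangle$, and the weight-respecting property of the pairing then forces $\alpha = \nu$ for non-vanishing; similarly $\langle z, b\rangle = 0$ unless $\beta = \mu$. Hence
\begin{align*}
  \langle y \otimes z, Y\rangle \;=\; \langle y \otimes z, Y_{\beta,\alpha}\rangle
\end{align*}
for all $y \in U^-_{-\alpha}$, $z \in U^-_{-\beta}$.

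Finally I would invoke nondegeneracy. Write $Y_{\beta,\alpha} = \sum_i a_i K_\beta \otimes b_i$ with $a_i \in U^+_\alpha$ and $b_i \in U^+_\beta$ (a finite sum, since these weight spaces are finite-dimensional). Then
\begin{align*}
  0 \;=\; \langle y \otimes z, Y_{\beta,\alpha}\rangle \;=\; \sum_i \langle y, a_i\rangle\, \langle z, b_i\rangle
\end{align*}
for all $y \in U^-_{-\alpha}$, $z \in U^-_{-\beta}$. Since $\langle\cdot,\cdot\rangle$ is nondegenerate on $U^-_{-\alpha} \times U^+_\alpha$ and on $U^-_{-\beta} \times U^+_\beta$, the element $\sum_i a_i \otimes b_i \in U^+_\alpha \otimes U^+_\beta$ must vanish, whence $Y_{\beta,\alpha} = 0$. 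As $\alpha,\beta$ were arbitrary, $Y = 0$. There is no real obstacle here; the only subtlety is correctly tracking how the $K_\mu$ factor interacts with the pairing via \eqref{eq:formKout} so that the weight-respecting property applies cleanly.
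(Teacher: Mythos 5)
Your proof is correct and follows essentially the same route as the paper: reduce to showing $X-X'=0$, use weight-homogeneous $y,z$ and the weight-respecting property to isolate a single component, strip the $K_\mu$ factor via \eqref{eq:formKout}, and conclude by nondegeneracy of the pairing on the (finite-dimensional) weight spaces. Your version merely makes explicit the additional decomposition of the first tensor factor by weight, which the paper leaves implicit.
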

\begin{proof}
We may assume that $X'=0$. Write $X=\sum_{\mu\in Q^+}X_\mu$, with 
\begin{align*}
  X_\mu=\sum_i X_{\mu,i}^{(1)}K_\mu\otimes X_{\mu,i}^{(2)}\in U^+ K_\mu\ot U^+_\mu.
\end{align*}
Consider $\tilde{X}_\mu=\sum_i X_{\mu,i}^{(1)}\otimes X_{\mu,i}^{(2)}\in U^+ \ot U^+_\mu$. For any $y\in U^-$, $\mu \in Q^+$, and $z\in U^-_\mu$ we then have
\begin{align*}
  0=\left<y\ot z, X \right>=\left<y\ot z, X_\mu \right>\stackrel{\eqref{eq:formKout}}{=}\left<y\ot z, X_\mu' \right>.
\end{align*}
By the nondegeneracy of the pairing on $U^+_\mu\times U^-_\mu$ it follows that $ X_\mu'=0$. Consequently $ X_\mu=0$ for all $\mu\in Q^+$, and hence $X=0$ as claimed. 
\end{proof}

\subsection{Lusztig's skew derivations $r_i$ and ${}_ir$}\label{sec:defri}
Let $\free$ be the free associative $\field(q^{1/d})$-algebra generated by elements $\ccf_i$ for all $i\in I$. The algebra $\free$ is a $U^0$-module algebra with $K_h \lact{\ccf}_i= q^{\alpha_i(h)} \ccf_i$. As in \eqref{eq:U-grading} one obtains a $Q^+$-grading
\begin{align*}
  \free=\bigoplus_{\mu\in Q^+} \free_\mu.
\end{align*}
The natural projection $\free\rightarrow U^+$, $\ccf_i\mapsto E_i$ respects the $Q^+$-grading.
There exist uniquely determined $\field(q^{1/d})$-linear maps $\ir, \ri:\free\rightarrow \free$ such that 
\begin{align}
r_i(\ccf_j)= \delta_{ij}, \qquad & r_i(xy)=q^{(\alpha_i,\nu)}r_i(x)y+xr_i(y), \label{def:ri} \\
{}_ir(\ccf_j)= \delta_{ij}, \qquad & {}_ir(xy)={}_ir(x)y+q^{(\alpha_i,\mu)}x\, {}_ir(y) \label{def:ir} 
\end{align}
for any $x\in \free_\mu$ and $y\in \free_\nu$. The above equations imply in particular that $\ir(1)=0=\ri(1)$. By \cite[1.2.13]{b-Lusztig94} the maps $\ir$ and $\ri$ factor over $U^+$, that is there exist linear maps $\ir, \ri:U^+\rightarrow U^+$, denoted by the same symbols, which satisfy
\eqref{def:ri} and \eqref{def:ir} for all $x\in U^+_\mu$, $y\in U^+_\nu$ and with $\ccf_j$ replaced by $E_j$. The maps $\ri$ and $\ir$ on $U^+$ satisfy the following three properties, each of which is equivalent to the definition given above.

\noindent {\bf (1)} For all $x\in U^+$ and all $i\in I$ one has
     \begin{equation}
        [x,F_i]=\frac{1}{(q_i-q_i^{-1})}\left(r_i(x)K_i -K_i^{-1}\, {}_ir(x) \right), \label{eq:riCommute}
     \end{equation}
     see \cite[Proposition 3.1.6]{b-Lusztig94}.
     
\noindent {\bf (2)} For all $x\in U^+_\mu$ one has 
\begin{align}
\Delta(x)&=x\otimes 1+ \sum_{i}r_i(x)K_i\otimes E_i + \mathrm{(rest)_1}, \label{eq:riDelta}\\
\Delta(x)&=K_\mu \otimes x+ \sum_{i}E_i K_{\mu-\alpha_i}\otimes {}_ir(x) + \mathrm{(rest)_2} \notag
\end{align}
 where $\mathrm{(rest)_1}\in \sum_{ \alpha \notin \Pi \cup \{0\}} U^+_{\mu-\alpha}K_\alpha\otimes U^+_\alpha$, and $\mathrm{(rest)_2}\in \sum_{ \alpha \notin \Pi \cup \{0\}}  U^+_{\alpha}K_{\mu-\alpha}\otimes U^+_{\mu-\alpha}$, see \cite[Section 6.14]{b-Jantzen96}.

\noindent{\bf (3)} For all $x\in U^+$, $y\in U^-$, and $i\in I$ one has
    \begin{equation} \label{eq:form-ri}
      \left<F_iy,x\right>=\left<F_i,E_i\right>\left<y,{}_ir(x)\right>,  \qquad
			\left<yF_i,x\right> =\left<F_i,E_i\right>\left<y,r_i(x)\right>
		\end{equation}
see \cite[1.2.13]{b-Lusztig94}

Property (3), and the original definition of $\ri$ and $\ir$ as skew derivations, are useful in inductive arguments. Properties (1) and (2), on the other hand, carry information about the algebra and the coalgebra structure of $\uqg$, respectively.

Property (3) above and the nondegeneracy of the pairing $\langle\cdot,\cdot\rangle$ imply that for any $x\in U^+_\mu$ with $\mu\in Q^+\setminus \{0\}$ one has 
\begin{equation}
x=0 \quad \Leftrightarrow \quad r_i(x)=0 \textrm{ for all } i\in I \quad \Leftrightarrow \quad {}_ir(x)=0 \textrm{ for all } i\in I, \label{eq:allrizero}
\end{equation}
see also \cite[Lemma 1.2.15]{b-Lusztig94}. Moreover, property (2) and the coassociativity of the coproduct imply that for any $i,j\in I$ one has
\begin{equation}r_i\circ {}_jr={}_jr \circ  r_i, \label{eq:rijr}\end{equation}
see \cite[Lemma 10.1]{b-Jantzen96}. Note that this includes the case $i=j$.

Similarly to the situation for $U^+$, the maps $\ri, \ir:\free\rightarrow \free$ also factor over the canonical projection $\free \rightarrow U^-$, $\ccf_i\rightarrow F_i$ which maps $\free_\mu$ to $U^-_{-\mu}$ for all $\mu\in Q^+$. The maps $\ri, \ir: U^- \rightarrow U^-$ satisfy \eqref{def:ri} and \eqref{def:ir} for all $x\in U^-_{-\mu}$, $y\in U^-_{-\nu}$ with $\ccf_j$ replaced by $F_j$. 
Moreover, the maps $\ri, \ir: U^- \rightarrow U^-$ can be equivalently described by analogs of the properties (1)-(3) above. For example, in analogy to (3) one has
\begin{equation}\label{eq:form-riE}
  		\left<y,E_ix\right>=\left<F_i,E_i\right>\left<{}_ir(y),x\right>,\qquad
			\left<y,xE_i\right>=\left<F_i,E_i\right>\left<r_i(y),x\right>
\end{equation}
for all $x\in U^-$, $y\in U^-$, and $i\in I$.

As in \cite[3.1.3]{b-Lusztig94} let $\sigma:\uqg\rightarrow \uqg$ denote the $\field(q^{1/d})$-algebra antiautomorphism determined by 
\begin{align}
 \sigma(E_i)&=E_i, & \sigma(F_i)&=F_i,& \sigma(K_h)&=K_{-h} \qquad \mbox{for all $i\in I$, $h\in \Qvext$}. \label{def:sigma}
\end{align}
The map $\sigma$ intertwines the skew derivations $r_i$ and ${}_i r$ as follows
\begin{align}
  \sigma \circ r_i = {}_i r \circ \sigma\qquad \mbox{for all $i\in I$}. \label{eq:sigmari}
\end{align}
Recall that the bar involution on $\uqg$ is the $\mathbb{K}$-algebra automorphism
\begin{align*}
  \overline{\phantom{bl}}:\uqg\rightarrow \uqg,\quad x\mapsto \overline{x}
\end{align*}
defined by
\begin{align}
  \overline{q^{1/d}}&=q^{-1/d}, & \overline{E_i}&=E_i, & \overline{F_i}&=F_i,& \overline{K_h}&=K_{-h}\label{eq:defbarUqg}
\end{align}
for all $i\in I$, $h\in Q^\vee_{ext}$.
The bar involution on $\uqg$ also intertwines the skew derivations $\ri$ and $\ir$ in the sense that
\begin{equation}
  {}_ir(\overline{x})=q^{(\alpha_i,\mu-\alpha_i)}\overline{r_i(x)}\qquad \mbox{for all $x\in U^+_\mu$, $\mu\in Q^+$,}  \label{eq:ribar}
\end{equation}
see \cite[Lemma 1.2.14]{b-Lusztig94}.
\section{The completion $\mathscr{U}$ of $\uqg$}\label{sec:completion}
It is natural to consider completions of the infinite dimensional algebra $\uqg$ and related algebras. The quasi R-matrix for $\uqg$, for example, lies in a completion of $U^-\otimes U^+$, and the universal R-matrix lies in a completion of $U^\le\ot U^\ge$, see Section \ref{sec:Rmatrix}. Similarly, the universal K-matrix we construct in this paper lies in a completion $\scrU$ of $\uqg$. This completion is commonly used in the literature, see for example \cite[1.3]{a-Saito94}. Here, for the convenience of the reader, we recall the construction and properties of the completion $\scrU$ in quite some detail. This allows us to introduce further concepts, such as the Lusztig automorphisms, as elements of $\scrU$. It also provides a more conceptual view on the quasi R-matrix and the commutativity isomorphisms.  
\subsection{The algebra $\mathscr{U}$}
Let $\cO_{int}$ denote the category of integrable $\uqg$-modules in category $\cO$.
Recall that category $\cO$ consists of all finitely generated $\uqg$-modules $M$ which decompose into weight spaces $M=\oplus_{\lambda\in P} M_\lambda$ and on which the action of $U^+$ is locally finite. Objects in $\cO_{int}$ are additionally locally finite with respect to the action of $\uqislzi$ for all $i\in I$. Simple objects in $\cO_{int}$ are irreducible highest weight modules with dominant integral highest weight \cite[Corollary 6.2.3]{b-Lusztig94}. If $\gfrak$ is of finite type then $\cO_{int}$ is the category of finite dimensional type 1 representations. 

Let $\cV$ be the category of vector spaces over $\mathbb{K}(q^{1/d})$. Both $\cV$ and $\mathcal{O}_{int}$  are tensor categories, and the forgetful functor
\begin{align*}
 \For:\mathcal{O}_{int} \rightarrow \cV
\end{align*}
is a tensor functor. Let $\mathscr{U}=\End(\For)$ be the set of natural transformations from $\For$ to itself. The category $\mathcal{O}_{int}$ is equivalent to a small category and hence $\mathscr{U}$ is indeed a set. More explicitly, elements of $\sU$ are families $(\varphi_M)_{M\in Ob(\mathcal{O}_{int})}$ of vector space endomorphisms $\varphi_M:\For(M)\rightarrow \For(M)$ such that the diagram
\begin{align*}
\xymatrix{
  \For(M) \ar[r]^{\varphi_M} \ar[d]^{\For(\psi)}& \For(M)\ar[d]^{\For(\psi)}\\
  \For (N) \ar[r]^{\varphi_N}& \For (N)
}
\end{align*}
commutes for any $\uqg$-module homomorphism $\psi:M\rightarrow N$. Natural transformations of $\For$ can be added and multiplied by a scalar, both operations coming from the linear structure on vector spaces. Composition of natural transformations gives a multiplication on $\sU$ which turns $\sU$ into a $\mathbb{K}(q^{1/d})$-algebra. 
\begin{eg}
The action of $\uqg$ on objects of $\mathcal{O}_{int}$ gives an algebra homomorphism $\uqg \rightarrow \sU$ which is injective, see \cite[Proposition 3.5.4]{b-Lusztig94} and \cite[5.11]{b-Jantzen96}. We always consider $\uqg$ a subalgebra of $\sU$.
\end{eg}
\begin{eg}
Let $\widehat{U^+}=\prod_{\mu \in Q^+}U^+_\mu$ and let $(X_\mu)_{\mu \in Q^+}\in \widehat{U^+}$. 
Let $M\in Ob(\mathcal{O}_{int})$ and $m\in M$. As the action of $U^+$ on $M$ is locally finite there exist only finitely many $\mu\in Q^+$ such that $X_\mu m\neq 0$. Hence the expression 
\begin{align}\label{eq:X-mu-m}
  \sum_{\mu\in Q^+}X_\mu m
\end{align}
is well defined. In this way the element $(X_\mu)_{\mu \in Q^+}\in \widehat{U^+}$ defines an endomorphism of $\For$, and we may thus consider $\widehat{U^+}$ as a subalgebra of $\sU$. We sometimes write elements of $\widehat{U^+}$ additively as $X=\sum_{\mu\in Q^+}X_\mu$. In view of \eqref{eq:X-mu-m}  this is compatible with the inclusions $U^+\subseteq \widehat{U^+} \subseteq \sU$.
\end{eg}
\begin{eg}\label{eg:xiinU}
  Let $\xi:P\rightarrow \mathbb{K}(q^{1/d})$ be any map. For $M\in Ob(\mathcal{O}_{int})$ define a linear map $\xi_M:M\rightarrow M$ by $\xi_M(m)=\xi(\lambda) m$ for all $m\in M_\lambda$. Then the family $(\xi_M)_{M\in Ob(\mathcal{O}_{int})}$ is an element in $\sU$. By slight abuse of notation we denote this element by $\xi$ as well.
\end{eg}
Lusztig showed that $\sU$ contains a homomorphic image of the braid group corresponding to $W$. 
For any $M\in Ob(\mathcal{O}_{int})$ and any $i\in I$ the Lusztig automorphism ${T_i}_M:M\rightarrow M$ is defined on $m\in M_\lambda$ with $\lambda\in P$ by
\begin{align}\label{eq:TiV-def}
  {T_i}_M(m) = \sum_{\substack{a,b,c\ge0 \\a-b+c=\lambda(h_i)}} (-1)^b q_i^{b-ac} E_i^{(a)}F_i^{(b)}E_i^{(c)}m.
\end{align} 
The family $T_i=({T_i}_M)$ defines an element in $\sU$. By \cite[Proposition 5.2.3]{b-Lusztig94} the elements $T_i$ of $\sU$ are invertible with inverse $T_i^{-1}=({T_i}_M^{-1})$ given by
\begin{align*}
  {T_i}_M^{-1}(m) = \sum_{\substack{a,b,c\ge0 \\ a-b+c=\lambda(h_i)}}(-1)^b q_i^{ac-b} F_i^{(a)}E_i^{(b)}F_i^{(c)}m
\end{align*} 
By \cite[39.43]{b-Lusztig94} the elements $T_i$ for $i\in I$ satisfy the braid relations
\begin{align*}
  \underbrace{T_i T_j T_i \dots}_{\mbox{\small{$m_{ij}$ factors}}}= \underbrace{T_j T_i T_j \dots}_{\mbox{\small{$m_{ij}$ factors}}}
\end{align*}
where $m_{ij}$ denotes the order of $\sigma_i\sigma_j\in W$. Hence, for any $w\in W$ there is a well defined element $T_w\in \sU$ given by 
\begin{align*}
  T_w=T_{i_1} T_{i_2}\dots T_{i_k}
\end{align*}
if $w=\sigma_{i_1}\sigma_{i_2}\dots \sigma_{i_k}$ is a reduced expression.

We also use the symbol $T_i$ for $i\in I$ to denote the corresponding algebra automorphism of $\uqg$ denoted by $T_{i,1}''$ in \cite[37.1]{b-Lusztig94}. This is consistent with the above notation, in the sense that for any $u\in \uqg$, any $M\in Ob(\mathcal{O}_{int})$, and any $m\in M$ we have $T_{iM}(um)=T_i(u) T_{iM}(m)$. Hence $T_i$, as an automorphism of $\uqg$, is nothing but conjugation by the invertible element $T_i\in \scrU$. In this way we obtain automorphisms $T_{w}$ of $\uqg$ for all $w\in W$.

Furthermore, the bar involution for $\uqg$ intertwines $T_i$ and $T_i^{-1}$. More explicitly, for $u\in U^+_\mu$ one has
\begin{equation}
T_{i}(u)=(-1)^{\mu(h_i)}q^{(\mu,\alpha_i)}\overline{T_i^{-1}(\overline{u})},\label{eq:Tibarinverse}
\end{equation}
see \cite[37.2.4]{b-Lusztig94}
\subsection{The coproduct on $\sU$}\label{sec:U-coproduct}
To define a coproduct on $\sU$ consider the functor
\begin{align*}
  \Ftwo:\mathcal{O}_{int}\times \mathcal{O}_{int} \rightarrow \cV, \qquad (M,N)&\mapsto \For(M\otimes N)=\For(M)\otimes \For (N),\\ (f,g)&\mapsto \For(f\otimes g). 
\end{align*}
Let $\sU^{(2)}_0=\End(\Ftwo)$ denote the set of natural transformations from $\Ftwo$ to itself. Again, $\sU^{(2)}_0$ is an algebra for which the multiplication $\cdot$ is given by composition of natural transformations. The map
  \begin{align*}
    i^{(2)}:\sU\otimes \sU \rightarrow \sU^{(2)}_0, \qquad (\varphi_M)\otimes (\psi_N)\mapsto (\varphi_M\otimes \psi_N)
  \end{align*}
is an injective algebra homomorphism. However, it is not surjective, as the following example shows. 
\begin{eg}\label{eg:kappa-def}
For $M,N\in Ob(\mathcal{O}_{int})$ define a linear map
  \begin{align*}
    \kappa_{M,N}: M\ot N \rightarrow M\ot N, \quad m\ot n\mapsto q^{(\mu,\nu)}m\ot n \quad \mbox{if $m\in M_\mu$ and $n\in N_\nu$.}
  \end{align*}
The collection $\kappa=(\kappa_{M,N})_{M,N\in Ob(\mathcal{O}_{int})}$ lies in $\sU^{(2)}_0$. However, one can show that $\kappa$ is not of the form $\sum_{k=1}^n f_i\otimes g_i$ for any $n\in \mathbb{N}$ and any collection $f_i,g_i \in \sU$. Hence $\kappa$ does not lie in the image of the map $i^{(2)}$ described above.

The element $\kappa$ is an important building block of the universal R-matrix for $\uqg$, see Section \ref{sec:Rmatrix}. For $\kappa$ to be well defined the ground field needs to contain $q^{(\mu,\nu)}$ for all $\mu,\nu \in P$. This gives one of the reasons why we work over the field $\field(q^{1/d})$.
\end{eg}
Any natural transformation $\varphi \in\sU$ can be restricted to all $\For(M\otimes N)$ for $M,N\in Ob(\mathcal{O}_{int})$. Moreover, restriction is compatible with composition and linear combinations of natural transformations. Hence we obtain an algebra homomorphism
\begin{align*}
  \kow_{\sU}: \sU \rightarrow \sU^{(2)}_0, \qquad (\varphi_M)_{M\in Ob(\mathcal{O}_{int})}\mapsto (\varphi_{M\otimes N})_{M,N\in Ob(\mathcal{O}_{int})}. 
\end{align*}  
We call $\kow_{\sU}$ the coproduct of $\sU$. The restriction of $\kow_\sU$ to $\uqg$ coincides with the coproduct of $\uqg$ from Section \ref{sec:QGps}. For this reason we will drop the subscript $\sU$ and just denote the coproduct on $\sU$ by $\kow$.

We would also like to consider families of linear maps flipping the two tensor factors by a similar formalism. To that end consider the functor
\begin{align*}
 \Ftwoop:\mathcal{O}_{int}\times \mathcal{O}_{int} \rightarrow \cV, \qquad (M,N)&\mapsto \For(N\otimes M)=\For (N)\otimes \For(M),\\ (f,g)&\mapsto \For(g\otimes f). 
\end{align*}
Define $\sU^{(2)}_1= \Hom(\Ftwo,\Ftwoop)$. For $M,N\in Ob(\mathcal{O}_{int})$ let 
\begin{align*}
  \flip_{M,N}:M\otimes N\rightarrow N\otimes M, \qquad m\otimes n \mapsto n\otimes m
\end{align*}
denote the flip of tensor factors. Then $\flip=(\flip_{M,N})_{M,N\in Ob(\mathcal{O}_{int})}$ is an element of $\sU^{(2)}_1$. The direct sum
\begin{align*}
  \sU^{(2)}=\sU^{(2)}_0\oplus \sU^{(2)}_1 
\end{align*}
is a $\Z_2$-graded algebra where multiplication $\cdot$ is given by composition of natural transformations. This is the natural algebra for the definition of the commutativity isomorphisms in the next subsection.
\subsection{Quasi R-matrix and commutativity isomorphisms}\label{sec:Rmatrix}
Let $\mu\in Q^+$ and let $\{b_{\mu,i}\}$ be a basis of $U^-_{-\mu}$. Let $\{{b_{\mu}}^i\}$ be the dual basis of $U^+_\mu$ with respect to the pairing \eqref{eq:form}. Define 
\begin{align*}
  R_\mu=\sum_i b_{\mu,i}\otimes {b_\mu}^i \in U^-\ot U^+.
\end{align*}   
The element $R_\mu$ is independent of the chosen basis $\{b_{\mu,i}\}$. The quasi R-matrix
\begin{align}
R=\sum_{\mu\in Q^+} R_\mu \label{eq:R-def}
\end{align}
gives a well-defined element $\sU^{(2)}_0$. Indeed, for $M,N\in Ob(\mathcal{O}_{int})$ only finitely many summands $R_\mu$ act nontrivially on any element of $M\otimes N$. 
\begin{rema}
The element $R\in \sU^{(2)}_0$ coincides with the quasi-R-matrix defined in \cite[4.1.4]{b-Lusztig94} and in \cite[7.2]{b-Jantzen96} in the finite case. Those references use the symbol $\Theta$ for the quasi-R-matrix, but we change notation to avoid confusion with the involutive automorphism $\Theta:\hfrak^\ast\rightarrow \hfrak^\ast$ defined in Section \ref{sec:SecondKind}.
\end{rema}
The quasi R-matrix has a second characterization in terms of the bar involution \eqref{eq:defbarUqg} of $\uqg$. Define a bar involution $\overline{\phantom{b}}$ on $\uqg\otimes \uqg$ by $\overline{u\otimes v}=\overline{u}\ot\overline{v}$. By \cite[Theorem 4.1.2]{b-Lusztig94} the quasi-R-matrix is the uniquely determined element $ R=\sum_{\mu\in Q^+}R_\mu\in\prod_{\mu\in Q^+} U^-_{-\mu}\ot U^+_\mu$ with $R_\mu\in U^-_{-\mu}\ot U^+_\mu$ and $R_0=1\ot 1$ for which
\begin{align}\label{eq:quasiR-property1}
\Delta(\overline{u})R=R\overline{\Delta(u)} \qquad \textrm{ for all } u \in \uqg.
\end{align}
Moreover, $R$ is invertible, with 
\begin{align}\label{eq:Rinv-Rbar}
  R^{-1}=\overline{R}.
\end{align}
\begin{rema}\label{rem:R-finite}
If $\gfrak$ is of finite type then the quasi-R-matrix $R$ can be factorized into a product of R-matrices for $\slfrak_2$. Choose a reduced expression $w_0=\sigma_{i_1}\dots \sigma_{i_t}$ for the longest element $w_0$ of $W$. For $j=1,\ldots, t$ set $\gamma_j=\sigma_{i_1}\sigma_{i_2}\dots \sigma_{i_{j-1}}(\alpha_{i_j})$ and define
\begin{equation}
E_{\gamma_j}=T_{i_1}T_{i_2}\dots T_{i_{j-1}}(E_{i_j}), \qquad F_{\gamma_j}=T_{i_1}T_{i_2}\dots T_{i_{j-1}}(F_{i_j}).\label{eq:rootvectors}
\end{equation}
Then $\{\gamma_1,\ldots, \gamma_t\}$ is the set of positive roots of $\gfrak$, and \eqref{eq:rootvectors} are the root vectors used in the construction of the PBW basis corresponding to the chosen reduced expression for $w_0$. For $j=1,\dots,t$ define 
\begin{align}\label{eq:Rgammaj}
  R^{[j]}=\sum_{r\ge 0}(-1)^r q_{i_j}^{-r(r-1)/2} \frac{(q_{i_j}-q_{i_j}^{-1})^r}{[r]_{q_{i_j}}!} F_{\gamma_j}^r\ot E_{\gamma_j}^r 
\end{align}
and for $i\in I$ set $R_i=R^{[j]}$ if $\gamma_{j}=\alpha_i$.
By \cite[Remark 8.29]{b-Jantzen96} one has
\begin{align}\label{eq:R-factorization}
  R=R^{[t]}\cdot R^{[t-1]}\cdot \dots \cdot R^{[2]}\cdot R^{[1]}.
\end{align}
\end{rema}
The quasi-R-matrix $R$ and the transformation $\kappa$ defined in Example \ref{eg:kappa-def} give rise to a family of commutativity isomorphisms. Define
\begin{align}\label{eq:Rh-def}
  \rh= R \cdot \kappa^{-1} \cdot \flip
\end{align}
in $\sU^{(2)}$. By \cite[Theorem 32.1.5]{b-Lusztig94} the maps 
\begin{align}\label{eq:RMN}
  \rh_{M,N}:M\otimes N\rightarrow N\otimes M
\end{align}
are isomorphisms of $\uqg$-modules for all $M,N\in Ob(\mathcal{O}_{int})$. Moreover, the isomorphisms $\rh_{M,N}$ satisfy the hexagon property
\begin{align*}
  \rh_{M,N\ot N'} = (\id_N \ot \rh_{M,N'})(\rh_{M,N} \ot \id_{N'}),\\
  \rh_{M\ot M',N} = (\rh_{M,N} \ot \id_{M'})(\id_M \ot \rh_{M',N'})
\end{align*}
for all $M,M',N,N'\in Ob(\Oint)$, see \cite[32.2]{b-Lusztig94}. This implies that $\Oint$ is a braided tensor category as defined for example in \cite[XIII.1.1]{b-Kassel1}.
\begin{rema}
  In the construction of the commutativity isomorphisms $\rh_{M,N}$ in \cite[Chapter 32]{b-Lusztig94} it is assumed that $\gfrak$ is of finite type. Moreover, Lusztig defines the commutativity isomorphisms on tensor products of integrable weight modules. Lusztig's arguments extend to the Kac-Moody case if one restricts to category $\cO$. We retain the assumption of integrability so that the Lusztig automorphisms ${T_i}_M$ given by \eqref{eq:TiV-def} are well defined. The restrictions imposed by $\rh_{M,N}$ and ${T_i}_M$ force us to work with the category $\Oint$.
\end{rema}
It follows from the definitions of the completion $\sU$ and the coproduct $\Delta:\sU\to \sU^{(2)}$ that in $\sU^{(2)}$ one has
\begin{align}
 \rh \cdot \Delta(u)=\Delta(u)\cdot  \rh \qquad \textrm{ for all } u \in \sU. \label{eq:RcommuteswithDeltau}
\end{align}
In the proof of the next Lemma we will use this property for $u=T_i$.
Moreover, by \cite[Proposition 5.3.4]{b-Lusztig94} the Lusztig automorphisms $T_i\in \sU$ satisfy
\begin{align}\label{eq:kowTi}
  \kow(T_i)=T_i\otimes T_i \cdot R_i^{-1}
\end{align}
where $R_i$ was defined just below \eqref{eq:Rgammaj}. To generalize the above formula we recall the following well known lemma, see for example \cite[Proposition 8.3.11]{b-CP94}. We include a proof to assure that we have the correct formula in our conventions. Recall that for $\gfrak$ of finite type $w_0\in W$ denotes the longest element. Define, moreover, $R_{21}=\flip\cdot R\cdot\flip\in\sU^{(2)}_0$.
\begin{lem}\label{lem:kowTw}
Assume that $\gfrak$ is of finite type. Then the relations
\begin{align}
  \kow(T_{w_0})&=T_{w_0}\otimes T_{w_0}\cdot R^{-1},\label{eq:kowTw01}\\
  \kow(T_{w_0}^{-1})&=T_{w_0}^{-1}\otimes T_{w_0}^{-1}\cdot\kappa\cdot R_{21} \cdot \kappa^{-1}\label{eq:kowTw02}
\end{align}
hold in $\sU^{(2)}_0$.
\end{lem}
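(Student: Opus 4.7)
The plan is to prove \eqref{eq:kowTw01} by induction on the length of a reduced expression for $w_0$, using the factorization \eqref{eq:R-factorization} of the quasi R-matrix, and then to derive \eqref{eq:kowTw02} from \eqref{eq:kowTw01} by exploiting the naturality property \eqref{eq:RcommuteswithDeltau} of $\rh$.

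For the first formula, fix a reduced expression $w_0 = \sigma_{i_1}\cdots\sigma_{i_t}$, set $\mathcal{T}_j = T_{i_1}\cdots T_{i_j}$, and prove by induction on $j$ the generalized identity
\begin{align*}
\kow(\mathcal{T}_j) = (\mathcal{T}_j \otimes \mathcal{T}_j)\cdot \bigl(R^{[j]}R^{[j-1]}\cdots R^{[1]}\bigr)^{-1}.
\end{align*}
Taking $j=t$ and invoking \eqref{eq:R-factorization} then yields \eqref{eq:kowTw01}. The base case $j=1$ is precisely \eqref{eq:kowTi}, since by \eqref{eq:rootvectors} we have $\gamma_1 = \alpha_{i_1}$ and hence $R^{[1]} = R_{i_1}$. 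For the inductive step, the key observation, following immediately from \eqref{eq:rootvectors} and \eqref{eq:Rgammaj}, is the conjugation formula
\begin{align*}
R^{[j]} = (\mathcal{T}_{j-1}\otimes \mathcal{T}_{j-1})\cdot R_{i_j}\cdot (\mathcal{T}_{j-1}^{-1}\otimes \mathcal{T}_{j-1}^{-1}).
\end{align*}
Combining this with the multiplicativity $\kow(\mathcal{T}_j) = \kow(\mathcal{T}_{j-1})\kow(T_{i_j})$ and the inductive hypothesis reduces the inductive step to a routine algebraic rearrangement: the factor $R_{i_j}^{-1}$ produced by $\kow(T_{i_j})$ is conjugated into $(R^{[j]})^{-1}$ and absorbed at the end of the accumulated product.

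For \eqref{eq:kowTw02}, inverting \eqref{eq:kowTw01} gives $\kow(T_{w_0}^{-1}) = R \cdot (T_{w_0}^{-1}\otimes T_{w_0}^{-1})$, so it suffices to verify the identity
\begin{align*}
(T_{w_0}\otimes T_{w_0})\cdot R \cdot (T_{w_0}^{-1}\otimes T_{w_0}^{-1}) = \kappa R_{21}\kappa^{-1}
\end{align*}
in $\sU^{(2)}_0$. The approach is to apply \eqref{eq:RcommuteswithDeltau} to $u = T_{w_0}$, substitute $\rh = R\kappa^{-1}\flip$ and \eqref{eq:kowTw01}, and simplify using the commutation of $T_{w_0}\otimes T_{w_0}$ with both $\kappa$ (since $w_0$ preserves the symmetric bilinear form on weights) and $\flip$, together with $\flip R \flip^{-1} = R_{21}$. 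A direct calculation produces the ``swapped'' identity $(T_{w_0}\otimes T_{w_0})R_{21}(T_{w_0}^{-1}\otimes T_{w_0}^{-1}) = \kappa R\kappa^{-1}$; the desired identity then follows by exchanging the roles of the two factors of the tensor product and conjugating by $\flip$, which swaps $R$ and $R_{21}$ while fixing $\kappa$ and $T_{w_0}\otimes T_{w_0}$.

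The main technical difficulty I anticipate is the bookkeeping in the inductive step of the first part, where the conjugations by $\mathcal{T}_{j-1}\otimes \mathcal{T}_{j-1}$ at each stage have to combine correctly with the partial factorization of $R$ to yield the next factor $R^{[j]}$. A secondary subtlety appears in the derivation of \eqref{eq:kowTw02}, where the careful distinction between $\sU^{(2)}_0$ and $\sU^{(2)}_1$, and the role of $\flip$ in moving between them, requires attention.
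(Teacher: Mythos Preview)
Your derivation of \eqref{eq:kowTw02} from \eqref{eq:kowTw01} via \eqref{eq:RcommuteswithDeltau} is correct and is exactly the equivalence the paper uses (in the reverse direction). The problem is the inductive proof of \eqref{eq:kowTw01}.

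Write out the inductive step as you propose it:
\[
\kow(\mathcal{T}_j)=\kow(\mathcal{T}_{j-1})\kow(T_{i_j})
=(\mathcal{T}_{j-1}\otimes\mathcal{T}_{j-1})\,\bigl(R^{[j-1]}\cdots R^{[1]}\bigr)^{-1}\,(T_{i_j}\otimes T_{i_j})\,R_{i_j}^{-1}.
\]
Your conjugation formula $R^{[j]}=(\mathcal{T}_{j-1}\otimes\mathcal{T}_{j-1})R_{i_j}(\mathcal{T}_{j-1}\otimes\mathcal{T}_{j-1})^{-1}$ is correct, but to apply it you would need $(\mathcal{T}_{j-1}\otimes\mathcal{T}_{j-1})$ sitting directly next to $R_{i_j}^{-1}$. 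Instead there is the block $\bigl(R^{[j-1]}\cdots R^{[1]}\bigr)^{-1}(T_{i_j}\otimes T_{i_j})$ in between, and there is no simple commutation rule for $T_{i_j}\otimes T_{i_j}$ past the accumulated product $\bigl(R^{[j-1]}\cdots R^{[1]}\bigr)^{-1}$: conjugating the root vectors $E_{\gamma_k},F_{\gamma_k}$ for $k<j$ by $T_{i_j}$ does not produce any other $E_{\gamma_{k'}},F_{\gamma_{k'}}$ in general. So the step you call ``routine'' is in fact the heart of the matter, and the argument as written does not close.

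The paper sidesteps this precisely by running the induction on \eqref{eq:kowTw02} rather than \eqref{eq:kowTw01}. It first applies the same equivalence you found to \eqref{eq:kowTi}, obtaining $\kow(T_i^{-1})=(T_i^{-1}\otimes T_i^{-1})\,\kappa\,{R_i}_{21}\,\kappa^{-1}$, and then forms the product $\kow(T_{w_0}^{-1})=\kow(T_{i_t}^{-1})\cdots\kow(T_{i_1}^{-1})$. Building this product \emph{from the right} (that is, passing from $\kow(T_{i_k}^{-1})\cdots\kow(T_{i_1}^{-1})$ to $\kow(T_{i_{k+1}}^{-1})\cdots\kow(T_{i_1}^{-1})$), the new factor ${R_{i_{k+1}}}_{21}$ lands immediately to the left of $(\mathcal{T}_k^{-1}\otimes\mathcal{T}_k^{-1})$ once $\kappa$ is commuted through; moving $(\mathcal{T}_k^{-1}\otimes\mathcal{T}_k^{-1})$ to the left then conjugates ${R_{i_{k+1}}}_{21}$ exactly into $R^{[k+1]}_{21}$. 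The point is that for the inverse version the new $R$-factor is adjacent to the conjugating $\mathcal{T}$-factor, while for the version you chose it is separated from it by the entire previously accumulated product. Your plan becomes correct if you simply swap which formula you prove inductively and which you deduce by equivalence.
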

\begin{proof}
  First observe that Equations \eqref{eq:kowTw01} and \eqref{eq:kowTw02} are equivalent. Indeed, Equation \eqref{eq:RcommuteswithDeltau} for $u=T_{w_0}$ implies that Equation \eqref{eq:kowTw01} is equivalent to 
  \begin{align*}
    \rh\cdot \kow(T_{w_0}) = \kappa^{-1} \cdot \flip \cdot T_{w_0} \ot T_{w_0}.     
  \end{align*}
  The inverse of the above equation is \eqref{eq:kowTw02}. It remains to verify that \eqref{eq:kowTw02} holds. Applying the equivalence between \eqref{eq:kowTw01} and \eqref{eq:kowTw02} to \eqref{eq:kowTi} one obtains 
  \begin{align*}
    \kow(T_i^{-1}) = T_i^{-1} \ot T_i^{-1} \cdot \kappa \cdot{R_i}_{21}\cdot \kappa^{-1}
  \end{align*}
  where ${R_i}_{21}=\flip \cdot R_i\cdot \flip$.  Hence for $T_{w_0}= T_{i_1} T_{i_2} \dots T_{i_t}$ one has
  \begin{align*}
     \kow(T_{w_0}^{-1}) &= \kow(T_{i_t}^{-1})\cdot\dots\cdot \kow(T_{i_2}^{-1})\cdot \kow(T_{i_1}^{-1})\\
     &=T_{i_t}^{-1} \ot T_{i_t}^{-1} \cdot \kappa \cdot{R_{i_t}}_{21}\cdot \kappa^{-1} \cdot \cdots 
       \cdot T_{i_1}^{-1} \ot T_{i_1}^{-1} \cdot \kappa \cdot{R_{i_1}}_{21}\cdot \kappa^{-1}\\
     &=T_{w_0}^{-1} \ot T_{w_0}^{-1} \cdot \kappa \cdot R^{[t]}_{21}\cdot R^{[t-1]}_{21} \cdot \cdots \cdot R^{[2]}_{21}\cdot R^{[1]}_{21} \cdot\kappa^{-1}
  \end{align*}
  where $R^{[j]}_{21}=\flip\cdot R^{[j]} \cdot \flip$. By \eqref{eq:R-factorization} one obtains relation \eqref{eq:kowTw02}.
\end{proof}

\section{Braided tensor categories with a cylinder twist}\label{sec:cylinder twist}
As explained in Section \ref{sec:Rmatrix} the commutativity isomorphisms \eqref{eq:RMN} turn $\cO_{int}$ into a braided tensor category. For any $V\in Ob(\Oint)$ there exists a graphical calculus for the action of $\rh$ on $V^{\ot n}$ in terms of braids in $\C\times[0,1]$, see \cite[Corollary XIII.3.8]{b-Kassel1}. If $\gfrak$ is finite dimensional then $\Oint$ has a duality in the sense of \cite[XIV.2]{b-Kassel1} and there exists a ribbon element which turns $\Oint$ into a ribbon category as defined in \cite[XIV.3.2]{b-Kassel1}. The graphical calculus extends to ribbon categories, see \cite[Theorem XIV 5.1]{b-Kassel1} also for original references.

In \cite{a-tD98} T.~tom Dieck outlined a program to extend the graphical calculus to braids or ribbons in the cylinder $\C^\ast \times [0,1]$. The underlying braid group corresponds to a Coxeter group of type $B$. In the papers \cite{a-tD98}, \cite{a-tDHO98}, \cite{a-HaOld01} tom Dieck and R.~H{\"a}ring-Oldenburg elaborated a categorical setting for such a graphical calculus, leading to the notion of tensor categories with a cylinder braiding. In the present section we recall this notion. In Section \ref{sec:TwCylTw} we will also give a slight generalization which captures all the examples which we obtain from quantum symmetric pairs in Section \ref{sec:deltaK}. These examples are determined by a coideal subalgebra of the braided Hopf algebra $\uqg$. Cylinder braiding in this setting naturally leads to the notion of a cylinder-braided coideal subalgebra of a braided bialgebra $H$ which we introduce in Section \ref{sec:cylinder-braided}. The key point is that a cylinder-braided coideal subalgebra of $H$ has a universal K-matrix which provides solutions of the reflection equation in all representations of $H$.
\subsection{Cylinder twists and the reflection equation}\label{sec:CylTw+RE}
To define cylinder twists let $(\cA,\ot, I, a, l, r)$ be a tensor category as defined in \cite[Definition XI.2.1]{b-Kassel1}. Let $\cB$ be another category and assume that there exists a functor
$\ast:\cB\times\cA \rightarrow \cB$ which we write as
\begin{align*}
  (M,N)\mapsto M\ast N, \qquad (f,g)\mapsto f\ast g
\end{align*}
on objects $M\in Ob(\cB),N\in Ob(\cA)$ and morphisms $f,g$ in $\cB$ and $\cA$, respectively. The functor $\ast$ is called a right action of $\cA$ on $\cB$ if there exist natural isomorphisms $\alpha$ and $\rho$ with
\begin{align*}
  &\alpha_{M,N,N'}: (M\ast N) \ast N' \rightarrow M \ast (N\ot N')& &\mbox{for $M\in Ob(\cB)$, $N,N' \in Ob(\cA)$}\\
  &\rho_M: M\ast I \rightarrow M &&\mbox{for $M\in Ob(\cB)$}
\end{align*}
which satisfy the pentagon and the triangle axiom given in \cite[(2.1), (2.2)]{a-tD98}. A category $\cB$ together with a right action of $\cA$ on $\cB$ is called a right $\cA$-module category.
\begin{eg}\label{eg:Oint}
  As seen in Section \ref{sec:Rmatrix}, the category $\cA=\Oint$ is a braided tensor category. Let $B\subseteq \uqg$ be a right coideal subalgebra, that is a subalgebra satisfying
\begin{align*}
  \kow(B) \subseteq B\ot \uqg.  
\end{align*} 
Let $\cB$ be the category with $Ob(\cB)=Ob(\Oint)$ and $\Hom_\cB(M,N)=\Hom_B(M,N)$ for all $M,N\in Ob(\cB)$. Then $\cB$ is a right $\cA$-module category with $\ast$ given by $M\ast N=M\ot N$.
\end{eg}
From now on, following \cite{a-tD98}, we will consider the following data:
\begin{enumerate}
  \item $(\cA,\ot, I, a, l, r, c)$ is a braided tensor category with braiding $c_{M,N}:M\otimes N \rightarrow N\otimes M$ for all $M,N \in Ob(\cA)$. 
  \item $(\cB,\ast,\alpha,\rho)$ is a right $\cA$-module category.
  \item $\cA$ is a subcategory of $\cB$ with $Ob(\cA)=Ob(\cB)$. In other words $\Hom_\cA(M,N)$ is a subset of $\Hom_\cB(M,N)$ for all $M,N\in Ob(\cA)=Ob(\cB)$. 
  \item $\ast$, $\alpha$, $\rho$ restrict to $\ot$, $a$, $r$ on $\cA\times \cA$.
\end{enumerate}
We call $(\cB,\cA)$ a tensor pair if the above conditions (1)-(4) are satisfied. By condition (3) there exists a forgetful functor
\begin{align*}
  \For_{\cB}^{\cA}: \cA \rightarrow \cB.
\end{align*}
\begin{defi}
  Let $(\cB,\cA)$ be a tensor pair. A natural transformation 
  \begin{align*}
    t=(t_M)_{M\in Ob(\cA)}:\For_{\cB}^{\cA}\rightarrow \For_{\cB}^{\cA}
  \end{align*}
 is called a $\cB$-endomorphism of $\cA$. If $t_M:\For_{\cB}^{\cA}(M)\rightarrow \For_{\cB}^{\cA}(M)$ is an automorphisms for all $M\in Ob(\cA)$ then $t$ is called a $\cB$-automorphisms of $\cA$.
\end{defi}
In other words, a $\cB$-endomorphism of $\cA$ is a family $t=(t_M)_{M\in Ob(\cA)}$ of morphisms $t_M\in \Hom_\cB(M,M)$ such that
\begin{align}\label{eq:tXnatural}
  t_N\circ f = f\circ t_M 
\end{align}  
for all $f\in \Hom_\cA(M,N)$.
\begin{eg}\label{eg:Oint2}
  The pair $(\cB,\cA)$ from Example \ref{eg:Oint} is a tensor pair. In this setting a $\cB$-endomorphism of $\Oint$ is an element $t\in \sU$ which commutes with all elements of the coideal subalgebra $B\subset \sU$. In other words, the maps $t_M:M\rightarrow M$ are $B$-module homomorphisms for all $M\in Ob(\cO_{int})$.
\end{eg}
The following definition provides the main structure investigated by tom Dieck and H{\"a}ring-Oldenburg in \cite{a-tD98}, \cite{a-tDHO98}, \cite{a-HaOld01}.
\begin{defi}[\cite{a-tD98}]\label{def:cyltw}
  Let $(\cB,\cA)$ be a tensor pair. A cylinder twist for $(\cB,\cA)$ consists of a $\cB$-automorphism $t=(t_M)_{M\in Ob(\cA)}$ of $\cA$ such that
  \begin{align}\label{eq:tXY}
     t_{M\ot N} = (t_M\ast 1_N) c_{N,M} (t_N\ast 1_M) c_{M,N}
  \end{align}
for all $M,N\in Ob(\cA)=Ob(\cB)$.
\end{defi}
The definition of a cylinder twist in \cite{a-tD98} involves a second equation. This equation, however, is a consequence of \eqref{eq:tXY}. This was already observed in \cite[Proposition 2.10]{a-tD98}.
\begin{prop}\label{prop:cylinderRE}
 Let $(\cB,\cA)$ be a tensor pair with a cylinder twist $(t_M)_{M\in Ob(\cA)}$. Then the relation
 \begin{align}\label{eq:tensorRE}
   (t_M\ast 1_N) c_{N,M} (t_N\ast 1_M) c_{M,N} = c_{N,M}(t_N\ast 1_M) c_{M,N} (t_M\ast 1_N)
 \end{align}
 holds for all $M,N\in Ob(\cA)$.
\end{prop}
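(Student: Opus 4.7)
The plan is to identify both sides of \eqref{eq:tensorRE} with conjugates of $t$ evaluated on tensor products, and then appeal to naturality of $t$ applied to the braiding. No heavy computation is required; everything reduces to two applications of the defining identity \eqref{eq:tXY} and one application of the naturality condition \eqref{eq:tXnatural}.

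First, observe that by \eqref{eq:tXY} itself the LHS of \eqref{eq:tensorRE} is nothing other than $t_{M\ot N}$. Next, apply \eqref{eq:tXY} with the roles of $M$ and $N$ interchanged to obtain
\begin{align*}
  t_{N\ot M} = (t_N\ast 1_M)\, c_{M,N}\, (t_M\ast 1_N)\, c_{N,M}.
\end{align*}
Right-multiplying this identity by $c_{N,M}^{-1}$ yields
\begin{align*}
  (t_N\ast 1_M)\, c_{M,N}\, (t_M\ast 1_N) = t_{N\ot M}\, c_{N,M}^{-1},
\end{align*}
so the RHS of \eqref{eq:tensorRE} equals $c_{N,M}\, t_{N\ot M}\, c_{N,M}^{-1}$. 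Hence the proposition reduces to the identity
\begin{align*}
  t_{M\ot N}\, c_{N,M} = c_{N,M}\, t_{N\ot M}.
\end{align*}

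For the final step, note that $c_{N,M}: N\ot M \to M\ot N$ is a morphism in $\cA$ (since $\cA$ is braided), so it belongs to $\Hom_\cA(N\ot M, M\ot N)$. The naturality condition \eqref{eq:tXnatural} for the $\cB$-endomorphism $t$, applied with $f = c_{N,M}$, gives precisely the displayed equality above. This completes the argument. The only place where the tensor pair structure really enters is the observation that $c_{N,M}$ is a morphism in the subcategory $\cA$ rather than just in $\cB$; this is what allows naturality of $t$ to be invoked. I do not anticipate any technical obstacle, as the statement is essentially a formal consequence of \eqref{eq:tXY} together with naturality.
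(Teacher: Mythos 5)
Your proof is correct and follows essentially the same route as the paper: both arguments combine the naturality of $t$ applied to the morphism $c_{N,M}$ with two applications of the fusion relation \eqref{eq:tXY}, using invertibility of the braiding to rearrange. The only cosmetic difference is that you conjugate by $c_{N,M}$ explicitly, whereas the paper substitutes \eqref{eq:tXY} into both sides of the naturality identity and cancels the trailing $c_{N,M}$.
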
 
\begin{proof}
  As $c_{N,M}$ is a morphism in $\cA$, relation \eqref{eq:tXnatural} implies that
  \begin{align*}
    t_{M\ot N}\circ c_{N,M} = c_{N,M}\circ t_{N\ot M}.
  \end{align*}
  If one inserts relation \eqref{eq:tXY} into both sides of the above equation one obtains Equation \eqref{eq:tensorRE}.
\end{proof}
In \cite{a-tDHO98} Equation \eqref{eq:tensorRE} is called the \textit{four-braid relation}. Here we follow the mathematical physics literature \cite{a-KuSkl92} and call \eqref{eq:tensorRE} the \textit{reflection equation}. Equation \eqref{eq:tXY} is know as the \textit{fusion procedure}, see \cite[6.1]{a-KuSkl92}, as it allows us to fuse the two solutions $t_M$ and $t_N$ of the reflection equation for $M$ and $N$, respectively, to a new solution $t_{M\ot N}$ for the tensor product $M\otimes N$.
\subsection{Twisted cylinder twists}\label{sec:TwCylTw}
Let $(\cB,\cA)$ be a tensor pair. To cover the examples considered in the present paper in full generality, we introduce a slight generalization of tom Dieck's notion of a cylinder twist for $(\cB,\cA)$. This generalization involves a second twist which suggests the slightly repetitive terminology.

Let $\twn:\cA \rightarrow \cA$ be braided tensor equivalence given by $M\mapsto M^\twn$ and $f\mapsto f^\twn\in \Hom(M^\twn,N^\twn)$ for all $M,N\in Ob(\cA)$ and $f\in \Hom(M,N)$. This means that $\twn$ is a braided tensor functor as defined in \cite[Definition XIII.3.6]{b-Kassel1} and an equivalence of categories. A family $t=(t_M)_{M\in Ob(\cA)}$ of morphisms $t_M\in \Hom_\cB(M,M^\twn)$ is called a $\cB$-$\twn$-endomorphism of $\cA$ if 
\begin{align}\label{eq:tf=ftwt}
  t_N \circ f = f^\twn \circ t_M
\end{align}
for all $f\in \Hom_\cA(M,N)$. In other words, a $\cB$-$\twn$-endomorphism of $\cA$ is a natural transformation $t:\For_\cA^\cB\rightarrow \For_\cA^\cB\circ \twn$. 
\begin{defi}\label{def:tw-cylinder-twist}
  Let $(\cB,\cA)$ be a tensor pair and $\twn:\cA\rightarrow \cA$ a braided tensor equivalence. A $\twn$-cylinder twist for $(\cB,\cA)$ consists of a $\cB$-$\twn$-automorphism $t=(t_M)_{M\in Ob(\cA)}$ of $\cA$ such that
  \begin{align}\label{eq:tw-tXY}
     t_{M\ot N} = (t_M\ast 1_{N^\twn}) c_{N^\twn,M} (t_N\ast 1_M) c_{M,N}
  \end{align}
for all $M,N\in Ob(\cA)=Ob(\cB)$.
\end{defi}
Let $(\cB,\cA)$ be a tensor pair with a $\twn$-cylinder twist. The relation $c_{N,M}^\twn=c_{N^\twn,M^\twn}$ and \eqref{eq:tf=ftwt} imply that
\begin{align*}
  t_{M\otimes N}\circ c_{N,M}= c_{N^\twn,M^\twn}\circ t_{N\ot M}.
\end{align*}
As in the proof of Proposition \ref{prop:cylinderRE} one now obtains
\begin{align}\label{eq:tw-RE}
 (t_M\ast 1_{N^\twn}) c_{{N^\twn},M} (t_N\ast 1_M) c_{M,N} = c_{N^\twn,M^\twn}(t_N\ast 1_{M^\twn}) c_{M^\twn,N} (t_M\ast 1_N).
\end{align}
\begin{eg}\label{eg:Oint3}
  Consider the setting of Example \ref{eg:Oint}. Let $\varphi:\uqg\rightarrow \uqg$ be a Hopf algebra automorphism. For any $M\in Ob(\Oint)$ let $M^\varphi$ be the integrable representation with left action $\bullet_\varphi$ given by $u\bullet_\varphi m=\varphi(u)m$ for all $u\in \uqg$, $m\in M$. By \cite[Theorem 2.1]{a-Twietmeyer92} one has $\varphi(U^+)=U^+$ and $\varphi(U^0)=U^0$ and hence $M^\varphi\in Ob(\Oint)$. Moreover, as $\varphi(U^0)=U^0$ the map $\varphi$ induces a group isomorphism $\varphi_P:P\rightarrow P$. We assume additionally that $\varphi_P$ is an isometry, that is $(\varphi_P(\lambda),\varphi_P(\mu))=(\lambda,\mu)$ for all $\lambda,\mu\in P$. Then one obtains an auto-equivalence of braided tensor categories 
  \begin{align*}
    \twn: \Oint \rightarrow \Oint
  \end{align*}
given by $\twn(M)=M^\varphi$ and $\twn(f)=f$. In this case relations \eqref{eq:tw-tXY} and \eqref{eq:tw-RE} become
\begin{align}
  t_{M\ot N} = (t_M\ot 1) &\rh_{N^\varphi,M} (t_N\ot 1) \rh_{M,N},  \label{eq:tXY-phi} \\
   (t_M\ot 1) \rh_{N^\varphi,M} (t_N\ot 1) \rh_{M,N} &= \rh_{N^\varphi,M^\varphi}(t_N \ot 1) \rh_{M^\varphi,N} (t_M\ot 1), \label{eq:RE-phi}
\end{align}
respectively, for any $M,N\in Ob(\Oint)$.
\end{eg}
\subsection{Cylinder-braided coideal subalgebras and the universal K-matrix} \label{sec:cylinder-braided}
Examples \ref{eg:Oint}, \ref{eg:Oint2}, and \ref{eg:Oint3} can be formalized in the setting of bialgebras and their coideal subalgebras. For the convenience of the reader we recall the relevant notions in the setting of the present paper. 
\begin{defi}{\upshape{(\cite[Definition VIII.2.2]{b-Kassel1})}}\label{def:braided}
  A bialgebra $H$ with coproduct $\kow_H:H\rightarrow H\otimes H$ is called braided (or quasitriangular) if there exists an invertible element $R^H\in H\ot H$ such that the following two properties hold:
  \begin{enumerate}
    \item For all $x\in H$ one has
      \begin{align}\label{eq:Rx=xR}
        \kow_H^\op(x)=(R^H)^{-1} \kow_H(x) R^H
      \end{align}
      where $\kow_H^\op=\flip\circ \kow_H:H\rightarrow H\otimes H$ denotes the opposite coproduct.
    \item The element $R$ satisfies the relations
      \begin{align}\label{eq:hexagon}
        (\kow_H \ot \mathrm{id}_H)(R^H)= R^H_{23}R^H_{13}, \qquad (\id_H\ot\kow_H)(R^H)=R^H_{12} R^H_{13}
      \end{align}  
      where we use the usual leg-notation.
  \end{enumerate} 
  In this case the element $R^H$ is called a universal R-matrix for $H$.
\end{defi}
Let $H$ be a braided bialgebra with universal R-matrix $R^H=\sum_{i} s_i\ot t_i\in H\ot H$. In this situation the category $\cA=H$-$\mathrm{mod}$ of $H$-modules is a braided tensor category with braiding
\begin{align}\label{eq:cHMN-def}
  c^H_{M,N}:M\ot N \rightarrow N\ot M, \quad m\ot n\mapsto \sum_i s_in\ot t_im
\end{align}
for all $M,N\in Ob(\cA)$, see \cite[VIII.3]{b-Kassel1}. 
\begin{rema}
  The conventions in Definition \ref{def:braided} slightly differ from the conventions in \cite{b-Kassel1}.
  The reason for this is that following \cite{b-Lusztig94} we use the braiding $R\cdot \kappa^{-1}\cdot \flip$ for $\Oint$ and hence the braiding $R^H\circ \flip$ for $H$-mod. To match conventions observe that $R^H$ in Definition \ref{def:braided} coincides with $R_{21}$ in \cite[Definition VIII.2.2]{b-Kassel1}.
\end{rema}
Let $B$ be a right coideal subalgebra of $H$. As in Example \ref{eg:Oint2} define $\cB$ to be the category with $Ob(\cB)=Ob(\cA)$ and $\Hom_\cB(M,N)=\Hom_B(M,N)$ for all $M,N\in Ob(\cA)$. Then $(\cB,\cA)$ is a tensor pair. For any bialgebra automorphism $\varphi:H\rightarrow H$ define
\begin{align*}
  R^{H,\varphi}=(\id\ot \varphi)(R^H)
\end{align*}
In analogy to the notion of a universal R-matrix the following definition is natural.
\begin{defi}\label{def:B-cylinder-braided}
  Let $H$ be a braided bialgebra with universal R-matrix $R^H\in H\ot H$ and let $\varphi:H\rightarrow H$ be an automorphism of braided bialgebras. We say that a right coideal subalgebra $B$ of $H$ is $\varphi$-cylinder-braided if there exists an invertible element $\cK\in H$ such that
   \begin{align}
      \cK b&=\varphi(b)\cK \qquad \mbox{for all $b\in B$,}\label{eq:Kb=bK}\\
      \kow (\cK)&= (\cK\ot 1) R^{H,\varphi} (1\ot \cK) R_{21}^H.\label{eq:deltaK-proper}
   \end{align}
In this case we call $\cK$ a $\varphi$-universal K-matrix for the coideal subalgebra $B$. If $\varphi=\id_H$ then we simply say that $B$ is cylinder-braided and that $\cK$ is a universal K-matrix for $B$.  
\end{defi}
The bialgebra automorphism $\varphi$ defines a braided tensor equivalence $\tw:\cA\rightarrow \cA$ given by $M\mapsto M^\varphi$, where as before $M^\varphi$ denotes the $H$-module which coincides with $M$ as a vector space and has the left action $h\otimes m\mapsto \varphi(h)m$.
In the above setting a $\varphi$-universal K-matrix for the coideal subalgebra $B$ defines a family of maps 
\begin{align}\label{eq:tM-def}
  t_M:M\rightarrow M, \quad m\mapsto \cK m,\qquad \mbox{for all $M\in Ob(\cA)$}.
\end{align}
By construction the natural transformation $t=(t_M)_{M\in Ob(\cA)}$ is a $tw$-cylinder twist for the tensor pair $(\cB,\cA)$.
\begin{rema}
  Observe the parallel between Definition \ref{def:braided} and Definition \ref{def:B-cylinder-braided} in the case $\varphi=\id_{H}$. Indeed, condition \eqref{eq:Rx=xR} means that the maps $c^H_{M,N}$ defined by \eqref{eq:cHMN-def} are $H$-module homomorphisms while condition \eqref{eq:Kb=bK} means that the maps $t_M$ defined by \eqref{eq:tM-def} are $B$-module homomorphisms if $\varphi=\id_{H}$. Similarly, condition \eqref{eq:hexagon} and \eqref{eq:deltaK-proper} both express compatibility with the tensor product.  
\end{rema}
Definition \ref{def:B-cylinder-braided} can be extended to include the quantized universal enveloping algebra $\uqg$ which is braided only in the completion. In this case we also need to allow for $\cK$ to lie in the completion $\sU$. We repeat Definition \ref{def:B-cylinder-braided} in this setting for later reference. Recall the notation from Section \ref{sec:completion} and from Example \ref{eg:Oint3}. For any Hopf algebra automorphisms $\varphi:\uqg\rightarrow \uqg$ define an element $\rh^\varphi\in \sU^{(2)}$ by 
\begin{align*}
  (\rh^\varphi)_{M,N}=\rh_{M^\varphi,N}\qquad \mbox{for all $M,N\in Ob(\Oint)$.}
\end{align*}
In the following definition we reformulate condition \eqref{eq:deltaK-proper} in terms of $\rh$ and $\rh^\varphi$.
\begin{defi}\label{def:U-cylinder-braided}
  Let $\varphi:\uqg\rightarrow \uqg$ be a Hopf algebra automorphism. A right coideal subalgebra $B\subseteq \uqg$ is called $\varphi$-cylinder-braided if there exists an invertible element $\cK\in \sU$ such that the relation
  \begin{align}
      \cK b&=\varphi(b) \cK \qquad \mbox{for all $b\in B$} \label{eq:phi-cylinder1}
  \end{align}
holds in $\sU$ and the relation
  \begin{align}\label{eq:phi-cylinder2}      
      \kow (\cK)&= (\cK\ot 1)\cdot \rh^\varphi \cdot (\cK\ot 1)\cdot \rh
   \end{align}
holds in $\sU^{(2)}$. In this case we call $\cK$ a $\varphi$-universal K-matrix for the coideal subalgebra $B$. If $\varphi=\id_\uqg$ then we simply say that $B$ is cylinder-braided and that $\cK$ is a universal K-matrix for $B$.  
\end{defi}
Similarly to the discussion for the bialgebra $H$ above, a cylinder-braided coideal subalgebra of $\uqg$ naturally gives rise to a cylinder twist. For later reference we summarize the situation in the following remark.
\begin{rema}\label{rem:uniK=tw-cyl-tw}
Let $B \subseteq \uqg$ be a right coideal subalgebra and let $(\cB,\Oint)$ be the tensor pair from Example \ref{eg:Oint}. Moreover, let $\varphi:\uqg\rightarrow\uqg$ be a Hopf-algebra automorphism and let $\twn:\Oint\rightarrow \Oint$ be the corresponding braided tensor equivalence as in Example \ref{eg:Oint3}. An element $\cK\in \sU$ is a $\varphi$-universal K-matrix for $B$ if and only if $\cK$ is a $\twn$-cylinder twist of $(\cB,\Oint)$. In this case, in particular, the element $t=\cK\in \sU$ satisfies the fusion procedure \eqref{eq:tXY-phi} and the reflection equation \eqref{eq:RE-phi} for all $M,N\in \Oint$.
\end{rema}
\subsection{Cylinder braided coideal subalgebras via characters}\label{sec:characters}
In their paper \cite{a-DKM03} J.~Donin, P.~Kulish, and A.~Mudrov introduced the notion of a universal solution of the reflection equation which they also called a universal K-matrix. In contrast to Definition \ref{def:B-cylinder-braided}, this notion does not refer to a coideal subalgebra of a Hopf algebra. Nevertheless, there is a close relationship between Definition \ref{def:B-cylinder-braided} and the notion of a universal K-matrix in \cite{a-DKM03}, and it is the purpose of the present section to explain this. This material will not be used in later parts of the present paper.

As in Section \ref{sec:cylinder-braided} let $(H,R^H)$ be a braided Hopf algebra over a field $\fieldtwo$. We retain the conventions from Definition \ref{def:braided} and hence the symbol $\calR$ in \cite{a-DKM03} corresponds to $R^H_{21}$ in our conventions. 
Let $H^\ast=\Hom_\fieldtwo(H,\fieldtwo)$ denote the linear dual space of $H$. Recall from \cite{a-ResSTS88} that the braided Hopf $H$ algebra is called factorizable if the linear map
\begin{align*}   
  H^\ast \rightarrow H, \qquad f\mapsto (f\ot \id)(R^H_{12} R^H_{21})
\end{align*}
is an isomorphism of vector spaces. This is only possible if $H$ is finite dimensional. If $H$ is factorizable then Donin, Kulish, and Mudrov call the element 
\begin{align*}
  \cK^{\dkm} = R^H_{12} R^H_{21}\in H\ot H
\end{align*}
the universal K-matrix of $H$. It follows from \eqref{eq:hexagon} that
\begin{align}
  (\id\ot \kow)(\cK^{\dkm}) &=  R^H_{01}\, R^H_{02}\, R^H_{20}\, R^H_{10}\, R^H_{12} (R^H_{12})^{-1}\,\nonumber\\
  &=  R^H_{01}\, R^H_{10}\, R^H_{12}\, R^H_{02}\, R^H_{20} R^H_{21}(R^H_{12}R^H_{21})^{-1}\,\nonumber\\
                             &= \cK^{\dkm}_{01} R^H_{12}\, \cK^{\dkm}_{02} R^H_{21}(R^H_{12}R^H_{21})^{-1} \label{eq:id-deltaK-DKM}
\end{align}
where we label the tensor legs of $H^{\ot 3}$ by $0$, $1$, $2$. The above formula is closely related to formula \eqref{eq:deltaK-proper} for $\kow(\cK)$. There are two differences, however, namely the occurrence of the additional factor $(R^H_{12} R^H_{21})^{-1}$ and the fact that \eqref{eq:id-deltaK-DKM} holds in $H^{\ot 3}$ while formula \eqref{eq:deltaK-proper} holds in $H^{\ot 2}$. Moreover, the element $\cK^{\dkm}$ makes no reference to a coideal subalgebra of $\uqg$.

To address the first difference, recall from \cite[Definition XIV.6.1]{b-Kassel1} that the braided Hopf algebra $(H,R^H)$ is called a ribbon algebra if there exists a central element $\theta_H\in H$ such that
\begin{align*}
  \kow(\theta_H)= (R^H_{12} R^H_{21})^{-1}(\theta_H\ot \theta_H), \qquad \vep(\theta_H)=1, \qquad S(\theta_H)=\theta_H.
\end{align*}
If such a ribbon element $\theta_H$ exists then the element
\begin{align*}
  \cK^{\dkm,\theta}=(1\ot \theta_H^{-1}) \cK^\dkm \in H\ot H
\end{align*}
satisfies the relation
\begin{align}
  (\id\ot \kow)(\cK^{\dkm,\theta}) &= \cK^{\dkm,\theta}_{01} R^H_{12}\, \cK^{\dkm,\theta}_{02} R^H_{21}  \label{eq:id-deltaKtheta-DKM}
\end{align}
in $H^{\ot 3}$.  

To eliminate the additional tensor factor in \eqref{eq:id-deltaK-DKM} and \eqref{eq:id-deltaKtheta-DKM} let
\begin{align*}
  f: H\rightarrow \fieldtwo
\end{align*}
be a character, that is a one-dimensional representation. Define
\begin{align}\label{eq:Bf-def}
  B_f = \{(f\ot \id)\kow(h)\,|\, h\in H\}
\end{align}
and observe that $B_f$ is a right coideal subalgebra of $H$. The element
\begin{align}
   \cK^{\dkm,\theta,f}=(f\ot \theta_H^{-1}) \cK^\dkm \in H
\end{align}
commutes with all elements of $B_f$ because $\kow(h)$ commutes with $\cK^{\dkm}=R^H_{12} R^H_{21}$ for all $h\in H$ by \eqref{eq:Rx=xR}. By \eqref{eq:id-deltaKtheta-DKM} one has
\begin{align*}
  \kow(\cK^{\dkm,\theta,f}) &=  (\cK^{\dkm,\theta,f}\ot 1) R^H\, (1\ot\cK^{\dkm,\theta,f}) R^H_{21}  
\end{align*}
which coincides with relation \eqref{eq:deltaK-proper} in Definition \ref{def:B-cylinder-braided}. We summarize the above discussion.
\begin{prop}\label{prop:Kdkm}
  Let $(H,R^H,\theta_H)$ be a factorizable ribbon Hopf algebra over a field $\fieldtwo$ and let $f:H\rightarrow \fieldtwo$ be a character. Then the right coideal subalgebra $B_f$ defined by \eqref{eq:Bf-def} is cylinder braided with universal K-matrix 
  \begin{align*}
    \cK^{\dkm,\theta,f}=(f\ot \theta_H^{-1})(R^H_{12} R^H_{21}) \in H.
  \end{align*}  
\end{prop}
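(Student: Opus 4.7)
The plan is to verify directly the three defining features of Definition~\ref{def:B-cylinder-braided} for $B_f$ and $\cK^{\dkm,\theta,f}$ with $\varphi=\id_H$. Most of the ingredients are already present in the paragraphs leading up to the statement, so the task is one of clean packaging.

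First I would confirm that $B_f$ is a right coideal subalgebra of $H$. The subalgebra property uses only that $f$ is a character, via
\[
(f\ot\id)\Delta(hh')=\sum f(h_{(1)})f(h'_{(1)})\,h_{(2)}h'_{(2)}=\bigl((f\ot\id)\Delta(h)\bigr)\bigl((f\ot\id)\Delta(h')\bigr),
\]
while $\Delta(B_f)\subseteq B_f\ot H$ follows from coassociativity,
\[
\Delta\bigl((f\ot\id)\Delta(h)\bigr)=\bigl(((f\ot\id)\Delta)\ot\id\bigr)\Delta(h)\in B_f\ot H.
\]
Next, for the commutation \eqref{eq:Kb=bK}, I would note that $f\ot\id\colon H\ot H\to H$ is an algebra homomorphism (since $f$ is a character), and that relation \eqref{eq:Rx=xR} together with its flip-image yields $\Delta(h)\,R^H R^H_{21}=R^H R^H_{21}\,\Delta(h)$ for every $h\in H$, so that $\cK^\dkm$ is central in $\Delta(H)$ inside $H\ot H$. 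Applying $f\ot\id$ then shows that $(f\ot\id)(\cK^\dkm)$ commutes with every $b=(f\ot\id)\Delta(h)\in B_f$, and centrality of $\theta_H$ transports this to $\cK^{\dkm,\theta,f}=\theta_H^{-1}(f\ot\id)(\cK^\dkm)$.

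For the coproduct formula \eqref{eq:deltaK-proper}, I would apply the algebra map $f\ot\id\ot\id\colon H^{\ot 3}\to H^{\ot 2}$ to the identity \eqref{eq:id-deltaKtheta-DKM} already derived in the text. The left side produces $\Delta(\cK^{\dkm,\theta,f})$ directly. For the right side, writing $\cK^{\dkm,\theta}=\sum_i a_i\ot b_i$ and $R^H=\sum_j s_j\ot t_j$, expanding the product in $H^{\ot 3}$ gives $\sum_{i,j,k,\ell} a_i a_k\ot b_i s_j t_\ell\ot t_j b_k s_\ell$. After applying $f\ot\id\ot\id$ and using that $f$ is a character to split $f(a_i a_k)=f(a_i)f(a_k)$ and to pull the resulting scalars past the remaining factors, this collapses to $(\cK^{\dkm,\theta,f}\ot 1)\,R^H\,(1\ot\cK^{\dkm,\theta,f})\,R^H_{21}$, which is exactly the right-hand side of \eqref{eq:deltaK-proper} in the case $\varphi=\id_H$.

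The only nonroutine point I foresee is the invertibility of $\cK^{\dkm,\theta,f}\in H$, which Definition~\ref{def:B-cylinder-braided} demands. Both $\theta_H$ and $\cK^\dkm\in H\ot H$ are invertible, but the image $(f\ot\id)(\cK^\dkm)\in H$ need not automatically be. This is where the factorizability hypothesis on $(H,R^H)$ is needed: the map $H^\ast\to H$, $\phi\mapsto(\phi\ot\id)(\cK^\dkm)$ is a linear isomorphism, and, since characters are convolution-invertible group-like elements of $H^\ast$, one expects the corresponding element of $H$ to be invertible. As a consistency check I would also record $\vep(\cK^{\dkm,\theta,f})=1$, which drops out from the counit axioms for $R^H$ together with $\vep(\theta_H)=1$ and is compatible with both the commutation and coproduct formulas.
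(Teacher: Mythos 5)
Your argument is correct and follows essentially the same route as the paper, which simply packages the preceding discussion: the coideal property of $B_f$, the commutation via the fact that $\kow(h)$ commutes with $R^H_{12}R^H_{21}$, and the coproduct formula obtained by applying $f\ot\id\ot\id$ to \eqref{eq:id-deltaKtheta-DKM}. On your one worry, invertibility does not in fact require factorizability: since $f$ is a character, $f\ot\id\colon H\ot H\to H$ is a unital algebra homomorphism (as you already used), so it sends the invertible element $(1\ot\theta_H^{-1})R^H_{12}R^H_{21}$ to an invertible element of $H$.
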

By the above proposition the element $\cK^{\dkm,\theta,f}$ satisfies the reflection equation in every tensor product $M\ot N$ of representations of $H$. As the ribbon element $\theta_H$ is central, the element $(f\ot 1)(R^H_{12} R^H_{21})$ also satisfies the reflection equation. 
\begin{rema}
  Assume that $\gfrak$ is of finite type. If one naively translates the construction of Proposition \ref{prop:Kdkm} to the setting of $\uqg$ then the resulting universal K-matrix is the identity element because $\uqg$ does not have any interesting characters. However, in \cite{a-DKM03} a universal K-matrix is also defined for non-factorizable $H$. In this case one chooses $\cK^\dkm$ to be the canonical element in $\tilde{H}^\ast\ot H$ where $\tilde{H}^\ast$ denotes a twisted version of the dual Hopf algebra $H^\ast$. One obtains a universal K-matrix by application of a character $f$ of $\tilde{H}^\ast$. This framework translates to the setting of $\uqg$ if one replaces $\tilde{H}^\ast$ by the braided restricted dual of $\uqg$.  The braided restricted dual of $\uqg$ is isomorphic as an algebra to the (right) locally finite part
  \begin{align*}
    F_r(\uqg)=\{x\in \uqg\,|\,\dim(\ad_r(\uqg)(x))<\infty\}
  \end{align*}
where $\ad_r(u)(x)=S(u_{(1)})x u_{(2)}$ for $u,x\in \uqg$ denotes the right adjoint action. The locally finite part has many non-trivial characters, and a cylinder braiding for $\Oint$ can be associated to each of them, see \cite[Propositions 2.8, 3.14]{a-KolbStok09}.

The constructions in \cite{a-DKM03} and in this subsection, however, do not answer the question how to find characters of $\tilde{H}^\ast$. For $H=\uqg$ this amounts to finding numerical solutions of the reflection equation which satisfy additional compatibility conditions. For $\gfrak=\slfrak_n(\C)$ this is a manageable problem, see \cite[Remark 5.11]{a-KolbStok09}. It would be interesting to find a conceptual classification of characters of $F_r(\uqg)$ for all $\gfrak$ of finite type. 
\end{rema}
\section{Quantum symmetric pairs}\label{sec:QSP}
In the remainder of this paper we will show that quantum symmetric pair coideal subalgebras of $\uqg$ are $\varphi$-cylinder-braided as in Definition \ref{def:U-cylinder-braided} for a suitable automorphism $\varphi$ of $\uqg$. To set the scene we now recall the construction and properties of quantum symmetric pairs. We will in particular recall the existence of the intrinsic bar involution from \cite{a-BalaKolb14p} in Section \ref{sec:bar-involution}. Quantum symmetric pairs depend on a choice of parameters and the existence of the bar involution imposes further restrictions. In Section \ref{sec:Assumptions}, for later reference, we summarize our setting and assumptions including the restrictions on parameters.
\subsection{Involutive automorphisms of the second kind}\label{sec:SecondKind}
Let $\bfrak^+$ denote the positive Borel subalgebra of $\gfrak$. An automorphism $\theta:\gfrak\rightarrow \gfrak$ is said to be of the second kind if $\dim(\theta(\bfrak^+)\cap \bfrak^+)<\infty$. Involutive automorphisms of the second kind of $\gfrak$ were essentially classified in \cite{a-KW92}, see also \cite[Theorem 2.7]{a-Kolb14}. In this section we recall the combinatorial data underlying this classification.

For any subset $X$ of $I$ let $\g_X$ denote the corresponding Lie subalgebra of $\gfrak$. The sublattice $Q_X$ of $Q$ generated by $\{\alpha_i\,|\,i\in X\}$ is the root lattice of $\gfrak_X$. If $\gfrak_X$ is of finite type then let $\rho_X$ and $\rho_X^\vee$ denote the half sum of positive roots and positive coroots of $\gfrak_X$, respectively. The Weyl group $W_X$ of $\gfrak_X$ is the parabolic subgroup of $W$ generated by all $\sigma_i$ with $i\in X$. If $\g_X$ is of finite type then let $w_X\in W_X$ denote the longest element. Let $\Aut(A)$ denote the group of permutations $\tau:I\rightarrow I$ such that the entries of the Cartan matrix $A=(a_{ij})$ satisfy $a_{ij}=a_{\tau(i)\tau(j)}$ for all $i,j\in I$. Let $\Aut(A,X)$ denote the subgroup of all $\tau\in\Aut(A)$ which additionally satisfy $\tau(X)=X$.

Involutive automorphisms of $\gfrak$ of the second kind are parametrized by combinatorial data attached to the Dynkin diagram of $\gfrak$. This combinatorial data is a generalization of Satake diagrams from the finite dimensional setting to the Kac-Moody case, see \cite{a-Araki62}, \cite[Definition 2.3]{a-Kolb14}.
\begin{defi}\label{def:admissible}
  A pair $(X,\tau)$ consisting of a subset $X\subseteq I$ of finite type and an element $\tau\in \Aut(A,X)$ is called admissible if the following
  conditions are satisfied:
  \begin{enumerate}
    \item $\tau^2=\id_I$.
    \item \label{adm2} The action of $\tau$ on $X$ coincides with the action of $-w_X$.
    \item \label{adm3} If $j\in I\setminus X$ and $\tau(j)=j$ then
         $\alpha_j(\rho_X^\vee)\in \Z$. 
   \end{enumerate}
\end{defi}
We briefly recall the construction of the involutive automorphisms $\theta=\theta(X,\tau)$ corresponding to the admissible pair $(X,\tau)$, see \cite[Section 2]{a-Kolb14} for details. Let $\omega:\gfrak\rightarrow \gfrak$ denote the Chevalley involution as in \cite[(1.3.4)]{b-Kac1}. Any $\tau\in \Aut(A,X)$ can be lifted to a Lie algebra automorphism $\tau:\gfrak\rightarrow \gfrak$. Moreover, for $X\subset I$ of finite type let $\Ad(w_X): \gfrak\rightarrow \gfrak$ denote the corresponding braid group action of the longest element in $W_X$. Finally, let $s:I\rightarrow \field^\times$ be a function such that
\begin{align}
s(i)&=1 &&\textrm{ if } i \in X \textrm{ or } \tau(i)=i, \label{eq:defs(i)1} \\
\frac{s(i)}{s(\tau(i))}&=(-1)^{\alpha_i(2\rho_X^\vee)}  &&\textrm{ if } i \notin X \textrm{ and } \tau(i)\ne i. \label{eq:defs(i)2}
\end{align}
Such a function always exists. The map $s$ gives rise to a group homomorphism 
$s_Q:Q\rightarrow \field^\times$ such that $s_Q(\alpha_i)=s(i)$. This in turn allows us to define a Lie algebra automorphism $\Ad(s):\gfrak\rightarrow \gfrak$ such that the restriction of $\Ad(s)$ to any root space $\gfrak_\alpha$ is given by multiplication by $s_Q(\alpha)$. 
\begin{rema}
In \cite[(2.7)]{a-Kolb14} and in \cite[(3.2)]{a-BalaKolb14p} we chose the values $s(i)$ for $i\in I$ to be certain fourth roots of unity. This had the advantage that $\Ad(s)$ commutes with the involutive automorphism corresponding to the admissible pair $(X,\tau)$. However, the only properties of $s$ used in \cite{a-Kolb14}, \cite{a-BalaKolb14p}, and in the present paper are the relations \eqref{eq:defs(i)1} and \eqref{eq:defs(i)2}. It is hence possible to choose $s(i)\in \{-1,1\}$. This is more suitable for the categorification program in \cite{a-EhrigStroppel13p} and for the program of canonical bases for coideal subalgebras in \cite{a-BaoWang13p}.
\end{rema}
With the above notations at hand we can now recall the classification of involutive automorphisms of the second kind in terms of admissible pairs.
\begin{thm}{\upshape(\cite{a-KW92}, \cite[Theorem 2.7]{a-Kolb14})}\label{classThm}
   The map 
   \begin{align*}
     (X,\tau)\mapsto \theta(X,\tau)=\Ad(s) \circ \Ad(w_X) \circ \tau\circ \omega
   \end{align*}
  gives a bijection between the set of $\Aut(A)$-orbits of admissible pairs for
  $\gfrak$ and the set of $\Aut(\gfrak)$-conjugacy classes of involutive automorphisms
  of the second kind. 
\end{thm}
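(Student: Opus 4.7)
The plan is to prove the theorem in four conceptual steps: well-definedness of the map, well-definedness on $\Aut(A)$-orbits, injectivity, and surjectivity. The first three steps are essentially formal once the correct normalizations are set up, while surjectivity is where the real work lies.

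First I would verify that $\theta=\theta(X,\tau)$ is an involutive automorphism of the second kind. Involutivity reduces to checking $\theta^2$ on the Chevalley generators $e_i,f_i,h_i$. The factor $\omega$ swaps $\bfrak^+$ and $\bfrak^-$, $\tau$ permutes root spaces along $\tau$, and $\Ad(w_X)$ acts through the longest element of the finite-type parabolic $\gfrak_X$. The conditions \eqref{eq:defs(i)1}--\eqref{eq:defs(i)2} on $s$ are precisely what is needed to cancel the sign picked up from $-w_X$ acting on the $X$-simple roots and to make $\theta^2$ act trivially on the $i\notin X$ simple root spaces; condition (3) of admissibility ensures the required integrality for the $\tau$-fixed nodes. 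That $\theta$ is of the second kind is immediate from the fact that $\omega(\bfrak^+)=\bfrak^-$ and that the only positive roots sent to positive roots by $\Ad(w_X)\circ\tau\circ\omega$ are negatives of roots in $Q_X$, together with a finite correction supported on $Q_X$; since $\gfrak_X$ is of finite type, $\theta(\bfrak^+)\cap\bfrak^+$ is finite dimensional.

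Second, for equivariance under $\Aut(A)$, if $\sigma\in\Aut(A)$ and $(X',\tau')=(\sigma(X),\sigma\tau\sigma^{-1})$ then lifting $\sigma$ to a Lie algebra automorphism gives $\sigma\,\theta(X,\tau)\,\sigma^{-1}=\theta(X',\tau')$ up to an inner automorphism from a maximal torus (absorbed by rechoosing $s$), so the map descends to orbits. For injectivity I would recover the combinatorial data $(X,\tau)$ intrinsically from $\theta$: after conjugating $\theta$ so that it preserves a standard Cartan $\hfrak$ (possible by a Kac--Peterson argument for second-kind involutions), the set $X$ is characterised as the set of $i\in I$ for which $-\theta$ acts trivially on $\alpha_i$ modulo the lattice generated by $X$-roots, and $\tau$ is then read off from how $\theta$ permutes the $(I\setminus X)$-part of the Dynkin diagram modulo $\Ad(w_X)\circ\omega$. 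Two admissible pairs producing conjugate involutions must therefore give the same $(X,\tau)$ up to relabelling by some element of $\Aut(A)$.

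The main obstacle is surjectivity. Given an involutive $\theta$ of the second kind, one first conjugates it into a normal form preserving a standard Cartan; this uses the Peterson--Kac conjugacy theorem for conjugacy classes of Cartan subalgebras together with the Borel-finiteness hypothesis to rule out exotic behaviour at imaginary roots. Then $\theta\circ\omega$ becomes an automorphism preserving $\bfrak^+$ up to a finite correction, hence permutes the simple roots up to a Weyl group element supported on a finite-type subsystem; this decomposes it as $\Ad(w_X)\circ\tau$ composed with an inner automorphism from the torus. Choosing a maximal such $X$ forces condition (2) of admissibility (otherwise one could absorb more into $w_X$), while $\theta^2=\id$ forces conditions (1) and (3) after absorbing any residual torus factor into $\Ad(s)$. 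Verifying that the resulting $s$ can always be chosen to satisfy \eqref{eq:defs(i)1}--\eqref{eq:defs(i)2} is a cohomological computation on the quotient $Q/Q_X$ that uses exactly the integrality condition (3) of Definition \ref{def:admissible}. I expect this last cohomological step, together with the careful handling of the Cartan subalgebra conjugacy in the Kac--Moody setting, to be the most delicate point of the argument.
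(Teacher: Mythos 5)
First, note that the paper does not prove this statement at all: it is quoted from \cite{a-KW92} and \cite[Theorem 2.7]{a-Kolb14}, where the argument occupies a substantial part of those papers. So the only question is whether your outline would actually yield a proof, and as it stands it does not. The easy direction (that $\theta(X,\tau)$ is an involution of the second kind, and that the assignment descends to $\Aut(A)$-orbits) is described correctly in spirit, but the two hard steps are only gestured at. Your recovery of the pair from $\theta$ is circular: you characterize $X$ as ``the set of $i\in I$ for which $-\theta$ acts trivially on $\alpha_i$ modulo the lattice generated by $X$-roots,'' i.e.\ you use $X$ to define $X$. The correct statement is that, after normalizing $\theta$ so that it preserves $\hfrak$ and the standard pair of Borels up to finiteness, one sets $X=\{i\in I\mid \Theta(\alpha_i)=\alpha_i\}$ (equivalently, the $i$ for which $\theta$ fixes the corresponding $\slfrak_2$-triple pointwise) and reads off $\tau$ from $\Theta=-w_X\tau$ on the remaining nodes. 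Your attributions of the admissibility conditions are also off: condition (2) of Definition \ref{def:admissible} is not forced by ``choosing a maximal $X$'' but is automatic from $\Theta|_{Q_X}=\id$ together with $\Theta=-w_X\tau$, and condition (3) is forced by $\theta^2=\id$ on the root vectors $e_j$ with $\tau(j)=j$, $j\notin X$ (where $s(j)=1$, so the sign $(-1)^{\alpha_j(2\rho_X^\vee)}$ must equal $1$), not by a cohomological computation. Likewise, the existence of $s$ satisfying \eqref{eq:defs(i)1}--\eqref{eq:defs(i)2} is elementary (choose values on representatives of the $\tau$-orbits); what is genuinely delicate is the converse normalization, namely showing that the inner (torus) part of an arbitrary second-kind involution can be brought into the form $\Ad(s)$ with $s$ of this restricted shape after conjugation.

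That converse normalization, together with the conjugacy input you invoke only by name (the Peterson--Kac conjugacy theorems for Cartan and Borel subalgebras in the Kac--Moody setting, and the argument that a second-kind involution carries $\bfrak^+$ to a Borel conjugate to $\bfrak^-$ up to a finite-type correction), constitutes essentially all of the content of the theorem; none of it is carried out in your proposal, and the phrases ``rule out exotic behaviour at imaginary roots'' and ``cohomological computation on $Q/Q_X$'' do not identify concrete arguments. If you want a genuine proof you will need to follow the structure of \cite[Section 2]{a-Kolb14} (or \cite{a-KW92}): normalize $\theta$ via the conjugacy theorems, show the induced map on $\hfrak^\ast$ has the form $-w_X\tau$ for a uniquely determined $(X,\tau)$, verify the three admissibility conditions from $\theta^2=\id$ as above, and then control the residual inner automorphism by an explicit computation with group homomorphisms $Q\rightarrow\field^\times$ to match it with $\Ad(s)$. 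As written, the proposal is an outline of the statement of such a proof rather than a proof.
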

Let $\kfrak=\{x\in \gfrak\,|\,\theta(X,\tau)(x)=x\}$ denote the fixed Lie subalgebra of $\gfrak$. We refer to $(\gfrak,\kfrak)$ as a symmetric pair.
The involution $\theta=\theta(X,\tau)$ leaves $\hfrak$ invariant. The induced map $\Theta:\hfrak^\ast\rightarrow \hfrak^\ast$ is given by 
\begin{align*}
   \Theta=-w_X\circ \tau
\end{align*}
where $\tau(\alpha_i)=\alpha_{\tau(i)}$ for all $i\in I$, see \cite[2.2, (2.10)]{a-Kolb14}. Hence $\Theta$ restricts to an involution of the root lattice. Let $Q^\Theta$ be the sublattice of $Q$ consisting of all elements fixed by $\Theta$.
For later use we note that
\begin{equation}
  \Theta(\alpha_{\tau(i)})-\alpha_{\tau(i)}=\Theta(\alpha_{i})-\alpha_{i}\qquad\mbox{for all $i\in I$,} \label{eq:alphai-alphataui}
\end{equation}
see \cite[Lemma 3.2]{a-BalaKolb14p}.
\subsection{The construction of quantum symmetric pairs} 
We now recall the definition of quantum symmetric pair coideal subalgebras following \cite{a-Kolb14}. For the remainder of this paper let $(X,\tau)$ be an admissible pair and $s:I\to \mathbb{K}^\times$ a function satisfying \eqref{eq:defs(i)1} and \eqref{eq:defs(i)2}. Let $\cM_X=U_q(\gfrak_X)$ denote the subalgebra of $\uqg$ generated by the elements $E_i$, $F_i$, $K_i^{\pm 1}$ for all $i\in X$. Correspondingly, let $\cM_X^+$ and $\cM_X^-$ denote the subalgebras of $\cM_X$ generated by the elements in the sets $\{E_i\,|\,i \in X\}$ and  $\{F_i\,|\,i \in X\}$, respectively. 

The derived Lie subalgebra $\gfrak'$ is invariant under the involutive automorphism $\theta=\theta(X,\tau)$.  One can define a quantum group analog $\theta_q:\uqgp\rightarrow \uqgp$ of $\theta$, see \cite[Definition 4.3]{a-Kolb14} for details. The quantum involution $\theta_q$ is a $\field(q^{1/d})$-algebra automorphism but it is not a coalgebra automorphism and $\theta_q^2\neq \id_{\uqgp}$. However, the map $\theta_q$ has the following desirable properties 
\begin{align*}
\theta_q|_{\cM_X}&=\id_{\cM_X}, &&\\
\theta_q(K_\mu)&=K_{\Theta(\mu)} &&\mbox{for all $\mu\in Q$},\\
\theta_q(K_i^{-1}E_i)&=-s(\tau(i))^{-1}T_{w_X}(F_{\tau(i)})\in  U^-_{\Theta(\alpha_i)} && \mbox{for all $i\in I\setminus X$,}\\
\theta_q(F_iK_i)&= -s(\tau(i))T_{w_X}(E_{\tau(i)})\in U^+_{\Theta(-\alpha_i)}&& \mbox{for all $i\in I\setminus X$.}
\end{align*}
To shorten notation define
\begin{align}
X_i&=\theta_q(F_iK_i)=-s(\tau(i))T_{w_X}(E_{\tau(i)})\qquad \qquad\mbox{for all $i\in I\setminus X$.}\label{def:Xi} 
\end{align}
Quantum symmetric pair coideal subalgebras depend on a choice of parameters $\bc=(c_i)_{i\in I\setminus X}\in  (\mathbb{K}(q^{1/d})^\times)^{I\setminus X}$  and $\bs=(s_i)_{i\in I\setminus X}\in  \mathbb{K}(q^{1/d})^{I\setminus X}$. Define 
\begin{align}
  I_{ns} &= \{ i \in I\setminus X | \tau(i)= i \textrm{ and } a_{ij}=0 \textrm{ for all } j\in X \}. \label{def:setIns} 
\end{align}
In \cite[(5.9), (5.11)]{a-Kolb14}) the following parameter sets appeared 
\begin{align}
\mathcal{C} &= \{ \mathbf{c}\in (\mathbb{K}(q^{1/d})^\times)^{I\setminus X} | c_i=c_{\tau(i)} \textrm{ if } \tau(i)\ne i \textrm{ and } (\alpha_i,\Theta(\alpha_i))=0 \}, \label{def:setC} \\
\mathcal{S} &= \{ \mathbf{s}\in \mathbb{K}(q^{1/d})^{I\setminus X} | s_j\ne 0 \Rightarrow ( j \in I_{ns} \,\, \mathrm{ and }\,\, a_{ij} \in -2\mathbb{N}_0 \,\, \forall i \in I_{ns}\setminus \{j\}) \}, \label{def:setS}
\end{align}
see also \cite[Remark 3.3]{a-BalaKolb14p}. 

Let $U^0_\Theta{}'$ be the subalgebra of $U^0$ generated by all $K_\mu$ with $\mu\in Q^\Theta$. 
\begin{defi}\label{def:qsp}
Let $(X,\tau)$ be an admissible pair, $\bc=(c_i)_{i\in I\setminus X}\in \cC$, and $\bs=(s_i)_{i\in I\setminus X}\in \cS$. The quantum symmetric pair coideal subalgebra $\coid=\coid(X,\tau)$ is the subalgebra of $\uqgp$ generated by $\mathcal{M}_X$, $U^0_\Theta{}'$, and the elements 
\begin{align}
  B_i &= F_i + c_i X_i K_i^{-1} + s_i K_i ^{-1} \label{eq:Bi-def} 
\end{align}
for all $i \in I\setminus X$.
\end{defi}
\begin{rema}
The conditions $\mathbf{c}\in \mathcal{C}$, $\mathbf{s}\in \mathcal{S}$ can be found in \cite[(5.9),(5.11)]{a-Kolb14}. They are necessary to ensure that the intersection of the coideal subalgebra with $U^0$ is precisely $U^0_\Theta{}'$. This in turn implies that the coideal subalgebra $\coid$ specializes to $U(\kfrak')$ at $q=1$ with $\kfrak'=\{x\in \gfrak'\,|\,\theta(x)=x\}$, see \cite[Remark 5.12, Theorem 10.8]{a-Kolb14}. 
\end{rema}

For $i \in X$ we set $c_i=s_i=0$ and $B_i=F_i$. This convention will occasionally allow us to treat the cases $i\in X$ and $i\notin X$ simultaneously. 

The algebra $\coid$ is a right coideal subalgebra of $\uqgp$, that is
\begin{align*}
  \kow(\coid)\subseteq \coid\ot \uqgp,
\end{align*}
see \cite[Proposition 5.2]{a-Kolb14}. One can calculate the coproduct of the generators $B_i$ for $i\in I\setminus X$ more explicitly and obtains
\begin{align}\label{eq:kowBi}
  \kow(B_i) = B_i\ot K_i^{-1} + 1\ot F_i + c_i \,r_{\tau(i)}(X_i) K_i^{-1} K_{\tau(i)}\ot E_{\tau(i)}K_i^{-1} + \Upsilon
\end{align}
for some $\Upsilon\in \cM_X U^0_\Theta{'}\ot \sum_{\gamma>\alpha_{\tau(i)}} U^+_\gamma K_i^{-1}$, see \cite[Lemma 7.2]{a-Kolb14}. By \eqref{eq:riDelta} this implies that
\begin{align}\label{eq:rj(Xi)} 
 r_j(X_i)=0 \qquad\qquad \mbox{whenever $j\neq \tau(i)$.} 
\end{align}
In view of \eqref{eq:kowBi} it makes sense to define
\begin{align}
\mathcal{Z}_i&=r_{\tau(i)}(X_i) K_i^{-1}K_{\tau(i)}. \label{def:Zi}
\end{align}
The elements $\cZ_i$ play a crucial role in the description of $\coid$ in terms of generators and relations, see \cite[Section 7]{a-Kolb14}, \cite[Section 3.2]{a-BalaKolb14p}.
\subsection{The bar involution for quantum symmetric pairs}\label{sec:bar-involution}
The bar involution for $\uqg$ defined in \eqref{eq:defbarUqg} does not map $\coid$ to itself. Inspired by the papers \cite{a-EhrigStroppel13p}, \cite{a-BaoWang13p}, it was shown in \cite{a-BalaKolb14p} under mild additional assumptions that $\coid$ allows an intrinsic bar involution $\barB:\coid\rightarrow \coid$. We now recall these assumptions and the construction of the intrinsic bar involution for $\coid$.

In \cite[Section 3.2]{a-BalaKolb14p} the algebras $\coid$ are given explicitly in terms of generators and relations for all Cartan matrices $A=(a_{ij})$ and admissible pairs $(X,\tau)$ which satisfy the following properties:
\begin{enumerate}
     \item[(i)]  If $i\in I\setminus X$ with $\tau(i)=i$ and $j\in X$ then $a_{ij}\in \{0,-1,-2\}$.
     \item[(ii)]  If $i\in I\setminus X$ with $\tau(i)=i$ and $i\neq j\in I\setminus X$ then $a_{ij}\in \{0,-1,-2,-3\}$.
  \end{enumerate}
The existence of the bar involution $\barB$ on $\coid$ was then proved  by direct computation based on the defining relations. 
\begin{thm}{\upshape(\cite[Theorem 3.11]{a-BalaKolb14p})}\label{thm:bar-involution}
  Assume that conditions (i) and (ii) hold. The following statements are equivalent.
  \begin{enumerate}
    \item There exists a $\field$-algebra automorphism $\barB:B_{\bc,\bs}\rightarrow 
          B_{\bc,\bs}$, $x\mapsto \overline{x}^B$ such that
          \begin{align}
\overline{x}^B=\overline{x} \textrm{ for all }  x \in \mathcal{M}_X U^0_\Theta{}', \qquad  \overline{B_i}^B&=B_i \textrm{ for all } i \in I\setminus X. \label{eq:defbarBcs}
         \end{align} 
         In particular $\overline{q^{1/d}}=q^{-1/d}$.
    \item The relation
       \begin{align}\label{eq:ocZi}
         \overline{c_i \cZ_i} = q^{(\alpha_i,\alpha_{\tau(i)})} c_{\tau(i)} \cZ_{\tau(i)}
       \end{align}
         holds for all $i\in I\setminus X$ for which $\tau(i)\neq i$ or for which there exists $j\in I\setminus \{i\}$ such that $a_{ij}\neq 0$. 
  \end{enumerate}
\end{thm}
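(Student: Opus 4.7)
The plan is to prove the equivalence (1) $\Leftrightarrow$ (2) by exploiting the explicit presentation of $\coid$ in terms of generators and relations established in \cite[Section 3.2]{a-BalaKolb14p}, for which conditions (i), (ii) on the Cartan entries are crucial. The structure of the argument is symmetric: the defining relations of $\coid$ are exactly where the coefficients $c_i\cZ_i$ make their appearance, and the relation \eqref{eq:ocZi} is precisely the discrepancy between a formal application of the bar and the desired fixing of the $B_i$.

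For the direction (1) $\Rightarrow$ (2), I would exploit the minimality of the presentation. Assume $\barB$ exists, and pick $i\in I\setminus X$ satisfying the hypothesis in (2), so that either $\tau(i)\ne i$ or there exists $j\ne i$ with $a_{ij}\ne 0$. In either case, the presentation of $\coid$ contains a non-trivial defining relation among the $B_i$'s (a deformed $q$-Serre relation or a pair relation linking $B_i$ and $B_{\tau(i)}$) in which both $c_i\cZ_i$ and $c_{\tau(i)}\cZ_{\tau(i)}K_{\alpha_i-\alpha_{\tau(i)}}$-type terms occur as coefficients. Applying $\barB$ to both sides, using $\overline{B_i}^B=B_i$ and the fact that $\barB$ agrees with $\overline{\phantom{x}}$ on $\cM_X U^0_\Theta{}'$, and matching coefficients on the basis of $\coid$ given by the PBW-type decomposition, yields \eqref{eq:ocZi}. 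The factor $q^{(\alpha_i,\alpha_{\tau(i)})}$ arises from straightening the $K_i$ past $K_{\tau(i)}$.

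For the direction (2) $\Rightarrow$ (1), I would define $\barB$ on the generating set by \eqref{eq:defbarBcs} and verify that it extends to a $\field$-algebra automorphism by checking compatibility with each defining relation. The relations split into three families: (a) relations internal to $\cM_X$ and $U^0_\Theta{}'$, which are preserved automatically because $\barB$ restricts to the standard bar on these subalgebras; (b) commutation relations linking the $B_i$ with $\cM_X U^0_\Theta{}'$, which follow by direct computation from the formula \eqref{eq:Bi-def} and bar-equivariance of its constituents, noting that $\overline{X_i}$ and $\overline{\cZ_i}$ are controlled by the Lusztig braid action and \eqref{eq:Tibarinverse}; and (c) the deformed $q$-Serre-type relations among the $B_i$'s. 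Case (c) is the substantive content.

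The main obstacle is case (c), which requires a careful case analysis driven by the shape of the $q$-Serre deformation: the form of the right-hand side depends on $a_{ij}$, on whether $i,j\in X$ or $I\setminus X$, and on $\tau$. Conditions (i), (ii) keep this casework finite and manageable, since the deformed relations are explicitly enumerated in \cite[Section 3.2]{a-BalaKolb14p}. In each case, one applies the formal $\barB$ to both sides of the relation, commutes all Cartan elements $K_\mu$ to the right picking up $q$-powers, and collects the resulting discrepancy. The terms involving $\cZ_i$ occur with coefficient $c_i$ on one side and $\overline{c_i\cZ_i}$ (after bar) on the other, and precise cancellation forces exactly the identity \eqref{eq:ocZi}, together with the automatic identity $\overline{s_i}=s_i$ that is already ensured by the constraint $\bs\in\mathcal{S}$. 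Hence (2) is not only necessary but also sufficient.
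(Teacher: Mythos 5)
Your overall strategy is the right one, and it is essentially the proof behind this statement: the present paper does not reprove the theorem but imports it from \cite[Theorem 3.11]{a-BalaKolb14p}, where the argument is exactly the direct computation you describe --- use the explicit presentation of $\coid$ (valid under conditions (i), (ii)), define $\barB$ on generators by \eqref{eq:defbarBcs}, and check compatibility with the defining relations, the deformed quantum Serre relations being the only place where a constraint on the parameters can arise; conversely, applying $\barB$ to those relations and comparing coefficients (the Serre polynomial $S_{ij}$ has bar-invariant coefficients, so its value on the $B$'s is $\barB$-fixed) forces \eqref{eq:ocZi} exactly for the $i$ listed in (2). Your remark that the exceptional $i$ in (2) are precisely those for which no inhomogeneous relation involving $c_i\cZ_i$ occurs is also the correct mechanism; just note that the relevant neighbour $j$ may lie in $X$, in which case the relation involves $B_j=F_j$, which is covered by the paper's convention.

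There is, however, one concrete error in your write-up of the substantive case (c): you assert that $\overline{s_i}=s_i$ is ``automatic'' because $\bs\in\cS$. It is not. The set $\cS$ in \eqref{def:setS} only restricts \emph{which} $s_i$ may be nonzero (namely $i\in I_{ns}$ with even Cartan entries towards $I_{ns}$); it imposes no bar-invariance, and indeed the present paper lists $\overline{s_i}=s_i$ as a separate Assumption (3) in Section \ref{sec:Assumptions}, explicitly flagged as new and needed only for the quasi K-matrix (Lemma \ref{lem:Xcond2}, Theorem \ref{thm:Xfrak}), not for this theorem. Since the theorem as stated allows arbitrary $\bs\in\cS$, a correct verification of the deformed Serre relations in the cases where $s_i\neq 0$ (e.g.\ $\tau(i)=i$, $a_{ij}\in-2\N_0$) must show that the $s_i$-dependent inhomogeneous terms are compatible with $\barB$ \emph{without} assuming $\overline{s_i}=s_i$ --- in \cite{a-BalaKolb14p} this works because those terms enter with coefficients whose bar-behaviour is controlled by the relation itself, so no extra condition on $\bs$ appears. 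As written, your argument either silently adds a hypothesis the theorem does not have, or leaves the $s_i$-cases unverified; this is the one point you need to repair, by actually carrying out the relation check for those cases rather than appealing to a nonexistent bar-invariance of $\bs$.
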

It is conjectured that Theorem \ref{thm:bar-involution} holds without the assumptions (i) and (ii).
In \cite[Proposition 3.5]{a-BalaKolb14p} it was proved that for all $i\in I\setminus X$ one has 
\begin{align}\label{eq:oZ}
  \overline{\cZ_i} = \nu_i q^{(\alpha_i,\alpha_i- w_X(\alpha_i)-2\rho_X)} \cZ_{\tau(i)}
\end{align}
for some $\nu_i\in \{-1,1\}$. For $\gfrak$ of finite type it was moreover proved that $\nu_i=1$ for all $i\in I\setminus X$, and this was conjectured to hold also in the Kac-Moody case \cite[Proposition 2.3, Conjecture 2.7]{a-BalaKolb14p}. 
\subsection{Assumptions}\label{sec:Assumptions}
For later reference we summarize our setting. As before $\gfrak$ denotes the Kac-Moody algebra corresponding to the symmetrizable Cartan matrix $A=(a_{ij})$ and $(X,\tau)$ is an admissible pair. We fix parameters $\bc\in \cC$ and $\bs\in \cS$ and let $\coid$ denote the corresponding quantum symmetric pair coideal subalgebra of $\uqgp$ as given in Definition \ref{def:qsp}. Additionally the following assumptions are made for the remainder of this paper.
\begin{enumerate}
  \item The Cartan matrix $A=(a_{ij})$ satisfies conditions (i) and (ii) in Section \ref{sec:bar-involution}. 
  \item The parameters $\bc\in \cC$ satisfy the condition
  \begin{align}
\overline{c_i\mathcal{Z}_i}=q^{(\alpha_i,\alpha_{\tau(i)})}c_{\tau(i)}\mathcal{Z}_{\tau(i)}
  \qquad\mbox{ for all $i \in I\setminus X$.} \label{Mparameters1}
  \end{align}
  \item The parameters $\bs\in \cS$ satisfy the condition
  \begin{align}
    \overline{s}_i=s_i \qquad \mbox{for all $i \in I\setminus X$.} \label{Mparameters2}
  \end{align}
\item One has $\nu_i=1$ for all $i\in I \setminus X$, that is \cite[Conjecture 2.7]{a-BalaKolb14p} holds true.
\end{enumerate}
If (4) holds then using \eqref{eq:oZ} and \eqref{eq:alphai-alphataui} one sees that Equation \eqref{Mparameters1} is equivalent to 
\begin{align}\label{eq:octau}
  c_{\tau(i)} = q^{(\alpha_i,\Theta(\alpha_i)-2\rho_X)}\overline{c_i}. 
\end{align}
\begin{rema}
  Assumption (1) is only used in the proof of Theorem \ref{thm:bar-involution}. Assumption (4) is only used to obtain Equation \eqref{eq:octau}. Once Theorem \ref{thm:bar-involution} is established without assuming conditions (i) and (ii), and once it is proved that $\nu_i=1$ for all $i\in I\setminus X$, all results of this paper hold for $\coid$ with $\bc\in \cC$ and $\bs\in \cS$ satisfying relations \eqref{Mparameters1} and \eqref{Mparameters2}.
\end{rema}
\begin{rema}
  Observe that assumption (2) is a stronger statement then what is needed for the existence of the bar-involution $\barB$ in Theorem \ref{thm:bar-involution}. This stronger statement will be used in the construction of the quasi-K-matrix in Section \ref{sec:constructXmu}, see the end of the proof of Lemma \ref{lem:Xcond1}. It is moreover used in the calculation of the coproduct of the universal K-matrix in Section \ref{sec:coproductK}, see proof of Lemma \ref{lem:1riXK2}. Assumption (3) is new and will be used in the proofs of Lemma \ref{lem:Xcond2} and Theorem \ref{thm:Xfrak}. 
\end{rema}
\begin{rema}\label{rem:qhalf}
For every admissible pair there exist parameters $c_i \in \mathbb{K}(q)$ satisfying Equation \eqref{eq:octau}, see \cite[Remark 3.14]{a-BalaKolb14p}.
\end{rema}

\section{The Quasi K-matrix $\Xfrak$}\label{sec:quasiK}
The bar involution $x\mapsto \overline{x}$ on $\uqg$ defined by \eqref{eq:defbarUqg} and the internal bar involution $x\mapsto \overline{x}^B$ on $\coid$ defined by \eqref{eq:defbarBcs} satisfy $\overline{B_i}\ne \overline{B_i}^B$ if $i\in I\setminus X$. Hence the two bar involutions do not coincide when restricted to $B_{\bc,\bs}$. The aim of this section is to construct an element $\Xfrak\in \widehat{U^+}$ which intertwines between the two bar involutions. More precisely, we will find $(\Xfrak_\mu)_{\mu \in Q^+}$ with $\Xfrak_\mu \in U^+_\mu$ and $\Xfrak_0=1$ such that $\Xfrak=\sum_\mu \Xfrak_\mu$ satisfies  
\begin{equation}
\overline{x}^B  \Xfrak=\Xfrak\, \overline{x} \qquad \mbox{for all $ x \in \coid$.}\label{eq:Xfrak}
\end{equation}
In view of relation \eqref{eq:quasiR-property1}, the element $\Xfrak \in \widehat{U^+}\subseteq \mathscr{U}$ is an analog of the quasi-R-matrix $R$ for quantum symmetric pairs. For this reason we will call $\Xfrak$ the quasi K-matrix for $\coid$. Examples of quasi K-matrices $\Xfrak$ were first constructed in \cite[Theorems 2.10, 6.4]{a-BaoWang13p} for the coideal subalgebras corresponding to the symmetric pairs $(\slfrak_{2n}, \mathfrak{s}(\mathfrak{gl}_n\times \mathfrak{gl}_n))$ and $(\slfrak_{2n+1}, \mathfrak{s}(\mathfrak{gl}_{n+1}\times \mathfrak{gl}_n))$.

\subsection{A recursive formula for $\Xfrak$}
As a first step towards the construction of $\Xfrak$ we translate relation \eqref{eq:Xfrak} into a recursive formula for the components $\Xfrak_\mu$.
\begin{prop}\label{prop:TFAE}
Let $$\Xfrak=\sum_{\mu \in Q^+} \Xfrak_{\mu} \in \widehat{U^+},  \qquad \textrm{with } \Xfrak_\mu \in U^+_\mu.$$ The following are equivalent: 
\begin{enumerate}
\item For all $x \in \coid$ one has $\overline{x}^B  \Xfrak=\Xfrak\, \overline{x}$.
\item For all $i \in I$ one has $\overline{B_i}^B \Xfrak=\Xfrak\, \overline{B_i}$.
\item For all $\mu \in Q^+$ and all $i \in I$ one has
        \begin{align}
           r_i(\Xfrak_\mu) &= -(q_i-q_i^{-1}) \left( \Xfrak_{\mu+\Theta(\alpha_i)-\alpha_i} \overline{c_iX_i} + \overline{s_i} \Xfrak_{\mu-\alpha_i}\right), \label{eq:riX}\\
           {}_ir(\Xfrak_\mu)&=-(q_i-q_i^{-1}) \left(q^{-(\Theta(\alpha_i),\alpha_i)} c_i X_i \Xfrak_{\mu+\Theta(\alpha_i)-\alpha_i}  + s_i \Xfrak_{\mu-\alpha_i}\right). \label{eq:irX}
        \end{align}
\end{enumerate}
If these equivalent conditions hold then additionally
\begin{enumerate}
\item[(4)]  For all $\mu \in Q^+$ such that $\Xfrak_\mu\ne 0$, one has $\Theta(\mu)=-\mu$.
\end{enumerate}
\end{prop}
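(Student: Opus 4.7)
My plan is to prove $(1) \Rightarrow (2)$ (immediate, since $B_i \in \coid$ for all $i \in I$ with the convention $B_j = F_j$ for $j \in X$), $(2) \Leftrightarrow (3)$ (by direct computation of weight components), $(3) \Rightarrow (4)$ (by induction on $\mathrm{ht}(\mu)$), and finally $(2) + (4) \Rightarrow (1)$ (by checking on generators of $\coid$). The hardest step will be the last one.

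For $(2) \Leftrightarrow (3)$, the routine translation goes as follows. I expand $B_i = F_i + c_i X_i K_i^{-1} + s_i K_i^{-1}$ and $\overline{B_i} = F_i + \overline{c_iX_i}K_i + \overline{s_i}K_i$ inside the equation $B_i\Xfrak = \Xfrak\overline{B_i}$, use \eqref{eq:riCommute} to rewrite $[F_i,\Xfrak_\mu]$ in terms of $r_i$ and ${}_ir$, and move every $K_i^{\pm 1}$ to the right using $K_i^{\pm 1}x = q^{\pm(\alpha_i,\mathrm{wt}(x))}xK_i^{\pm 1}$. The equation then splits as a sum of terms in $U^+K_i$ and in $U^+K_i^{-1}$ which must vanish independently; matching the $K_i$ and $K_i^{-1}$ components weight by weight gives exactly \eqref{eq:riX} and \eqref{eq:irX}. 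The case $i \in X$ is handled the same way using $c_i = s_i = 0$ and $\overline{B_i} = B_i = F_i$. Property $(4)$ is then deduced from $(3)$ by induction on $\mathrm{ht}(\mu)$: if $\Xfrak_\mu \neq 0$ with $\mu \ne 0$, then some $r_i(\Xfrak_\mu)$ is nonzero by \eqref{eq:allrizero}, so \eqref{eq:riX} forces either $\Xfrak_{\mu - \alpha_i + \Theta(\alpha_i)}$ or $\Xfrak_{\mu-\alpha_i}$ to be nonzero. The inductive hypothesis combined with $\Theta^2 = \id$ yields $\Theta(\mu) = -\mu$ in the first case; in the second, $s_i \neq 0$ forces $i \in I_{ns}$, hence $\Theta(\alpha_i) = -\alpha_i$, and induction again gives $\Theta(\mu) = -\mu$.

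The main obstacle is $(2) \Rightarrow (1)$. Since $\overline{\cdot}^B$ and $\overline{\cdot}$ are algebra homomorphisms, the set $Z = \{x \in \coid \mid \overline{x}^B\Xfrak = \Xfrak\overline{x}\}$ is a subalgebra of $\coid$, so it suffices to show that $Z$ contains the generators $\cM_X$, $U^0_\Theta{}'$, and $\{B_i : i \in I \setminus X\}$. The $B_i$ case is (2) by assumption. For $K_\mu \in U^0_\Theta{}'$, property (4) gives $(\mu,\nu) = (\Theta(\mu),\nu) = (\mu, \Theta(\nu)) = -(\mu,\nu) = 0$ whenever $\Xfrak_\nu \neq 0$, so $\overline{K_\mu} = K_{-\mu}$ commutes with $\Xfrak$. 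Within $\cM_X$, commutation with $F_j$ ($j\in X$) is the $B_j = F_j$ case of (2), and commutation with $K_j^{\pm 1}$ follows from the $U^0_\Theta{}'$ case since $\alpha_j \in Q^\Theta$. The genuine obstacle is thus $[E_j,\Xfrak_\mu] = 0$ for $j \in X$. The plan is to establish this by induction on $\mathrm{ht}(\mu)$, using \eqref{eq:allrizero} to reduce to the vanishing of $r_i([E_j,\Xfrak_\mu])$ for all $i$. The skew-derivation identity \eqref{def:ri} gives
\begin{align*}
r_i([E_j,\Xfrak_\mu]) = \delta_{ij}(q^{(\alpha_j,\mu)}-1)\Xfrak_\mu + E_j r_i(\Xfrak_\mu) - q^{(\alpha_i,\alpha_j)}r_i(\Xfrak_\mu)E_j,
\end{align*}
where the first term vanishes by (4) (since $\Theta(\alpha_j) = \alpha_j$ for $j \in X$, forcing $(\alpha_j,\mu) = 0$), the terms with $i \in X$ vanish because $r_i(\Xfrak_\mu) = 0$ by (3), and for $i \in I \setminus X$ the remaining $q$-commutator is controlled by substituting the recursion \eqref{eq:riX} for $r_i(\Xfrak_\mu)$ and invoking the inductive hypothesis, together with the known $q$-commutation of $E_j$ ($j \in X$) with $X_i = -s(\tau(i))T_{w_X}(E_{\tau(i)})$ arising from the Lusztig braid relations.
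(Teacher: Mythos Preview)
Your proof is correct and follows the same overall architecture as the paper: $(1)\Rightarrow(2)$ trivially, $(2)\Leftrightarrow(3)$ by comparing weight components via \eqref{eq:riCommute}, $(3)\Rightarrow(4)$ by induction on height, and then $(3)\Rightarrow(1)$ by checking generators. The treatment of $U^0_\Theta{}'$ and of $F_j$ for $j\in X$ also matches.

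The one genuine divergence is how you show that $E_j$ commutes with $\Xfrak_\mu$ for $j\in X$. The paper observes that $\Xfrak_\mu$ is killed by $\ad(F_j)$ and fixed by $\ad(K_j)$, hence is a lowest weight vector of weight $0$ for the adjoint action of $U_{q_j}(\mathfrak{sl}_2)_j$; local finiteness of this action on $U^+$ then forces $\ad(E_j)(\Xfrak_\mu)=0$ as well. This is a one-line representation-theoretic argument requiring no further computation. Your route---induction on $\hght(\mu)$, reducing via \eqref{eq:allrizero} to $r_i([E_j,\Xfrak_\mu])=0$, and then handling the $i\in I\setminus X$ case by substituting \eqref{eq:riX} and using the $q$-commutation $E_j\,\overline{X_i}=q^{(\alpha_i,\alpha_j)}\overline{X_i}\,E_j$---works too, but is more laborious. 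The $q$-commutation you invoke is indeed obtained by applying $T_{w_X}^{-1}$ to $E_{\tau(i)}F_{\tau(j)}K_{\tau(j)}=q^{-(\alpha_i,\alpha_j)}F_{\tau(j)}K_{\tau(j)}E_{\tau(i)}$ and using $(\alpha_j,\Theta(\alpha_i))=(\Theta(\alpha_j),\alpha_i)=(\alpha_j,\alpha_i)$ for $j\in X$; the $s_i$-term is harmless since $s_i\neq 0$ forces $i\in I_{ns}$ and hence $(\alpha_i,\alpha_j)=0$. So both arguments are valid; the paper's is cleaner, yours is more explicit and stays entirely within the skew-derivation calculus already set up.
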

\begin{proof}
$\boldsymbol{(1)\Rightarrow (2):}$ Property (2) is the special case $x=B_i$ of property (1).

\noindent $\boldsymbol{(2)\Leftrightarrow (3):}$ Fix $i\in I$. Using the definition \eqref{eq:Bi-def} of $B_i$, definition \eqref{eq:defbarUqg} of $\overline{B_i}$ and definition \eqref{eq:defbarBcs} of $\overline{B_i}^B$, we see that (2) is equivalent to 
\begin{align}\label{eq:Xx=xX}
  \left(F_i+c_i X_i K_i^{-1}+s_iK_i^{-1}  \right) \Xfrak = \Xfrak \left(F_i+  \overline{c_iX_i} K_i+\overline{s_i}K_i  \right).
\end{align}  
Now compare the $(\mu-\alpha_i)$-homogeneous components for all $\mu\in Q^+$. One obtains that Equation \eqref{eq:Xx=xX} holds if and only if for all $\mu \in Q^+$ one has 
\begin{multline*}
[\Xfrak_\mu,F_i]=-\left(\Xfrak_{\mu-\alpha_i+\Theta(\alpha_i)} \overline{c_iX_i} + \overline{s_i} \Xfrak_{\mu-\alpha_i}  \right)K_i  \\
+K_i^{-1}\left(q^{-\left( \alpha_i, \Theta(\alpha_i) \right)} c_iX_i  \Xfrak_{\mu-\alpha_i+\Theta(\alpha_i)} + s_i \Xfrak_{\mu-\alpha_i}  \right).
\end{multline*}
By \eqref{eq:riCommute}, this is equivalent to 
relations (\ref{eq:riX}) and (\ref{eq:irX}) for all $\mu\in Q^+$.

\noindent $\boldsymbol{(3)\Rightarrow (4)}:$
We prove this implication by induction on $\mathrm{ht}(\mu)$. For $\mu=0$ there is nothing to show. Assume that $\mu>0$. If $\Xfrak_\mu\ne 0$, then by \eqref{eq:allrizero}, there exists $i\in I$ such that $r_i(\Xfrak_{\mu})\ne 0$. By (\ref{eq:riX}) we have either $\Xfrak_{\mu+\Theta(\alpha_i)-\alpha_i}\ne 0$ or $s_i \Xfrak_{\mu-\alpha_i}\ne 0$. In the case $\Xfrak_{\mu+\Theta(\alpha_i)-\alpha_i}\ne 0$, by induction hypothesis $\Theta(\mu+\Theta(\alpha_i)-\alpha_i)=-(\mu+\Theta(\alpha_i)-\alpha_i)$, which implies $\Theta(\mu)=-\mu$. In the case $s_i \Xfrak_{\mu-\alpha_i}\ne 0$, the condition $\mathbf{s}\in \mathcal{S}$ implies that $\Theta(\alpha_i)=-\alpha_i$, while the induction hypothesis implies that $\Theta(\mu-\alpha_i)=-(\mu-\alpha_i)$. Together, this gives $\Theta(\mu)=-\mu$.

\noindent $\boldsymbol{(3)\Rightarrow (1):}$ We have already seen that $(3)\Rightarrow (2)$ and hence $\overline{x}^B \Xfrak=\Xfrak\, \overline{x}$ for $x=B_i$. 

Let $\beta \in Q^\Theta$ and assume that $\Xfrak_\mu\ne 0$. The implication $(3)\Rightarrow (4)$ gives $\Theta(\mu)=-\mu$. On the other hand $\Theta(\beta)=\beta$ and therefore $\left(\beta, \mu\right)=0$. This implies that $K_\beta \Xfrak_\mu K_\beta^{-1}=q^{\left(\beta, \mu\right)}\Xfrak_\mu=\Xfrak_\mu$ and consequently $\overline{x}^B \Xfrak=\Xfrak\, \overline{x}$ for all $x \in U^0_\Theta$. 

Finally, let $i \in X$ and again assume that $\Xfrak_\mu\ne 0$. As $K_i \in U^0_\Theta$ and $F_i=B_i$, we already know that $\ad(K_i)(\Xfrak_\mu)= \Xfrak_\mu$ and $\ad(F_i)(\Xfrak_\mu) = 0$. Hence $\Xfrak_\mu$ is the lowest weight vector for the left adjoint action of $U_{q_i}(\mathfrak{sl}_2)_i$ on $\uqg$. As $U^+$ is locally finite for the left adjoint action of $U^+$, we conclude that $\Xfrak_\mu$ is also a highest weight vector, and hence
$$0=\ad(E_i)(\Xfrak_\mu)=E_i \Xfrak_\mu - K_i \Xfrak_\mu K_{i}^{-1}E_i=E_i \Xfrak_\mu - \Xfrak_\mu E_i.$$
Thus $E_i \Xfrak_\mu= \Xfrak_\mu E_i$ and consequently $\overline{x}^B \Xfrak=\Xfrak\, \overline{x}$ for all $x \in \cM_X$. 

This proves that the relation $\overline{x}^B \Xfrak=\Xfrak\, \overline{x}$ holds for the generators of the algebra $\coid$ and hence it holds for all $x\in \coid$.
\end{proof}
The proof of the implication $(3)\Rightarrow (4)$ only refers to $\Xfrak_{\mu'}$ with $\mu'\le \mu$. Hence we get the following corollary. 
\begin{cor}\label{cor:TFAE}
Let $\mu \in Q^+$ and let $(\Xfrak_{\mu'})_{\mu'\le\mu \in Q^+}$, with  $\Xfrak_{\mu'} \in U^+_{\mu'}$, be a collection of elements satisfying \eqref{eq:riX} and \eqref{eq:irX} for all $\mu'\le \mu$ and all $i\in I$. If $\Xfrak_\mu\ne 0$ then $\Theta(\mu)=-\mu$.
\end{cor}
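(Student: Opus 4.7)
The plan is to mimic the proof of the implication $(3)\Rightarrow(4)$ in Proposition~\ref{prop:TFAE}, observing that the argument there is purely local: verifying $\Theta(\mu)=-\mu$ for a given $\mu$ only requires knowledge of $\Xfrak_{\mu'}$ for heights strictly less than $\mathrm{ht}(\mu)$, via the recursion \eqref{eq:riX}. Hence the induction on $\mathrm{ht}(\mu)$ used there applies verbatim to any finite partial collection $(\Xfrak_{\mu'})_{\mu'\le\mu}$ satisfying the recursions, not just to a solution defined on all of $Q^+$. This is precisely the content the authors wish to isolate in the corollary.

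Concretely, I would proceed by induction on $\mathrm{ht}(\mu)$. The base case $\mu=0$ is immediate since $\Theta(0)=0$. For the inductive step with $\mu>0$ and $\Xfrak_\mu\neq 0$, the nondegeneracy statement \eqref{eq:allrizero} supplies some $i\in I$ with $r_i(\Xfrak_\mu)\neq 0$. Substituting this into \eqref{eq:riX} forces that at least one of $\Xfrak_{\mu+\Theta(\alpha_i)-\alpha_i}$ or $\overline{s_i}\,\Xfrak_{\mu-\alpha_i}$ is nonzero. In either case the relevant weight lies in $Q^+$ and has strictly smaller height than $\mu$, so it belongs to the partial collection and the inductive hypothesis applies to it.

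In the first subcase the inductive hypothesis gives $\Theta(\mu+\Theta(\alpha_i)-\alpha_i)=-(\mu+\Theta(\alpha_i)-\alpha_i)$, and applying $\Theta^2=\id$ collapses this identity to $\Theta(\mu)=-\mu$. In the second subcase, $s_i\neq 0$ forces $i\in I_{ns}$ by definition of $\cS$ in \eqref{def:setS}, hence $\tau(i)=i$ and $a_{ij}=0$ for all $j\in X$; this yields $w_X(\alpha_i)=\alpha_i$ and therefore $\Theta(\alpha_i)=-w_X\tau(\alpha_i)=-\alpha_i$. Combining $\Theta(\alpha_i)=-\alpha_i$ with the inductive hypothesis $\Theta(\mu-\alpha_i)=-(\mu-\alpha_i)$ again produces $\Theta(\mu)=-\mu$. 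There is no genuine obstacle to be overcome: the corollary exists precisely to record the localised form of the induction already performed in Proposition~\ref{prop:TFAE}, since this localised version will be needed later when constructing the $\Xfrak_\mu$ inductively in weight.
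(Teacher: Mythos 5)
Your proof is correct and takes essentially the same approach as the paper: the paper deduces the corollary simply by observing that the induction proving $(3)\Rightarrow(4)$ in Proposition \ref{prop:TFAE} only refers to $\Xfrak_{\mu'}$ with $\mu'\le\mu$, which is exactly the localized induction on $\mathrm{ht}(\mu)$ you carry out. The additional details you supply (e.g.\ that $s_i\ne 0$ forces $i\in I_{ns}$, hence $w_X(\alpha_i)=\alpha_i$ and $\Theta(\alpha_i)=-\alpha_i$) are consistent with, and merely spell out, the paper's argument.
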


\subsection{Systems of equations given by skew derivations}\label{sec:SysEq}
By Proposition \ref{prop:TFAE} the quasi K-matrix  $\Xfrak$ can be constructed inductively if in each step it is possible to solve the system of equations given by (\ref{eq:riX}) and (\ref{eq:irX}) for all $i$. In this subsection we derive necessary and sufficient conditions for such a system to have a solution. 
\begin{prop} \label{prop:Xi}
Let  $\mu \in Q^+$ with $\hght(\mu)\ge 2$ and fix elements $A_i, {}_iA \in U^+_{\mu-\alpha_i}$ for all $i \in I$. The following are equivalent:
\begin{enumerate}
\item There exists an element $\uX\in U_\mu^+$  such that 
\begin{align}\label{eq:riUnknown}
  r_i(\uX)&= A_i  \quad \mbox{and} \quad {}_ir(\uX)= {}_iA \qquad \mbox{for all $i\in I$.}
\end{align}
\item  The elements $A_i, {}_iA$ have the following two properties:
\begin{enumerate}
\item For all $i,j\in I$ one has
\begin{equation}
r_i(_jA)=\, \, {}_jr(A_i);\label{eq:xi2a}
\end{equation}
\item For all $i\ne j\in I$ one has
\begin{multline}
  \frac{-1}{q_i-q_i^{-1}} \sum_{s=1}^{1-a_{ij}} {1-a_{ij} \brack s}_{q_i}(-1)^s \left<F_i^{1-a_{ij}-s}F_jF_i^{s-1},A_i \right>\\
  -\frac{1}{q_j-q_j^{-1}}\left<F_i^{1-a_{ij}},A_j \right>=0. \label{eq:xi2b} \end{multline}
\end{enumerate}
\end{enumerate}
Moreover, if the system of equations \eqref{eq:riUnknown} has a solution $\uX$, then this solution is uniquely determined. 
\end{prop}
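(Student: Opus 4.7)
The strategy is to exploit the nondegeneracy of the pairing $\langle\cdot,\cdot\rangle:U^-_{-\mu}\times U^+_\mu\to\field(q^{1/d})$ to convert the problem of finding $\uX$ into that of constructing a suitable linear functional on $U^-_{-\mu}$. Uniqueness is immediate: the difference of two solutions is annihilated by every $r_i$, hence vanishes by \eqref{eq:allrizero} since $\hght(\mu)\ge 2>0$.

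For the implication $(1)\Rightarrow(2)$, condition (a) follows at once from applying \eqref{eq:rijr} to $\uX$. For condition (b), I would use \eqref{eq:form-ri} to rewrite each pairing on the left-hand side of (b) as a pairing with $\uX$: for $s\ge 1$,
\[
  \langle F_i,E_i\rangle \langle F_i^{1-a_{ij}-s}F_jF_i^{s-1},A_i\rangle = \langle F_i^{1-a_{ij}-s}F_jF_i^{s},\uX\rangle,
\]
and analogously $\langle F_j,E_j\rangle\langle F_i^{1-a_{ij}},A_j\rangle=\langle F_i^{1-a_{ij}}F_j,\uX\rangle$. Using $\langle F_i,E_i\rangle=-1/(q_i-q_i^{-1})$, the left-hand side of (b) collapses to $\langle S_{ij}(F_i,F_j),\uX\rangle$, which vanishes by the quantum Serre relations \eqref{eq:q-Serre} in $U^-$.

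For the harder implication $(2)\Rightarrow(1)$, the plan is to build a linear functional $\phi:U^-_{-\mu}\to\field(q^{1/d})$ satisfying
\[
  \phi(yF_i)=\langle F_i,E_i\rangle\langle y,A_i\rangle,\qquad \phi(F_iy)=\langle F_i,E_i\rangle\langle y,{}_iA\rangle
\]
for all $i\in I$ and all $y\in U^-_{-\mu+\alpha_i}$. Nondegeneracy will then produce a unique $\uX\in U^+_\mu$ with $\langle y,\uX\rangle=\phi(y)$, and \eqref{eq:form-ri} together with the defining properties of $\phi$ will yield $r_i(\uX)=A_i$ and ${}_ir(\uX)={}_iA$ for all $i$. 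To construct $\phi$ I would lift to the free algebra $\free_\mu$ and define $\tilde\phi$ by peeling off the rightmost letter of each word,
\[
  \tilde\phi(\ccf_{i_1}\cdots\ccf_{i_k}):=\langle F_{i_k},E_{i_k}\rangle\langle F_{i_1}\cdots F_{i_{k-1}},A_{i_k}\rangle.
\]
The analogous prescription using the leftmost letter and the ${}_iA$ gives a second candidate; two applications of \eqref{eq:form-ri} at opposite ends of the word show that the two candidates agree precisely when $r_{i_k}({}_{i_1}A)={}_{i_1}r(A_{i_k})$, which is exactly condition (a).

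The main obstacle is to verify that $\tilde\phi$ vanishes on the $\mu$-homogeneous component of the ideal generated by the Serre elements $S_{ij}(\ccf_i,\ccf_j)$, so that it descends to a well-defined functional on $U^-_{-\mu}$. For an ideal element of the form $z\cdot S_{ij}(\ccf_i,\ccf_j)\cdot z'$ with $z'$ non-empty, the right-peeling formula expresses $\tilde\phi$ as a pairing whose left argument projects in $U^-$ to a product containing the factor $S_{ij}(F_i,F_j)=0$, so vanishing is automatic; the symmetric case with $z$ non-empty uses the left-peeling formula in the same way. This reduces the verification to the single case $z=z'=1$, where the right-peeling formula applied to $S_{ij}(\ccf_i,\ccf_j)$ reproduces precisely the left-hand side of condition (b), by the reverse of the computation in the second paragraph. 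Thus (b) is exactly the obstruction to well-definedness, and the construction of $\uX$ is complete.
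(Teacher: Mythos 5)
Your proposal is correct and follows essentially the same route as the paper: dualizing via the pairing $\langle\cdot,\cdot\rangle$, defining right- and left-peeling functionals on the free algebra $\free_\mu$, using condition (a) (together with $\hght(\mu)\ge 2$) to identify them, using condition (b) plus the radical property of the Serre elements to descend to $U^-_{-\mu}$, and then invoking nondegeneracy to produce $\uX$, with uniqueness from \eqref{eq:allrizero}. The computations for $(1)\Rightarrow(2)$ and the case analysis for descent through the Serre ideal match the paper's proof step for step.
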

\begin{proof}
$\boldsymbol{(1) \Rightarrow (2):}$
Assume that there exists and element $\uX\in U^+_\mu$ which satisfies the equations \eqref{eq:riUnknown}. Then
\begin{align*}
  r_i({}_jA)=r_i(\, {}_jr(\uX))\stackrel{\eqref{eq:rijr}}{=} \, {}_jr(r_i(\uX))=\, {}_jr(A_i)
\end{align*}  
and hence \eqref{eq:xi2a} holds for all $i,j\in I$.

Moreover, using the quantum Serre relation $S_{ij}(F_i,F_j)=0$ and the properties \eqref{eq:form-ri} of the bilinear form $\left< \cdot,\cdot\right>$, we get
\begin{align*}
0=&\left<S_{ij}(F_i,F_j),\uX\right>\\
=&\sum_{s=0}^{1-a_{ij}} {1-a_{ij} \brack s}_{q_i}(-1)^s\left<F_i^{1-a_{ij}-s}F_jF_i^{s},\uX\right>\\
=&\frac{-1}{q_i-q_i^{-1}} \sum_{s=1}^{1-a_{ij}} {1-a_{ij} \brack s}_{q_i}(-1)^s\left<F_i^{1-a_{ij}-s}F_jF_i^{s-1},A_i \right> - \\
&\qquad -\frac{1}{q_j-q_j^{-1}}\left<F_i^{1-a_{ij}},A_j \right>,
\end{align*}
which proves relation \eqref{eq:xi2b}. Hence property (2) holds.

\noindent $\boldsymbol{(2)\Rightarrow (1):}$ Assume that the elements $A_i, {}_iA$ satisfy the relations \eqref{eq:xi2a} and \eqref{eq:xi2b}. We first solve the system dual to (\ref{eq:riUnknown}) with respect to the bilinear form $\left< \cdot,\cdot\right>$. By slight abuse of notation we consider $\langle\cdot,\cdot\rangle$ as a pairing on $\free\times U^+$ via the canonical projection $\free\rightarrow U^-$ on the first factor. Fix $\mu\in Q^+$ with $\hght(\mu)\ge 2$. As $\mu>0$ there exist uniquely determined linear functionals $\uX^*_{L}, \uX^*_{R}:\free_{\mu} \rightarrow \field(q^{1/d})$ such that
\begin{align}
\uX^*_{L}(\ccf_i z)&=\frac{-1}{q_i-q_{i}^{-1}}\cdot \left<z,{}_iA \right> \label{eq:dualxi1}, \\
\uX^*_{R}(z\ccf_i)&=\frac{-1}{q_i-q_{i}^{-1}}\cdot \left<z,A_i \right> \label{eq:dualxi2}
\end{align}
for all $z\in \free_{\mu-\alpha_i}$. For any $i,j \in I$ and any $x\in \free_{\mu-\alpha_i-\alpha_j}$ we have
\begin{multline*}
\uX^*_{L}(\ccf_jx\ccf_i)\stackrel{\eqref{eq:dualxi1}}{=}\frac{-1}{q_j-q_{j}^{-1}} \cdot  \left<x\ccf_i,{}_jA\right> \stackrel{\eqref{eq:form-ri}}{=} \frac{-1}{q_i-q_{i}^{-1}} \cdot \frac{-1}{q_j-q_{j}^{-1}} \left<x, r_i({}_jA) \right>\\
\stackrel{\eqref{eq:xi2a}}{=} \frac{-1}{q_i-q_{i}^{-1}}\cdot\frac{-1}{q_j-q_{j}^{-1}} \left<x,\, {}_jr(A_i) \right> \stackrel{\eqref{eq:form-ri}}{=}\frac{-1}{q_i-q_{i}^{-1}}\cdot \left<\ccf_j x,A_i \right>\stackrel{\eqref{eq:dualxi2}}{=} \uX^*_{R}(\ccf_jx\ccf_i).
\end{multline*}
As $\hght(\mu)\ge 2$, any element in $\free_{\mu}$ can be written as a linear combination of elements of the form $\ccf_jx\ccf_i$ with $x\in \free_{\mu-\alpha_i-\alpha_j}$ for $i,j\in I$. Consequently, the above relation implies that the functionals $\uX^*_{L}$ and $\uX^*_{R}$ coincide on $\free_{\mu}$. To simplify notation we write $\uX^*=\uX^*_{L}=\uX^*_{R}$.

We claim that relation (\ref{eq:xi2b}) implies that $\uX^*$ descends from $\free_{\mu}$ to a linear functional on $U^-_{-\mu}$. Recall that the kernel of the projection $\free\rightarrow U^-$ is the ideal generated by the elements $S_{ij}(\ccf_i,\ccf_j)$ for all $i,j\in I$. Hence it is enough to show that all elements of the form $x=\ccf_{a_1}\ldots \ccf_{a_l}\cdot S_{ij}(\ccf_i,\ccf_j) \cdot \ccf_{b_1}\ldots \ccf_{b_k}$ lie in the kernel of the linear functional $\uX^*$. If $l>0$ then the fact that $S_{ij}(\ccf_i,\ccf_j)$ lies in the radical of the bilinear form $\left<\cdot,\cdot\right>$ implies that
\begin{align*} 
\uX^*(x)&= \uX_L^*(\ccf_{a_1}\ldots \ccf_{a_l}\cdot S_{ij}(\ccf_i,\ccf_j)\cdot \ccf_{b_1}\ldots \ccf_{b_k})\\
&= \frac{-1}{q_{a_1}-q_{a_1}^{-1}}\cdot \left<\ccf_{a_2}\ldots \ccf_{a_l}\cdot S_{ij}(\ccf_i,\ccf_j)\cdot \ccf_{b_1}\ldots \ccf_{b_k} ,{}_{a_1}A \right>\\
&= 0.
\end{align*}
Similarly, if $k>1$ then we get $\uX^*(x)=\uX_R^*(x)=0$. Assume now that $l=k=0$. Then
\begin{align*} 
\uX^*(S_{ij}(\ccf_i,\ccf_j))&= \sum_{s=0}^{1-a_{ij}} {1-a_{ij} \brack s}_{q_i}(-1)^s \cdot \uX_R^*(\ccf_i^{1-a_{ij}-s}\ccf_j\ccf_i^{s})\\
&= \frac{-1}{q_i-q_i^{-1}} \sum_{s=1}^{1-a_{ij}} {1-a_{ij} \brack s}_{q_i}(-1)^s\left<\ccf_i^{1-a_{ij}-s}\ccf_j\ccf_i^{s-1},A_i \right> \\
&\quad -\frac{1}{q_j-q_j^{-1}}\left<\ccf_i^{1-a_{ij}},A_j \right>\\
& \stackrel{(\ref{eq:xi2b})}{=} 0.
\end{align*}
Hence $\uX^*$ does indeed descend to a linear functional $\uX^*:U_{-\mu}^-\to \mathbb{K}(q^{1/d})$.

Let $\uX\in U_{\mu}^+$ be the element dual to  $\uX^*$ with respect to the nondegenerate pairing $\left< \cdot ,\cdot \right>$ on $U^-_\mu\times U^+_\mu$. In other words, for all $z\in U^-$ we have
$\uX^*(z)=\left<z,\uX\right>$. Then 
$$\left<z,r_i(\uX) \right>=-(q_i-q_i^{-1})\left<zF_i,\uX\right>=-(q_i-q_i^{-1})\uX^*(zF_i)\stackrel{(\ref{eq:dualxi2})}{=}\left<z,A_i \right>$$
for any $z\in U^-_{\mu-\alpha_i}$ and hence $r_i(\uX)=A_i$ for all $i\in I$. Similarly, (\ref{eq:dualxi1}) implies that ${}_ir(\uX)={}_iA$ for all $i\in I$. This completes the proof of relation \eqref{eq:riUnknown} and hence (1) holds.

Finally, to see uniqueness, assume that $\uX$ and $\uX'$ both satisfy the system of equations \eqref{eq:riUnknown}. Then $r_i(\uX-\uX')=0$ for all $i\in I$, so by \eqref{eq:allrizero}, we have that $\uX=\uX'$.
\end{proof}

\subsection{Three technical lemmas}
We will use Proposition \ref{prop:Xi} in Section \ref{sec:constructXmu} to inductively construct $\Xfrak_\mu$ by solving the system of equations given by \eqref{eq:riX}, \eqref{eq:irX} for all $i\in I$. To simplify the proof that the right hand sides of equations \eqref{eq:riX}, \eqref{eq:irX} satisfy the conditions from Proposition \ref{prop:Xi}.(2), we provide several technical lemmas. These results are auxiliary and will only be used in the proof of Lemma \ref{lem:Xcond2}.
\begin{lem}\label{lem:technical1}
Let $i\ne j\in I$ and $\mu=(1-a_{ij})\alpha_i+\alpha_j$. If $\Theta(\mu)=-\mu$ then $i,j\in I\setminus X$ and one of the following two cases holds: 
\begin{enumerate}
\item $\Theta(\alpha_i)=-\alpha_j$ and $a_{ij}=0$.
\item $\Theta(\alpha_i)=-\alpha_i$ and $\Theta(\alpha_j)=-\alpha_j $.
\end{enumerate}
\end{lem}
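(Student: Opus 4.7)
The plan is to decompose $\mu=\mu_X+\mu_Y$ with $\mu_X\in Q_X$ and $\mu_Y\in Q_{I\setminus X}$ and exploit the explicit form of $\Theta=-w_X\circ\tau$. Using the admissibility condition $\tau|_X=-w_X|_X$ from Definition \ref{def:admissible}(ii), one checks that $\Theta(\alpha_k)=\alpha_k$ for $k\in X$, while for $k\in I\setminus X$ one has $\Theta(\alpha_k)=-\alpha_{\tau(k)}-\gamma_{\tau(k)}$ where $\gamma_l:=w_X(\alpha_l)-\alpha_l\in Q_X$. The conceptual key is the positivity $\gamma_l\in Q_X^+$ (non-negative integer combination of simple roots in $X$): since $a_{lk}\le 0$ for $l\in I\setminus X$, $k\in X$, the element $\alpha_l$ is $X$-anti-dominant, so $w_X(\alpha_l)$ is the $X$-dominant element of $W_X\cdot \alpha_l$, which lies above $\alpha_l$ in the root partial order. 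Splitting $\Theta(\mu)+\mu=0$ into its $X$- and $Y$-components then gives two conditions: $\mu_Y$ is $\tau$-fixed, and $2\mu_X=\sum_{k\in I\setminus X}n_k\gamma_{\tau(k)}$, where $n_k$ is the coefficient of $\alpha_k$ in $\mu$.

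Next I would case-split on the location of $i,j$ with respect to $X$. The case $i,j\in X$ forces $\mu_X=0=\mu$, contradicting $\hght(\mu)\ge 2$. The case $i\in X$, $j\in I\setminus X$ yields $\tau(j)=j$ and $\gamma_j=2(1-a_{ij})\alpha_i$; pairing $w_X(\alpha_j)=\alpha_j+\gamma_j$ with $\alpha_i$ in the symmetric bilinear form and using the $W$-invariance identity $(w_X(\alpha_j),\alpha_i)=(\alpha_j,w_X(\alpha_i))=-(\alpha_j,\alpha_{\tau(i)})$ together with $a_{j,\tau(i)}=a_{ji}$ (from $\tau(j)=j$ and the Dynkin symmetry of $\tau$) forces $a_{ij}=2$, contradicting $a_{ij}\le 0$. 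Symmetrically, the case $i\in I\setminus X$, $j\in X$ gives $\tau(i)=i$ and $(1-a_{ij})\gamma_i=2\alpha_j$; positivity of $\gamma_i$ then forces $1-a_{ij}\in\{1,2\}$. For $a_{ij}=0$ one gets $w_X(\alpha_i)=\alpha_i+2\alpha_j$, and applying $\sigma_j\in W_X$ (and using $a_{ji}=0$) produces the element $\alpha_i-2\alpha_j$ in the orbit $W_X\cdot\alpha_i$, contradicting the fact that the anti-dominant $\alpha_i$ is the minimum of its orbit. For $a_{ij}=-1$ the $X$-dominance $\langle\alpha_i+\alpha_j,\alpha_l^\vee\rangle\ge 0$ for $l\in X\setminus\{j\}$ forces $a_{il}=a_{jl}=0$, hence $j$ is isolated in $X$ from everything connected to $i$; a direct computation then yields $\alpha_i(\rho_X^\vee)=a_{ij}/2=-1/2\notin\Z$, contradicting Definition \ref{def:admissible}(iii) since $\tau(i)=i\in I\setminus X$.

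This establishes $i,j\in I\setminus X$, and hence $\mu_X=0$. The condition $\tau(\mu)=\mu$ with $\mu=(1-a_{ij})\alpha_i+\alpha_j$ has exactly two solution types: either $\tau(i)=i$ and $\tau(j)=j$, or $\tau(i)=j$, $\tau(j)=i$, and $a_{ij}=0$ (the remaining possibilities being ruled out by comparing coefficients). In the first subcase the $X$-equation becomes $(1-a_{ij})\gamma_i+\gamma_j=0$ in $Q_X^+$, and since $(1-a_{ij})\ge 1$ positivity forces $\gamma_i=\gamma_j=0$, hence $\Theta(\alpha_i)=-\alpha_i$ and $\Theta(\alpha_j)=-\alpha_j$, which is case (2). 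In the second subcase the $X$-equation reduces to $\gamma_i+\gamma_j=0$, again forcing $\gamma_i=\gamma_j=0$ and therefore $\Theta(\alpha_i)=-\alpha_{\tau(i)}=-\alpha_j$, which is case (1).

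The main obstacle is the positivity claim $\gamma_l\in Q_X^+$: once this is established, Case D dissolves into a one-line argument, but without it the identification of cases (1) and (2) is not forced. Ruling out the mixed cases $i\in X$, $j\notin X$ and vice versa also requires care, as the admissibility conditions enter in an essential way only in the second of these; I expect the inner-product computation for the first mixed case to be the most delicate bookkeeping in the proof.
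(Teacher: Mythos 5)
Your overall strategy is the same as the paper's: split $\Theta(\mu)+\mu=0$ into its $Q_X$-component and its component outside $X$, rule out the mixed cases $i\in X$ or $j\in X$ using admissibility, and read off the two possibilities from coefficient comparison. Several of your details are in fact more careful than the printed proof: the positivity $\gamma_l=w_X(\alpha_l)-\alpha_l\in Q_X^+$ is exactly what is needed in the last step to conclude $\gamma_i=\gamma_j=0$, i.e.\ to upgrade ``$\tau(i)=j$, $a_{ij}=0$'' and ``$\tau(i)=i$, $\tau(j)=j$'' to the stated conclusions $\Theta(\alpha_i)=-\alpha_j$, resp.\ $\Theta(\alpha_i)=-\alpha_i$ and $\Theta(\alpha_j)=-\alpha_j$, a point the paper passes over quickly; and your pairing argument in the case $i\in X$, using $(w_X\alpha_j,\alpha_i)=(\alpha_j,w_X\alpha_i)=-(\alpha_j,\alpha_{\tau(i)})$, is a clean substitute for the paper's identification $\sigma_i(\alpha_j)=w_X(\alpha_j)$; both force $a_{ij}=2$, a contradiction.

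There is, however, one genuine imprecision, in the case $i\in I\setminus X$, $j\in X$ with $a_{ij}=-1$. The quantity you need is $\alpha_i(\rho_X^\vee)=\tfrac12\alpha_i(h_j)=\tfrac12 a_{ji}$, not $\tfrac12 a_{ij}$, and since $A$ need not be symmetric, $a_{ij}=-1$ alone does not make this a half-integer: dominance of $w_X(\alpha_i)=\alpha_i+\alpha_j$ at the node $j$ only gives $a_{ji}\in\{-1,-2\}$, and $a_{ji}=-2$ escapes your parity argument. The gap closes in one line from facts you already have: since $a_{li}=a_{lj}=0$ for all $l\in X\setminus\{j\}$, the node $j$ is an isolated component of $X$ and $w_X$ moves $\alpha_i$ only through $\sigma_j$, so $w_X(\alpha_i)=\sigma_j(\alpha_i)=\alpha_i-a_{ji}\alpha_j$; comparing with $(1-a_{ij})\gamma_i=2\alpha_j$ gives $-a_{ji}(1-a_{ij})=2$, hence $a_{ij}=a_{ji}=-1$. (This identity $\sigma_j(\alpha_i)=w_X(\alpha_i)$, giving $a_{ji}=-2/(1-a_{ij})$, is exactly how the paper handles this case, and it also disposes of your subcase $a_{ij}=0$ without the orbit-minimality argument, since $a_{ij}=0$ would force $a_{ji}=-2\neq 0$.) With $a_{ji}=-1$ your computation $\alpha_i(\rho_X^\vee)=-\tfrac12\notin\Z$ contradicts condition (3) of Definition \ref{def:admissible} as intended, and the rest of your proof is correct.
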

\begin{proof}
Assume that $i \in X$. Then $\Theta(\alpha_i)=\alpha_i$ which together with $\Theta(\mu)=-\mu$ implies that 
\begin{equation*}
w_X(\alpha_{\tau(j)})=-\Theta(\alpha_j)=-\Theta(\mu-(1-a_{ij})\alpha_i)=\alpha_j+2(1-a_{ij})\alpha_i.
\end{equation*}
Hence $\tau(j)=j$ and $\sigma_i(\alpha_j)=w_X(\alpha_{j})$ and $-a_{ij}=2(1-a_{ij})$. This would mean that $a_{ij}=2$ which is impossible. 

Assume that $j\in X$. Then 
$$w_X(\alpha_{\tau(i)})=-\Theta(\alpha_i)=\frac{-1}{(1-a_{ij})}\Theta(\mu-\alpha_j)=\alpha_i +\frac{2}{(1-a_{ij})}\alpha_j.$$ 
Hence $\tau(i)=i$ and $\sigma_j(\alpha_i)=w_X(\alpha_{i})$ and $a_{ji}=-\frac{2}{(1-a_{ij})}$. This is only possible if $a_{ji}=a_{ij}=-1$. But then $$\alpha_i(\rho_X^\vee)=\frac{1}{2}\alpha_i(h_j)=\frac{-1}{2}\notin \mathbb{Z}$$ 
which contradicts condition (3) in Definition \ref{def:admissible} of an admissible pair. 

Hence $i,j \in I \setminus X$. As $(w_X-\mathrm{id})(\alpha_k)\in Q_X$ for any $k \in I$, it follows that
\begin{equation*}
(1-a_{ij})(\alpha_i-\alpha_{\tau(i)})+(\alpha_j-\alpha_{\tau(j)})=-\Theta(\mu)-\tau(\mu)=(w_X-\mathrm{id})(\tau(\mu))
\end{equation*}
lies in $Q_X$. 
Using $i,j\in I\setminus X$, it follows that $(1-a_{ij})(\alpha_i-\alpha_{\tau(i)})+(\alpha_j-\alpha_{\tau(j)})=0$.
So, there are two possibilities: either (1) $\tau(i)=j$ and $a_{ij}=0$, or (2) $\tau(i)=i$ and $\tau(j)=j$. \end{proof}

\begin{lem}\label{lem:technical2}
Let $\mu\in Q^+$ and let $j\in I\setminus X$ with $s_j=0$. Assume that a collection      
$(\Xfrak_{\mu'})_{\mu'\le \mu}$ with $\Xfrak _{\mu'} \in U^+_{\mu'}$ satisfies condition (\ref{eq:riX}) for all $\mu'\le \mu$ and for all $i \in I$. If $\Xfrak _{\mu}\ne 0$, then $\mu \in \mathrm{span}_{\mathbb{N}_0} \{ \alpha_j-\Theta(\alpha_j)\}\oplus \mathrm{span}_{\mathbb{N}_0} \{ \alpha_k | k\ne j\}$.
\end{lem}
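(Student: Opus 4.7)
The plan is to proceed by induction on $\hght(\mu)$, with the base case $\mu = 0$ being immediate. For the inductive step, I would assume $\Xfrak_\mu \neq 0$ and invoke \eqref{eq:allrizero} to obtain some $i \in I$ with $r_i(\Xfrak_\mu) \neq 0$. The recursion \eqref{eq:riX} then forces at least one of the two summands on its right-hand side to be nonzero, giving the two cases:
\begin{itemize}
\item[(A)] $\Xfrak_{\mu - (\alpha_i - \Theta(\alpha_i))} \neq 0$ with $c_i \neq 0$, which forces $i \in I\setminus X$;
\item[(B)] $\Xfrak_{\mu-\alpha_i} \neq 0$ with $s_i \neq 0$.
\end{itemize}
Since $\alpha_i - \Theta(\alpha_i) = \alpha_i + w_X(\alpha_{\tau(i)})$ is a sum of two positive roots when $i \notin X$, the weight involved in case (A) is strictly smaller than $\mu$, so the inductive hypothesis applies in both cases. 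Write $S := \mathrm{span}_{\mathbb{N}_0}\{\alpha_j - \Theta(\alpha_j)\} \oplus \mathrm{span}_{\mathbb{N}_0}\{\alpha_k \mid k \neq j\}$.

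In case (A), by induction $\mu - (\alpha_i - \Theta(\alpha_i)) \in S$, and it remains to show that $\alpha_i - \Theta(\alpha_i) \in S$. If $i = j$ this is built into the definition of $S$. If $i \neq j$ but $\tau(i) = j$, then $\tau(j) = i$, and \eqref{eq:alphai-alphataui} gives $\alpha_i - \Theta(\alpha_i) = \alpha_{\tau(j)} - \Theta(\alpha_{\tau(j)}) = \alpha_j - \Theta(\alpha_j) \in S$. Finally, if both $i \neq j$ and $\tau(i) \neq j$, then $\alpha_i + w_X(\alpha_{\tau(i)})$ is a non-negative integer combination of simple roots with vanishing $\alpha_j$-coefficient (since $j \notin X$, $j \neq i$, $j \neq \tau(i)$), hence lies in $\mathrm{span}_{\mathbb{N}_0}\{\alpha_k \mid k \neq j\} \subseteq S$.

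In case (B), the condition $\bs \in \cS$ together with $s_i \neq 0$ forces $i \in I_{ns}$, and in particular $i \notin X$. Moreover, the hypothesis $s_j = 0$ rules out $i = j$, so $\alpha_i \in \mathrm{span}_{\mathbb{N}_0}\{\alpha_k \mid k \neq j\} \subseteq S$. Combined with $\mu - \alpha_i \in S$ by induction, this gives $\mu \in S$. The main (albeit minor) obstacle is the subcase $\tau(i) = j$ in (A): one must recognize that the apparent $\alpha_j$-contribution from $w_X(\alpha_{\tau(i)}) = w_X(\alpha_j)$ is precisely accounted for by a single factor of $\alpha_j - \Theta(\alpha_j)$, which is made transparent by the symmetry \eqref{eq:alphai-alphataui}. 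Everything else amounts to routine bookkeeping of simple-root coefficients.
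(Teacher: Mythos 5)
Your proof is correct and follows essentially the same route as the paper: induction on $\hght(\mu)$, producing $i$ with $r_i(\Xfrak_\mu)\neq 0$ via \eqref{eq:allrizero} and then reading off from \eqref{eq:riX} that $\Xfrak_{\mu+\Theta(\alpha_i)-\alpha_i}\neq 0$ or $s_i\Xfrak_{\mu-\alpha_i}\neq 0$, and applying the inductive hypothesis (the paper rules out the second option for $i=j$ using $s_j=0$, exactly as you do). Your explicit treatment of the subcase $\tau(i)=j$ via \eqref{eq:alphai-alphataui} is a detail the paper leaves implicit, but it is the same argument.
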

\begin{proof}
We prove this by induction on $\mathrm{ht}(\mu)$. If $\mu>0$ and $\Xfrak _{\mu}\ne 0$ then by \eqref{eq:allrizero} there exists some $i$ such that $r_i(\Xfrak _{\mu})\ne 0$. Relation (\ref{eq:riX}) implies that $\Xfrak _{\mu+\Theta(\alpha_i)-\alpha_i}\ne 0$ or $s_i\Xfrak _{\mu-\alpha_i}\ne 0$. If $i\ne j$ then the induction hypothesis on $\mu+\Theta(\alpha_i)-\alpha_i$ and $\mu-\alpha_i$ implies the claim. If $i=j$ then the induction hypothesis on $\mu+\Theta(\alpha_i)-\alpha_i$ implies the claim.
\end{proof}
Recall that $\sigma$ denotes the involutive antiautomorphism of $\uqg$ defined by \eqref{def:sigma}.
\begin{lem}\label{lem:auxSerre}
Let $\nu \in Q^+$, and let $(\Xfrak_\mu)_{\mu < \nu\in Q^+}$ be a collection with $\Xfrak_\mu \in U^+_{\mu}$ and  $\Xfrak_0=1$. For all $\mu< \nu$ assume that $\Xfrak_\mu$ satisfies \eqref{eq:riX} and \eqref{eq:irX} for all $i\in I$. Let $j,k\in I\setminus X$ be such that $\Theta(\alpha_j)=-\alpha_j$ and $\Theta(\alpha_k)=-\alpha_k$. Assume that $n\ge 0$, and that $x\in U^-_{n\alpha_k+\alpha_j}$ satisfies $\sigma(x)=-x$. Then
\begin{align}
\left<x,\Xfrak_\mu\right>&=0 \label{eq:auxSerre3}
\end{align}
for all $\mu< \nu$.
\end{lem}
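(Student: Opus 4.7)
The plan is to reduce the claim to a $\sigma$-symmetry of $\Xfrak_\mu$ at the relevant weight, then exploit a $\sigma$-invariance of the bilinear pairing $\langle\cdot,\cdot\rangle$. By weight-compatibility of the pairing, $\langle x,\Xfrak_\mu\rangle$ vanishes unless $\mu = n\alpha_k + \alpha_j$, so we may restrict attention to this weight.

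First, I would establish the $\sigma$-invariance of the pairing, namely
\begin{equation*}
\langle \sigma(y), \sigma(z)\rangle = \langle y, z\rangle \qquad \mbox{for all $y\in U^-_{-\lambda}$, $z\in U^+_\lambda$.}
\end{equation*}
This is proved by induction on $\hght(\lambda)$: the base case on generators is immediate from \eqref{eq:formdef}--\eqref{eq:formdef2}, and the inductive step writes $y=F_i y'$ and $\sigma(y)=\sigma(y')F_i$, applies \eqref{eq:form-ri} to both sides, and uses the operator identity $r_i\circ\sigma = \sigma\circ{}_ir$ coming from \eqref{eq:sigmari} to match the two sides. Granting this together with $\sigma(x)=-x$, we obtain $\langle x,\Xfrak_\mu\rangle = \langle\sigma(x),\sigma(\Xfrak_\mu)\rangle = -\langle x,\sigma(\Xfrak_\mu)\rangle$, so it suffices to show that $\sigma(\Xfrak_\mu)=\Xfrak_\mu$.

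The $\sigma$-invariance of $\Xfrak_\mu$ I would prove by induction on $n$, simultaneously treating the two families of weights $n\alpha_k+\alpha_j$ and $n\alpha_k$ which both arise in the recursion. The uniqueness statement of Proposition \ref{prop:Xi} reduces the induction step to verifying that $\sigma(\Xfrak_\mu)$ satisfies the same equations \eqref{eq:riX}, that is, $r_i(\sigma(\Xfrak_\mu))=r_i(\Xfrak_\mu)$ for all $i\in I$. By \eqref{eq:sigmari} this is equivalent to $\sigma({}_ir(\Xfrak_\mu))=r_i(\Xfrak_\mu)$. Applying $\sigma$ to \eqref{eq:irX}, invoking the inductive hypothesis $\sigma(\Xfrak_{\mu'})=\Xfrak_{\mu'}$ on the lower-weight terms $\Xfrak_{\mu+\Theta(\alpha_i)-\alpha_i}$ and $\Xfrak_{\mu-\alpha_i}$, and comparing with \eqref{eq:riX}, one is left with the scalar identities $\overline{s_i}=s_i$ (which is \eqref{Mparameters2}) and
\begin{equation*}
q^{-(\Theta(\alpha_i),\alpha_i)}\,c_i\,\sigma(X_i) = \overline{c_iX_i},
\end{equation*}
required only at those $i$ for which the corresponding $\Xfrak_{\mu+\Theta(\alpha_i)-\alpha_i}$ is nonzero. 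A weight count using $\Theta(\alpha_j)=-\alpha_j$, $\Theta(\alpha_k)=-\alpha_k$ and Lemma \ref{lem:technical2} shows that this forces $i\in\{j,k\}$, in which case $\tau(i)=i$ and $X_i=-s(i)T_{w_X}(E_i)$.

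The main obstacle is the verification of the displayed identity. Using \eqref{eq:octau} to eliminate $\overline{c_i}/c_i$, it unravels to an identity between $\sigma(T_{w_X}(E_i))$ and $\overline{T_{w_X}(E_i)}$ (up to an explicit $q$-power). The individual actions of $\sigma$ and of the bar involution on Lusztig's braid group elements are controlled by \eqref{eq:Tibarinverse} and its $\sigma$-counterpart, but reconciling them and tracking the resulting signs and $q$-scalars is the delicate calculation at the heart of the lemma.
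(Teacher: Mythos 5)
Your proposal is correct, but it takes a genuinely different route from the paper. The paper argues directly on the pairing: it spans the $\sigma$-anti-invariant part of $U^-_{n\alpha_k+\alpha_j}$ by the elements $F_k^aF_jF_k^b-F_k^bF_jF_k^a$ and inducts on $n=a+b$, peeling off one $F_k$ via \eqref{eq:form-ri}, \eqref{eq:form-riE}, inserting the recursion \eqref{eq:riX}, \eqref{eq:irX} for $\Xfrak_\mu$, and observing that the resulting lower-weight test vectors are again $\sigma$-anti-invariant (via \eqref{eq:sigmari} and \eqref{eq:rijr}), so the induction closes. You instead isolate two structural facts — the $\sigma$-invariance $\langle\sigma(y),\sigma(z)\rangle=\langle y,z\rangle$ of the pairing (a standard fact, and your inductive proof via \eqref{eq:form-ri} and \eqref{eq:sigmari} is fine) and the $\sigma$-invariance of $\Xfrak_\mu$ at weights supported on $\{j,k\}$, proved by induction using the characterization of $\Xfrak_\mu$ through all $r_i(\Xfrak_\mu)$ and \eqref{eq:allrizero} — and then the vanishing is formal. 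This is a cleaner conceptual explanation of why the lemma holds, at the price of an extra induction establishing $\sigma(\Xfrak_\mu)=\Xfrak_\mu$ on the two families $n\alpha_k+\alpha_j$ and $n\alpha_k$; the paper's argument is more computational but stays entirely inside the pairing. One remark on your final paragraph: the step you flag as ``the delicate calculation at the heart of the lemma'' is in fact immediate. For $i\in\{j,k\}$ the hypothesis $\Theta(\alpha_i)=-\alpha_i$ forces $\tau(i)=i$, $s(i)=1$ and $(\alpha_i,\alpha_l)=0$ for all $l\in X$, hence $T_{w_X}(E_i)=E_i$ and $X_i=-E_i$, so $\sigma(X_i)=\overline{X_i}=X_i$ and your displayed identity reduces to the scalar relations $\overline{c_i}=q^{(\alpha_i,\alpha_i)}c_i$ and $\overline{s_i}=s_i$, which are exactly \eqref{eq:octau} and \eqref{Mparameters2} — the same facts the paper's proof invokes. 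No bookkeeping with \eqref{eq:Tibarinverse} or the braid group action is needed, so your outline closes without any genuine gap.
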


\begin{proof}
The space $U^-_{n\alpha_k+\alpha_j}$ is spanned by elements of the form $F_k^{a}F_j F_k^{b}$ with $a+b=n$. As the antiautomorphism $\sigma$ is involutive it is enough to verify Equation \eqref{eq:auxSerre3} for elements of the form $x=F_k^{a}F_j F_k^{b}-\sigma(F_k^{a}F_j F_k^{b})=F_k^{a}F_j F_k^{b}-F_k^{b}F_j F_k^{a}.$
We will prove that
\begin{align}
  \left<F_k^{a}F_jF_k^{b}-F_k^{b}F_jF_k^{a},\Xfrak_\mu\right>&=0 \label{eq:auxSerre3.1}
\end{align}
for all $\mu< \nu$ and $a,b\ge 0$ by induction on $n=a+b$. It holds for $n=0$. Let $a+b=n>0$, and assume that \eqref{eq:auxSerre3.1} holds for all $a',b'$ with $a'+b'<n$. Without loss of generality assume that $b>0$. Using the assumption that  $\Xfrak_\mu$ satisfies \eqref{eq:riX} and \eqref{eq:irX}, we get that 
\begin{align}
  \left<F_k^{a}F_jF_k^{b}{-}F_k^{b}F_jF_k^{a},\Xfrak_\mu\right>&=\frac{-1}{q_k-q_{k}^{-1}}  
   \left(\left<F_k^{a}F_jF_k^{b-1},r_k(\Xfrak_\mu)\right>-   
   \left<F_k^{b-1}F_jF_k^{a},{}_kr(\Xfrak_\mu)\right> \right) \notag \\
  &=\left<F_k^{a}F_jF_k^{b-1},\Xfrak_{\mu-2\alpha_k} \overline{c_kX_k} + \overline{s_k}   
   \Xfrak_{\mu-\alpha_k}\right>\notag\\
  & \quad -\left<F_k^{b-1}F_jF_k^{a},q^{(\alpha_k,\alpha_k)} c_k X_k \Xfrak_{\mu-2\alpha_k} 
   +s_k \Xfrak_{\mu-\alpha_k}\right>.\notag
\end{align}
The assumption $\Theta(\alpha_k)=-\alpha_k$ implies that $X_k=-E_k$ and by \eqref{eq:octau} and \eqref{Mparameters2} one has $\overline{c_k}=q^{(\alpha_k,\alpha_k)}c_k$ and $\overline{s_k}=s_k$. Hence the above equation turns into
\begin{align}
  \left<F_k^{a}F_jF_k^{b}-F_k^{b}F_jF_k^{a},\Xfrak_\mu\right>&=s_k 
  \left<F_k^{a}F_jF_k^{b-1}-F_k^{b-1}F_jF_k^{a} , \Xfrak_{\mu-\alpha_k}\right>\notag\\
  -&\overline{c_k}\left(\left<F_k^{a}F_jF_k^{b-1},\Xfrak_{\mu-2\alpha_k}E_k\right>-
  \left<F_k^{b-1}F_jF_k^{a},E_k\Xfrak_{\mu-2\alpha_k}\right>\right) \notag
\end{align}
By the induction hypothesis one has $\left<F_k^{a}F_jF_k^{b-1}-F_k^{b-1}F_jF_k^{a} , \Xfrak_{\mu-\alpha_k}\right>=0$. Hence, 
\begin{align*}
  \left<F_k^{a}F_jF_k^{b}{-}F_k^{b}F_jF_k^{a},\Xfrak_\mu\right>&=
  \frac{\overline{c_k}}{q_k{-}q_k^{-1}}  
  \left<r_k(F_k^{a}F_jF_k^{b-1}){-}{}_kr(F_k^{b-1}F_jF_k^{a}),\Xfrak_{\mu-2\alpha_k}\right>. 
\end{align*}
As 
\begin{align}
\sigma(r_k(F_k^{a}F_jF_k^{b-1})-{}_kr(F_k^{b-1}F_jF_k^{a}))&\stackrel{\eqref{eq:sigmari}}{=}{}_kr(\sigma(F_k^{a}F_jF_k^{b-1}))-r_k (\sigma(F_k^{b-1}F_jF_k^{a})) \notag\\
&=-(r_k(F_k^{a}F_jF_k^{b-1})-{}_kr(F_k^{b-1}F_jF_k^{a})), \notag
\end{align}
Equation \eqref{eq:auxSerre3.1} follows from the induction hypothesis. 
\end{proof}

\subsection{Constructing $\Xfrak_\mu$}\label{sec:constructXmu}
We are now ready to construct $\Xfrak_\mu$ inductively. Fix $\mu \in Q^+$ and assume that a collection $(\Xfrak _{\mu'})_{\mu'<\mu \in Q^+}$ with  $\Xfrak_{\mu'}\in U^+_{\mu'}$ and $\Xfrak_0=1$ has already been constructed and that this collection satisfies
conditions (\ref{eq:riX}) and (\ref{eq:irX}) for all $\mu'<\mu$ and for all $i \in I$. Define 
\begin{align}
         A_i&=-(q_i-q_i^{-1}) \left(  \Xfrak_{\mu+\Theta(\alpha_i)-\alpha_i} \overline{c_iX_i} + \overline{s_i} \Xfrak_{\mu-\alpha_i}\right) , \label{eq:Ai}\\
         {}_iA&=-(q_i-q_i^{-1}) \left(q^{-(\Theta(\alpha_i),\alpha_i)}c_iX_i \Xfrak_{\mu+\Theta(\alpha_i)-\alpha_i}  + s_i \Xfrak_{\mu-\alpha_i}\right) \label{eq:iA}
\end{align}
for all $i\in I$. We will keep the above assumptions and the definition of $A_i$ and ${}_iA$ all through this subsection. We will prove that the elements $A_i,{}_iA$, which are the right hand sides of equations \eqref{eq:riX}, \eqref{eq:irX}, satisfy the conditions \eqref{eq:xi2a} and \eqref{eq:xi2b}. By Proposition \ref{prop:Xi} this will prove the existence of an element $\Xfrak_\mu$ with the desired properties.
\begin{lem}\label{lem:Xcond1}
The relation  $r_i({}_jA)=\, {}_jr(A_i)$ holds for all $i,j\in I$.
\end{lem}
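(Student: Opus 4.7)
The plan is to expand both ${}_jr(A_i)$ and $r_i({}_jA)$ using the Leibniz rules \eqref{def:ri}, \eqref{def:ir} applied to the product structure in definitions \eqref{eq:Ai}, \eqref{eq:iA}, and then invoke the inductive hypothesis — that $\Xfrak_{\mu'}$ already satisfies the recursions \eqref{eq:riX}, \eqref{eq:irX} for every $\mu'<\mu$ — to rewrite the derivatives ${}_jr(\Xfrak_{\mu'})$ and $r_i(\Xfrak_{\mu'})$ that appear. After these substitutions each side becomes a linear combination of five types of terms, distinguished by which of the atoms $s_j\overline{s_i}$, $s_j\overline{c_iX_i}$, $\overline{s_i}c_jX_j$, $c_jX_j\cdot\Xfrak_\cdot\cdot\overline{c_iX_i}$ appears, together with a fifth ``boundary'' term arising from differentiating the factor $X_j$ in ${}_jA$ or the factor $\overline{X_i}$ in $A_i$ directly.

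Set $\nu_j=\mu+\Theta(\alpha_j)-\alpha_j$ and $\nu_i'=\mu+\Theta(\alpha_i)-\alpha_i$. Four of the five term-pairs match trivially: the $\Xfrak$ weight indices agree (using the identity $\nu_j+\Theta(\alpha_i)-\alpha_i=\nu_i'+\Theta(\alpha_j)-\alpha_j$), and the scalar prefactors coincide, each equal to $(q_i-q_i^{-1})(q_j-q_j^{-1})q^{-(\Theta(\alpha_j),\alpha_j)}$ times the appropriate atom. This is pure bookkeeping and requires no additional identities.

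The only nontrivial identity is the matching of the boundary terms, namely
\begin{align*}
-(q_j{-}q_j^{-1})\,q^{(\alpha_i,\nu_j)-(\Theta(\alpha_j),\alpha_j)}\,c_j\,r_i(X_j)\,\Xfrak_{\nu_j}
\ =\ -(q_i{-}q_i^{-1})\,q^{(\alpha_j,\nu_i')}\,\Xfrak_{\nu_i'}\,{}_jr(\overline{c_iX_i}).
\end{align*}
By \eqref{eq:rj(Xi)} the left-hand side vanishes unless $i=\tau(j)$, and by \eqref{eq:ribar} the right-hand side vanishes unless $j=\tau(i)$; these two conditions are equivalent. In the exceptional case one has $q_i=q_j$, and by \eqref{eq:alphai-alphataui} also $\nu_j=\nu_i'$, so the two $\Xfrak$-factors agree. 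Using \eqref{def:Zi} to rewrite $r_{\tau(j)}(X_j)=\cZ_j K_{\tau(j)}^{-1}K_j$, and using \eqref{eq:ribar} together with \eqref{def:Zi} to rewrite ${}_{\tau(i)}r(\overline{X_i})$ in terms of $\overline{\cZ_i}$, and commuting the resulting $K$-factors past the weight-homogeneous $\Xfrak_{\nu_j}$, the identity reduces to the parameter constraint \eqref{Mparameters1}: $\overline{c_i\cZ_i}=q^{(\alpha_i,\alpha_{\tau(i)})}c_{\tau(i)}\cZ_{\tau(i)}$.

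The main obstacle is the $q$-power bookkeeping in this boundary-term comparison: one must combine $(\alpha_i,\nu_j)$, $(\alpha_j,\nu_i')$, $(\Theta(\alpha_j),\alpha_j)$, the shift from \eqref{eq:ribar}, and the commutations of $K$-factors through $\Xfrak_{\nu_j}$ to recognise the exponent $(\alpha_i,\alpha_{\tau(i)})$ appearing in \eqref{Mparameters1}. Corollary \ref{cor:TFAE} plays a crucial role here, since it forces $\Theta(\nu_j)=-\nu_j$ whenever $\Xfrak_{\nu_j}\ne 0$; this is precisely what allows the inner products involving $\Theta$ to be rewritten in a form matching the right-hand side of \eqref{Mparameters1}.
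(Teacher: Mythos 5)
Your proposal follows essentially the same route as the paper's proof: expand both sides with the skew-derivation rules and the inductive hypothesis, match four of the five resulting term-pairs directly, and reduce the remaining boundary terms---nonzero only when $i=\tau(j)$ by \eqref{eq:rj(Xi)} and \eqref{eq:ribar}---to the parameter constraint \eqref{Mparameters1}, using \eqref{eq:alphai-alphataui}, the fact that the relevant factor lies in $\cM_X$ and hence commutes with the weight component of $\Xfrak$, and Corollary \ref{cor:TFAE} to kill the $\Theta$-dependent exponents. The only quibble is cosmetic: the common prefactor $q^{-(\Theta(\alpha_j),\alpha_j)}$ you ascribe to all four trivially matching pairs in fact occurs only in the pairs containing $c_jX_j$, though each pair still matches.
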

\begin{proof}
  This is a direct calculation. Note that all computations include the case $i=j$. We expand both sides of the desired equation, using \eqref{def:ri} and \eqref{def:ir} and the assumption that the elements $\Xfrak _{\mu'}$ satisfy (\ref{eq:riX}) and (\ref{eq:irX}) for $\mu'<\mu$. We obtain
\begin{align*}
  r_i(_jA)&=-(q_j-q_j^{-1})q^{-(\Theta(\alpha_j),\alpha_j)}
           c_jX_jr_i( \Xfrak_{\mu+\Theta(\alpha_j)-\alpha_j})\\
          &\quad-(q_j-q_j^{-1}) 
          q^{-(\Theta(\alpha_j),\alpha_j)}q^{(\alpha_i,\mu+\Theta(\alpha_j)-\alpha_j)}r_i(c_jX_j) 
          \Xfrak_{\mu+\Theta(\alpha_j)-\alpha_j}\\
          &\quad-(q_j-q_j^{-1})s_j r_i\left(\Xfrak_{\mu-\alpha_j}\right)\\
          &=(q_j-q_j^{-1})q^{-(\Theta(\alpha_j),\alpha_j)}c_jX_j(q_i-q_i^{-1}) 
          \Xfrak_{\mu+\Theta(\alpha_j)-\alpha_j+\Theta(\alpha_i)-\alpha_i} \overline{c_iX_i}\\
          & \quad+(q_j-q_j^{-1})q^{-(\Theta(\alpha_j),\alpha_j)}c_jX_j(q_i-q_i^{-1})\overline{s_i} 
          \Xfrak_{\mu+\Theta(\alpha_j)-\alpha_j-\alpha_i}\\ 
          &\quad-(q_j-q_j^{-1}) 
          q^{-(\Theta(\alpha_j),\alpha_j)}q^{(\alpha_i,\mu+\Theta(\alpha_j)-\alpha_j)}r_i(c_jX_j) 
          \Xfrak_{\mu+\Theta(\alpha_j)-\alpha_j}\\
          &\quad+(q_j-q_j^{-1})s_j(q_i-q_i^{-1})\Xfrak_{\mu-\alpha_j+\Theta(\alpha_i)-\alpha_i} 
          \overline{c_iX_i}\\
          &\quad+(q_j-q_j^{-1})s_j(q_i-q_i^{-1})\overline{s_i} \Xfrak_{\mu-\alpha_i-\alpha_j},
\end{align*}
\begin{align*}
{}_jr(A_i)&=-(q_i-q_i^{-1})\, {}_jr(\Xfrak_{\mu+\Theta(\alpha_i)-\alpha_i}) \overline{c_iX_i}\\
          &\quad-(q_i-q_i^{-1}) q^{(\alpha_j, 
          \mu+\Theta(\alpha_i)-\alpha_i)}\Xfrak_{\mu+\Theta(\alpha_i)-\alpha_i}\, 
          {}_jr(\overline{c_iX_i})\\
          &\quad-(q_i-q_i^{-1})\overline{s_i} \,{}_jr( \Xfrak_{\mu-\alpha_i})\\
          &=(q_i-q_i^{-1})(q_j-q_j^{-1})q^{-(\Theta(\alpha_j),\alpha_j)}
          c_jX_j\Xfrak_{\mu+\Theta(\alpha_i)-\alpha_i+\Theta(\alpha_j)-\alpha_j} \overline{c_iX_i}\\
          &\quad +(q_i-q_i^{-1})(q_j-q_j^{-1})s_j 
          \Xfrak_{\mu+\Theta(\alpha_i)-\alpha_i-\alpha_j}\overline{c_iX_i}\\
          &\quad-(q_i-q_i^{-1})q^{(\alpha_j, 
          \mu+\Theta(\alpha_i)-\alpha_i)}\Xfrak_{\mu+\Theta(\alpha_i)-\alpha_i}\, 
          {}_jr(\overline{c_iX_i})\\
          &\quad+(q_i-q_i^{-1})\overline{s_i} (q_j-q_j^{-1}) q^{-(\Theta(\alpha_j),\alpha_j)}c_jX_j  
          \Xfrak_{\mu-\alpha_i+\Theta(\alpha_j)-\alpha_j}\\
          &\quad+(q_i-q_i^{-1})\overline{s_i} (q_j-q_j^{-1}) s_j \Xfrak_{\mu-\alpha_i-\alpha_j}.
\end{align*}
We see that the first and fifth summands in the above expansions of $r_i(_jA)$ and $\, \, {}_jr(A_i)$ coincide, the second summand of $r_i(_jA)$ is the same as the fourth summand of $\, \, {}_jr(A_i)$, and the fourth summand of $r_i(_jA)$ coincides with the second summand of $\, \, {}_jr(A_i)$. Therefore, the claim of the Lemma, $r_i(_jA)={}_jr(A_i)$,  is equivalent to the third summands being equal, 
\begin{multline}
-(q_j-q_j^{-1}) q^{-(\Theta(\alpha_j),\alpha_j)}q^{(\alpha_i,\mu+\Theta(\alpha_j)-\alpha_j)}r_i(c_jX_j) \Xfrak_{\mu+\Theta(\alpha_j)-\alpha_j}\\
=-(q_i-q_i^{-1})q^{(\alpha_j, \mu+\Theta(\alpha_i)-\alpha_i)}\Xfrak_{\mu+\Theta(\alpha_i)-\alpha_i}\, {}_jr(\overline{c_iX_i}). \label{eq:Xcond1.0}
\end{multline}
By \eqref{eq:rj(Xi)} and \eqref{eq:ribar} we may assume that $i=\tau(j)\in I\setminus X$ because otherwise both sides of the above equation vanish. By \eqref{eq:ribar} we have $\, {}_jr\left( \overline{c_iX_i} \right)=q^{(\alpha_j,-\Theta(\alpha_i)-\alpha_j)} \overline{r_j (c_iX_i)}$. Substituting this and using $q_i=q_j$, we see that \eqref{eq:Xcond1.0} is equivalent to 
\begin{multline}
q^{(\alpha_j,\Theta(\alpha_i)-\Theta(\alpha_j))+(\alpha_i,\mu-\alpha_j)}r_i(c_jX_j) \Xfrak_{\mu+\Theta(\alpha_j)-\alpha_j}\\
=q^{(\alpha_j, \mu-\alpha_i-\alpha_j)}\Xfrak_{\mu+\Theta(\alpha_i)-\alpha_i} \overline{r_j (c_iX_i)}. \label{eq:Xcond1.1}
\end{multline}
By Equation \ref{eq:alphai-alphataui} one has $\Theta(\alpha_i)-\alpha_i=\Theta(\alpha_j)-\alpha_j$ and hence $\Xfrak_{\mu+\Theta(\alpha_j)-\alpha_j}=\Xfrak_{\mu+\Theta(\alpha_i)-\alpha_i}$. Moreover, $r_i(T_{w_X}(E_i))$ lies in $\cM_X$ and hence it commutes with $\Xfrak_{\mu+\Theta(\alpha_i)-\alpha_i}$. Using this, we can rewrite \eqref{eq:Xcond1.1} as
\begin{multline}
q^{(\alpha_j,\Theta(\alpha_i-\alpha_j))+(\alpha_i,\mu)}\Xfrak_{\mu+\Theta(\alpha_i)-\alpha_i} r_i(c_jX_j) \\
=q^{(\alpha_j, \mu-\alpha_j)}\Xfrak_{\mu+\Theta(\alpha_i)-\alpha_i} \overline{r_j (c_iX_i)}. \label{eq:Xcond1.2}
\end{multline}
If $\Xfrak_{\mu+\Theta(\alpha_i)-\alpha_i}=0$ then both sides of the above equation vanish. Hence assume that $\Xfrak_{\mu+\Theta(\alpha_i)-\alpha_i}$ is nonzero. Corollary \ref{cor:TFAE} states that then $\Theta(\mu)=-\mu$. Along with $\Theta(\alpha_i-\alpha_j)=\alpha_i-\alpha_j$, this implies that $(\alpha_i-\alpha_j,\mu)=0$. Hence \eqref{eq:Xcond1.2} is equivalent to the relation
\begin{equation}
q^{(\alpha_j,\alpha_i)}r_i(c_jX_j) =\overline{r_j (c_iX_i)}. \label{eq:Xcond1.3}
\end{equation}
Using the definition \eqref{def:Zi} of $\mathcal{Z}_i$ the above formula follows from assumption \eqref{Mparameters1} about the parameters $\bc$.
\end{proof}
This proves that the elements $A_i,{}_iA$ satisfy the first condition from Proposition \ref{prop:Xi}.(2). Next we prove that they also satisfy the second condition. 
\begin{lem}\label{lem:Xcond2}
 For all $i\ne j\in I$ the elements $A_i, A_j$ given by \eqref{eq:Ai} satisfy the relation
\begin{multline}\frac{-1}{q_i-q_i^{-1}} \sum_{s=1}^{1-a_{ij}} {1-a_{ij} \brack s}_{q_i}(-1)^s\left<F_i^{1-a_{ij}-s}F_jF_i^{s-1},A_i \right>-\\-\frac{1}{q_j-q_j^{-1}}\left<F_i^{1-a_{ij}},A_j \right>=0. \label{eq:Xcond2} \end{multline}
\end{lem}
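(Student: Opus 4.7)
The plan is to reduce the identity \eqref{eq:Xcond2} to a manageable weight, then perform case analysis using Lemmas \ref{lem:technical1}, \ref{lem:technical2} and \ref{lem:auxSerre}. First I would exploit the weight grading of the pairing $\langle \cdot,\cdot\rangle$. The element $F_i^{1-a_{ij}-s}F_jF_i^{s-1}$ lies in $U^-_{-\nu}$ for $\nu=(-a_{ij})\alpha_i+\alpha_j$, so its pairing with $A_i\in U^+_{\mu-\alpha_i}$ vanishes unless $\mu-\alpha_i=\nu$, that is unless $\mu=(1-a_{ij})\alpha_i+\alpha_j$. The same weight restriction follows for $\langle F_i^{1-a_{ij}},A_j\rangle$. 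Henceforth I would fix $\mu=(1-a_{ij})\alpha_i+\alpha_j$.

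Next I would analyse when the various constituents of $A_i$ and $A_j$ can be nonzero. Each of $\Xfrak_{\mu+\Theta(\alpha_i)-\alpha_i}$, $\Xfrak_{\mu-\alpha_i}$, $\Xfrak_{\mu+\Theta(\alpha_j)-\alpha_j}$, $\Xfrak_{\mu-\alpha_j}$ is required by Corollary \ref{cor:TFAE} to satisfy the $\Theta(\mu')=-\mu'$ condition on its weight $\mu'$, and the $\overline{s_i}$-term requires $s_i\ne 0$ which by $\bs\in \cS$ forces $\Theta(\alpha_i)=-\alpha_i$ (and similarly for $j$). Combining these constraints I would identify three regimes: \textbf{(a)} at least one of $i,j$ lies in $X$, in which case the corresponding $A_k$ vanishes by the convention $c_k=s_k=0$ and the remaining pairings are killed by the Serre type argument used already in the proof of Proposition \ref{prop:Xi} (expanding via \eqref{eq:form-ri}); \textbf{(b)} $i,j\in I\setminus X$ with $\Theta(\alpha_i)=-\alpha_j$ and $a_{ij}=0$, the single-term case given by Lemma \ref{lem:technical1}(1); and \textbf{(c)} $i,j\in I\setminus X$ with $\Theta(\alpha_i)=-\alpha_i$ and $\Theta(\alpha_j)=-\alpha_j$, the diagonal case given by Lemma \ref{lem:technical1}(2). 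In all other configurations the right-hand side of \eqref{eq:Ai} is zero and \eqref{eq:Xcond2} holds trivially.

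In case (b), the sum collapses because $1-a_{ij}=1$, and $X_i=-s(\tau(i))T_{w_X}(E_{\tau(i)})=-s(j)E_j$ up to the $w_X$-action. I would substitute this directly, use \eqref{eq:form-ri} and \eqref{eq:formdef} to reduce both pairings to evaluations at lower-weight $\Xfrak_{\mu'}$, and verify that the two sides cancel using the relation $c_{\tau(i)}=q^{(\alpha_i,\Theta(\alpha_i)-2\rho_X)}\overline{c_i}$ from \eqref{eq:octau}. In case (c), one has $X_i=-E_i$ and $X_j=-E_j$, and the $\overline{c_iX_i}$-piece of $A_i$ equals $-\overline{c_i}\,\Xfrak_{\mu-2\alpha_i}E_i$. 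Expanding $\langle F_i^{1-a_{ij}-s}F_jF_i^{s-1},\Xfrak_{\mu-2\alpha_i}E_i\rangle$ via \eqref{eq:form-riE} moves one $r_i$ onto the left factor, and after interchanging the sum with $r_i$ one recognises the quantum-Serre type combination $\sum_s(-1)^s{\scriptscriptstyle\brack}\cdots F_i^{1-a_{ij}-s}F_jF_i^{s-1}$. The corresponding $j$-term pairs against $F_i^{1-a_{ij}}$ and likewise produces a $\sigma$-antisymmetric expression in $U^-_{(-a_{ij})\alpha_i+\alpha_j}$. At this point Lemma \ref{lem:auxSerre} applies and forces the combined pairing with $\Xfrak_{\mu-2\alpha_i}$ (respectively $\Xfrak_{\mu-\alpha_i-\alpha_j}$) to vanish. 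The $\overline{s_i}$- and $\overline{s_j}$-terms, which survive only under the $\Theta(\alpha_k)=-\alpha_k$ hypothesis of case (c), are handled in exactly the same way using Lemma \ref{lem:auxSerre} together with $\overline{s_i}=s_i$ from \eqref{Mparameters2}.

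The main obstacle I anticipate is bookkeeping in case (c): the four kinds of terms ($\overline{c_iX_i}$-$\overline{c_jX_j}$, $\overline{c_iX_i}$-$\overline{s_j}$, $\overline{s_i}$-$\overline{c_jX_j}$, $\overline{s_i}$-$\overline{s_j}$) must cancel in pairs under the Serre-type summation, and one must track the scalars produced by \eqref{eq:form-ri}--\eqref{eq:form-riE} together with the $q^{(\alpha_i,\alpha_k)}$ factors arising from the skew-derivation rules \eqref{def:ri}--\eqref{def:ir}. The antisymmetry input from Lemma \ref{lem:auxSerre} is exactly tailored to produce these cancellations, but the correct matching of signs and powers of $q$ depends on Assumption \eqref{Mparameters2} for the $s$-parameters and on the identity $c_{\tau(i)}=\overline{c_i}$ (valid in this case since $\tau(i)=i$ and $(\alpha_i,\Theta(\alpha_i))=-(\alpha_i,\alpha_i)$ together with $\rho_X=0$-contribution from $i\in I_{ns}$) coming from \eqref{Mparameters1}.
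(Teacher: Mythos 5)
Your overall skeleton (reduction to $\mu=(1-a_{ij})\alpha_i+\alpha_j$ with $\Theta(\mu)=-\mu$, case split via Lemma \ref{lem:technical1}, antisymmetry plus Lemma \ref{lem:auxSerre} in the diagonal case) is the paper's route, and your regime (a) is in fact vacuous: Lemma \ref{lem:technical1} shows nonvanishing terms already force $i,j\in I\setminus X$, so no Serre-type argument is needed there. The genuine gap is in your regime (c). You claim that the $A_j$-term and the Serre-type sum over $s$ are each killed by $\sigma$-antisymmetry and Lemma \ref{lem:auxSerre}; neither is true. The full sum $\sum_{s=1}^{1-a_{ij}}(-1)^s{1-a_{ij}\brack s}_{q_i}F_i^{1-a_{ij}-s}F_jF_i^{s-1}$ is not $\sigma$-antisymmetric (under $s\mapsto 1-a_{ij}-s$ the binomial index shifts), and the $A_j$-term is, up to constants, $s_j\bigl<F_i^{1-a_{ij}},\Xfrak_{(1-a_{ij})\alpha_i}\bigr>$: here $F_i^{1-a_{ij}}$ is $\sigma$-symmetric, the weight contains no $\alpha_j$ so Lemma \ref{lem:auxSerre} does not even apply, and the pairing need not vanish (for $s_i\neq 0$ the component $\Xfrak_{(1-a_{ij})\alpha_i}$ is a nonzero multiple of $E_i^{1-a_{ij}}$ in general). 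The actual mechanism is a cross-cancellation your plan does not provide: since the $X_j$-part of $A_j$ drops out for weight reasons and $\overline{s_j}=s_j$ by \eqref{Mparameters2}, one has $A_j={}_jA$, and the $A_j$-term combines with the unpaired summand $s=1-a_{ij}$ of the $A_i$-sum into a multiple of $\bigl<F_i^{-a_{ij}},{}_jr(A_i)-r_i({}_jA)\bigr>$, which vanishes by Lemma \ref{lem:Xcond1}. Only the remaining summands, paired as $s\leftrightarrow 1-a_{ij}-s$ using ${1-a_{ij}\brack s}_{q_i}={1-a_{ij}\brack 1-a_{ij}-s}_{q_i}$ and the evenness of $-a_{ij}$, yield $\sigma$-antisymmetric elements to which the recursion and Lemma \ref{lem:auxSerre} apply. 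This evenness dichotomy (either $s_j=0$, disposed of by Lemma \ref{lem:technical2}, or $a_{ij}\in-2\N_0$, built into the definition of $\cS$) is what makes the sign pattern match under the pairing, and your plan never invokes it.

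There are also two scalar slips worth flagging. In case (b), \eqref{eq:octau} alone does not give the cancellation: what the computation needs is $c_i=c_{\tau(i)}$, imposed by the definition of $\cC$ precisely because $\tau(i)\neq i$ and $(\alpha_i,\Theta(\alpha_i))=0$, together with $s(i)=s(\tau(i))$ and $q_i=q_j$. In case (c) the identity you quote, $c_{\tau(i)}=\overline{c_i}$, is false; here \eqref{eq:octau} gives $\overline{c_i}=q^{(\alpha_i,\alpha_i)}c_i$, and it is exactly this factor of $q^{(\alpha_i,\alpha_i)}$ that is needed to merge the two $E_i$-terms produced by the recursion into $r_i\circ{}_ir$ applied to a $\sigma$-antisymmetric element before Lemma \ref{lem:auxSerre} can be used.
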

\begin{proof}

We may assume that $\mu=(1-a_{ij})\alpha_i+\alpha_j$  and that $\Theta(\mu)=-\mu$, as otherwise all terms in the above sum vanish. By Lemma \ref{lem:technical1} it suffices to consider the following two cases. 

\noindent{\bf Case 1:} $\Theta(\alpha_i)=-\alpha_j$ and $a_{ij}=0$. In this case $\mu=\alpha_i+\alpha_j$ and $s_i=s_j=0$ by definition \eqref{def:setS} of the parameter set $\mathcal{S}$. Hence $A_i=-s(j)(q_i-q_i^{-1}) \overline{c_i} E_j$ and $A_j=-s(i)(q_j-q_j^{-1})\overline{c_j}E_i$. Therefore the left hand side of (\ref{eq:Xcond2}) is equal to 
\begin{align}\label{eq:nearly}
\frac{1}{q_i-q_i^{-1}} \left<F_j,A_i \right>-\frac{1}{q_j-q_j^{-1}} \left<F_i,A_j \right>&=- s(j)\overline{c_i}\left<F_j,E_j \right>+ s(i)\overline{c_j}\left<F_i,E_i \right>.
\end{align}
Using $q_i=q_j$, the fact that $s(i)=s(j)$ by \eqref{eq:defs(i)2}, and the relation $c_i=c_j$ which holds by definition of the parameter set $\mathcal{C}$, one sees that the right hand side of \eqref{eq:nearly} vanishes.
 
\noindent{\bf Case 2:} $\Theta(\alpha_i)=-\alpha_i$ and $\Theta(\alpha_j)=-\alpha_j$. In this case by \eqref{def:setIns} one has $i,j \in I_{ns}$. Hence, by definition \eqref{def:setS} of the parameter set $\mathcal{S}$, one has either $s_j=0$ or $a_{ij}\in -2\mathbb{N}_0$. If $s_j=0$ then Lemma \ref{lem:technical2} implies that $(1-a_{ij})\alpha_i+\alpha_j= \mu\in \mathbb{N}_0 \alpha_i \oplus 2\mathbb{N}_0 \alpha_j$, which is not the case. If $-a_{ij}$ is even then the left hand side of \eqref{eq:Xcond2} can be written as
\begin{multline}
\frac{1}{q_i-q_i^{-1}}\left<F_jF_i^{-a_{ij}},A_i \right>-\frac{1}{q_j-q_j^{-1}}\left<F_i^{1-a_{ij}},A_j \right> \\
+\frac{-1}{q_i-q_i^{-1}} \sum_{s=1}^{-a_{ij}/2} {1-a_{ij} \brack s}_{q_i}(-1)^s \left<F_i^{1-a_{ij}-s}F_jF_i^{s-1}-F_i^{s}F_jF_i^{-a_{ij}-s},A_i \right>.\label{eq:Xcond2.1}
\end{multline}
By \eqref{eq:Ai} and \eqref{eq:iA} one has $A_j=-(q_j-q_j^{-1})^{-1}s_j \Xfrak_{\mu-\alpha_j} ={}_jA$ and hence
\begin{align*}
&\frac{1}{q_i-q_i^{-1}}\left<F_jF_i^{-a_{ij}},A_i \right>-\frac{1}{q_j-q_j^{-1}}\left<F_i^{1-a_{ij}},A_j \right> \\
&\qquad=-\frac{1}{q_i-q_i^{-1}}\frac{1}{q_j-q_j^{-1}}\left<F_i^{-a_{ij}},{}_jr(A_i)-r_i({}_jA)\right>.
\end{align*}
In view of Lemma \ref{lem:Xcond1} the above relation shows that the sum of the first two terms of \eqref{eq:Xcond2.1} vanishes. Each of the remaining summands in \eqref{eq:Xcond2.1} contains a factor of the form
\begin{align}
&\big<F_i^{1-a_{ij}-s}F_jF_i^{s-1}-F_i^{s}F_jF_i^{-a_{ij}-s},A_i \big> \notag \\
&\qquad=\frac{-1}{q_i-q_i^{-1}}\big<F_i^{-a_{ij}-s}F_jF_i^{s-1}-F_i^{s-1}F_jF_i^{-a_{ij}-s},{}_ir(A_i) \big>. \label{eq:Xcond2.2} 
\end{align}
Set $x=F_i^{-a_{ij}-s}F_jF_i^{s-1}-F_i^{s-1}F_jF_i^{-a_{ij}-s}$ and observe that $\sigma(x)=-x$.
Inserting the definition of $A_i$ into \eqref{eq:Xcond2.2} one obtains in view of $X_i=-E_i$ the relation
\begin{align*}
  \big<F_i^{1-a_{ij}-s}F_jF_i^{s-1}-F_i^{s}F_jF_i^{-a_{ij}-s},A_i \big> =
  \big<x, {}_ir(-\Xfrak_{\mu-2\alpha_i} \overline{c_i}E_i + \overline{s_i}  
  \Xfrak_{\mu-\alpha_i})\big>.
\end{align*}
Using the skew derivation property \eqref{def:ir} and the assumption that $\Xfrak_{\mu'}$ satisfies  \eqref{eq:irX} for all $\mu'<\mu$ one obtains
\begin{align*}
  &\big<F_i^{1-a_{ij}-s}F_jF_i^{s-1}-F_i^{s}F_jF_i^{-a_{ij}-s},A_i \big>\\
  &\qquad=-(q_i-q_i^{-1})\big<x,\overline{c_i}q^{(\alpha_i,\alpha_i)}c_i E_i\Xfrak_{\mu-4\alpha_i}E_i \big>\notag \\ 
& \qquad \quad +(q_i-q_i^{-1}) \big<x,s_i\overline{c_i}\Xfrak_{\mu-3\alpha_i}E_i \big>-\big<x,\overline{c_i}q^{(\alpha_i,\mu-2\alpha_i)}\Xfrak_{\mu-2\alpha_i} \big>\notag \\
&\qquad\quad +(q_i-q_i^{-1}) \big<x,s_iq^{(\alpha_i,\alpha_i)}c_iE_i\Xfrak_{\mu-3\alpha_i} \big>-(q_i-q_i^{-1})  \big<x,s_i^2 \Xfrak_{\mu-2\alpha_i} \big>
\end{align*}
Using the relations \eqref{eq:form-riE} and the property $\overline{c_i}=q^{(\alpha_i,\alpha_i)}c_i$ which holds by \eqref{eq:octau}, the above equation becomes
\begin{align}
&\big<F_i^{1-a_{ij}-s}F_jF_i^{s-1}-F_i^{s}F_jF_i^{-a_{ij}-s},A_i \big> \notag \\
&\qquad=-\overline{c_i}q^{(\alpha_i,\alpha_i)}c_i\frac{1}{q_i-q_i^{-1}} \big<r_i({}_ir(x)), \Xfrak_{\mu-4\alpha_i}\big>-\overline{c_i}q^{(\alpha_i,\mu-2\alpha_i)} \big<x,\Xfrak_{\mu-2\alpha_i} \big>  \label{eq:Xcond2.3}\\
&\qquad\quad  -s_i\overline{c_i} \big<r_{i}(x)+{}_ir(x),\Xfrak_{\mu-3\alpha_i} \big>-(q_i-q_i^{-1})s_i^2   \big<x,\Xfrak_{\mu-2\alpha_i} \big>.\notag
\end{align}
Using the fact that $\sigma(x)=-x$ we obtain from \eqref{eq:sigmari} and \eqref{eq:rijr} that
\begin{align*}
\sigma(r_i({}_ir(x)))&=-r_i({}_ir(x)), &  \sigma(r_{i}(x)+{}_ir(x))&=-(r_{i}(x)+{}_ir(x)).
\end{align*}
By Lemma \ref{lem:auxSerre} the above relations imply that all terms in \eqref{eq:Xcond2.3} vanish. Therefore all summands in \eqref{eq:Xcond2.1} vanish, which completes the proof of the Lemma in the second case.
\end{proof}
\begin{rema}
  If one restricts to quantum symmetric pair coideal subalgebras $\coid$ with $\bs=(0,0,\dots,0)$ then Case 2 in the proof of Lemma \ref{lem:Xcond2} simplifies significantly and Lemma \ref{lem:auxSerre} is not needed.
\end{rema}
\subsection{Constructing $\Xfrak$}\label{sec:constructingX}
We are now ready to prove the main result of this section, namely the existence of the quasi K-matrix $\Xfrak$. Recall the assumptions from Section \ref{sec:Assumptions}.
\begin{thm}\label{thm:Xfrak}
There exists a uniquely determined element $\Xfrak=\sum_{\mu\in Q^+}\Xfrak_\mu \in \widehat{U^+}$, with $\Xfrak_0=1$ and $\Xfrak_\mu \in U^+_\mu $, such that the equality 
\begin{equation}
\overline{x}^B \Xfrak=\Xfrak\, \overline{x} \label{eq:Xfrakthm}
\end{equation}
holds in $\mathscr{U}$ for all $ x \in \coid$.
\end{thm}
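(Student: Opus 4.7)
The plan is to construct $\Xfrak = \sum_{\mu \in Q^+} \Xfrak_\mu$ by induction on $\hght(\mu)$, using Proposition \ref{prop:TFAE} to translate the defining property \eqref{eq:Xfrakthm} into the recursive system \eqref{eq:riX}, \eqref{eq:irX}, and using Proposition \ref{prop:Xi} together with Lemmas \ref{lem:Xcond1} and \ref{lem:Xcond2} to carry out the inductive step.

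First I would set $\Xfrak_0 = 1$, which satisfies both \eqref{eq:riX} and \eqref{eq:irX} for $\mu = 0$ since $r_i(1) = {}_ir(1) = 0$ and the right-hand sides vanish (the terms $\Xfrak_{-\alpha_i}$ and $\Xfrak_{\Theta(\alpha_i) - 2\alpha_i}$ do not exist / are taken to be zero). For the inductive step, fix $\mu \in Q^+$ with $\hght(\mu) \geq 1$ and assume $\Xfrak_{\mu'} \in U^+_{\mu'}$ has been uniquely determined for all $\mu' < \mu$ so that \eqref{eq:riX}, \eqref{eq:irX} hold. Define $A_i, {}_iA \in U^+_{\mu - \alpha_i}$ by formulas \eqref{eq:Ai}, \eqref{eq:iA}. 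For $\hght(\mu) = 1$, one directly checks the desired compatibility; for $\hght(\mu) \geq 2$, I would invoke Proposition \ref{prop:Xi}: the existence and uniqueness of $\Xfrak_\mu \in U^+_\mu$ with $r_i(\Xfrak_\mu) = A_i$ and ${}_ir(\Xfrak_\mu) = {}_iA$ for all $i$ reduces to verifying conditions \eqref{eq:xi2a} and \eqref{eq:xi2b}, which are precisely the content of Lemmas \ref{lem:Xcond1} and \ref{lem:Xcond2}.

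With the family $(\Xfrak_\mu)_{\mu \in Q^+}$ so constructed, the element $\Xfrak = \sum_{\mu} \Xfrak_\mu \in \widehat{U^+} \subseteq \sU$ satisfies condition (3) of Proposition \ref{prop:TFAE} by construction, hence also condition (1), namely \eqref{eq:Xfrakthm}. Uniqueness of $\Xfrak$ follows from the uniqueness of each $\Xfrak_\mu$ at each inductive step, combined with the equivalence $(1) \Leftrightarrow (3)$ in Proposition \ref{prop:TFAE}: any other element $\Xfrak'$ satisfying \eqref{eq:Xfrakthm} with $\Xfrak'_0 = 1$ must have components satisfying the same recursion \eqref{eq:riX}, \eqref{eq:irX}, and hence agree with $\Xfrak$ in every weight by induction and the uniqueness clause of Proposition \ref{prop:Xi}.

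The main obstacle in this program is already absorbed into Lemmas \ref{lem:Xcond1} and \ref{lem:Xcond2}, which are the technical heart of the argument; the final theorem itself is essentially a clean packaging step. The subtle point one must not overlook is the starting cases $\hght(\mu) = 1$, where Proposition \ref{prop:Xi} does not directly apply: here one needs $\Xfrak_{\alpha_i} = 0$ (since $\Theta(\alpha_i) = -\alpha_i$ fails generically, and when it does hold one must check that $A_i$ and ${}_iA$ are consistent with this) — this is guaranteed by condition (4) of Proposition \ref{prop:TFAE} and the structure of the parameter set $\cS$, which forces the right-hand sides of \eqref{eq:riX}, \eqref{eq:irX} to vanish in weight $\alpha_i$ unless $\Theta(\alpha_i) = -\alpha_i$ and then provides a consistent one-dimensional equation determining $\Xfrak_{\alpha_i}$.
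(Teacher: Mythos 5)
Your proposal is correct and follows essentially the same route as the paper: induction on $\hght(\mu)$, translating \eqref{eq:Xfrakthm} into the system \eqref{eq:riX}, \eqref{eq:irX} via Proposition \ref{prop:TFAE}, invoking Proposition \ref{prop:Xi} with Lemmas \ref{lem:Xcond1} and \ref{lem:Xcond2} for $\hght(\mu)\ge 2$, and treating $\hght(\mu)=1$ separately, with uniqueness coming from the uniqueness clause of Proposition \ref{prop:Xi}. The only slip is the phrase ``one needs $\Xfrak_{\alpha_i}=0$'': the paper's explicit height-one solution is $\Xfrak_{\alpha_j}=-(q_j-q_j^{-1})s_jE_j$, which is nonzero whenever $s_j\neq 0$ (the consistency of the $r_j$- and ${}_jr$-equations using $\overline{s_j}=s_j$ from \eqref{Mparameters2}), so only your subsequent description of the one-dimensional determining equation, not the vanishing claim, is what actually occurs.
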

\begin{proof}
We construct $\Xfrak_\mu$ by induction on  the height of $\mu$, starting from $\Xfrak_0=1$. If $\mu=\alpha_j$ then equations \eqref{eq:riX} and \eqref{eq:irX} are equivalent to
\begin{align*}
  r_i(\Xfrak_\mu) = {}_ir(\Xfrak_\mu)=\begin{cases} 0& \mbox{if $i\neq j$,}\\
                                                    -(q_i-q_i^{-1})s_i & \mbox{if $i=j$}
                                      \end{cases}              
\end{align*}
as $s_j=\overline{s}_j$ by \eqref{Mparameters2}. In this case $\Xfrak_{\alpha_j}= -(q_j-q_j^{-1})s_j E_j$ satisfies  \eqref{eq:riX} and \eqref{eq:irX}. This defines $\Xfrak_\mu$ in the case $\hght(\mu)=1$.
Assume now that $\hght(\mu)\ge 2$ and that the elements $\Xfrak_{\mu'}$ have been defined for all $\mu'$ with $\mathrm{ht}(\mu')<\mathrm{ht}(\mu)$ such that they satisfy \eqref{eq:riX} and \eqref{eq:irX} for all $i\in I$. The elements $A_i$ and ${}_iA$ given by (\ref{eq:Ai}) and (\ref{eq:iA}), respectively, are then well defined, and by Lemma \ref{lem:Xcond1} and Lemma \ref{lem:Xcond2} they satisfy the conditions of Proposition \ref{prop:Xi}.(2). By Proposition \ref{prop:Xi} the system of equations given by  \eqref{eq:riUnknown} for all $i\in I$ has a unique solution $\uX=\Xfrak_\mu\in U^+_\mu$. By definition of $A_i$ and ${}_iA$ the element $\Xfrak_\mu$ satisfies equations (\ref{eq:riX}) and (\ref{eq:irX}).

Set $\Xfrak=\sum_{\mu\in Q^+}\Xfrak_\mu\in \widehat{U^+}$. By Proposition \ref{prop:TFAE} the element $\Xfrak$ satisfies the relation (\ref{eq:Xfrakthm}) for all $x\in \coid$. 
The uniqueness of $\Xfrak$ follows by Propositions \ref{prop:TFAE} and \ref{prop:Xi} from the uniqueness of the solution of the system of equations given by \eqref{eq:riUnknown} for all $i\in I$.
\end{proof}

\section{Construction of the universal K-matrix}\label{sec:K-construction}
Using the quasi K-matrix $\Xfrak$ from the previous section we now construct a candidate $\cK\in\sU$ for a universal K-matrix as in Definition \ref{def:U-cylinder-braided}. Our approach is again inspired by the special case considered in \cite{a-BaoWang13p}. However, we are aiming for a comprehensive construction for all quantum symmetric Kac-Moody pairs. In this setting the Weyl group does not contain a longest element. We hence replace the Lusztig action in \cite[Theorem 2.18]{a-BaoWang13p} by a twist of the underlying module, see Section \ref{sec:pseudoT}. In Section \ref{sec:generalK'} we construct a $\coid$-module homomorphism between twisted versions of modules in $\Oint$. This provides the main step of the construction in the general Kac-Moody case. In Section \ref{sec:finitecase} we restrict to the finite case and obtain a $\cB$-$\tw$-automorphism $\cK$ for $\Oint$ as in Section \ref{sec:TwCylTw} with $\cB$ as in Example \ref{eg:Oint}. The coproduct of $\cK$ will be determined in Section \ref{sec:coproductK}. 
\subsection{A pseudo longest element of $W$}\label{sec:pseudoT}
If $\gfrak$ is of finite type then there exists $\tau_0\in \Aut(A)$ such that the longest element $w_0\in W$ satisfies 
\begin{align}\label{eq:w0alphai}
  w_0(\alpha_i)=-\alpha_{\tau_0(i)}\qquad\mbox{ for all $i\in I$.}
\end{align}
Moreover, in this case the Lusztig automorphism $T_{w_0}$ of $\uqg$ corresponding to $w_0$ can be explicitly calculated. Indeed, by  \cite[Proposition 8.20]{b-Jantzen96} or \cite[Lemma 3.4]{a-Kolb14} one has
\begin{equation}\label{eq:Tw0Fi}
\begin{aligned}
T_{w_0}(E_i)&=-F_{\tau_0(i)}K_{\tau_0(i)},&
T_{w_0}(F_i)&=-K_{\tau_0(i)}^{-1}E_{\tau_0(i)},& T_{w_0}(K_i)&=K_{\tau_0(i)}^{-1}, \\
T_{w_0}^{-1}(E_i)&=-K_{\tau_0(i)}^{-1}F_{\tau_0(i)},&
T_{w_0}^{-1}(F_i)&=-E_{\tau_0(i)}K_{\tau_0(i)},& T_{w_0}^{-1}(K_i)&=K_{\tau_0(i)}^{-1}. \\
\end{aligned}
\end{equation}
In the Kac-Moody case we mimic the inverse of the Lusztig automorphism corresponding to the longest element of the Weyl group as follows. Let $\tw:\uqg\rightarrow \uqg$ denote the algebra automorphism defined by
\begin{align*}
  \tw(E_i)= - K_i^{-1} F_i, \qquad \tw(F_i)=- E_i K_i, \qquad \tw(K_h)=K_{-h}
\end{align*}
for all $i\in I$, $h\in Q^\vee_{ext}$.
\begin{lem}\label{lem:twTi}
  For all $i\in I$ one has $\tw\circ T_i=T_i\circ \tw$ on $\uqg$.
\end{lem}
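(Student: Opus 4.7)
My plan is to verify $\tw\circ T_i = T_i\circ \tw$ by a direct check on the algebra generators of $\uqg$. Since both $\tw$ and $T_i$ are algebra automorphisms, the two compositions are algebra homomorphisms, so it suffices to establish agreement on the generating set consisting of $K_h$ for $h\in \Qvext$ together with $E_j, F_j$ for $j\in I$.

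The cases $K_h$, $E_i$, and $F_i$ are immediate. For $K_h$, both compositions produce $K_{-\sigma_i(h)}$. For $E_i$, using the standard formulas $T_i(E_i)=-F_iK_i$ and $T_i(F_i)=-K_i^{-1}E_i$ together with the explicit definition of $\tw$, one computes directly that $\tw(T_i(E_i)) = \tw(-F_iK_i) = E_iK_iK_i^{-1} = E_i$ and $T_i(\tw(E_i)) = T_i(-K_i^{-1}F_i) = -K_i(-K_i^{-1}E_i) = E_i$, and the analogous pair of computations settles $F_i$.

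The remaining case $j\neq i$ is the main one. Here I would invoke Lusztig's explicit polynomial formulas \cite[37.1]{b-Lusztig94}, which express $T_i(E_j)$ as a polynomial in $E_i, E_j$ of weight $\sigma_i(\alpha_j) = \alpha_j - a_{ij}\alpha_i$ and $T_i(F_j)$ as a polynomial in $F_i, F_j$ of the same combinatorial shape. Applying $\tw$ to $T_i(E_j)$ and using $\tw(E_\ell) = -K_\ell^{-1}F_\ell$, then collecting Cartan factors to the left via the relation $K_\ell^{\pm 1}F_m = q^{\mp\alpha_m(\epsilon_\ell h_\ell)}F_mK_\ell^{\pm 1}$, rewrites $\tw(T_i(E_j))$ as a single $K$-factor times a polynomial in $F_i, F_j$. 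On the other side, $T_i(\tw(E_j)) = T_i(-K_j^{-1}F_j) = -K_{-\sigma_i(\epsilon_jh_j)}\,T_i(F_j)$, and expanding $T_i(F_j)$ by Lusztig's formula yields an expression of the same form. Term-by-term comparison of polynomial parts, together with a weight calculation to match the overall Cartan factor, completes the argument. The main obstacle is the bookkeeping of $K$-factors in this last case; the underlying identity is transparent from the parallel structure of the $E$- and $F$-parts of $T_i$, but an explicit match of coefficients is required.
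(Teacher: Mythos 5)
Your proposal is correct and follows essentially the same route as the paper: check the identity on the generators $K_h$, $E_j$, $F_j$, handle the rank-one case $j=i$ via the explicit formulas $T_i^{\pm1}(E_i)$, $T_i^{\pm1}(F_i)$ (the paper phrases this as $T_i^{-1}=\tw$ on $U_{q_i}(\mathfrak{sl}_2)_i$), and settle $j\neq i$ by a direct computation with Lusztig's formulas from \cite[37.1]{b-Lusztig94}. The paper likewise leaves that last verification as a direct calculation, so your sketch matches its level of detail.
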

\begin{proof}
   For $h\in Q^\vee_{ext}$ one has $T_i \circ \tw(K_h)=K_{-s_i(h)}=\tw\circ T_i(K_h)$. It remains to check that 
\begin{align}\label{eq:Titw-goal}
   T_i\circ \tw(E_j)=\tw\circ T_i(E_j)\qquad \mbox{ and } \qquad T_i\circ \tw(F_j)=\tw\circ T_i(F_j)  
\end{align}   
for all $j\in I$. For $j=i$ relation \eqref{eq:Titw-goal} holds because $T_i^{-1}|_{\uqislzi}=\tw|_{\uqislzi}$. For $j\neq i$ relation \eqref{eq:Titw-goal} is verified by a direct calculation using the formulas
\begin{align*}
  T_i(E_j) &= \sum_{k=0}^{-a_{ij}} (-1)^k q_i^{-k} E_i^{(-a_{ij}-k)} E_j E_i^{(k)},\\
  T_i(F_j) &= \sum_{k=0}^{-a_{ij}} (-1)^k q_i^{k} F_i^{(k)} F_j F_i^{(-a_{ij}-k)}.\\
\end{align*}
which hold by \cite[37.1.3]{b-Lusztig94}.
\end{proof}
To mimic the Lusztig action of the longest element in the Kac-Moody case we additionally need an automorphism $\tau_0\in \Aut(A,X)$. Recall our setting and assumptions from Section \ref{sec:Assumptions}. For the construction of the universal K-matrix we need to make minor additional assumptions on the parameters $\bc\in \cC$ and $\bs\in \cS$.

\medskip

\noindent{\bf Assumption ($\tau_0$):} We are given an additional involutive element $\tau_0\in\Aut(A,X)$ with the following properties:
  \begin{enumerate}
    \item $\tau\circ \tau_0 = \tau_0\circ \tau$.
    \item The parameters $\bc\in \cC$ and $\bs\in \cS$ satisfy the relations
      \begin{align}\label{eq:ctau0tau}
        c_{\tau_0\tau(i)}=c_i, \qquad s_{\tau_0(i)}=s_i \qquad \mbox{for all $i\in I\setminus X$.}
      \end{align}
    \item The function $s:I\rightarrow \field$ described by \eqref{eq:defs(i)1} and \eqref{eq:defs(i)2} satisfies the relation
      \begin{align}\label{eq:stautau0}
         s(\tau(i))=s(\tau_0(i)) \qquad \mbox{for all $i\in I$}.
      \end{align}   
  \end{enumerate}
\begin{rema}\label{rem:tw-finite}
  Assume that $\gfrak$ is of finite type. In this case we always choose $\tau_0$ to be the diagram automorphism determined by Equation \eqref{eq:w0alphai}. Then Property (1) is automatically satisfied as follows by inspection from the list of Satake diagrams in \cite{a-Araki62}. Moreover, by definition of the parameter set $\cS$ one can have $s_i\neq 0$ only if $\tau'(i)=i$ for all $\tau'\in \Aut(A)$. Hence Property (2) reduces to $c_{\tau_0\tau(i)}=c_i$ in the finite case. By \eqref{eq:defs(i)1} and \eqref{eq:defs(i)2} one can have $s(i)\neq 1$ only if $\tau(i)=\tau_0(i)$. Hence Property (3) is always satisfied in the finite case. 
  
If $\tau_0=\tau$ then Property (2) is an empty statement. It is possible that $\tau=\id$ and $\tau_0\neq \id$, see the list in \cite{a-Araki62}. In this case condition \eqref{eq:octau} implies that $c_i$ equals $c_{\tau_0(i)}$ up to multiplication by a bar invariant scalar. The new condition $c_{\tau_0\tau(i)}=c_i$ forces this scalar to be equal to $1$. Finally, only in type $D_{2n}$ is it possible that $\tau_0=\id$ and $\tau\neq \id$. In this case, however, the condition $\bc\in \cC$ implies that $c_{\tau_0\tau(i)}=c_i$. These arguments show that the new condition $c_{\tau_0\tau(i)}=c_i$ is consistent with the conditions imposed in Section \ref{sec:Assumptions} and that it is always possible to choose parameters $\bc$ and $\bs$ which satisfy all of the assumptions. 
\end{rema}
The composition 
\begin{align*}
  \tw\circ \tau_0:\uqgp\rightarrow \uqgp
\end{align*}   
defines an algebra automorphism. By \eqref{eq:Tw0Fi} the automorphism $\tw\circ \tau_0$ is a Kac-Moody analog of the inverse of the Lusztig action on $\uqg$ corresponding to the longest element in the Weyl group in the finite case. As $\tau_0\in \Aut(A,X)$ one has $\tau_0\circ T_{w_X}=T_{w_X}\circ\tau_0$. By Lemma \ref{lem:twTi} this implies that
\begin{align}\label{eq:tau0twTwX}
  \tau_0\circ \tw\circ T_{w_X} = T_{w_X} \circ \tau_0\circ \tw.
\end{align}
To obtain an analog of this Lusztig action on modules in $\Oint$ we will twist the module structure. 
In the following section we construct a $\coid$-module homomorphism between twisted versions of modules in $\Oint$. As $\coid$ is a subalgebra of $\uqgp$ it suffices to consider objects in $\Oint$ as $\uqgp$-modules. With this convention, for any algebra automorphism $\varphi:\uqgp\rightarrow \uqgp$ and any $M\in Ob(\Oint)$ let $M^\varphi$ denote the vector space $M$ with the $\uqgp$-module structure $u\ot m\mapsto u\bullet_\varphi m$ given by
\begin{align*}
  u\bullet_\varphi m=\varphi(u)m\qquad \mbox{for all $u\in \uqgp$, $m\in M$.}
\end{align*}
We will apply this notation in particular in the case where $\varphi$ is one of $\tau_0\circ \tau$ and $\tw\circ \tau_0$, see Theorem \ref{thm:Bc-hom}. 
\begin{rema}
  If the algebra automorphism $\varphi:\uqgp\rightarrow \uqgp$ extends to a Hopf algebra automorphism of $\uqg$ then the notation $M^\varphi$ for $M\in Ob(\Oint)$ coincides with the notation in Example \ref{eg:Oint3}.
\end{rema}
\subsection{The twisted universal K-matrix in the Kac-Moody case}\label{sec:generalK'}
We keep our assumptions from Section \ref{sec:Assumptions} and Assumption $(\tau_0)$ from the previous subsection. To construct the desired $\coid$-module homomorphism we require one additional ingredient.
Consider the function $\gamma: I\to \field(q^{1/d})$ defined by
\begin{align}\label{eq:gamma(i)}
  \gamma(i)=\begin{cases}
                      1 & \mbox{if $i\in X$}\\
                      c_i s(\tau(i)) & \mbox{if $i\in I \setminus X$}
                   \end{cases}
\end{align}
and note that by \eqref{eq:ctau0tau} and \eqref{eq:stautau0} one has $\gamma(\tau\tau_0(i))=\gamma(i)$ for all $i\in I$.
Now assume that $\xi:P\rightarrow \mathbb{K}(q^{1/d})^\times$ is a function satisfying the following recursion
\begin{align}\label{eq:xi-recursion}
  \xi(\mu+\alpha_i) =\gamma(i) q^{-(\alpha_i,\Theta(\alpha_i)) - (\mu,\alpha_i+\Theta(\alpha_i))} \xi(\mu) \qquad \mbox{for all $\mu\in P$, $i\in I$.}
\end{align}
Such a function exists. Indeed, we may take an arbitrary map on any set of representatives of $P/Q$ and uniquely extend it to $P$ using \eqref{eq:xi-recursion}.
\begin{lem}
 Let $\xi:P\rightarrow \field(q^{1/d})^\times$ be any function which satisfies the recursion \eqref{eq:xi-recursion}. Then one has
  \begin{align}\label{eq:xi-QX-recursion}
    \xi(\mu+\lambda)= q^{-(\lambda,\lambda)-2(\mu,\lambda)} \xi(\mu)\qquad \mbox{for all $\mu\in P$, $\lambda\in Q_X$.}
  \end{align}
\end{lem}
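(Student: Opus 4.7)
The plan is to reduce the general statement to the special case $\lambda = \alpha_i$ with $i \in X$ and then bootstrap via induction on the expression length of $\lambda$ in the basis $\{\alpha_i : i \in X\}$ of $Q_X$.

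First I would record the key simplification: for $i \in X$ one has $\Theta(\alpha_i) = \alpha_i$ and $\gamma(i) = 1$. Indeed, condition (2) in Definition~\ref{def:admissible} says that $\tau|_X$ coincides with $-w_X|_X$, so $\alpha_{\tau(i)} = -w_X(\alpha_i)$ for $i \in X$; hence
\[
\Theta(\alpha_i) = -w_X(\alpha_{\tau(i)}) = -w_X(-w_X(\alpha_i)) = \alpha_i,
\]
and $\gamma(i) = 1$ by \eqref{eq:gamma(i)}. By $\Z$-linearity this gives $\Theta(\lambda) = \lambda$ for every $\lambda \in Q_X$.

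Feeding this into the recursion \eqref{eq:xi-recursion}, for $i \in X$ and any $\mu \in P$ we obtain
\[
\xi(\mu + \alpha_i) \;=\; q^{-(\alpha_i,\alpha_i) - 2(\mu,\alpha_i)}\,\xi(\mu),
\]
which is precisely \eqref{eq:xi-QX-recursion} in the case $\lambda = \alpha_i$. The same recursion, read backwards, gives
\[
\xi(\mu - \alpha_i) \;=\; q^{-(\alpha_i,\alpha_i) + 2(\mu,\alpha_i)}\,\xi(\mu) \qquad (i \in X),
\]
which is \eqref{eq:xi-QX-recursion} for $\lambda = -\alpha_i$.

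To obtain \eqref{eq:xi-QX-recursion} for arbitrary $\lambda \in Q_X$, I would proceed by induction on the length $\ell(\lambda)$ of a shortest expression $\lambda = \epsilon_1 \alpha_{i_1} + \cdots + \epsilon_n \alpha_{i_n}$ with $i_k \in X$ and $\epsilon_k \in \{\pm 1\}$. The base case $\ell(\lambda) = 0$, i.e.\ $\lambda = 0$, is trivial. For the inductive step, write $\lambda = \lambda' + \epsilon \alpha_i$ with $\lambda' \in Q_X$ and $\ell(\lambda') < \ell(\lambda)$. Applying the $\lambda = \epsilon \alpha_i$ case with $\mu$ replaced by $\mu + \lambda'$, and then the inductive hypothesis to $\xi(\mu + \lambda')$, gives
\[
\xi(\mu + \lambda) \;=\; q^{-(\alpha_i,\alpha_i) - 2\epsilon(\mu + \lambda',\alpha_i)}\,q^{-(\lambda',\lambda') - 2(\mu,\lambda')}\,\xi(\mu).
\]
Using $(\lambda,\lambda) = (\lambda',\lambda') + 2\epsilon(\lambda',\alpha_i) + (\alpha_i,\alpha_i)$ and $(\mu,\lambda) = (\mu,\lambda') + \epsilon(\mu,\alpha_i)$, the exponent collapses to $-(\lambda,\lambda) - 2(\mu,\lambda)$, completing the induction.

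There is no real obstacle here: the only substantive input is the identity $\Theta|_{Q_X} = \id$ combined with $\gamma|_X = 1$, after which the argument is a routine induction and a straightforward computation with the bilinear form.
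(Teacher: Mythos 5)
Your proof is correct and follows essentially the same route as the paper: the paper also proves the identity by induction, adding one simple root $\alpha_i$ with $i\in X$ at a time and using the recursion \eqref{eq:xi-recursion} together with the facts $\gamma(i)=1$ and $\Theta(\alpha_i)=\alpha_i$ for $i\in X$. If anything, your version is slightly more explicit than the paper's, since you justify $\Theta|_{Q_X}=\id$ from Definition \ref{def:admissible}(2) and treat elements of $Q_X$ with negative coefficients via the backwards recursion, whereas the paper's induction on height only records the step of adding $+\alpha_i$.
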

\begin{proof}
We prove this by induction on the height of $\lambda$. Assume that \eqref{eq:xi-QX-recursion} holds for a given $\lambda\in Q_X$. Then one obtains for any $i\in X$ the relation
 \begin{align*}
   \xi(\mu+\lambda+\alpha_i)&=q^{-(\alpha_i,\alpha_i)-2(\mu+\lambda,\alpha_i)}\xi(\mu+\lambda)\\
   &= q^{-(\alpha_i,\alpha_i)-2(\mu+\lambda,\alpha_i)-(\lambda,\lambda)-2(\mu,\lambda)}\xi(\mu)\\
   &= q^{-(\lambda+\alpha_i,\lambda+\alpha_i)-2(\mu,\lambda+\alpha_i)} \xi(\mu)
 \end{align*}
which completes the induction step. 
\end{proof}
As in Example \ref{eg:xiinU} we may consider $\xi$ as an element of $\mathscr{U}$. The next theorem shows that the element $\Xfrak\, \xi\, T_{w_X}^{-1}\in \sU$ defines a $\coid$-module isomorphism between twisted modules in $\Oint$.
\begin{thm}\label{thm:Bc-hom}
Let $\xi:P\rightarrow \field(q^{1/d})^\times$ be a function satisfying the recursion \eqref{eq:xi-recursion}. Then the element $\cK'=\Xfrak\, \xi\, T_{w_X}^{-1}\in \sU$ defines an isomorphism of $\coid$-modules 
\begin{align*}
  \mathcal{K}'_M:M^{\tw\circ \tau_0} \rightarrow M^{\tau_0\tau}, \quad m\mapsto \Xfrak_M \circ \xi_M \circ (T_{w_X}^{-1})_M(m)
\end{align*}
for any $M\in Ob(\Oint)$. In other words, the relation
\begin{align*}
  \cK'\, \tw( \tau_0(x))=\tau_0(\tau(x))\,\cK'
\end{align*}
holds in $\sU$ for all $x\in \coid$.
\end{thm}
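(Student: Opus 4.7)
The plan is to verify the identity $\cK'\cdot \tw\tau_0(x)=\tau_0\tau(x)\cdot \cK'$ in $\sU$ for all $x\in \coid$. The set of $x\in \coid$ for which this identity holds is closed under multiplication and contains $1$, so it suffices to check it on a generating set. By Definition~\ref{def:qsp} the algebra $\coid$ is generated by $\cM_X$, by $U^0_\Theta{}'$, and by the elements $B_i$ for $i\in I\setminus X$, and we treat these three classes in turn.

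I would first dispose of the generators coming from $\cM_X$ and $U^0_\Theta{}'$. In both cases $\Xfrak$ commutes with the relevant generators: for $\cM_X$ this was observed in the proof of Proposition~\ref{prop:TFAE} (implication $(3)\Rightarrow(1)$), and for $K_\mu$ with $\mu\in Q^\Theta$ it follows from Proposition~\ref{prop:TFAE}(4), which forces every nonzero weight component $\Xfrak_\nu$ to satisfy $\Theta(\nu)=-\nu$ and hence $(\mu,\nu)=0$. By the admissibility condition~\ref{adm2} of Definition~\ref{def:admissible}, the element $-w_X$ restricts to $\tau$ on the simple roots indexed by $X$; applying the explicit Lusztig formulas~\eqref{eq:Tw0Fi} inside $U_q(\gfrak_X)$ then yields $T_{w_X}^{-1}|_{\cM_X}=\tw\tau|_{\cM_X}$. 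Combined with Lemma~\ref{lem:twTi} and the fact that $T_{w_X}$ commutes with $\tau_0$ (since $\tau_0\in \Aut(A,X)$), the identity on $\cM_X$ reduces to matching scalars on weight spaces, and this match is exactly the content of the recursion~\eqref{eq:xi-recursion} for $\xi$ in the case $\gamma(j)=1$ and $\Theta(\alpha_j)=\alpha_j$ for $j\in X$. The $U^0_\Theta{}'$ case is analogous: from $\Theta=-w_X\tau$ one obtains $w_X\mu=-\tau\mu$ for $\mu\in Q^\Theta$, and hence $T_{w_X}^{-1}(K_{-\tau_0\mu})=K_{\tau_0\tau\mu}$, which together with the commutativity of $\Xfrak$ and $\xi$ with $K_{\tau_0\tau\mu}$ yields the identity.

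The main obstacle is the verification for the generators $B_i$ with $i\in I\setminus X$, which requires the full strength of Theorem~\ref{thm:Xfrak}. Expanding $\tw\tau_0(B_i)$ via $B_i=F_i+c_iX_iK_i^{-1}+s_iK_i^{-1}$ and using $X_i=-s(\tau(i))T_{w_X}(E_{\tau(i)})$ together with Lemma~\ref{lem:twTi}, pushing $T_{w_X}^{-1}$ to the right through the three summands produces a crucial cancellation in the middle term $T_{w_X}^{-1}\circ T_{w_X}=\id$, reducing $\cK'\cdot\tw\tau_0(B_i)$ to a sum of elementary expressions with $\Xfrak\xi$ on the left and $T_{w_X}^{-1}$ on the right. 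The right-hand side $\tau_0\tau(B_i)\cdot\cK'=B_{\tau_0\tau(i)}\Xfrak\xi T_{w_X}^{-1}$ is then rewritten using Theorem~\ref{thm:Xfrak} as $\Xfrak\overline{B_{\tau_0\tau(i)}}\,\xi T_{w_X}^{-1}$, which again expands into elementary summands with the same $\Xfrak$ on the left. Matching the two sides summand by summand becomes a scalar comparison: the powers of $q$ produced by commuting $\xi$ past $K_{\tau_0(i)}^{\pm 1}$ and $K_{\tau(i)}^{\pm 1}$ must balance against the bar-involution images $\overline{c_i}$ and $\overline{s_i}$ of the parameters. The recursion~\eqref{eq:xi-recursion} has been engineered so that, together with the parameter identities~\eqref{eq:octau} and~\eqref{Mparameters2} and the Assumption~$(\tau_0)$ relations $c_{\tau_0\tau(i)}=c_i$, $s_{\tau_0(i)}=s_i$, and $s(\tau(i))=s(\tau_0(i))$, these scalars agree term by term. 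Once this is established, $\cK'$ intertwines the two module structures on all generators of $\coid$, and the naturality of $\Xfrak$, $\xi$, and $T_{w_X}^{-1}$ as elements of $\sU$ completes the argument.
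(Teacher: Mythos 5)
Your plan is correct and follows essentially the same route as the paper's proof: checking the intertwining relation on the generators of $\coid$ (the subalgebra $\cM_X U^0_\Theta{}'$ and the elements $B_i$, $i\in I\setminus X$), using the commutation of $\Xfrak$ with $\cM_X U^0_\Theta{}'$, the explicit formulas \eqref{eq:Tw0Fi} for $T_{w_X}^{\pm 1}$ on $\cM_X$, the commutation \eqref{eq:tau0twTwX}, the recursion \eqref{eq:xi-recursion} for $\xi$, Theorem \ref{thm:Xfrak}, and the parameter constraints \eqref{eq:octau}, \eqref{Mparameters2}, \eqref{eq:ctau0tau}. The only cosmetic difference is that you apply Theorem \ref{thm:Xfrak} to $B_{\tau_0\tau(i)}\,\Xfrak$ on the right-hand side and match summands, whereas the paper simplifies the left-hand side all the way to $\Xfrak\,\overline{B_{\tau_0\tau(i)}}\,\xi\, T_{w_X}^{-1}$ (via \eqref{eq:TwXbar}) and invokes the theorem once at the end.
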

\begin{proof}
 It suffices to check that 
   \begin{align}\label{eq:Txm}
     \mathcal{K}_M'\big(x\bullet_{\tw\circ \tau_0}m)= x\bullet_{\tau \tau_0}\mathcal{K}_M'(m)\qquad \mbox{for all $m\in M$}
   \end{align}
where $x$ is one of the elements $K_\lambda$, $E_i$, $F_i$, or $B_j$  for $\lambda\in Q^\Theta$, $i\in X$, and $j\in I\setminus X$. Moreover, it suffices to prove the above relation for a weight vector $m\in M_\mu$. In the following we will suppress the subscript $M$ for elements in $\sU$ acting on $M$.

\noindent{\bf Case 1: $x=K_\lambda$ for some $\lambda\in Q^\Theta$.} In this case $w_X(\lambda)=-\tau(\lambda)$. Moreover, as $\tau_0\in \Aut(A,X)$ one has $\tau_0 (w_X(\lambda)) = w_X(\tau_0(\lambda))$. Hence one obtains
\begin{align*}
  \mathcal{K}'(K_\lambda \bullet_{\tw\circ \tau_0} m) &=\Xfrak\circ \xi \circ T_{w_X}^{-1} (\tw( \tau_0(K_\lambda)) m)\\
  &=\Xfrak\circ \xi \circ T_{w_X}^{-1} (K_{-\tau_0(\lambda)} m)\\
  &=\Xfrak\big(K_{-w_X\tau_0(\lambda)}(\xi\circ T_{w_X}^{-1}(m))\big)\\
  &=K_{-w_X\tau_0(\lambda)} \Xfrak\circ\xi\circ T_{w_X}^{-1}(m)\\
  &=K_{\tau_0\tau(\lambda)} \mathcal{K}'(m)\\
  &=K_\lambda \bullet_{\tau_0\tau} \mathcal{K}'(m).
\end{align*}
\noindent{\bf Case 2: $x=E_i$ for some $i\in X$.} 
By relation \eqref{eq:Tw0Fi} applied to $\mathcal{M}_X$ we have $T_{w_X}^{-1}(F_i)=-E_{\tau(i)} K_{\tau(i)}$. Using this and the recursion \eqref{eq:xi-recursion} one obtains
\begin{align*}
  \mathcal{K}'&(E_i\bullet_{\tw\circ \tau_0} m)\stackrel{\phantom{\eqref{eq:xi-recursion}}}{=}
  \Xfrak\circ \xi \circ T_{w_X}^{-1} (-K_{\tau_0(i)}^{-1} F_{\tau_0(i)} m)\\
    &\stackrel{\phantom{\eqref{eq:xi-recursion}}}{=}  \Xfrak\circ \xi \big(q^{(\alpha_i,\alpha_i)}E_{\tau \tau_0(i)}K_{\tau \tau_0(i)}^2 T_{w_X}^{-1}(m)\big)\\
    &\stackrel{\phantom{\eqref{eq:xi-recursion}}}{=} \Xfrak \big(\xi(w_X(\mu){+}\alpha_{\tau \tau_0(i)}) \xi(w_X(\mu))^{-1}q^{(\alpha_i,\alpha_i)+2(w_X(\mu),\alpha_{\tau_0\tau(i)})} E_{\tau \tau_0(i)}\xi \circ T_{w_X}^{-1}(m)\big)\\
    &\stackrel{\eqref{eq:xi-recursion}}{=} E_{\tau \tau_0(i)}\Xfrak\circ\xi\circ T_{w_X}^{-1}(m)\\
    &\stackrel{\phantom{\eqref{eq:xi-recursion}}}{=} E_i \bullet_{\tau_0\tau} \mathcal{K}'(m).
\end{align*}
This confirms relation \eqref{eq:Txm} for $x=E_i$ where $i\in X$. The case $x=F_i$ for $i\in X$ is treated analogously.

\noindent{\bf Case 3: $x=B_j=F_j - \gamma(j)T_{w_X}(E_{\tau(j)})K_j^{-1} + s_j K_j^{-1}$ for some $j\in I\setminus X$.}  
We calculate
 \begin{align*}
  \mathcal{K}'(B_j\bullet_{\tw\circ \tau_0} m)
  &\stackrel{\phantom{\eqref{eq:tau0twTwX}}}{=} \Xfrak\circ \xi \circ T_{w_X}^{-1}\big(\tw\circ \tau_0(B_j)m\big)\\  
  &\stackrel{\eqref{eq:tau0twTwX}}{=} \Xfrak\circ \xi \big(\tw\circ \tau_0(T_{w_X}^{-1}(B_j)) T_{w_X}^{-1}(m)\big)\\  
 &\stackrel{\eqref{eq:tau0twTwX}}{=}\Xfrak\circ \xi \Big(\big(T_{w_X}^{-1}(- E_{\tau_0(j)} K_{\tau_0(j)}) + \gamma(j) K_{\tau\tau_0(j)}^{-1}F_{\tau \tau_0(j)} K_{w_X(\alpha_{\tau_0(j)})}\\
 & \qquad \qquad \qquad + s_j K_{w_X(\tau_0(j))}\big) T_{w_X}^{-1}(m)\Big)\\
 &=\Xfrak\circ \xi \Big(\big(F_{\tau \tau_0(j)} \gamma(j) q^{(\alpha_j,\alpha_j)- (w_X(\mu), \alpha_{\tau\tau_0(j)}+ \Theta(\alpha_{\tau\tau_0(j)}))} \\
 & \qquad\qquad \qquad -T_{w_X}^{-1}(E_{\tau_0(j)})K_{\tau_0\tau(j)} q^{-(w_X(\mu),\alpha_{\tau_0\tau(j)}+ \Theta(\alpha_{\tau_0\tau(j)}))} \\
 & \qquad \qquad \qquad + s_j K_{\tau_0\tau(j)}q^{-(w_X(\mu),\alpha_{\tau_0\tau(j)}+ \Theta(\alpha_{\tau_0\tau(j)}))}\big) T_{w_X}^{-1}(m)\Big).
  \end{align*}
To simplify the last term, recall from \eqref{def:setS} that $s_{\tau_0\tau(j)}=s_j=0$ unless $\Theta(\alpha_j)=-\alpha_j$, in which case $\alpha_{\tau_0\tau(j)} + \Theta(\alpha_{\tau_0\tau(j)})=0$. 
Additionally moving $\xi$ to the right one obtains
\begin{align}
  \mathcal{K}'&(B_j\bullet_{\tw\circ \tau_0} m)=\nonumber\\
 &=\Xfrak \Big(\big(F_{\tau \tau_0(j)} \gamma(j) q^{(\alpha_j,\alpha_j)- (w_X(\mu), \alpha_{\tau\tau_0(j)}+ \Theta(\alpha_{\tau\tau_0(j)}))} 
 \frac{\xi(w_X(\mu)-\alpha_{\tau_0\tau(j)})}{\xi(w_X(\mu))} \label{eq:TBj1}\\
 & \qquad -T_{w_X}^{-1}(E_{\tau_0(j)})K_{\tau_0\tau(j)} q^{-(w_X(\mu),\alpha_{\tau_0\tau(j)}+ \Theta(\alpha_{\tau_0\tau(j)}))} \frac{\xi(w_X(\mu){+}w_X\alpha_{\tau_0(j)})}{\xi(w_X(\mu))} \nonumber\\
 & \qquad \qquad \qquad + s_{\tau_0\tau(j)} K_{\tau_0\tau(j)}\big) \xi\circ T_{w_X}^{-1}(m)\Big).\nonumber
  \end{align}
To simplify the above expression observe that
\begin{align*}
  \xi(w_X(\mu) + w_X\alpha_i)&\stackrel{\eqref{eq:xi-QX-recursion}}{=}q^{-(w_X\alpha_i-\alpha_i, w_X\alpha_i-\alpha_i)-2 (w_X(\mu)+\alpha_i,w_X\alpha_i-\alpha_i)}\xi(w_X(\mu) +\alpha_i)\\
  &\stackrel{\phantom{\eqref{eq:xi-recursion}}}{=} q^{-2(w_X(\mu),w_X\alpha_i-\alpha_i)} \xi(w_X(\mu)+\alpha_i)\\
  &\stackrel{\eqref{eq:xi-recursion}}{=}\gamma(\tau_0\tau(i))q^{-(\alpha_i,\Theta(\alpha_i))+ (w_X(\mu),\alpha_{\tau(i)}+\Theta(\alpha_{\tau(i)})}\xi(w_X(\mu))
\end{align*}
for $i\in I\setminus X$. Inserting this formula for $i=\tau_0(j)$ into Equation \eqref{eq:TBj1} and applying the recursion \eqref{eq:xi-recursion} also to the first summand one obtains
\begin{align}
  \mathcal{K}'&(B_j\bullet_{\tw\circ \tau_0} m) =\Xfrak\Big( \big( F_{\tau \tau_0(j)}  
  -\gamma(\tau(j)) q^{-(\alpha_j,\Theta(\alpha_j))} 
    T_{w_X}^{-1}(E_{\tau_0(j)})K_{\tau_0\tau(j)} + \label{eq:TBj2}\\
  &\qquad\qquad \qquad\qquad \qquad + s_{\tau_0\tau(j)} K_{\tau_0\tau(j)}\big)\xi\circ 
    T_{w_X}^{-1}(m)\Big).\nonumber
\end{align}
Now set 
  \begin{align*}
    \beta_i &= (-1)^{2\alpha_i(\rho^\vee_X)}q^{(2\rho_X,\alpha_i)}
               \qquad \mbox{for $i\in I\setminus X$.}
  \end{align*}
In view of \cite[37.2.4]{b-Lusztig94} one has
\begin{align}\label{eq:TwXbar}
  \overline{T_{w_X}(E_i)}&= \beta_i^{-1} T_{w_X}^{-1}(E_i) \qquad \mbox{for all $i\in I\setminus X$}, 
\end{align}
see also the proof of \cite[Lemma 2.9]{a-BalaKolb14p}. Hence \eqref{eq:TBj2} gives 
  \begin{align*} 
 \mathcal{K}'&(B_j\bullet_{\tw\circ \tau_0} m)=\Xfrak \Big(\big(F_{\tau \tau_0(j)}-\\ 
 &-\gamma(\tau(j)) q^{-(\alpha_j,\Theta(\alpha_j))}  \beta_{\tau_0(j)} 
  \overline{T_{w_X}(E_{\tau_0(j)})K_{\tau_0\tau(j)}^{-1}} + s_j K_{\tau_0\tau(j))}\big) \xi \circ 
   T_{w_X}^{-1}(m)\Big). 
 \end{align*}
In view of the relation
\begin{align*}
  \gamma(\tau(j)) q^{-(\alpha_j,\Theta(\alpha_j))}  \beta_{\tau_0(j)}=s(\tau(j))\overline{c_j}
\end{align*} 
one now obtains
\begin{align*}
  \mathcal{K}'(B_j\bullet_{\tw\circ \tau_0} m)&=\Xfrak\left(\overline{B_{\tau_0\tau(j)}}\xi \circ 
   T_{w_X}^{-1}(m)\right)\\ 
   &= B_{\tau_0\circ\tau(j)} \mathcal{K}'(m)= B_j \bullet_{\tau \tau_0} \mathcal{K}'(m)
\end{align*}
which completes the proof of the Theorem.
\end{proof}
For later reference we note that relation \eqref{eq:TwXbar} implies that the element $X_i$ defined by \eqref{def:Xi} satisfies the relation
\begin{align}\label{eq:Xibar}
  \overline{X_i}= -s(i) q^{-(2\rho_X,\alpha_i)} T_{w_X}^{-1}(E_{\tau(i)}) \qquad \mbox{for all $i\in I\setminus X$},
\end{align}
see also \eqref{eq:defs(i)1}, \eqref{eq:defs(i)2} and property \eqref{adm3} in Definition \ref{def:admissible} of an admissible pair.
\begin{rema}
  The function $\xi$ is an important ingredient in the construction of the twisted K-matrix $\mathcal{K}'$ and should be compared to the recursively defined function $f$ involved in the construction of the commutativity isomorphisms \cite[32.1.3]{b-Lusztig94}. The recursion \eqref{eq:xi-recursion} is a necessary and sufficient condition on $\xi$ for $\Xfrak \circ\xi\circ T_{w_X}^{-1}: M^{\tw\circ\tau_0}\rightarrow M^{\tau_0\tau}$ to be a $\coid$-module homomorphism. 
\end{rema}
\subsection{The universal K-matrix in the finite case}\label{sec:finitecase}
We now assume that $\gfrak$ is of finite type. In this case, following Remark \ref{rem:tw-finite}, we always choose $\tau_0\in \Aut(I,X)$ such that the longest element $w_0\in W$ satisfies $w_0(\alpha_i)=-\alpha_{\tau_0(i)}$ for all $i\in I$. By Equation \eqref{eq:Tw0Fi} this gives $T_{w_0}^{-1}=\tau_0\circ \tw$ on $\uqg$.

If $M$ a finite dimensional $\uqg$-module then the Lusztig action $T_{w_0}:M\rightarrow M$ satisfies
$T_{w_0}(um)= T_{w_0}(u) T_{w_0}(m)$ for all $m\in M$, $u\in \uqg$. In other words, the Lusztig action on $M$ defines an $\uqg$-module isomorphism
\begin{align*}
 T_{w_0}: M^{\tw\circ\tau_0}\rightarrow M.
\end{align*}
Composing the inverse of this isomorphism with the isomorphism $\mathcal{K}'$ from Theorem \ref{thm:Bc-hom}, we get the following corollary. 
\begin{cor}\label{cor:K}
  Assume that $\gfrak$ is of finite type and let $\xi:P\rightarrow \field(q^{1/d})^\times$ be a function satisfying the recursion \eqref{eq:xi-recursion}. Then the element $\cK=\Xfrak\,\xi\, T_{w_X}^{-1}\, T_{w_0}^{-1}\in \mathscr{U}$ defines an isomorphism of $B_{\bc,\bs}$-modules
 \begin{align*}
    \cK_M:M \rightarrow M^{\tau \tau_0}, \qquad m\mapsto \Xfrak_M\circ \xi_M\circ (T_{w_X}^{-1})_M\circ (T_{w_0}^{-1})_M(m)
  \end{align*}
for any finite dimensional $\uqg$-module $M$. In other words, the relation
\begin{align*}
  \cK\,b=\tau_0(\tau(b))\,\cK
\end{align*}
holds in $\sU$ for all $b\in \coid$.
\end{cor}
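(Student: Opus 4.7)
The plan is to deduce this directly from Theorem \ref{thm:Bc-hom} by recognizing that $T_{w_0}^{-1}$, in the finite case, provides exactly the twist needed to bridge between the plain module $M$ and the twisted module $M^{\tw \circ \tau_0}$ appearing in the statement of that theorem.

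First I would observe that in the finite case, relation \eqref{eq:Tw0Fi} gives
\[
  T_{w_0}^{-1}(E_i) = -K_{\tau_0(i)}^{-1} F_{\tau_0(i)}, \quad T_{w_0}^{-1}(F_i) = -E_{\tau_0(i)} K_{\tau_0(i)}, \quad T_{w_0}^{-1}(K_h) = K_{-\tau_0(h)},
\]
and comparing with the definition of $\tw$ in Section \ref{sec:pseudoT} one sees that the algebra automorphisms $\tw$ and $\tau_0$ commute and that $T_{w_0}^{-1} = \tw \circ \tau_0 = \tau_0 \circ \tw$ as automorphisms of $\uqg$. In particular, for any $M \in Ob(\Oint)$, the Lusztig map $(T_{w_0}^{-1})_M : M \to M$ satisfies
\[
  (T_{w_0}^{-1})_M(u m) = T_{w_0}^{-1}(u)\, (T_{w_0}^{-1})_M(m) = (\tw \circ \tau_0)(u)\, (T_{w_0}^{-1})_M(m)
\]
for all $u \in \uqg$ and $m \in M$. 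Reinterpreting the target as the twisted module $M^{\tw \circ \tau_0}$, this says precisely that
\[
  (T_{w_0}^{-1})_M : M \longrightarrow M^{\tw \circ \tau_0}
\]
is an isomorphism of $\uqg$-modules, and in particular of $\coid$-modules.

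Next, Theorem \ref{thm:Bc-hom} provides the $\coid$-module isomorphism
\[
  \cK'_M = \Xfrak_M \circ \xi_M \circ (T_{w_X}^{-1})_M : M^{\tw \circ \tau_0} \longrightarrow M^{\tau_0 \tau}.
\]
Composing the two $\coid$-module isomorphisms yields
\[
  \cK_M = \cK'_M \circ (T_{w_0}^{-1})_M : M \longrightarrow M^{\tau_0 \tau},
\]
which matches the formula for $\cK = \Xfrak\, \xi\, T_{w_X}^{-1}\, T_{w_0}^{-1} \in \sU$ in the statement. Since the composition of $\coid$-module homomorphisms is a $\coid$-module homomorphism, one obtains
\[
  \cK_M(b \cdot m) = b \bullet_{\tau_0 \tau} \cK_M(m) = \tau_0(\tau(b))\, \cK_M(m)
\]
for all $b \in \coid$ and $m \in M$; this is the asserted identity $\cK\, b = \tau_0(\tau(b))\, \cK$ in $\sU$. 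Invertibility of $\cK_M$ follows from the fact that each of $\Xfrak_M$, $\xi_M$, $(T_{w_X}^{-1})_M$, and $(T_{w_0}^{-1})_M$ is invertible on a finite dimensional module.

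There is essentially no obstacle here: the corollary is a clean composition of two module isomorphisms. The only subtlety worth verifying carefully is that $\tw$ and $\tau_0$ commute (so that the identification $T_{w_0}^{-1} = \tw \circ \tau_0$ is unambiguous) and that Assumption $(\tau_0)$, together with the finite type hypothesis, guarantees that the $\xi$-recursion \eqref{eq:xi-recursion} and the conclusion of Theorem \ref{thm:Bc-hom} are available in the form needed.
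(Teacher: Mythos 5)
Your proof is correct and follows essentially the same route as the paper: in the finite case $T_{w_0}^{-1}=\tw\circ\tau_0$ on $\uqg$, so the Lusztig action gives a $\uqg$-module isomorphism between $M$ and $M^{\tw\circ\tau_0}$, and composing with the $\coid$-module isomorphism $\cK'_M$ of Theorem \ref{thm:Bc-hom} yields $\cK_M$. The paper phrases this via $T_{w_0}:M^{\tw\circ\tau_0}\rightarrow M$ and its inverse, which is just the dual formulation of your step.
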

\begin{rema}
  As before let $\cB$ denote the category with objects in $\Oint$ and morphisms $\Hom_\cB(V,W)=\Hom_{\coid}(V,W)$. In the terminology of Section \ref{sec:TwCylTw} the above corollary states that $\cK=\Xfrak\,\xi\, T_{w_X}^{-1}\, T_{w_0}^{-1}$ is a $\cB$-$(\tau\circ \tau_0)$-automorphism of $\Oint$. Equivalently, the element $\cK$ satisfies relation \eqref{eq:phi-cylinder1} in Definition \ref{def:U-cylinder-braided} of a $\tau\tau_0$-universal K-matrix. 
\end{rema}

\section{A special choice of $\xi$}\label{sec:xi-construction}
In the following we want to determine the coproduct of the element $\cK\in \sU$ from Corollary \ref{cor:K}. We aim to show that $\cK$ is a $\tau\tau_0$-universal K-matrix for $\coid$, that is that the coproduct $\kow(\cK)$ is given by \eqref{eq:phi-cylinder2}. This, however, will only hold true for a suitable choice of $\xi$.
\subsection{Choosing $\xi$}\label{sec:choosing-xi}
Recall that $\xi$ has to satisfy the recursion \eqref{eq:xi-recursion} which involves the function $\gamma:I\rightarrow \field(q^{1/d})$ given by \eqref{eq:gamma(i)}. Extend the function $\gamma$ to a group homomorphism $\gamma:P\to \mathbb{K}(q^{1/d})^\times$. Depending on the choice of coefficients $\bc\in \cC$, it may be necessary to replace $\field(q^{1/d})$ by a finite extension to do this. We will illustrate the situation and comment on the field extension in Subsection \ref{sec:lifting-gamma}.

 For any $\lambda\in P$ write
\begin{align*}
  \lambda^+ &= \frac{\lambda+\Theta(\lambda)}{2}, & \tilde{\lambda}=\frac{\lambda-\Theta(\lambda)}{2}.
\end{align*}
Observe that both $(\lambda^+,\lambda^+)$ and $(\tilde{\lambda},\tilde{\lambda})$ are contained in $\frac{1}{2d}\Z$ for all $\lambda\in P$. Recall from Section \ref{sec:RootDatum} that $\varpi^\vee_i$ for $i\in I$ denote the fundamental coweights. Now define a function $\xi:P\rightarrow \field(q^{1/d})^\times$ by 
  \begin{align}\label{eq:xi-def}
    \xi(\lambda) = \gamma(\lambda) q^{-(\lambda^+,\lambda^+)+ \sum_{k\in I}(\alphatil_k,\alphatil_k) \lambda(\varpi_k^\vee)}.
  \end{align}
\begin{rema}
  A priori one only has $-(\lambda^+,\lambda^+) + \sum_{k\in I}(\alphatil_k,\alphatil_k)\lambda(\varpi^\vee_k)\in \frac{1}{2d}\Z$. However, for all $\gfrak$ of finite type one can show by direct calculation that
\begin{align}\label{eq:in1dZ}
  -(\lambda^+,\lambda^+) + \sum_{k\in I}(\alphatil_k,\alphatil_k)
   \lambda(\varpi^\vee_k)\in \frac{1}{d}\Z \qquad \mbox{for all $\lambda \in P$.}
\end{align}
To this end it is useful to reformulate the above condition as
\begin{align*}
  -(\lambdatil,\lambdatil) + \sum_{k\in I}(\alphatil_k,\alphatil_k)
   \lambda(\varpi^\vee_k)\in \frac{1}{d}\Z \qquad \mbox{for all $\lambda \in P$.}
\end{align*}
and to work with the weight lattice $P(\Sigma)$ of the restricted root system $\Sigma$ of the symmetric pair $(\gfrak,\kfrak)$. The relation between $P(\Sigma)$ and $P$ is discussed in detail in \cite[Section 2]{a-Letzter-memoirs}. We expect \eqref{eq:in1dZ} also to hold for infinite dimensional $\gfrak$. If it does not hold, then the definition of $\xi$ requires an extension of $\field(q^{1/d})$ also for the $q$-power to lie in the field.
\end{rema}
We claim that $\xi$ satisfies the recursion \eqref{eq:xi-recursion}.  
\begin{lem}\label{lem:xi}
The function $\xi:P\rightarrow \field(q^{1/d})^\times$ defined by \eqref{eq:xi-def} satisfies the relation 
\begin{align}
  \xi (\mu+\nu)= \xi (\mu) \xi(\nu) q^{-(\mu+\Theta(\mu),\nu)}  \label{eq:xi(mu+nu)}
\end{align}
for all $\mu, \nu\in P$. In particular, $\xi$ satisfies the recursion \eqref{eq:xi-recursion}.
\end{lem}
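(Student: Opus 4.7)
The strategy is to verify \eqref{eq:xi(mu+nu)} directly from the definition \eqref{eq:xi-def}, exploiting the fact that $\xi$ is a product of three factors each of which has simple behavior under addition in $P$, and then to deduce the recursion \eqref{eq:xi-recursion} as the special case $\nu=\alpha_i$.

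First, I will break up $\xi(\mu+\nu)$ into its three constituent factors:
\begin{enumerate}
\item The homomorphism $\gamma:P\to\field(q^{1/d})^\times$ satisfies $\gamma(\mu+\nu)=\gamma(\mu)\gamma(\nu)$ by construction.
\item The sum $\sum_{k\in I}(\alphatil_k,\alphatil_k)\lambda(\varpi^\vee_k)$ is linear in $\lambda$, so the corresponding $q$-factor is multiplicative in $\lambda$.
\item Since $(\mu+\nu)^+=\mu^++\nu^+$, bilinearity gives
\[
((\mu+\nu)^+,(\mu+\nu)^+) = (\mu^+,\mu^+) + (\nu^+,\nu^+) + 2(\mu^+,\nu^+).
\]
\end{enumerate}
Combining these three observations gives the key identity
\[
\xi(\mu+\nu) = \xi(\mu)\xi(\nu)\, q^{-2(\mu^+,\nu^+)}.
\]

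The main (and only nontrivial) step is then to show that $2(\mu^+,\nu^+)=(\mu+\Theta(\mu),\nu)$. Since $\Theta=-w_X\tau$ is a composition of a Weyl group element with a diagram automorphism, both of which are isometries of $(\cdot,\cdot)$ on $\hfrak^\ast$, the map $\Theta$ is an isometric involution. Expanding using $\Theta^2=\id$,
\[
4(\mu^+,\nu^+) = (\mu+\Theta(\mu),\nu+\Theta(\nu)) = (\mu+\Theta(\mu),\nu) + (\Theta(\mu)+\mu,\nu),
\]
where in the second term I used $(\mu+\Theta(\mu),\Theta(\nu))=(\Theta(\mu)+\mu,\nu)$. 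Hence $2(\mu^+,\nu^+)=(\mu+\Theta(\mu),\nu)$, establishing \eqref{eq:xi(mu+nu)}.

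For the final assertion, I will specialize \eqref{eq:xi(mu+nu)} to $\nu=\alpha_i$. Noting that $(\Theta(\mu),\alpha_i)=(\mu,\Theta(\alpha_i))$ by isometry, this becomes
\[
\xi(\mu+\alpha_i) = \xi(\mu)\xi(\alpha_i)\, q^{-(\mu,\alpha_i+\Theta(\alpha_i))}.
\]
It remains to compute $\xi(\alpha_i)=\gamma(i)\,q^{-(\alpha_i,\Theta(\alpha_i))}$. A direct computation from \eqref{eq:xi-def} using $\alpha_i(\varpi_k^\vee)=\delta_{ik}$ gives
\[
(\alpha_i^+,\alpha_i^+)=\tfrac12\bigl((\alpha_i,\alpha_i)+(\alpha_i,\Theta(\alpha_i))\bigr), \qquad
(\alphatil_i,\alphatil_i)=\tfrac12\bigl((\alpha_i,\alpha_i)-(\alpha_i,\Theta(\alpha_i))\bigr),
\]
so that the exponent of $q$ in $\xi(\alpha_i)$ is $-(\alpha_i,\Theta(\alpha_i))$, yielding the desired recursion \eqref{eq:xi-recursion}. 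There is no serious obstacle; the argument is a short computation whose only conceptual input is that $\Theta$ preserves the form $(\cdot,\cdot)$.
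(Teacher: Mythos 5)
Your proof is correct and follows essentially the same route as the paper: a direct expansion of the definition \eqref{eq:xi-def}, using bilinearity of $(\cdot,\cdot)$, the fact that $\Theta=-w_X\tau$ is an isometric involution (so $2(\mu^+,\nu^+)=(\mu+\Theta(\mu),\nu)$), and then the specialization $\nu=\alpha_i$ together with $(\alpha_i^+,\alpha_i^+)-(\alphatil_i,\alphatil_i)=(\alpha_i,\Theta(\alpha_i))$ to recover \eqref{eq:xi-recursion}. The only difference is cosmetic: you isolate the isometry argument as a separate step, whereas the paper carries it out inside one chain of equalities.
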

\begin{proof}
 For any $\mu,\nu\in P$ one calculates
\begin{align*}
  \xi (\mu+\nu)&= \gamma(\mu+\nu) q^{-((\mu+\nu)^+,(\mu+\nu)^+)+ \sum_{k\in I}(\alphatil_k,\alphatil_k) (\mu+\nu)(\varpi_k^\vee)}\nonumber\\
& = \gamma(\mu) \gamma(\nu) q^{-(\mu^+,\mu^+)-(\nu^+,\nu^+)-(\mu,\nu)-(\Theta(\mu),\nu)+\sum_{k\in I}(\alphatil_k,\alphatil_k) (\mu+\nu)(\varpi_k^\vee)}\nonumber\\
&=\xi (\mu) \xi(\nu) q^{-(\mu+\Theta(\mu),\nu)}  
\end{align*}
which proves \eqref{eq:xi(mu+nu)}. Choosing $\nu=\alpha_i$ one now obtains
\begin{align*}
  \xi(\mu+\alpha_i)&\stackrel{\eqref{eq:xi(mu+nu)}}{=}\xi(\mu)\xi(\alpha_i) q^{-(\mu+\Theta(\mu),\alpha_i)}\\
  &\stackrel{\eqref{eq:xi-def}}{=}\xi(\mu)\gamma(i) q^{-(\alpha_i^+,\alpha_i^+)+(\alphatil_i,\alphatil_i)-(\mu,\alpha_i+\Theta(\alpha_i))}
\end{align*}
As $(\alpha_i^+,\alpha_i^+)-(\alphatil_i,\alphatil_i)=(\alpha_i,\Theta(\alpha_i))$ the above formula implies that $\xi$ satisfies recursion \eqref{eq:xi-recursion}.
 \end{proof}

\subsection{The coproduct of $\xi$}\label{sec:xi-coproduct}
Recall the invertible element $\kappa\in\mathscr{U}^{(2)}$ defined in Example \ref{eg:kappa-def}. Let $f:P\rightarrow P$ be any map. For every $M,N \in Ob(\Oint)$ define a linear map
\begin{align}
  \kappa^{f}_{M,N}:M\ot N\rightarrow M\ot N, \quad (m\ot n) \mapsto 
  q^{(f(\mu),\nu)}m\ot n \quad \mbox{if $m{\in} M_\mu$, $n{\in} N_\nu$.}\label{def:kappa-f} 
\end{align}
As in Example \ref{eg:kappa-def} the collection $\kappa^{f}=(\kappa^{f}_{M,N})_{M,N\in Ob(\Oint)}$ defines an element in $\sU^{(2)}$. 
\begin{rema}
In the following we will apply this notion in the case $f=-\Theta=w_X\circ \tau$, see Section \ref{sec:SecondKind}. To this end we need to assume that the minimal realization $(\hfrak, \Pi,\Pi^\vee)$ is compatible with the involution $\tau\in \Aut(I,X)$ as in \cite[2.6]{a-Kolb14}. This means that the map $\tau:\Pi^\vee\rightarrow \Pi^\vee$ extends to a permutation $\tau:\Pi^\vee_{ext}\rightarrow \Pi^\vee_{ext}$ such that $\alpha_{\tau(i)}(d_\tau(s))=\alpha_i(d_s)$. In this case $\tau$ may be considered as a map $\tau:P\rightarrow P$. We will make this assumption without further comment. In the finite case, which is our only interest in Section \ref{sec:coproductK}, it is always satisfied.
\end{rema}
Recall from Example \ref{eg:xiinU} that the function $\xi$ defined by \eqref{eq:xi-def} may be considered as an element in $\sU$ and hence we can take its coproduct, see Section \ref{sec:U-coproduct}. The coproduct $\kow(\xi)\in \sU^{(2)}$ can be explicitly determined.
\begin{lem}\label{lem:DeltaXi}
The element $\xi\in \sU$ defined by \eqref{eq:xi-def} satisfies the relation
\begin{equation}
     \kow(\xi)=(\xi\ot\xi)\cdot \kappa^{-1} \cdot \kappa^{-\Theta}. \label{DeltaXi}
\end{equation}
\end{lem}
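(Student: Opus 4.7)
The proof will be a direct verification by evaluating both sides of \eqref{DeltaXi} on a pure tensor of weight vectors $m \otimes n \in M_\mu \otimes N_\nu$ for arbitrary $M,N \in Ob(\Oint)$ and $\mu,\nu \in P$. Since elements of $\sU^{(2)}_0$ are determined by their action on such tensors across all $M,N$, this suffices.

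First I would unpack the left-hand side: $\kow(\xi)$ acts on $(M \otimes N)$ via the action of $\xi$ on $M \otimes N$ regarded as an object of $\Oint$. Since $m \otimes n$ lies in the weight space $(M \otimes N)_{\mu+\nu}$, we get
\begin{align*}
\kow(\xi)(m \otimes n) = \xi(\mu+\nu)\, m \otimes n.
\end{align*}
Then I would invoke the key identity \eqref{eq:xi(mu+nu)} from Lemma \ref{lem:xi}, namely $\xi(\mu+\nu) = \xi(\mu)\xi(\nu) q^{-(\mu+\Theta(\mu),\nu)}$, to rewrite this scalar.

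Next I would compute the right-hand side on the same tensor. By definition of $\kappa$ (Example \ref{eg:kappa-def}) and of $\kappa^{f}$ in \eqref{def:kappa-f} applied to $f = -\Theta$, we have
\begin{align*}
\kappa^{-1}(m\otimes n) &= q^{-(\mu,\nu)}\, m\otimes n, \\
\kappa^{-\Theta}(m\otimes n) &= q^{-(\Theta(\mu),\nu)}\, m\otimes n, \\
(\xi \otimes \xi)(m\otimes n) &= \xi(\mu)\xi(\nu)\, m \otimes n.
\end{align*}
Composing these three diagonal scalar operators yields
\begin{align*}
(\xi \otimes \xi)\cdot \kappa^{-1} \cdot \kappa^{-\Theta}(m \otimes n) = \xi(\mu)\xi(\nu) q^{-(\mu,\nu)-(\Theta(\mu),\nu)}\, m \otimes n,
\end{align*}
which matches $\xi(\mu+\nu)\, m \otimes n$ exactly by \eqref{eq:xi(mu+nu)}.

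There is no real obstacle here; the lemma is a clean reformulation of the scalar identity \eqref{eq:xi(mu+nu)} in the language of the completion $\sU^{(2)}_0$. The only thing to be careful about is confirming that the three factors on the right-hand side commute (they do, since each acts diagonally on weight tensors) and that the conventions $\kappa^{f}(m\otimes n) = q^{(f(\mu),\nu)} m\otimes n$ from \eqref{def:kappa-f} are applied with the correct sign when $f = -\Theta$.
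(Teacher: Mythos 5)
Your proof is correct and is essentially identical to the paper's argument: both evaluate $\kow(\xi)$ on a weight tensor $m\ot n\in M_\mu\ot N_\nu$, apply the identity \eqref{eq:xi(mu+nu)} from Lemma \ref{lem:xi}, and recognize the resulting scalar $\xi(\mu)\xi(\nu)q^{-(\mu,\nu)-(\Theta(\mu),\nu)}$ as the action of $(\xi\ot\xi)\cdot\kappa^{-1}\cdot\kappa^{-\Theta}$. Your additional remarks on the sign convention for $\kappa^{-\Theta}$ and the commutativity of the diagonal factors are fine but not needed beyond what the paper records.
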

\begin{proof}
Let $M,N\in Ob(\Oint)$ and $m\in M_\mu$, $n\in N_\nu$ for some $\mu,\nu\in P$. Then $m\otimes n$ lies in the weight space $(M\otimes N)_{\mu+\nu}$. Hence one gets
\begin{align*}
\kow(\xi)(m\otimes n )&=\xi (\mu+\nu)  m\otimes n\\
& \stackrel{\eqref{eq:xi(mu+nu)}}{=} \xi (\mu) \xi(\nu) q^{-(\mu,\nu)}  q^{-(\Theta(\mu),\nu) }  m\otimes n\\
&\stackrel{\phantom{\eqref{eq:xi(mu+nu)}}}{=} (\xi\ot\xi) \cdot \kappa^{-1} \cdot \kappa^{-\Theta} ( m\otimes n)
\end{align*}
which proves formula \eqref{DeltaXi}.
\end{proof}
For the rest of this paper the symbol $\xi$ will always denote the function given by \eqref{eq:xi-def} and the corresponding element of $\sU$.

\subsection{The action of $\xi$ on $\uqg$}\label{sec:xi-adjoint}
Conjugation by the invertible element $\xi\in \sU$ gives an automorphism 
\begin{align*}
  \Ad(\xi):\mathscr{U}\to \mathscr{U},\qquad u\mapsto\Ad(\xi)(u)=\xi u \xi^{-1}.
\end{align*}  
For any $M\in Ob(\Oint)$ one has
    \begin{align}\label{eq:Adxi-property}
      \xi(u m)=\Ad(\xi)(u) \xi(m) \qquad \mbox{for all $u\in \sU$ and $m\in M$.}
    \end{align}
Recall that we consider $\uqg$ as a subalgebra of $\sU$.    
\begin{lem}\label{lem:Adxi}
The automorphism $\Ad(\xi):\mathscr{U}\to \mathscr{U}$ restricts to an automorphism of $\uqg$. More explicitly one has
    \begin{align}
      \Ad(\xi)(E_\nu)&= \xi(\nu)E_\nu K_{\nu+\Theta(\nu)}^{-1},\label{eq:AdxiE}\\
      \Ad(\xi)(F_\nu)&= \xi(\nu)^{-1}  K_{\nu+\Theta(\nu)} F_\nu,\label{eq:AdxiF}\\
      \Ad(\xi)(K_i)&=K_i \label{eq:AdxiK}
    \end{align}
for all $E_\nu\in U^+_\nu$ and $F_\nu\in U^-_{-\nu}$ and all $i\in I$.
\end{lem}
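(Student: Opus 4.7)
The plan is to use the fact that $\xi$ acts diagonally on weight spaces, so its conjugation action on a weight-homogeneous element $u \in \uqg_\nu$ produces $u$ scaled by a weight-dependent factor that can then be reabsorbed into a suitable $K_\beta$. Concretely, given $M\in Ob(\Oint)$ and $m\in M_\mu$, observe that $um\in M_{\mu+\nu}$, so
\begin{align*}
  \Ad(\xi)(u)(m) \;=\; \xi(\mu)^{-1}\xi(\mu+\nu)\,u\,m.
\end{align*}
Applying the multiplicativity relation \eqref{eq:xi(mu+nu)} from Lemma \ref{lem:xi}, the scalar ratio becomes $\xi(\nu)\,q^{-(\mu+\Theta(\mu),\nu)}$. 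Since $\Theta=-w_X\circ\tau$ is a composition of an isometry with a diagram automorphism and a sign, $\Theta$ preserves the bilinear form and is involutive, so $(\Theta(\mu),\nu)=(\mu,\Theta(\nu))$. Hence
\begin{align*}
  \Ad(\xi)(u)(m)\;=\;\xi(\nu)\,q^{-(\mu,\,\nu+\Theta(\nu))}\,u\,m,
\end{align*}
and the weight-dependent factor $q^{-(\mu,\nu+\Theta(\nu))}$ is exactly the eigenvalue of $K_{\nu+\Theta(\nu)}^{-1}$ on $M_\mu$.

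Applying this identity in the three cases one obtains the claimed formulas. For $E_\nu\in U^+_\nu$ the factor $K_{\nu+\Theta(\nu)}^{-1}$ can be pulled past $E_\nu$ to the right, giving \eqref{eq:AdxiE} directly. For $F_\nu\in U^-_{-\nu}$ a short calculation using $\xi(-\nu)=\xi(\nu)^{-1}q^{-(\nu+\Theta(\nu),\nu)}$ (which follows by setting $\mu=\nu$ in \eqref{eq:xi(mu+nu)}) shows that the weight factor combines with $F_\nu$ into the form $\xi(\nu)^{-1}K_{\nu+\Theta(\nu)}F_\nu$, yielding \eqref{eq:AdxiF}. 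For $K_i$ the weight $\nu$ is zero, so the scalar is trivial and $\Ad(\xi)(K_i)=K_i$, which is \eqref{eq:AdxiK}.

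Finally, to conclude that $\Ad(\xi)$ actually restricts to an automorphism of $\uqg$ rather than merely landing inside $\sU$, note that the formulas \eqref{eq:AdxiE}--\eqref{eq:AdxiK} applied to the Chevalley generators $E_i$, $F_i$, $K_h$ produce elements of $\uqg$; since these generate $\uqg$ as an algebra and $\Ad(\xi)$ is an algebra homomorphism on $\sU$, it sends $\uqg$ into $\uqg$. Invertibility is automatic because the scalars $\xi(\nu)$ and the Cartan elements appearing in \eqref{eq:AdxiE}--\eqref{eq:AdxiK} are invertible in $\uqg$, and $\Ad(\xi^{-1})$ provides a two-sided inverse on $\uqg$. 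There is no real obstacle here; the only subtle point is recognising that $\Theta$ is an isometry, which is needed in order to convert the scalar $q^{-(\mu+\Theta(\mu),\nu)}$ into the eigenvalue of a genuine element $K_{\nu+\Theta(\nu)}^{\pm 1}$ of $\uqg$ acting on $M_\mu$.
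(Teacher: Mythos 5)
Your argument is correct and is essentially the paper's own proof: act on a weight vector $m\in M_\mu$, apply the multiplicativity relation \eqref{eq:xi(mu+nu)} from Lemma \ref{lem:xi}, and use that $\Theta$ is a self-adjoint involution with respect to $(\cdot,\cdot)$ to identify the resulting scalar with the eigenvalue of $K_{\nu+\Theta(\nu)}^{\mp 1}$. The only cosmetic slip is in the $F_\nu$ case: the relation $\xi(-\nu)=\xi(\nu)^{-1}q^{-(\nu+\Theta(\nu),\nu)}$ follows from \eqref{eq:xi(mu+nu)} with $(\mu,\nu)$ replaced by $(\nu,-\nu)$ together with $\xi(0)=1$, not literally by "setting $\mu=\nu$".
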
 
\begin{proof}
By definition the elements $\xi$ and $K_i$ commute in $\sU$. This proves \eqref{eq:AdxiK}. To verify the remaining two formulas let $M\in Ob(\Oint)$ and $m\in M_\mu$ for some $\mu\in P$. Then one has
\begin{align*}
  \xi E_\nu \xi^{-1} m = \frac{\xi(\mu+\nu)}{\xi(\mu)} E_\nu m \stackrel{\eqref{eq:xi(mu+nu)}}{=} \xi(\nu) q^{-(\mu+\Theta(\mu),\nu)} E_{\nu} m=\xi(\nu) E_\nu K_{\nu+\Theta(\nu)}^{-1}m
\end{align*}
which proves formula \eqref{eq:AdxiE}. Formula \eqref{eq:AdxiF} is obtained analogously using the relation $\xi(\nu)^{-1}=\xi(-\nu)q^{(\nu+\Theta(\nu),\nu)}$ which also follows from \eqref{eq:xi(mu+nu)}.
\end{proof}
For $\gfrak$ of finite type the above lemma allows us to identify the restriction of $\Ad(\xi)$ to the subalgebra $\cM_X U^0_\Theta{}'$ of $\uqg$. Recall the conventions for the diagram automorphisms $\tau_0$ in the finite case from Remark \ref{rem:tw-finite}.
\begin{lem}\label{lem:Adxi-res}
  Assume that $\gfrak$ is of finite type. Then one has
  \begin{align*}
    \Ad(\xi)\big|_{\cM_XU^0_\Theta}= (T_{w_0} T_{w_X}\tau\tau_0)\big|_{\cM_XU^0_\Theta}.
  \end{align*}  
\end{lem}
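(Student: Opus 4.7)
Both $\Ad(\xi)$ and $T_{w_0}T_{w_X}\tau\tau_0$ are algebra automorphisms of $\uqg$ by Lemma \ref{lem:Adxi} and the general properties of the Lusztig and diagram actions. Hence it suffices to verify that they agree on the generators $K_\mu$ ($\mu\in Q^\Theta$) and $E_i, F_i$ ($i\in X$) of $\cM_X U^0_\Theta$. I would first check that the right-hand side actually preserves $\cM_X U^0_\Theta$, which will follow from the explicit computations below.

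For $K_\mu$ with $\mu\in Q^\Theta$, formula \eqref{eq:AdxiK} gives $\Ad(\xi)(K_\mu)=K_\mu$, so the plan is to prove the identity $w_0 w_X\tau\tau_0(\mu)=\mu$ on $Q^\Theta$. Since $\Theta(\mu)=\mu$ means $w_X\tau(\mu)=-\mu$, and $\tau_0\in\Aut(A,X)$ commutes with $w_X$ and (by Assumption $(\tau_0)$) with $\tau$, while $w_0$ acts on $Q$ by $\mu\mapsto -\tau_0(\mu)$ (from \eqref{eq:w0alphai}), the composition evaluates as $w_0 w_X\tau\tau_0(\mu)=-\tau_0(w_X\tau\tau_0(\mu))=-w_X\tau(\tau_0^2\mu)=-w_X\tau(\mu)=\mu$, as required.

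For the generators $E_i$ with $i\in X$, note that admissibility condition (\ref{adm2}) gives $w_X(\alpha_i)=-\alpha_{\tau(i)}$ for $i\in X$, hence $\Theta(\alpha_i)=-w_X\tau(\alpha_i)=\alpha_i$. Applying \eqref{eq:AdxiE} yields
\[
\Ad(\xi)(E_i)=\xi(\alpha_i)\,E_i\,K_i^{-2}.
\]
The explicit formula \eqref{eq:xi-def} simplifies because $\gamma(i)=1$, $\alpha_i^+=\alpha_i$, $\tilde{\alpha}_i=0$, and $\alpha_i(\varpi^\vee_k)=\delta_{ik}$, giving $\xi(\alpha_i)=q^{-(\alpha_i,\alpha_i)}$. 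On the other side, $\tau\tau_0(E_i)=E_{\tau\tau_0(i)}$, and since $\tau\tau_0(i)\in X$ we may use the $\cM_X$-analog of \eqref{eq:Tw0Fi} (valid because $w_X$ is the longest element of $W_X$ and $-w_X|_X=\tau|_X$) to obtain $T_{w_X}(E_{\tau\tau_0(i)})=-F_{\tau_0(i)}K_{\tau_0(i)}$. Then \eqref{eq:Tw0Fi} yields
\[
T_{w_0}\bigl(-F_{\tau_0(i)}K_{\tau_0(i)}\bigr)=K_i^{-1}E_iK_i^{-1}=q^{-(\alpha_i,\alpha_i)}E_iK_i^{-2},
\]
matching $\Ad(\xi)(E_i)$. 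The verification for $F_i$ with $i\in X$ is strictly analogous, using \eqref{eq:AdxiF} and the same sequence of substitutions.

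The calculation is essentially a bookkeeping exercise; the only mildly subtle point is ensuring that the conventions $w_0(\alpha_i)=-\alpha_{\tau_0(i)}$, $-w_X|_X=\tau|_X$, and the mutual commutativity of $\tau$, $\tau_0$, $w_X$ (needed for the $K_\mu$ step) are combined consistently. Once this is set up, the evaluation of $\xi(\alpha_i)$ and the matching with $q^{-(\alpha_i,\alpha_i)}$ from the Lusztig side is immediate.
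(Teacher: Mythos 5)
Your proposal is correct and follows essentially the same route as the paper: the paper likewise computes $\Ad(\xi)$ on weight vectors of $\cM_X$ and on $K_\nu$, $\nu\in Q^\Theta$, via Lemma \ref{lem:Adxi} (using $\Theta(\mu)=\mu$ on $Q_X$ and $\xi(\mu)=q^{-(\mu,\mu)}$ there), and matches it with $T_{w_0}T_{w_X}\tau\tau_0$ via \eqref{eq:Tw0Fi} and its $\cM_X$-analogue. Your generator-by-generator verification together with the remark that both maps are algebra automorphisms is just a slightly more explicit phrasing of the same computation.
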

\begin{proof}
  Consider $\mu\in Q^+_X$ and elements $E_\mu\in U^+_\mu$ and $F_{\mu}\in U^-_{-\mu}$. By Lemma \ref{lem:Adxi} and relations \eqref{eq:Tw0Fi} one has
  \begin{align*}
    \Ad(\xi)(E_\mu)&=q^{-(\mu,\mu)}E_\mu K_\mu^{-2}= T_{w_0} T_{w_X}\tau\tau_0(E_{\mu}),\\
    \Ad(\xi)(F_\mu)&=q^{(\mu,\mu)} K_\mu^2 F_\mu = T_{w_0} T_{w_X}\tau\tau_0(F_{\mu}).
  \end{align*}
Moreover, if $\nu \in Q^\Theta$ then Lemma \ref{lem:Adxi} implies that 
  \begin{align*}  
    \Ad(\xi)(K_\nu)&=K_\nu=T_{w_0}\tau_0 T_{w_X}\tau(K_\nu)
  \end{align*} 
which completes the proof of the lemma.
\end{proof}
\subsection{Extending $\gamma$ from $Q$ to $P$}\label{sec:lifting-gamma}
In this final subsection, we illustrate how different choices of $\uc$ and $s$ influence the extension of the group homomorphism $\gamma:Q\rightarrow \field(q^{1/d})^\times$ to the weight lattice $P$. As an example consider the root datum of type $A_3$ with $I=\{1,2,3\}$, that is $\gfrak=\slfrak_4(\C)$, and the admissible pair $(X,\tau)$ given by $X=\{2\}$ and $\tau(i)=4-i$.
In this case the constraints \eqref{eq:octau} and \eqref{eq:defs(i)2} reduce to the relations 
\begin{align*}
  c_3=q^2\overline{c_1}, \qquad s(3)=-s(1).
\end{align*}  
The group homomorphism $\gamma:Q\rightarrow \field(q^{1/d})^\times$ is defined by
\begin{align*}
  \gamma(\alpha_1)=s(3)c_1, \qquad \gamma(\alpha_2)=1, \qquad \gamma(\alpha_3)=s(1)c_3.
\end{align*}
The weight lattice $P$ is spanned by the fundamental weights
\begin{align*}
  \varpi_1=\frac{3\alpha_1+2\alpha_2+\alpha_3}{4}, \qquad
  \varpi_2=\frac{\alpha_1+2\alpha_2+\alpha_3}{2}, \qquad
  \varpi_3=\frac{\alpha_1+2\alpha_2+3\alpha_3}{4}
\end{align*}  
and we want to extend $\gamma$ from $Q$ to $P$. 

\noindent {\bf Choice 1.} Let $c_1=c_3=q$, $s(1)=1$, $s(3)=-1$. Then $\gamma(\alpha_1)=-q$, $\gamma(\alpha_3)=q$, and $\gamma$ can be extended to $P$ by
\begin{align*}
  \gamma(\varpi_1)=e^{3\pi i /4}q, \qquad 
  \gamma(\varpi_2)=e^{\pi i /2}q, \qquad 
  \gamma(\varpi_3)=e^{\pi i /4}q.
\end{align*}  

\noindent{\bf Choice 2.} Let $c_1=1-q^2$, $c_3=q^2-1$, $s(1)=1$, $s(3)=-1$. Then $\gamma(\alpha_1)=\gamma(\alpha_3)=q^2-1$, and $\gamma$ can be extended to $P$ by
\begin{align*}
  \gamma(\varpi_1)=\gamma(\varpi_2)=\gamma(\varpi_3)=q^2-1.
\end{align*}  
The advantage of Choice 1 is that the parameters $c_i$ specialize to $1$ as $q\to 1$. 
This property is necessary to show that $\coid$ specializes to $U(\kfrak)$ for $q\rightarrow 1$, see \cite[Section 10]{a-Kolb14}. The drawback of Choice 1 is that, for $\gamma$ to extend to $P$, the field $\mathbb{K}$ must contain some $4$-th root of $-1$. Choice 2, one the other hand, has the advantage that $\gamma$ can be defined on $P$ with values in $\mathbb{Q}(q)^\times$. The drawback of Choice 2 is that $c_i\to 0$ as $q\to 1$ and hence $\coid$ does not specialize to $U(\kfrak)$.

For any quantum symmetric pair of finite type it is possible to find analogs of Choice 1 and Choice 2 above. We can choose $c_i=q^{a_i}$ for some $a_i\in \Z$, see \cite[Remark 3.14]{a-BalaKolb14p}. If $X=\emptyset$ or $\tau=\id$ then $\gamma$ extends to a group homomorphism $P\rightarrow \field(q^{1/d})$ and no field extension is necessary. Now assume that  $X\neq \emptyset$ and $\tau\neq \id$. If we keep the choice $c_i=q^{a_i}$ then the extension of $\gamma$ to $P$ requires the field to contain certain roots of unity. Alternatively, as in Choice 2, one can choose $c_i\in \{q^{a_i}, (1-q^{b_i})q^{a_i}\}$ for some $a_i \in \mathbb{Z}$, $b_i\in \Z$ and $s(i)=\pm 1$ in such a way that $\gamma$ can be extended from $Q$ to $P$ with values in $\Q(q^{1/d})^\times$.

\section{The coproduct of the universal K-matrix $\cK$}\label{sec:coproductK}
For the remainder of this paper we assume that $\gfrak$ is of finite type. We keep the setting from Section \ref{sec:Assumptions} and Assumption ($\tau_0$) from Section \ref{sec:pseudoT}. Recall that in the finite case Assumptions (1) and (4) in Section \ref{sec:Assumptions} are always satisfied. Moreover, by Remark \ref{rem:tw-finite} Assumption ($\tau_0$) reduces to Equation \eqref{eq:ctau0tau} where $\tau_0$ is determined by \eqref{eq:w0alphai}. In this section we calculate the coproduct of the element 
  \begin{align*}
    \cK=\Xfrak\,\xi\, T_{w_0}^{-1}\,T_{w_X}^{-1}\in \sU
  \end{align*}   
given in Corollary \ref{cor:K}. This will show that $\cK$ is indeed a $\tau\tau_0$-universal K-matrix. As an essential step we determine the coproduct of the quasi-K-matrix $\Xfrak$ in Section \ref{sec:deltaX}. First, however, we perform some calculations which simplify later arguments. 
\subsection{Preliminary calculations with the quasi R-matrix}\label{sec:preliminaryR}
Let $R_X$ denote the quasi-R-matrix corresponding to the semisimple Lie subalgebra $\gfrak_X$ of $\gfrak$. Recall that $w_0$ and $w_X$ denote the longest elements of $W$ and $W_X$, respectively. Choose a reduced expression $w_0=s_{i_1}\ldots s_{i_t}$ such that $w_X=s_{i_1}\ldots s_{i_s}$ for some $s<t$. As in Remark \ref{rem:R-finite} the quasi-R-matrices $R$ and $R_X$ can then be written as
\begin{align*}
R&=R^{[t]}\cdot R^{[t-1]}\cdots R^{[1]}, &
R_X&=R^{[s]}\cdot R^{[s-1]}\cdots R^{[1]}.
\end{align*}
In view of relation \eqref{eq:Rinv-Rbar} one obtains that 
\begin{align}
R\, \overline{R_X}=RR_X^{-1} &=R^{[t]}\cdot R^{[t-1]}\cdots R^{[s+1]}.\label{eq:RRX-aux1}
\end{align}
Define
\begin{align}\label{eq:RtX-def}
  R^{(\tau,X)}=(\Ad(\xi)T_{w_0}^{-1}T_{w_X}^{-1}\tau\tau_0\ot 1)(R \overline{R_X}) \,\, \in \sU^{(2)}_0.
\end{align}
We will see in Theorem \ref{thm:kowX} that the element $R^{(\tau,X)}$ is a major building block of the coproduct $\kow(\Xfrak)$.
\begin{lem}\label{lem:Adttttin}
  The following relation holds
  \begin{align*}
    \RtX \in \prod_{\mu\in w_XQ^+\cap Q^+} U^+_{-\Theta(\mu)} K_\mu\ot U^+_\mu.
  \end{align*}  
\end{lem}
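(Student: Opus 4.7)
The plan is to trace the weight decomposition of the element $R\overline{R_X}$ through each of the automorphisms $\tau\tau_0$, $T_{w_X}^{-1}$, $T_{w_0}^{-1}$, and $\Ad(\xi)$ on the first tensor factor. First, by \eqref{eq:RRX-aux1} we have $R\overline{R_X} = R^{[t]} \cdot R^{[t-1]} \cdots R^{[s+1]}$, where for $j>s$ the factor $R^{[j]}$ has first tensor leg a power of the root vector $F_{\gamma_j}$. Since $F_{\gamma_j}=T_{w_X}\bigl(T_{\tilde w_{j-s-1}}(F_{i_j})\bigr)$ with $\tilde w_{j-s-1}=\sigma_{i_{s+1}}\cdots\sigma_{i_{j-1}}$ and $\tilde w_{j-s-1}(\alpha_{i_j})>0$ (because $\tilde w_{j-s-1}\sigma_{i_j}$ is reduced), the positive root $\gamma_j=w_X\tilde w_{j-s-1}(\alpha_{i_j})$ satisfies $w_X(\gamma_j)\in Q^+$, so $\gamma_j\in w_XQ^+\cap Q^+$. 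As $w_XQ^+\cap Q^+$ is closed under addition, one has $R\overline{R_X}\in \prod_{\mu\in w_XQ^+\cap Q^+}U^-_{-\mu}\otimes U^+_\mu$.

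Next, I track the first tensor factor through $T_{w_X}^{-1}\circ \tau\tau_0$. As $\tau,\tau_0\in \Aut(A,X)$, both fix $w_X$, so $\tau\tau_0$ commutes with $T_{w_X}^{-1}$. Clearly $\tau\tau_0$ sends $U^-_{-\mu}$ to $U^-_{-\tau\tau_0(\mu)}$. The identity $T_{w_X}^{-1}(F_{\gamma_j})=T_{\tilde w_{j-s-1}}(F_{i_j})\in U^-_{-w_X(\gamma_j)}$ then shows that $T_{w_X}^{-1}$ preserves $U^-$ on the relevant weight spaces and contributes a Weyl-group action $\mu\mapsto w_X(\mu)$ on the weight. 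Thus the first factor of $T_{w_X}^{-1}\tau\tau_0(R\overline{R_X})$ lies in $U^-_{-\tau\tau_0 w_X(\mu)}$.

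Apply $T_{w_0}^{-1}$. From the formulas in \eqref{eq:Tw0Fi} and a straightforward induction on height, $T_{w_0}^{-1}(U^-_{-\nu})\subseteq U^+_{\tau_0(\nu)}K_{\tau_0(\nu)}$ for every $\nu\in Q^+$. Setting $\nu=\tau\tau_0 w_X(\mu)$ and using that $\tau,\tau_0$ commute (so $\tau_0\tau\tau_0=\tau$) and that $\Theta=-w_X\tau=-\tau w_X$, the exponent simplifies to $\tau_0(\tau\tau_0 w_X(\mu))=\tau w_X(\mu)=-\Theta(\mu)$, which incidentally confirms $-\Theta(\mu)\in Q^+$. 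Hence the first factor now lies in $U^+_{-\Theta(\mu)}K_{-\Theta(\mu)}$.

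Finally, by Lemma~\ref{lem:Adxi}, the automorphism $\Ad(\xi)$ multiplies any element of $U^+_\nu$ by a nonzero scalar and by an extra factor $K_{\nu+\Theta(\nu)}^{-1}$, while fixing the $K_h$'s. Specializing to $\nu=-\Theta(\mu)$, one has $\Theta(\nu)=-\mu$, so the $\Ad(\xi)$-correction contributes $K_{\mu+\Theta(\mu)}$; combined with the already present $K_{-\Theta(\mu)}$ this produces exactly $K_\mu$, while the $U^+$-part stays in $U^+_{-\Theta(\mu)}$. The second tensor factor is left untouched and remains in $U^+_\mu$, so the membership in the claimed product space follows. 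The only real subtlety in this plan is ensuring that $T_{w_X}^{-1}$ does not throw the first tensor factor out of $U^-$; this is where the condition $\mu\in w_XQ^+\cap Q^+$ — equivalently, $w_X(\gamma_j)>0$ for $j>s$ — is decisive.
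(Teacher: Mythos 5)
Your proposal is correct and follows essentially the same route as the paper: factorize $R\overline{R_X}=R^{[t]}\cdots R^{[s+1]}$ so that the first tensor legs lie in $T_{w_X}(U^-)$ with weights in $w_XQ^+\cap Q^+$, then push this through $\tau\tau_0$, $T_{w_X}^{-1}$, $T_{w_0}^{-1}$ (via \eqref{eq:Tw0Fi}) and $\Ad(\xi)$ (via Lemma \ref{lem:Adxi}), with the $K$-factors combining to $K_\mu$ exactly as in the paper's proof. The only difference is that you spell out why the first legs lie in $T_{w_X}(U^-)$ via the root vectors $F_{\gamma_j}$ and apply the automorphisms in their literal order rather than grouping them as $(T_{w_X}^{-1}\tau)$ and $(T_{w_0}^{-1}\tau_0)$, which is immaterial.
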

\begin{proof}
For any $j=1,\dots,t$ let $\gamma_j$ denote the corresponding root as in Remark \ref{rem:R-finite}. If $s+1\le j \le t$ then the factor $R^{[j]}$ defined by \eqref{eq:Rgammaj} satisfies
\begin{align*}
  R^{[j]}\in \prod_{n\ge 0}( T_{w_X}(U^-) \cap U^-_{-n \gamma_j}) \otimes U^+_{n \gamma_j}.
\end{align*}  
Using \eqref{eq:RRX-aux1} we get that
  \begin{align*}
    R\overline{R_X}\in \prod_{\mu\in w_XQ^+\cap Q^+} (T_{w_X}(U^-)\cap U^-_{-\mu})\ot U^+_\mu.
  \end{align*} 
 This implies that
  \begin{align*}
    (T_{w_X}^{-1}\tau\ot 1)(R\overline{R_X}) \in \prod_{\mu\in w_XQ^+\cap Q^+} U^-_{-w_X(\tau(\mu))}\ot U^+_\mu.
  \end{align*}
Using \eqref{eq:Tw0Fi} we get
  \begin{align*}
    (T_{w_0}^{-1}\tau_0 T_{w_X}^{-1}\tau\ot 1)(R\overline{R_X}) \in \prod_{\mu\in w_XQ^+\cap Q^+} U^+_{w_X(\tau(\mu))}K_{w_X(\tau(\mu))}\ot U^+_\mu.
  \end{align*}
  As $\tau_0$ and $T_{w_X}$ commute, relation \eqref{eq:AdxiE} gives 
  \begin{align*}
    (\Ad(\xi)T_{w_0}^{-1}T_{w_X}^{-1}\tau_0\tau\ot 1)(R\overline{R_X}) \in \prod_{\mu\in w_XQ^+\cap Q^+} U^+_{w_X(\tau(\mu))}K_{\mu}\ot U^+_\mu
  \end{align*} 
  which completes the proof of the lemma.
\end{proof}
\begin{lem}\label{lem:1ri-long}
  For any $i\in I$ the following relation holds
  \begin{align*} 
    (1\ot r_i)\RtX &=\rho_i (q_i{-}q_i^{-1})\cdot \RtX \cdot R_X \cdot (T_{w_X}^{-1}(E_{\tau(i)})\ot 1) \cdot \overline{R_X}\cdot (K_i\ot 1)
  \end{align*}
 in $\sU^{(2)}_0$ where $\rho_i=c_{\tau(i)}s(i)q^{-(\alpha_i,\Theta(\alpha_i))}$, in particular $\rho_i=0$ if $i\in X$.
\end{lem}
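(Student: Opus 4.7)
The plan is to compute $(1\otimes r_i)(R\overline{R_X})$ directly via a Leibniz-type rule, and only then apply the twist $\phi\otimes 1$, where $\phi = \Ad(\xi)T_{w_0}^{-1}T_{w_X}^{-1}\tau\tau_0$. First I would establish the key identity $(1\otimes r_i)R = -(q_i-q_i^{-1})\,R\cdot(F_i\otimes 1)$ for all $i\in I$. This follows by dualizing via the pairing: expanding $R_\mu = \sum_k b_{\mu,k}\otimes b_\mu^k$ in dual bases and invoking $\langle yF_i,x\rangle = \langle F_i,E_i\rangle\langle y,r_i(x)\rangle$ from \eqref{eq:form-ri} together with $\langle F_i,E_i\rangle = -1/(q_i-q_i^{-1})$. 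Writing $\overline{R_X} = \sum A'_\nu\otimes B'_\nu$ with $B'_\nu\in\cM_X^+$ of weight $\nu\in Q_X^+$, the skew-derivation property of $r_i$ and the relation $K_iB'_\nu K_i^{-1} = q^{(\alpha_i,\nu)}B'_\nu$ give the Leibniz-type formula
\[
(1\otimes r_i)(R\overline{R_X}) = [(1\otimes r_i)R]\cdot(1\otimes K_i)\overline{R_X}(1\otimes K_i^{-1}) + R\cdot(1\otimes r_i)\overline{R_X}.
\]

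For the case $i\in X$, we have $\rho_i=0$ since $\tau\in\Aut(A,X)$ forces $\tau(i)\in X$ and hence $c_{\tau(i)}=0$; so the right-hand side vanishes. To show the left-hand side also vanishes, apply \eqref{eq:quasiR-property1} to both $R$ and $R_X$ with $u = F_i$: from $\Delta(F_i)R = R\overline{\Delta(F_i)}$ and $\overline{\Delta(F_i)}R_X^{-1} = R_X^{-1}\Delta(F_i)$ one deduces that $R\overline{R_X}$ commutes with $\Delta(F_i)$. Extracting the $K_i$-weight component on the second tensor factor via \eqref{eq:riCommute} then gives $(1\otimes r_i)(R\overline{R_X}) = 0$, and the claim follows after applying $\phi\otimes 1$. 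For $i\notin X$, the skew derivation $r_i$ annihilates $\overline{R_X}$ (since the factor $B'_\nu$ is a polynomial in $E_j$'s with $j\in X\not\ni i$), so the Leibniz formula collapses to
\[
(1\otimes r_i)(R\overline{R_X}) = -(q_i-q_i^{-1})\,R(F_i\otimes K_i)\overline{R_X}(1\otimes K_i^{-1}).
\]

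Next I would identify $\phi(F_i)$ explicitly. Choose a reduced expression $w_0 = \tilde w\, w_X$ with $\ell(w_0)=\ell(\tilde w)+\ell(w_X)$, giving $T_{w_0}^{-1} = T_{w_X}^{-1}T_{\tilde w}^{-1}$ and hence $T_{w_0}^{-1}T_{w_X}^{-1}(F_j) = T_{w_X}^{-1}\bigl(T_{w_0}^{-1}(F_j)\bigr) = -T_{w_X}^{-1}(E_{\tau_0(j)})\,K_{w_X\tau_0(j)}$ by \eqref{eq:Tw0Fi}. The commutativity $\tau\tau_0=\tau_0\tau$ (Assumption $(\tau_0)$) yields $\tau_0(\tau\tau_0(i)) = \tau(i)$, so for $j = \tau\tau_0(i)$ we get $T_{w_0}^{-1}T_{w_X}^{-1}(F_{\tau\tau_0(i)}) = -T_{w_X}^{-1}(E_{\tau(i)})\,K_{w_X\tau(i)}$. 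Applying $\Ad(\xi)$ using Lemma \ref{lem:Adxi}, together with the weight identity $\Theta(w_X\alpha_{\tau(i)})=-\alpha_i$ (from $\tau w_X=w_X\tau$ and $w_X^2=1$), collapses the $K$-factor and yields $\phi(F_i) = -\xi(w_X\alpha_{\tau(i)})\,T_{w_X}^{-1}(E_{\tau(i)})\,K_i$. A direct evaluation of \eqref{eq:xi-def} shows $\xi(w_X\alpha_{\tau(i)}) = \rho_i$.

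Substituting into the formula from Step 2 and using that $\Delta(K_i)=K_i\otimes K_i$ commutes with $R$, $\overline{R_X}$, and therefore with $(\phi\otimes 1)\overline{R_X}$, one can redistribute the $K_i$-factors to arrive at
\[
(1\otimes r_i)\RtX = \rho_i(q_i-q_i^{-1})\,\tilde R\cdot(T_{w_X}^{-1}(E_{\tau(i)})\otimes 1)\cdot\tilde{\overline{R_X}}\cdot(K_i\otimes 1),
\]
where $\tilde R = (\phi\otimes 1)R$ and $\tilde{\overline{R_X}} = (\phi\otimes 1)\overline{R_X}$, so that $\RtX = \tilde R\,\tilde{\overline{R_X}}$. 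To conclude, it remains to verify the commutation identity
\[
(T_{w_X}^{-1}(E_{\tau(i)})\otimes 1)\cdot\tilde{\overline{R_X}} = \tilde{\overline{R_X}}\cdot R_X\cdot(T_{w_X}^{-1}(E_{\tau(i)})\otimes 1)\cdot\overline{R_X},
\]
i.e.\ that $\Psi := \tilde{\overline{R_X}}\cdot R_X$ commutes with $T_{w_X}^{-1}(E_{\tau(i)})\otimes 1$ on the first tensor factor. This is the principal obstacle and is verified using the $\cM_X$-invariance of $T_{w_X}^{-1}(E_{\tau(i)})$, namely $r_j\bigl(T_{w_X}^{-1}(E_{\tau(i)})\bigr)=0={}_jr\bigl(T_{w_X}^{-1}(E_{\tau(i)})\bigr)$ for all $j\in X$ (which follows since $T_{w_X}$ sends positive $\cM_X$-roots to negatives and $\tau(i)\notin X$), combined with Lemma \ref{lem:Adxi-res} describing the restriction of $\Ad(\xi)$, hence $\phi$, to the Levi $\cM_X U^0_\Theta$.
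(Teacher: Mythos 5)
Your Steps 1--4 are correct and essentially parallel to the paper's own argument: the identity $(1\ot r_i)R=-(q_i-q_i^{-1})R(F_i\ot 1)$, the Leibniz rule with the $K_i$-conjugation absorbing the factor $q^{(\alpha_i,\nu)}$, the vanishing of $(1\ot r_i)(R\overline{R_X})$ for $i\in X$ via the intertwining property and \eqref{eq:riCommute}, and the computation $\psi(F_i)=-\rho_i\,T_{w_X}^{-1}(E_{\tau(i)})K_i$ with $\xi(w_X\alpha_{\tau(i)})=\rho_i$, where $\psi=\Ad(\xi)T_{w_0}^{-1}T_{w_X}^{-1}\tau\tau_0$, all check out (the commutation of $T_{w_0}$ and $T_{w_X}$ you need uses $\tau_0(X)=X$). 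The problem is the final step, which you yourself call the principal obstacle and then do not carry out: the identity $(T_{w_X}^{-1}(E_{\tau(i)})\ot 1)\cdot(\psi\ot 1)(\overline{R_X})=(\psi\ot 1)(\overline{R_X})\cdot R_X\cdot(T_{w_X}^{-1}(E_{\tau(i)})\ot 1)\cdot\overline{R_X}$ is only asserted, and the main auxiliary fact you cite for it is false: $r_j$ and ${}_jr$ do \emph{not} both annihilate $T_{w_X}^{-1}(E_{\tau(i)})$ for $j\in X$. For example, for $\gfrak=\slfrak_4$, $X=\{2\}$, $\tau=(1\,3)$, $i=1$ one has $T_2^{-1}(E_3)=E_3E_2-q^{-1}E_2E_3$ and $r_2\big(T_2^{-1}(E_3)\big)=(1-q^{-2})E_3\neq 0$; equivalently $[T_{w_X}^{-1}(E_{\tau(i)}),F_j]\neq 0$ in general, as \eqref{eq:riCommute} shows (only one of $r_j$, ${}_jr$ vanishes on this element). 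So as written the proof is incomplete exactly where the content of the lemma sits.

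The gap is fillable, and the correct ingredient is the one the paper uses: the relevant statement is not about $T_{w_X}^{-1}(E_{\tau(i)})$ but about $\psi$ on the Levi part. Lemma \ref{lem:Adxi-res}, combined with the facts that $T_{w_0}^{-1}T_{w_X}^{-1}\tau\tau_0$ preserves $\cM_XU^0_\Theta{}'$ and that $T_{w_0}$, $T_{w_X}$, $\tau\tau_0$ pairwise commute with $(\tau\tau_0)^2=\id$, gives $\psi\big|_{\cM_XU^0_\Theta{}'}=\id$. Hence $(\psi\ot 1)(\overline{R_X})=\overline{R_X}$, your element $\Psi=(\psi\ot 1)(\overline{R_X})\cdot R_X$ equals $\overline{R_X}R_X=1$, and the commutation identity becomes trivial. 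The paper phrases the same point differently: it inserts $\overline{R_X}R_X=1$ \emph{before} applying $\psi\ot 1$, so that the factor $R_X\cdot(F_iK_i^{-1}\ot 1)\cdot\overline{R_X}\cdot(K_i\ot 1)$ is evaluated directly using that $\psi$ fixes the first tensor legs of $R_X$ and $\overline{R_X}$. With this replacement of your last step the argument closes and agrees with the paper's proof.
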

\begin{proof}
  It follows from the intertwining property \eqref{eq:quasiR-property1} of the quasi R-matrix for $u=F_j$ and from relation \eqref{eq:riCommute} that
  \begin{align}\label{eq:riR}
    (1\ot r_j)R = - (q_j-q_j^{-1})\cdot R \cdot  (F_j\otimes 1) \qquad \mbox{for all $j\in I$.}
  \end{align}
and that $(1\ot r_i)(R\overline{R_X})=0$ for $i\in X$. This proves the lemma for $i\in X$. Now assume that $i\in I\setminus X$.  
In view of property \eqref{def:ri} for $U^+$ and the fact that $(1\ot r_i)R_X=0$ for $i\in I\setminus X$, relation \eqref{eq:riR} implies that
\begin{align*}
  (1\ot r_i)(R\overline{R_X})=-(q_i-q_i^{-1})R\cdot (F_iK_i^{-1}\ot 1) \cdot \overline{R_X} \cdot (K_i\ot 1).
\end{align*}
Hence one gets
  \begin{align} 
    (1\ot r_i)&(\Ad(\xi)T_{w_0}^{-1} T_{w_X}^{-1}\tau\tau_0\ot 1)(R \overline{R_X})= \nonumber\\   
    &=-(q_i-q_i^{-1})(\Ad(\xi)T_{w_0}^{-1}T_{w_X}^{-1}\tau\tau_0\ot 1)\big(R\cdot (F_iK_i^{-1}\ot 1) \cdot\overline{R_X}\cdot (K_i\ot 1)\big)\nonumber\\
    &=-(q_i-q_i^{-1})(\Ad(\xi)T_{w_0}^{-1}T_{w_X}^{-1}\tau\tau_0\ot 1)(R \overline{R_X})\cdot \nonumber\\
    &\qquad \qquad \qquad  \cdot(\Ad(\xi)T_{w_0}^{-1}T_{w_X}^{-1}\tau\tau_0\ot 1)\big(R_X \cdot (F_i K_i^{-1}\ot 1)\cdot \overline{R_X}\cdot (K_i\ot 1)\big).\label{eq:first-step}
  \end{align}
   Applying Lemma \ref{lem:Adxi-res} to the second factor in the above expression we get
  \begin{align}\label{eq:second-factor}
    (\Ad(\xi)T_{w_0}^{-1}T_{w_X}^{-1}&\tau\tau_0\ot 1)\big(R_X \cdot (F_i K_i^{-1}\ot 1) \cdot\overline{R_X}\cdot(K_i\ot 1)\big)= \nonumber\\
    &=R_X\cdot(\Ad(\xi)T_{w_0}^{-1}T_{w_X}^{-1}\tau\tau_0(F_i)\ot 1)\cdot (K_i^{-1}\ot 1)\cdot\overline{R_X}\cdot(K_i\ot 1)\nonumber\\
    &=-R_X\cdot \big(\Ad(\xi)(T_{w_X}^{-1}(E_{\tau(i)})) K_{w_X(\alpha_{\tau(i)})-\alpha_i}\ot 1\big)\cdot\overline{R_X}\cdot(K_i\ot 1).
  \end{align} 
    As $T_{w_X}^{-1}(E_{\tau(i)})\in U^+_{-\Theta(\alpha_i)}$ relation \eqref{eq:AdxiE} gives
    \begin{align*}
      \Ad(\xi)(T_{w_X}^{-1}(E_{\tau(i)}))=\gamma(\tau(i))q^{-(\alpha_i,\Theta(\alpha_i))} T_{w_X}^{-1}(E_{\tau(i)}) K_{\alpha_i+\Theta(\alpha_i)}.
    \end{align*}
  Inserting this expression into \eqref{eq:second-factor} gives
  \begin{align*}
    (\Ad(\xi)T_{w_0}^{-1}T_{w_X}^{-1}&\tau\tau_0\ot 1)\big(R_X \cdot (F_i K_i^{-1}\ot 1) \cdot\overline{R_X}\cdot (K_i\ot 1)\big)=\\
    &=-\gamma(\tau(i))q^{-(\alpha_i,\Theta(\alpha_i))} R_X\cdot \big(T_{w_X}^{-1}(E_{\tau(i)})\ot 1\big)\cdot\overline{R_X}\cdot (K_i\ot 1).
  \end{align*}
Finally, inserting the above formula into \eqref{eq:first-step} produces the desired result.
\end{proof}
\subsection{The coproduct of the quasi K-matrix $\Xfrak$}\label{sec:deltaX}
 As in Section \ref{sec:quasiK} write the quasi K-matrix as $\Xfrak=\sum_{\mu\in Q^+}\Xfrak_\mu \in \sU$ with $\Xfrak_\mu\in U^+_\mu$. Define
  \begin{align}\label{eq:XK2}
    \Xfrak_{K2}= \sum_{\mu\in Q^+} K_{\mu}\ot \Xfrak_\mu \in \mathscr{U}^{(2)}_0.
  \end{align}
This element will appear as a factor in the coproduct of $\Xfrak$. 
\begin{lem}\label{lem:1riXK2}
  For any $i\in I$ one has
  \begin{align*}
    (1\ot r_i)(\Xfrak_{K2})&=  (q_i-q_i^{-1}) \Xfrak_{K2} \cdot \big( \rho_i K_{\alpha_i-\Theta(\alpha_i)}\ot T_{w_X}^{-1}(E_{\tau(i)}) - s_i K_i\ot 1\big)
  \end{align*}
 in $\sU^{(2)}$ where $\rho_i=c_{\tau(i)}s(i) q^{-(\alpha_i,\Theta(\alpha_i))}$, in particular $\rho_i=s_i=0$ if $i\in X$.
\end{lem}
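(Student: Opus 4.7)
The proof will be a direct computation starting from the recursion for $r_i(\Xfrak_\mu)$ established in Proposition~\ref{prop:TFAE}(3). Apply $1\ot r_i$ termwise to the definition $\Xfrak_{K2}=\sum_{\mu\in Q^+}K_\mu\ot\Xfrak_\mu$ and substitute
\begin{align*}
r_i(\Xfrak_\mu)=-(q_i-q_i^{-1})\bigl(\Xfrak_{\mu+\Theta(\alpha_i)-\alpha_i}\,\overline{c_iX_i}+\overline{s_i}\,\Xfrak_{\mu-\alpha_i}\bigr).
\end{align*}
This splits $(1\ot r_i)(\Xfrak_{K2})$ into two sums. The strategy is to reindex each sum so that $\Xfrak_{K2}$ can be factored out on the left, using the obvious identity $(K_\nu\ot \Xfrak_\nu)\cdot(K_\beta\ot Y)=K_{\nu+\beta}\ot\Xfrak_\nu Y$.

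In the first sum set $\nu=\mu+\Theta(\alpha_i)-\alpha_i$; this rewrites the sum as $\Xfrak_{K2}\cdot(K_{\alpha_i-\Theta(\alpha_i)}\ot \overline{c_iX_i})$. In the second sum set $\nu=\mu-\alpha_i$, which yields $\overline{s_i}\,\Xfrak_{K2}\cdot(K_i\ot 1)$. Using the assumption \eqref{Mparameters2} one has $\overline{s_i}=s_i$, giving the second term of the claimed formula up to the sign.

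The remaining step is to identify $\overline{c_iX_i}$ with $-\rho_i T_{w_X}^{-1}(E_{\tau(i)})$. To this end apply \eqref{eq:Xibar}, giving $\overline{X_i}=-s(i)q^{-(2\rho_X,\alpha_i)}T_{w_X}^{-1}(E_{\tau(i)})$, and the relation \eqref{eq:octau}, which is equivalent to $\overline{c_i}=q^{(2\rho_X,\alpha_i)-(\alpha_i,\Theta(\alpha_i))}c_{\tau(i)}$. Multiplying these two identities the factors $q^{\pm(2\rho_X,\alpha_i)}$ cancel and one finds
\begin{align*}
\overline{c_iX_i}=-s(i)c_{\tau(i)}q^{-(\alpha_i,\Theta(\alpha_i))}T_{w_X}^{-1}(E_{\tau(i)})=-\rho_i\,T_{w_X}^{-1}(E_{\tau(i)}),
\end{align*}
which combined with the overall factor $-(q_i-q_i^{-1})$ delivers the first term of the claimed formula. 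For $i\in X$ both terms vanish by our convention $c_i=s_i=0$ (noting that $\tau(i)\in X$ and hence $c_{\tau(i)}=0$), matching $\rho_i=s_i=0$.

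The argument is essentially bookkeeping; no real obstacle arises. The only slightly delicate point is the computation of $\overline{c_iX_i}$, which crucially relies on both the parameter constraint \eqref{eq:octau} (itself a consequence of Assumption~(2) in Section~\ref{sec:Assumptions} together with \cite[Proposition 2.3]{a-BalaKolb14p}) and the formula \eqref{eq:Xibar} for the bar involution applied to $X_i$, so the simplification of the $q$-powers should be carried out explicitly.
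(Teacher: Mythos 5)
Your proposal is correct and follows essentially the same route as the paper: apply $1\ot r_i$ termwise using the recursion \eqref{eq:riX}, reindex so that $\Xfrak_{K2}$ factors out on the left, and then identify $\overline{c_iX_i}=-\rho_i\,T_{w_X}^{-1}(E_{\tau(i)})$ via \eqref{eq:Xibar} and \eqref{eq:octau} (with $\overline{s_i}=s_i$ from \eqref{Mparameters2}). No gaps.
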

\begin{proof}
  By Equation \eqref{eq:riX} one has
  \begin{align}
    (1\ot r_i)&(\Xfrak_{K2})=\sum_{\mu\in Q^+} K_{\mu}\ot r_i(\Xfrak_\mu)\nonumber\\
      &=-(q_i{-}q_i^{-1})\sum_{\mu\in Q^+} K_{\mu}\ot\big(\oci \Xfrak_{\mu+\Theta(\alpha_i)-\alpha_i} \overline{X_i} + s_i \Xfrak_{\mu-\alpha_i}\big)\nonumber\\
      &=-(q_i{-}q_i^{-1}) \oci \sum_{\mu\in Q^+} (K_{\mu+\Theta(\alpha_i)-\alpha_i} \ot\Xfrak_{\mu+\Theta(\alpha_i)-\alpha_i})\,(K_{\alpha_i-\Theta(\alpha_i)}\ot \overline{X_i})\nonumber\\
      &\phantom{=}\, 
      -(q_i{-}q_i^{-1})s_i\sum_{\mu\in Q^+} (K_{\mu-\alpha_i}\ot \Xfrak_{\mu-\alpha_i})\,(K_i\ot 1)\nonumber\\
      &=-(q_i{-}q_i^{-1})\Xfrak_{K2}\big( \oci K_{\alpha_i-\Theta(\alpha_i)}\ot \overline{X_i} + s_i K_i\ot 1 \big).\label{eq:nearly2}
  \end{align}
By the explicit expression \eqref{eq:Xibar} for $\overline{X_i}$ and by relation \eqref{eq:octau} for $\overline{c_i}$ one has
\begin{align}\label{eq:overlineciXi}
  \overline{c_iX_i}=-\rho_i T_{w_X}^{-1}(E_{\tau(i)}).
\end{align}
Inserting \eqref{eq:overlineciXi} into \eqref{eq:nearly2} one obtains the desired formula.
\end{proof}
By Proposition \ref{prop:TFAE}.(4) the element $\Xfrak_{K2}$ commutes with $F_i\ot E_j$ for all $i,j\in X$ and hence 
\begin{align}\label{eq:RX-XK2}
  R_X\cdot \Xfrak_{K2}=\Xfrak_{K2}\cdot R_X.
\end{align}
Now we are ready to compute the coproduct $\Delta(\Xfrak)\in \sU^{(2)}$ of the quasi K-matrix $\Xfrak \in \mathscr{U}$ in terms of the elements $\RtX$ defined in \eqref{eq:RtX-def} and $\Xfrak_{K2}$ defined in \eqref{eq:XK2}.
\begin{thm}\label{thm:kowX}
  The intertwiner $\Xfrak$ satisfies the relation
  \begin{align}\label{eq:kowX}
    \kow(\Xfrak) = (\Xfrak\ot 1)\cdot  \RtX \cdot \Xfrak_{K2}
  \end{align}
  in $\sU^{(2)}_0$.
\end{thm}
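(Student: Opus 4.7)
The plan is to use Lemma \ref{lem:yzXX'} to reduce the identity \eqref{eq:kowX} to a statement about pairings $\langle y\otimes z,\cdot\rangle$ for $y,z\in U^-$. First I would verify that both sides lie in $\prod_{\mu\in Q^+}U^+K_\mu\otimes U^+_\mu$: for the left-hand side this follows from \eqref{eq:riDelta} applied weight-by-weight to $\Xfrak_\mu\in U^+_\mu$, and for the right-hand side from Lemma \ref{lem:Adttttin} combined with the explicit form $\Xfrak_{K2}=\sum_\nu K_\nu\otimes\Xfrak_\nu$, using that $U^+_\lambda \cdot K_\mu = K_\mu \cdot U^+_\lambda$ up to a scalar. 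The pairing on the left-hand side simplifies immediately via \eqref{eq:formdef} to $\langle yz,\Xfrak\rangle$.

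The main argument would proceed by induction on $\hght(z)$, using the identity $\langle y\otimes zF_i,X\rangle=\langle F_i,E_i\rangle\langle y\otimes z,(1\otimes r_i)X\rangle$ derived from \eqref{eq:form-ri}. The base case $z=1$ is immediate: only the $\mu=0$ components of $\RtX$ and $\Xfrak_{K2}$, both equal to $1\otimes 1$, contribute, so the right-hand side pairs to $\langle y,\Xfrak\rangle$, matching the left. For the inductive step one applies $1\otimes r_i$ to the right-hand side and uses the skew-derivation property of $r_i$ on $U^+$, which gives a weighted Leibniz rule for $1\otimes r_i$ on products. Since $(1\otimes r_i)(\Xfrak\otimes 1)=0$, this reduces to computing $(1\otimes r_i)(\RtX\cdot\Xfrak_{K2})$, where Lemma \ref{lem:1ri-long} gives the contribution from $\RtX$ and Lemma \ref{lem:1riXK2} gives the contribution from $\Xfrak_{K2}$. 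On the other side, $\langle y\otimes zF_i,\kow(\Xfrak)\rangle=\langle F_i,E_i\rangle\langle yz,r_i(\Xfrak)\rangle$ is controlled by the recursion \eqref{eq:riX}, which expresses $r_i(\Xfrak_\mu)$ in terms of the lower-weight components $\Xfrak_{\mu-\alpha_i+\Theta(\alpha_i)}$ and $\Xfrak_{\mu-\alpha_i}$ paired with $\overline{c_iX_i}=-\rho_iT_{w_X}^{-1}(E_{\tau(i)})$ (by \eqref{eq:overlineciXi}) and $\overline{s_i}=s_i$ (by \eqref{Mparameters2}).

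The main obstacle will be matching the resulting expressions. The $s_i$-contributions line up cleanly: the $-s_iK_i\otimes 1$ factor produced by Lemma \ref{lem:1riXK2} matches the $\overline{s_i}\Xfrak_{\mu-\alpha_i}$ term of \eqref{eq:riX}. The delicate identification is among the $\rho_i$-contributions. One must show that the factor $R_X\cdot(T_{w_X}^{-1}(E_{\tau(i)})\otimes 1)\cdot\overline{R_X}\cdot(K_i\otimes 1)$ arising from Lemma \ref{lem:1ri-long}, together with the $\rho_iK_{\alpha_i-\Theta(\alpha_i)}\otimes T_{w_X}^{-1}(E_{\tau(i)})$ factor from Lemma \ref{lem:1riXK2}, combine via the inductive hypothesis to reproduce the $\Xfrak_{\mu-\alpha_i+\Theta(\alpha_i)}\overline{c_iX_i}$ term predicted by \eqref{eq:riX}. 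The key here is that $\Xfrak_{K2}$ commutes with $R_X$ by \eqref{eq:RX-XK2}, so the $R_X$ and $\overline{R_X}$ factors surrounding $T_{w_X}^{-1}(E_{\tau(i)})$ can be absorbed after commutation past the inductive occurrence of $\Xfrak_{K2}$. The emergence of $\overline{R_X}$ in the definition \eqref{eq:RtX-def} of $\RtX$ is precisely what is needed for this cancellation, and reflects the structure of quantum symmetric pairs beyond the braided structure of $\uqg$ alone.
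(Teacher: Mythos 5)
Your strategy is the same as the paper's: reduce to pairings against $y\ot z$ with $y,z\in U^-$ via Lemma \ref{lem:yzXX'}, induct on the length of a monomial $z$, use \eqref{eq:form-ri} to trade $zF_i$ for $(1\ot r_i)$, and compare the output of Lemmas \ref{lem:1ri-long} and \ref{lem:1riXK2} on the right with the recursion \eqref{eq:riX} (together with $\overline{c_iX_i}=-\rho_iT_{w_X}^{-1}(E_{\tau(i)})$ and $\overline{s_i}=s_i$) on the left. The base case, the $s_i$-matching, and the use of \eqref{eq:RX-XK2} are all as in the paper.

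However, the crux — matching the two $\rho_i$-contributions — is not correctly accounted for by your description of the $R_X$, $\overline{R_X}$ factors being ``absorbed after commutation past the inductive occurrence of $\Xfrak_{K2}$''; nothing cancels there, and \eqref{eq:RX-XK2} only serves to move $\Xfrak_{K2}$ past $R_X$. The missing ingredient is the coproduct of the root vector $T_{w_X}^{-1}(E_{\tau(i)})$, obtained from Lemma \ref{lem:kowTw} (formula \eqref{eq:kowTw01} applied to $U_q(\gfrak_X)$) together with the check that $K_{-\Theta(\alpha_i)}\ot T_{w_X}^{-1}(E_{\tau(i)})$ commutes with $R_X$:
\begin{align*}
  \kow\big(T_{w_X}^{-1}(E_{\tau(i)})\big)
  = R_X\cdot\big(T_{w_X}^{-1}(E_{\tau(i)})\ot 1\big)\cdot\overline{R_X}
    \;+\; K_{-\Theta(\alpha_i)}\ot T_{w_X}^{-1}(E_{\tau(i)}).
\end{align*}
On the left-hand side the recursion \eqref{eq:riX} produces $\langle yz,\Xfrak\, T_{w_X}^{-1}(E_{\tau(i)})\rangle=\langle y\ot z,\kow(\Xfrak)\kow(T_{w_X}^{-1}(E_{\tau(i)}))\rangle$, to which the induction hypothesis applies; the two displayed summands of $\kow(T_{w_X}^{-1}(E_{\tau(i)}))$ then match \emph{term by term} the sandwich $R_X\cdot(T_{w_X}^{-1}(E_{\tau(i)})\ot 1)\cdot\overline{R_X}$ coming from Lemma \ref{lem:1ri-long} and the factor $K_{\alpha_i-\Theta(\alpha_i)}\ot T_{w_X}^{-1}(E_{\tau(i)})$ coming from Lemma \ref{lem:1riXK2}, after the residual $K$-factors are discarded using \eqref{eq:formKout} (pairings against $U^-\ot U^-$ are insensitive to right $K$-multiplication) and Proposition \ref{prop:TFAE}.(4). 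Without this identity the identification you flag as ``delicate'' does not go through; also note that the $\overline{R_X}$ occurring in the definition \eqref{eq:RtX-def} of $\RtX$ is already consumed in proving Lemma \ref{lem:1ri-long} and is not what effects the matching here.
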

\begin{proof}
  By the definition \eqref{eq:E-copr} of the coproduct of $\uqg$ the left hand side of Equation \eqref{eq:kowX} belongs to $\prod_{\mu\in Q^+} U^+ K_\mu\ot U^+_\mu$. The right hand side of \eqref{eq:kowX} also belongs to $\prod_{\mu\in Q^+} U^+ K_\mu\ot U^+_\mu$, as follows from  the definition \eqref{eq:XK2} of $\Xfrak_{K2}$ and from Lemma \ref{lem:Adttttin}.
By Lemma \ref{lem:yzXX'} it hence suffices to show that
\begin{align}\label{eq:kowX-goal}
  \left< y\ot z, \kow(\Xfrak)\right>=   \left< y\ot z, (\Xfrak\ot 1) \cdot \RtX \cdot\Xfrak_{K2}\right>
\end{align}
for all $y,z\in U^-$. By linearity it suffices to show this in the case where $z=F_{i_1} F_{i_2} \dots F_{i_r}$ is a monomial in the generators $F_i$ of $U^-$. We perform induction on $r$.

For $r=0$, we have $z=1$ and both sides of \eqref{eq:kowX-goal} equal $\left<y,\Xfrak\right>$. Now assume that \eqref{eq:kowX-goal} holds for all $y\in U^-$ and all monomials $z\in U^-$ of length shorter or equal than $r$. Then we get for any $i\in I$ the relation
\begin{align*}
  \left<y\ot z F_i, \kow \Xfrak\right>&\stackrel{\eqref{eq:formdef}}{=}\left<yzF_i,\Xfrak\right>\\
         &\stackrel{\eqref{eq:form-ri}}{=}\frac{-1}{q_i-q_i^{-1}} \left<yz,r_i(\Xfrak)\right>\\
         &\stackrel{\eqref{eq:riX}}{=}\left<yz, \Xfrak (\oci \overline{X_i} + s_i) \right> \\
         &\stackrel{\eqref{eq:overlineciXi}}{=} -\left<yz, \Xfrak  ( \rho_i T_{w_X}^{-1}(E_{\tau(i)}) - s_i) \right>,
\end{align*}
where as before $\rho_i=c_{\tau(i)} s(i) q^{-(\alpha_i,\Theta(\alpha_i))}$.
By induction hypothesis we obtain
\begin{align}
  \langle y\ot z F_i, \kow \Xfrak\rangle =&-\big<y\ot z, \rho_i(\Xfrak\ot 1)\cdot \RtX \cdot\Xfrak_{K2} \cdot\kow(T_{w_X}^{-1}(E_{\tau(i)}))\big> \label{eq:half-way-kowX}  \\
      &+\big<y\ot z, s_i (\Xfrak\ot 1)\cdot \RtX \cdot\Xfrak_{K2} \big>.\nonumber 
\end{align}
On the other hand, using Lemma \ref{lem:1ri-long} and Lemma \ref{lem:1riXK2} one gets
\begin{align*}
  - &\frac{1}{q_i-q_i^{-1}}(1\ot r_i)\big((\Xfrak\ot 1)\cdot R^{(\tau,X)}\cdot \Xfrak_{K,2}\big) \\
  =& -\rho_i (\Xfrak\ot 1)\cdot R^{(\tau,X)}\cdot R_X \cdot (T_{w_X}^{-1}(E_{\tau(i)})\ot 1)\cdot \overline{R_X}\cdot (K_i\ot K_i) \Xfrak_{K2}(1\ot K_i^{-1})\\
    &-\rho_i (\Xfrak\ot 1)\cdot R^{(\tau,X)} \cdot\Xfrak_{K2}\cdot (K_{\alpha_i-\Theta(\alpha_i)}\ot T_{w_X}^{-1}(E_{\tau(i)}))\\
    &+s_i (\Xfrak\ot 1)\cdot R^{(\tau,X)}\cdot \Xfrak_{K2}\cdot(K_i\ot 1).
\end{align*}
Using the above formula, Equation \eqref{eq:RX-XK2}, and again Proposition \ref{prop:TFAE}.(4) one obtains
\begin{align}
  \Big<y\ot z F_i,(\Xfrak\ot 1)& R^{(\tau,X)}  \Xfrak_{K2}  \Big> \nonumber\\
     =& - \frac{1}{q_i-q_i^{-1}} \left<y\otimes z, (1\ot r_i)((\Xfrak\ot 1)\cdot R^{(\tau,X)} \cdot \Xfrak_{K,2} )\right>\nonumber\\
     =& -\rho_i \left<y \ot z, (\Xfrak\ot 1)\cdot R^{(\tau,X)}\cdot \Xfrak_{K2}\cdot R_X\cdot (T_{w_X}^{-1}(E_{\tau(i)})\ot 1)\cdot \overline{R_X}\right>\label{eq:nearly-there}\\
    &-\rho_i \left<y \ot z, (\Xfrak\ot 1)\cdot R^{(\tau,X)} \cdot \Xfrak_{K,2}\cdot (K_{-\Theta(\alpha_i)}\ot T_{w_X}^{-1}(E_{\tau(i)}))\right>\nonumber\\
    &+s_i \left<y \ot z, (\Xfrak\ot 1)\cdot R^{(\tau,X)} \cdot \Xfrak_{K,2}\right>.\nonumber
\end{align}
Now we want to compare Equation \eqref{eq:half-way-kowX} with Equation \eqref{eq:nearly-there}. If $i\in X$ then both expressions vanish and hence coincide. Assume now that $i\in I\setminus X$. 
Applying $T_{w_X}^{-1}$ to the relation $E_{\tau(i)}F_{\tau(j)}K_{\tau(j)} = q^{-(\alpha_i, \alpha_j)} F_{\tau(j)} K_{\tau(j)} E_{\tau(i)}$ for $j\in X$ one sees that
\begin{align*}
  T_{w_X}^{-1}(E_{\tau(i)}) E_j = q^{(\alpha_j,w_X(\alpha_{\tau(i)}))} E_j T_{w_X}^{-1}(E_{\tau(i)}) \qquad \mbox{for all $j\in X$}
\end{align*}
and hence $K_{-\Theta(\alpha_i)}\ot T_{w_X}^{-1}(E_{\tau(i)})$ commutes with $R_X$.
Using \eqref{eq:kowTw01} one now gets
\begin{align*}
  \kow(T_{w_X}^{-1}(E_{\tau(i)})) &= R_X\cdot \big( T_{w_X}^{-1}(E_{\tau(i)}) \ot 1 + K_{-\Theta(\alpha_i)}\ot T_{w_X}^{-1}(E_{\tau(i)})\big) \cdot \overline{R_X}\\
    &=R_X \cdot \left(T_{w_X}^{-1}(E_{\tau(i)}) \ot 1 \right)  \overline{R_X} + K_{-\Theta(\alpha_i)}\ot T_{w_X}^{-1}(E_{\tau(i)}).
\end{align*}
Inserting the above relation into  \eqref{eq:half-way-kowX} and comparing the outcome with \eqref{eq:nearly-there} one obtains
\begin{align*}
  \left<y\ot z F_i,(\Xfrak\ot 1)\cdot R^{(\tau,X)} \cdot \Xfrak_{K2}\right> = \left<y\ot z F_i,\kow(\Xfrak)\right>
\end{align*}
also for $i\in I\setminus X$. This completes the induction step.
\end{proof}
\subsection{The coproduct of $\mathcal{K}$}\label{sec:deltaK}
We apply the construction given in \eqref{def:kappa-f} to the map $\tau\tau_0:P\rightarrow P$ and obtain elements  $\kappa^{\tau\tau_0}, \kappa^{-\tau\tau_0}\in\mathscr{U}^{(2)}_0$. 	
To simplify notation define
\begin{align*}
  R^{\tau\tau_0} = \sum_{\mu\in Q^+}(\tau\tau_0\ot 1)(R_\mu) = \sum_{\mu\in Q^+}(1\ot\tau\tau_0)(R_\mu) \quad \in \sU^{(2)}_{0}
\end{align*}
and
\begin{align*}
  \rh^{\tau\tau_0}=R^{\tau\tau_0}\cdot \kappa^{-\tau\tau_0}\cdot \flip \quad \in  \sU^{(2)}.
\end{align*}
Recall from Corollary \ref{cor:K} that $\cK=\Xfrak\, \xi\, T_{w_X}^{-1}\, T_{w_0}^{-1}$. 
\begin{thm}\label{thm:deltaK}
The coproduct of $\cK$ in $\sU^{(2)}$ is given by
  \begin{align}
    \kow(\cK) = (\cK\ot 1)\cdot \rh^{\tau\tau_0}\cdot(\cK\ot 1)\cdot \rh. \label{eq:deltaK}
  \end{align}
\end{thm}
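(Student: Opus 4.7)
The plan is to verify \eqref{eq:deltaK} by a direct computation that expands both sides and then matches them term by term, using the factorization $\cK = \Xfrak\cdot\xi\cdot T_{w_X}^{-1}\cdot T_{w_0}^{-1}$ established in Corollary~\ref{cor:K}.

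For the left-hand side, multiplicativity of the coproduct gives
\[
\kow(\cK) = \kow(\Xfrak)\cdot\kow(\xi)\cdot\kow(T_{w_X}^{-1})\cdot\kow(T_{w_0}^{-1}).
\]
We substitute the four known formulas: $\kow(\Xfrak) = (\Xfrak\ot 1)\cdot R^{(\tau,X)}\cdot \Xfrak_{K2}$ from Theorem~\ref{thm:kowX}, $\kow(\xi) = (\xi\ot\xi)\cdot\kappa^{-1}\cdot\kappa^{-\Theta}$ from Lemma~\ref{lem:DeltaXi}, and Lemma~\ref{lem:kowTw} applied to $w_0\in W$ and $w_X\in W_X$ (note that the lemma holds also for $U_q(\gfrak_X)$ since $\gfrak_X$ is of finite type).

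For the right-hand side, we expand $\rh = R\cdot\kappa^{-1}\cdot\flip$ and $\rh^{\tau\tau_0} = R^{\tau\tau_0}\cdot\kappa^{-\tau\tau_0}\cdot\flip$, and push the two $\flip$'s together using the identity $\flip\cdot X\cdot\flip = X_{21}$ for $X\in \sU^{(2)}_0$, together with the symmetry $(\kappa^f)_{21}=\kappa^f$ whenever $f:P\to P$ is an isometry (applied to $f=-\id$ and $f=-\tau\tau_0$). Since $(\cK\ot 1)$ gets transported past the inner $\flip$ to become $(1\ot \cK)$, we obtain
\[
(\cK\ot 1)\cdot\rh^{\tau\tau_0}\cdot(\cK\ot 1)\cdot\rh = (\cK\ot 1)\cdot R^{\tau\tau_0}\cdot\kappa^{-\tau\tau_0}\cdot(1\ot\cK)\cdot R_{21}\cdot\kappa^{-1}.
\]

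The matching of the two expansions is the main step. The strategy is to bring both sides into a normal form in which $(\Xfrak\ot 1)$ sits on the far left and $(T_{w_X}^{-1}T_{w_0}^{-1}\ot T_{w_X}^{-1}T_{w_0}^{-1})\cdot \kappa^{-1}$ sits on the far right. The definition \eqref{eq:RtX-def} of $R^{(\tau,X)}$ is precisely designed for this: $R^{(\tau,X)}$ already encodes the conjugation by $\Ad(\xi)T_{w_0}^{-1}T_{w_X}^{-1}\tau\tau_0$ in the first tensor slot of $R\overline{R_X}$. Consequently, when on the RHS we push $(T_{w_X}^{-1}T_{w_0}^{-1}\ot T_{w_X}^{-1}T_{w_0}^{-1})$ to the right, the factor $R^{\tau\tau_0}$ gets conjugated to match $R^{(\tau,X)}$ up to the $\overline{R_X}$-contribution. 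The $\overline{R_X}=R_X^{-1}$ hidden in $R^{(\tau,X)}$ then cancels against the $R_{X,21}$ emerging from $\kow(T_{w_X}^{-1})$ on the LHS (after conjugation by $T_{w_X}^{-1}$, which swaps $R_X$ and $R_{X,21}$ appropriately). Finally, the element $\Xfrak_{K2}=\sum_\mu K_\mu\ot \Xfrak_\mu$ on the LHS plays the role of the $(1\ot \Xfrak)$-part of the RHS: the weight-dependent $K_\mu$'s are exactly what is needed to account for the interaction of $\Xfrak$ on the second tensor factor with the braiding factors that follow.

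The main obstacle will be the bookkeeping of the many diagonal $\kappa$-factors. One has $\kappa^{-1}\kappa^{-\Theta}$ from $\kow(\xi)$, two copies of $\kappa\cdots\kappa^{-1}$ from the Lusztig coproducts, and $\kappa^{-\tau\tau_0}\cdots\kappa^{-1}$ on the RHS. Their cancellation rests on a numerical identity for the bilinear form of weights: after conjugation by $\Ad(\xi)\Ad(T_{w_X}^{-1})\Ad(T_{w_0}^{-1})$ (described by Lemma~\ref{lem:Adxi} and \eqref{eq:Tw0Fi}), the bilinear pairing $-(\Theta(\mu),\nu)$ coming from $\kappa^{-\Theta}$ combines with the twists from the Lusztig automorphisms to reproduce the pairing $-(\tau\tau_0(\mu),\nu)$ coming from $\kappa^{-\tau\tau_0}$, using the relations $\Theta=-w_X\tau$ and $\tau\tau_0 = -\tau w_0$ on $P$. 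Once these diagonal factors are reconciled, the equality follows.
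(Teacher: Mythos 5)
Your proposal is correct and takes essentially the same route as the paper: the published proof is exactly this computation, written as a one-directional chain that transforms $\kow(\Xfrak)\cdot\kow(\xi)\cdot\kow(T_{w_X}^{-1})\cdot\kow(T_{w_0}^{-1})$ (via Theorem \ref{thm:kowX}, Lemma \ref{lem:DeltaXi} and Lemma \ref{lem:kowTw} for $w_0$ and for $\gfrak_X$) into the unwound form $(\cK\ot 1)\cdot R^{\tau\tau_0}\cdot\kappa^{-\tau\tau_0}\cdot(1\ot\cK)\cdot R_{21}\cdot\kappa^{-1}$ of the right-hand side. The conjugation facts you sketch are made precise there as the identities $(T_{w_X}T_{w_0}\ot 1)\cdot\Xfrak_{K2}\cdot(T_{w_X}T_{w_0}\ot 1)^{-1}=\kappa^{-\tau\tau_0}\cdot(1\ot\Xfrak)\cdot\kappa^{\tau\tau_0}$ and $\kappa^{\tau\tau_0}\cdot(T_{w_0}T_{w_X}\ot 1)=(T_{w_0}T_{w_X}\ot 1)\cdot\kappa^{\Theta}$, with the $\overline{R_X}$-factor first commuted past $\Xfrak_{K2}\cdot\kow(\xi)$ (using Proposition \ref{prop:TFAE}(4)) and then cancelled directly against $\kow(T_{w_X}^{-1})=R_X\cdot(T_{w_X}^{-1}\ot T_{w_X}^{-1})$, rather than against an $R_{X,21}$-factor as in your description.
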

\begin{proof}
Using \ref{prop:TFAE}.(4) one verifies that in $\sU^{(2)}_{0}$ one has
\begin{align}
    (T_{w_X}T_{w_0}\ot 1)\cdot \Xfrak_{K2}\cdot(T_{w_X}T_{w_0}\ot 1)^{-1}&= \kappa^{-\tau\tau_0}\cdot(1\ot \Xfrak)\cdot \kappa^{\tau\tau_0}.\label{eq:KX-1X}
\end{align}    
Moreover, the following relation holds
\begin{align}
    \kappa^{\tau\tau_0}\cdot(T_{w_0}T_{w_X}\ot 1) &= (T_{w_0}T_{w_X}\ot 1)\cdot\kappa^{\Theta}.\label{eq:kappaTT}  
  \end{align}
Combining the above two formulas with all our previous preparations we calculate
  \begin{align*}
    \kow(\cK)&\stackrel{\phantom{\eqref{eq:kowX}}}{=} \kow(\Xfrak)\cdot \kow(\xi) \cdot \kow(T_{w_X}^{-1})\cdot \kow(T_{w_0}^{-1})\\
    &\stackrel{\eqref{eq:kowX}}{=} (\Xfrak\ot 1)\cdot R^{(\tau,X)}\cdot\Xfrak_{K2}\cdot \kow(\xi) \cdot \kow(T_{w_X}^{-1})\cdot \kow(T_{w_0}^{-1})\\
    &\stackrel{\eqref{eq:RtX-def}}{=} (\cK\ot 1)\cdot R^{\tau\tau_0}\cdot(T_{w_X}T_{w_0}\xi^{-1}\ot 1)\cdot \overline{R_X}\cdot\Xfrak_{K2}\cdot \kow(\xi) \cdot \kow(T_{w_X}^{-1})\cdot \kow(T_{w_0}^{-1})\\
    &\stackrel{\eqref{eq:RX-XK2}}{=} (\cK\ot 1)\cdot R^{\tau\tau_0}\cdot(T_{w_X}T_{w_0}\xi^{-1}\ot 1) \cdot\Xfrak_{K2}\cdot \kow(\xi)\cdot\overline{R_X} \cdot \kow(T_{w_X}^{-1})\cdot \kow(T_{w_0}^{-1})\\
    &\stackrel{\eqref{eq:kowTw01}}{=} (\cK\ot 1)\cdot R^{\tau\tau_0}\cdot(T_{w_X}T_{w_0}\xi^{-1}\ot 1) \cdot\Xfrak_{K2}\cdot \kow(\xi) \cdot T_{w_X}^{-1}\ot T_{w_X}^{-1}\cdot \kow(T_{w_0}^{-1})\\
    &\stackrel{\eqref{eq:KX-1X}}{=} (\cK\ot 1)\cdot R^{\tau\tau_0}\cdot \kappa^{-\tau\tau_0}\cdot (1\ot \Xfrak)\cdot \kappa^{\tau\tau_0}\cdot(T_{w_X}T_{w_0}\xi^{-1}\ot 1)\cdot \kow(\xi) \\
    &\phantom{shakeitshakeitshakeit------------}\cdot T_{w_X}^{-1}\ot T_{w_X}^{-1}\cdot \kow(T_{w_0}^{-1})\\
    &\stackrel{\eqref{eq:kappaTT}}{=} (\cK\ot 1)\cdot R^{\tau\tau_0}\cdot \kappa^{-\tau\tau_0}\cdot (1\ot \Xfrak)\cdot (T_{w_X}T_{w_0}\xi^{-1}\ot 1)\cdot\kappa^{\Theta}\cdot \kow(\xi)\\
    &\phantom{shakeitshakeitshakeit------------} \cdot T_{w_X}^{-1}\ot T_{w_X}^{-1}\cdot \kow(T_{w_0}^{-1})\\
    &\stackrel{\eqref{DeltaXi}}{=} (\cK\ot 1)\cdot R^{\tau\tau_0}\cdot \kappa^{-\tau\tau_0}\cdot (1\ot \Xfrak\xi)\cdot (T_{w_X}T_{w_0}\ot 1)\cdot\kappa^{-1}\\
    &\phantom{shakeitshakeitshakeit------------}\cdot T_{w_X}^{-1}\ot T_{w_X}^{-1}\cdot \kow(T_{w_0}^{-1})\\
    &\stackrel{\eqref{eq:kowTw02}}{=} (\cK\ot 1)\cdot R^{\tau\tau_0}\cdot \kappa^{-\tau\tau_0}\cdot (1\ot \Xfrak\xi)\cdot (T_{w_X}T_{w_0}\ot 1)\cdot T_{w_X}^{-1}\ot T_{w_X}^{-1}\\
   &\phantom{shakeitshakeitshakeit------------} \cdot T_{w_0}^{-1}\ot T_{w_0}^{-1}\cdot R_{21}\cdot \kappa^{-1}\\
    &\stackrel{\phantom{\eqref{eq:kowTw02}}}{=} (\cK\ot 1)\cdot R^{\tau\tau_0}\cdot \kappa^{-\tau\tau_0}\cdot (1\ot \Xfrak\xi T_{w_0}^{-1}T_{w_X}^{-1})\cdot R_{21}\cdot \kappa^{-1}\\
    &\stackrel{\phantom{\eqref{eq:kowTw02}}}{=} (\cK\ot 1)\cdot \rh^{\tau\tau_0}\cdot (\cK\ot 1) \cdot \rh
  \end{align*}
which gives the desired formula.  
\end{proof}
Recall Definition \ref{def:U-cylinder-braided} of a $\varphi$-universal K-matrix. Combining Corollary \ref{cor:K} with Theorem \ref{thm:deltaK} we obtain the first statement of the following corollary. 
\begin{cor}
  The element $\cK=\Xfrak\, \xi\, T_{w_X}^{-1}\, T_{w_0}^{-1}\in \sU$ is a $\tau\tau_0$-universal K-matrix for the quantum symmetric pair coideal subalgebra $\coid$ of $\uqg$. In particular, $\cK$ satisfies the reflection equation
  \begin{align}\label{eq:RE-final}
     (\cK\ot 1)\cdot \rh^{\tau\tau_0}\cdot(\cK\ot 1)\cdot \rh=\rh\cdot(\cK\ot 1)\cdot \rh^{\tau\tau_0}\cdot(\cK\ot 1).
  \end{align}
in $\sU^{(2)}$.  
\end{cor}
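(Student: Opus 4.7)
The first assertion of the corollary is a direct combination of results already established in the excerpt. Indeed, Definition \ref{def:U-cylinder-braided} of a $\varphi$-universal K-matrix for a right coideal subalgebra requires exactly two properties: the intertwining condition \eqref{eq:phi-cylinder1} and the coproduct formula \eqref{eq:phi-cylinder2}. The first is supplied by Corollary \ref{cor:K}, which asserts that $\cK b = \tau_0(\tau(b)) \cK$ in $\sU$ for all $b \in \coid$. The second is precisely the content of Theorem \ref{thm:deltaK}. Thus no additional calculation is required: one simply unpacks the two cited results and observes that together they constitute the defining conditions of a $\tau\tau_0$-universal K-matrix for $\coid$.

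For the reflection equation \eqref{eq:RE-final}, the plan is to invoke the naturality relation \eqref{eq:RcommuteswithDeltau}, which states that $\rh \cdot \Delta(u) = \Delta(u) \cdot \rh$ holds in $\sU^{(2)}$ for every $u \in \sU$. Applying this with $u = \cK$ and substituting the formula $\kow(\cK) = (\cK \otimes 1) \cdot \rh^{\tau\tau_0} \cdot (\cK \otimes 1) \cdot \rh$ from Theorem \ref{thm:deltaK} on both sides, one obtains
\begin{align*}
  \rh \cdot (\cK \otimes 1) \cdot \rh^{\tau\tau_0} \cdot (\cK \otimes 1) \cdot \rh
  = (\cK \otimes 1) \cdot \rh^{\tau\tau_0} \cdot (\cK \otimes 1) \cdot \rh \cdot \rh.
\end{align*}
Since $\rh$ is invertible in $\sU^{(2)}$ (each component map $\rh_{M,N}: M \otimes N \to N \otimes M$ is an isomorphism), one can cancel the rightmost factor of $\rh$ from both sides to arrive at \eqref{eq:RE-final}.

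No step of this proof presents a genuine obstacle, since all the hard work has been carried out in the preceding sections: the existence and intertwining property of the quasi K-matrix $\Xfrak$ (Theorem \ref{thm:Xfrak}), the $\coid$-module homomorphism property of $\cK$ (Corollary \ref{cor:K}), and the coproduct formula for $\cK$ (Theorem \ref{thm:deltaK}). The corollary is therefore best viewed as a conceptual summary that packages these results into the categorical framework of Section \ref{sec:cylinder twist} and Remark \ref{rem:uniK=tw-cyl-tw}, where a $\varphi$-universal K-matrix is equivalent to a $\twn$-cylinder twist and hence automatically yields solutions of the twisted reflection equation.
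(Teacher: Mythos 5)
Your argument is correct, and its first half is exactly the paper's: Corollary \ref{cor:K} supplies the intertwining condition \eqref{eq:phi-cylinder1} and Theorem \ref{thm:deltaK} supplies the coproduct condition \eqref{eq:phi-cylinder2}, so $\cK$ is a $\tau\tau_0$-universal K-matrix by Definition \ref{def:U-cylinder-braided}. For the reflection equation you take a more direct route than the paper does. You apply the naturality relation \eqref{eq:RcommuteswithDeltau} with $u=\cK$, substitute the formula for $\kow(\cK)$ on both sides, and cancel the rightmost factor $\rh$; this is legitimate since each $\rh_{M,N}$ is an isomorphism of $\uqg$-modules and the family of inverses is again natural, so $\rh$ is invertible in $\sU^{(2)}$ with inverse in $\sU^{(2)}_1$. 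The paper instead invokes the categorical machinery: by Remark \ref{rem:uniK=tw-cyl-tw} the element $\cK$ is a $\twn$-cylinder twist of $(\cB,\Oint)$ and hence satisfies \eqref{eq:RE-phi}, and one then needs the extra observation that $\rh_{N^{\tau\tau_0},M^{\tau\tau_0}}=\rh_{N,M}$ as maps $N\ot M\rightarrow M\ot N$ to bring the right-hand side of \eqref{eq:RE-phi} into the form \eqref{eq:RE-final}. The two arguments are essentially equivalent, since the categorical relation \eqref{eq:tw-RE} is itself proved by the same naturality-plus-fusion computation, but yours stays entirely inside $\sU^{(2)}$, coincides with the sketch given in the introduction, and bypasses the comparison of twisted and untwisted braidings, whereas the paper's phrasing makes explicit the link to the cylinder-twist framework of Section \ref{sec:cylinder twist}.
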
 
\begin{proof}
  The second statement follows from Remark \ref{rem:uniK=tw-cyl-tw} and Equation \eqref{eq:RE-phi}. Here one needs to observe that $\rh_{N^{\tau\tau_0},M^{\tau\tau_0}}=\rh_{N,M}$ considered as a map $N\otimes M\rightarrow M\otimes N$ for all $M,N\in \Oint$. 
\end{proof}
\begin{rema}
  The approach to quantum symmetric pairs in the papers \cite{a-Noumi96}, \cite{a-NS95}, \cite{a-NDS97}, \cite{a-Dijk96} is based on explicit solutions of the reflection equation. In \cite{a-Noumi96} Noumi first found a reflection equation for the symmetric pairs of type 
  \begin{align*}
  AI: (\slfrak_N,\mathfrak{so}_N), \qquad AII:(\slfrak_{2N+1}, \mathfrak{sp}_{2N}).
  \end{align*} 
For the symmetric pairs of type 
  \begin{align*}
    AIII:(\slfrak_{M+N}, \mathfrak{s}(\mathfrak{gl}_M \times \mathfrak{gl}_N))
  \end{align*}
a different reflection equation appeared in \cite{a-NDS97}. The differing reflection equations are unified by Equation \eqref{eq:RE-final}. Indeed, the diagram automorphism $\tau\tau_0$ is non-trivial in types AI and AII, while $\tau\tau_0$ is the identity in type $AIII$. 
\end{rema}
\providecommand{\bysame}{\leavevmode\hbox to3em{\hrulefill}\thinspace}
\providecommand{\MR}{\relax\ifhmode\unskip\space\fi MR }
\providecommand{\MRhref}[2]{%
  \href{http://www.ams.org/mathscinet-getitem?mr=#1}{#2}
}
\providecommand{\href}[2]{#2}


\end{document}